\apptocmd{\sloppy}{\hbadness 10000\relax}{}{}
\newtheorem{theorem}{Theorem}[section]
\newtheorem{corollary}[theorem]{Corollary}
\newtheorem{lemma}[theorem]{Lemma}
\newtheorem{definition}[theorem]{Definition}
\theoremstyle{definition}
\newtheorem{example}[theorem]{Example}
\newtheorem{remark}[theorem]{Remark}
\numberwithin{equation}{section}
\def\orb#1{\llbracket#1\rrbracket}
\renewcommand{\div}{\operatorname{div}}
\newcommand{\extreme}{\mathbb{M}}
\newcommand{\ass}{A}
\newcommand{\cass}{C}
\newcommand{\inc}{\imt}
\DeclareMathOperator*{\argmax}{argmax}
\newcommand{\join}{\wedge}
\newcommand{\I}{\mathscr{I}}
\newcommand{\pfaff}{\operatorname{Pfaff}}
\newcommand{\perm}{\operatorname{perm}}
\newcommand{\imm}{\operatorname{imm}}
\newcommand{\ext}{\Lambda}
\newcommand{\rictr}{\operatorname{\rho}}
\newcommand{\Id}{\operatorname{Id}}
\newcommand{\G}{\mathscr{G}}
\newcommand{\C}{\mathscr{C}}
\newcommand{\stw}{\mathbb{W}}
\newcommand{\centroid}{\mathbb{o}}
\renewcommand{\frame}{\mathscr{F}}
\newcommand{\sphere}{\mathbb{S}}
\newcommand{\setn}{\bar{\mathsf{n}}}
\newcommand{\kc}{\mathsf{c}}
\newcommand{\mkc}{\mathsf{m}}
\newcommand{\sn}{\mathbb{S}^{n}}
\newcommand{\rperp}{\lb r \ra^{\perp}}
\newcommand{\id}{\operatorname{Id}}
\newcommand{\scal}{\si}
\newcommand{\crit}{\mathsf{Crit}}
\newcommand{\critline}{\mathsf{CritLine}}
\newcommand{\zero}{\mathsf{Zero}}
\newcommand{\fie}{\mathbb{k}}
\newcommand{\ffie}{\mathbb{F}}
\newcommand{\Om}{\Omega}
\newcommand{\imt}{\iota}
\newcommand{\quat}{\mathbb{H}}
\newcommand{\cayley}{\mathbb{O}}
\newcommand{\so}{\mathfrak{so}}
\newcommand{\alg}{\mathbb{A}}
\newcommand{\balg}{\mathbb{B}}
\newcommand{\pol}{\mathsf{Pol}}
\newcommand{\harpol}{\mathsf{Har}}
\newcommand{\om}{\omega}
\newcommand{\mlt}{\circ}
\newcommand{\hess}{\operatorname{Hess}}
\newcommand{\ka}{\kappa}
\newcommand{\dum}{\,\cdot\,\,}
\newcommand{\lap}{\Delta}
\renewcommand{\j}{\mathsf{i}}
\newcommand{\la}{\lambda}
\newcommand{\ep}{\epsilon}
\newcommand{\cinf}{C^{\infty}}
\newcommand{\eno}{\operatorname{End}}
\newcommand{\si}{\sigma}
\newcommand{\pr}{\partial}
\newcommand{\re}{\text{Re\,}}
\newcommand{\im}{\text{Im\,}}
\newcommand{\B}{\mathcal{B}}
\newcommand{\lb}{\langle}
\newcommand{\ra}{\rangle}
\newcommand{\ste}{\mathbb{V}}
\newcommand{\al}{\alpha}
\newcommand{\be}{\beta}
\newcommand{\ga}{\gamma}
\newcommand{\spn}{\text{Span}\,}
\newcommand{\proj}{\mathbb{P}}
\DeclareMathOperator{\Aut}{Aut}
\newcommand{\g}{\mathfrak{g}}
\newcommand{\tensor}{\otimes}
\newcommand{\rea}{\mathbb R}
\newcommand{\com}{\mathbb C}
\newcommand{\tr}{\operatorname{\mathsf{tr}}}
\let\oldtocsection=\tocsection
\let\oldtocsubsection=\tocsubsection
\renewcommand{\tocsection}[2]{\hspace{0em}\oldtocsection{#1}{#2}}
\renewcommand{\tocsubsection}[2]{\hspace{1em}\oldtocsubsection{#1}{#2}}
\begin{document}
\title[Harmonic cubic homogeneous polynomials]{Harmonic cubic homogeneous polynomials such that the norm-squared of the Hessian is a multiple of the Euclidean quadratic form}
\author{Daniel J.~F. Fox} 
\address{Departamento de Matem\'atica Aplicada a la Ingeniería Industrial\\ Escuela T\'ecnica Superior de Ingenier\'ia y Dise\~no Industrial\\ Universidad Polit\'ecnica de Madrid\\Ronda de Valencia 3\\ 28012 Madrid Espa\~na}
\email{daniel.fox@upm.es}

\begin{abstract}
There is considered the problem of describing up to linear conformal equivalence those harmonic cubic homogeneous polynomials for which the squared-norm of the Hessian is a nonzero multiple of the quadratic form defining the Euclidean metric. Solutions are constructed in all dimensions and solutions are classified in dimension at most $4$. Techniques are given for determining when two solutions are linearly conformally inequivalent.
\end{abstract}

\maketitle

\setcounter{tocdepth}{2} 

\begin{footnotesize}
\tableofcontents
\end{footnotesize}

\section{Introduction}\label{polynomialsection}
Let $\alg$ be an $n$-dimensional vector space equipped with a pseudo-Riemannian metric $h$ parallel with respect to the flat affine connection $D$ on $\alg$ whose geodesics are affinely parameterized straight lines. The space $\pol^{k}(\alg)$ of degree $k$ homogenenous polynomials on $\alg$ is identified with the symmetric power $S^{k}\alg^{\ast}$ via polarization: with $\om \in S^{k}\alg^{\ast}$ is associated $P \in \pol^{k}(\alg)$ defined by $k!P(x) = \om(x, \dots, x)$, and given $P$, $\om$ is recovered via the formula $\om(x(1), \dots, x(k)) = \sum_{1 \leq i_{1} <  \dots <  i_{r} \leq k}(-1)^{k-r}P(x(i_{1}) + \dots + x(i_{r})) = x(1)^{i_{1}}\dots x(k)^{i_{k}}D_{i_{1}}\dots D_{i_{k}}P$ for $x(1), \dots, x(k) \in\alg$. 

This paper considers the problem of classifying, up to approriate notions of equivalence and decomposabislity, homogeneous cubic polynomials $P \in \pol^{3}(\alg)$ solving the equations
\begin{align}\label{einsteinpolynomials}
&\lap_{h} P = 0 ,& &|\hess P|_{h}^{2} = \ka|x|_{h}^{2},
\end{align}
for some constant $\ka \neq 0$. When the metric $h$ is \emph{Euclidean}, meaning it has Riemannian signature, the constant $\ka$ in \eqref{einsteinpolynomials} must be positive. The first equation of \eqref{einsteinpolynomials} simply means that the restriction of $P$ to the unit sphere $S^{n-1}$ is a spherical harmonic with eigenvalue $3(n+1)$.

Solutions to \eqref{einsteinpolynomials} are constructed in all dimensions $n \geq 2$ and they are classified up to orthogonal equivalence for $n \leq 4$ and $h$ of Riemannian signature. Diverse constructions of solutions are given for $n \geq 5$. For example, as is shown in Lemma \ref{isoparametricpolynomiallemma}, it follows essentially immediately from Lemma \ref{einsteinpolynomialslemma}  (this also follows from results in chapter $6$ of \cite{Nadirashvili-Tkachev-Vladuts}) that the cubic isoparametric polynomials of Cartan solve \eqref{einsteinpolynomials}. Sections \ref{framesection} and \ref{steinersection} give combinatorial constructions of solutions based on two-distance tight frames and partial Steiner triple systems. Additionally, techniques are developed for determining when solutions are conformally linearly equivalent.

The notational conventions used in \eqref{einsteinpolynomials} and throughout the paper are explained briefly now. Indices are abstract, and are raised and lowered using the $D$-parallel metric $h_{ij}$ and the inverse symmetric bivector $h^{ij}$. For $F \in \cinf(\alg)$, $F_{i_{1}\dots i_{k}} = D_{i_{1}}\dots D_{i_{k}}F$, and the Hessian of $F$ is indicated by either of the notational synonyms $\hess F$ or $F_{ij}$. The Laplacian $\lap_{h}$ is defined by $\lap_{h} = D^{i}D_{i} = h^{ij}D_{i}D_{j}$. The norms induced by $h_{ij}$ on $\tensor^{k}\alg^{\ast}$ and $\tensor^{k}\alg$ and their submodules are those defined by complete contraction, $|\om|^{2}_{h} = \om^{i_{1}\dots i_{k}}\om_{i_{1}\dots i_{k}}$. Wherever helpful for clarity, the dependence on the choice of metric is indicated with subscripts.

The group $GL(n, \rea)= GL(\alg)$ acts on the left on $\cinf(\alg)$ by $(g \cdot F)(x) = F(g^{-1}x)$. 
Because the action of the orthogonal subgroup $O(h) \subset GL(\alg)$ commutes with $\lap_{h}$ and preserves the norms appearing in \eqref{einsteinpolynomials}, if $P$ solves \eqref{einsteinpolynomials} with constant $\ka$, then $g\cdot P$ solves \eqref{einsteinpolynomials} with constant $\ka$ for all $g \in O(h)$. The group $GL(1, \rea)$ acts on $\alg$ by scaling, $(r\cdot P)(x) = P(r^{-1}x)$, and if $P$ solves \eqref{einsteinpolynomials} with constant $\ka$, then $r\cdot P$ solves \eqref{einsteinpolynomials} with constant $r^{-2}\ka$. Because of the $CO(h)$ invariance of the equations \eqref{einsteinpolynomials}, one wants to classify the orbit $\orb{P} = \{g\cdot P: g \in CO(h)\}$ of a solution $P$ of \eqref{einsteinpolynomials} under the action of the group of conformal linear transformations $CO(h) = GL(1, \rea)\times O(h)$. (Because all affine parallel metrics of a given signature are affinely equivalent, when $h$ is fixed it makes sense to write $O(p, n-p)$ for the orthogonal group $O(h)$ where $p$ is the maximal dimension of a subspace on which $h$ is positive definite.) 

Not all orbits of solutions of \eqref{einsteinpolynomials} are equally interesting in the sense that some are obtained automatically from others via iterative or inductive constructions. There are several senses in which a given solution generates more solutions, and solutions should be classified modulo the corresponding notions of equivalence. These other more subtle notions of equivalence are described in more detail later in the introduction.

\begin{remark}
That $P$ solve \eqref{einsteinpolynomials} means it is an element of the $\binom{n+2}{3} - n = (n+4)n(n-1)/6$-dimensional space $\harpol^{3}_{h}(\alg)$ of $h$-harmonic homogeneous cubic polynomials. The equation $|\hess P|_{h}^{2} = \ka|x|_{h}^{2}$ is a system of $(n+1)n/2$ quadratic equations in the coefficients of the associated tensor $P_{ijk}$ and solutions on the same orbit of the $(n^{2} - n + 2)/2$-dimensional group $CO(h)$ are considered equivalent, so \eqref{einsteinpolynomials} could be studied from the point of view of invariant theory and algebraic geometry. For very general considerations along these lines see \cite{Popov}. Since $\harpol^{3}_{h}(\alg)$ is an irreducible $SO(n)$-module, it admits no invariant linear form. An invariant quadratic polynomial on $\harpol^{3}_{h}(\alg)$ is unique up to a scalar multiple and can be identified with $|\hess P|^{2}_{h}$ (see the introduction to \cite{Bryant-secondorder}), so \eqref{einsteinpolynomials} is essentially the simplest equation on $\harpol^{3}_{h}(\alg)$ determined by invariant theoretic considerations. 
\end{remark}

\begin{example}
Although trivial, the one-dimensional case merits brief mention, as it is the base case of the iterative constructions described in Lemma \ref{triplepolynomiallemma}; see Example \ref{1dseedexample}. On $\rea$, any metric is a multiple of the squared absolute value, and no nontrivial cubic polynomial is harmonic. However, the cubic polynomial $\tfrac{1}{6}x^{3}$ solves the second equation of \eqref{einsteinpolynomials}.
\end{example}

When writing concrete polynomials the following notational conventions are employed. There is written $\rea^{n}$ in place of $\alg$ and $x^{i}$ are coordinates on $\rea^{n} = \alg$ such that the differentials $dx^{i}$ constitute a parallel orthonormal coframe. Polynomials are written in terms of the dual coordinates defined by $x_{i} = x^{p}h_{ip}$ rather than the coordinates $x^{i}$. In particular, this helps avoid confusion between indices and exponents. 

\begin{example}\label{2dharpolyexample}
The most general harmonic cubic polynomial on $P$ on $\rea^{2}$ has the form 
\begin{align}\label{2dharpoly}
\begin{split}
6P(x_{1}, x_{2}) &= r\cos\theta (x_{1}^{3} - 3x_{1}x_{2}^{2}) + r\sin \theta(x_{2}^{3}- 3x_{2}x_{1}^{2}) \\
& =  r (\cos\tfrac{\theta}{3}x_{1} - \sin\tfrac{\theta}{3}x_{2}) ^{3} - 3r(\cos\tfrac{\theta}{3}x_{1} - \sin\tfrac{\theta}{3}x_{2})^{2}(\sin\tfrac{\theta}{3}x_{1} + \cos\tfrac{\theta}{3}x_{2})^{2}  \\
&= r\re(e^{\j\theta/3}z)^{3}, 
\end{split}
\end{align}
for some $r \in [0, \infty)$ and $\theta \in [0, 2\pi)$, where $z = x_{1} + \j x_{2}$. The polynomial \eqref{2dharpoly} solves \eqref{einsteinpolynomials} with $\ka = 2r^{2}$. The orthogonal transformation sending $z$ to $e^{-\j\theta/3}z$ sends $P$ to $\tfrac{r}{6}(x_{1}^{3} - 3x_{1}x_{2}^{2})$, and so every nontrivial solution of \eqref{einsteinpolynomials} on $\rea^{2}$ is equivalent to $x_{1}^{3} - 3x_{1}x_{2}^{2}$ modulo the action of the group $CO(2)$ of linear conformal transformations of the plane. For later use note that the automorphism group of $x_{1}^{3} - 3x_{1}x_{2}^{2}$ is the symmetric group $S_{3}$ acting in its $2$-dimensional irreducible representation by permuting the cube roots of unity.
\end{example}

Theorem \ref{3dpolytheorem}, proved in section \ref{lowdpolysection}, shows that any two solutions of \eqref{einsteinpolynomials} with $\ka = 2c^{2} > 0$ on $\rea^{3}$ are orthogonally equivalent.
\begin{theorem}\label{3dpolytheorem}
For $P \in \pol^{3}(\rea^{3})$ not identically zero and $\kc > 0$ the following are equivalent.
\begin{enumerate}
\item\label{dth2} $P$ solves \eqref{einsteinpolynomials} for a Euclidean metric with parameter $\ka = 2\kc^{2}$.
\item\label{dth3} $P$ is a product of orthogonal homogeneous linear forms. Precisely, $P$ is in the $O(3)$-orbit of 
\begin{align}
\label{basepoly} 
\begin{split}
P(x) &= \kc x_{1}x_{2}x_{3}.
\end{split}
\end{align}
\end{enumerate}
\end{theorem}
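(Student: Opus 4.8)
The implication \eqref{dth3}$\,\Rightarrow\,$\eqref{dth2} is an immediate computation: for $P = \kc x_{1}x_{2}x_{3}$ one has $\lap P = 0$, while $\hess P(x)$ is $\kc$ times the matrix with zero diagonal and off-diagonal entries $x_{1}, x_{2}, x_{3}$, so that $|\hess P|^{2} = 2\kc^{2}|x|^{2}$; since \eqref{einsteinpolynomials} is $O(3)$-invariant, the same holds throughout the orbit. For the reverse implication the plan is to use the $O(3)$-action to bring $P$ into a normal form and then to recognize that normal form as a product of mutually orthonormal linear forms. The key preliminary observation is that, since $|\hess P(x)|^{2} = \bigl(P_{ijk}P^{ij}{}_{l}\bigr)x^{k}x^{l}$, the second equation of \eqref{einsteinpolynomials} is equivalent to the tensorial identity $P_{ijk}P^{ij}{}_{l} = \ka\, h_{kl}$ with $\ka = 2\kc^{2} > 0$; together with harmonicity $h^{ij}P_{ijk} = 0$ this is the full list of constraints on the symmetric tensor $P_{ijk}$.

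I would then choose the orthonormal coframe so that $e_{3}$ is a point where $P|_{S^{2}}$ attains its maximum value $m$; since $P$ is a nonzero odd homogeneous polynomial, $m > 0$. Criticality on the sphere gives $\nabla P(e_{3})\parallel e_{3}$, and the Euler relations $x^{j}P_{ij} = 2P_{i}$, $x^{i}P_{i} = 3P$ then force $P_{133} = P_{233} = 0$ and $P_{333} = 6m$; thus $\hess P(e_{3})$ is block diagonal with a $2\times 2$ block $S$ and lower entry $6m$, where $\tr S = -6m$ by harmonicity. After a rotation of the $\langle e_{1}, e_{2}\rangle$-plane diagonalizing $S$ I may assume $P_{113} = \alpha$, $P_{223} = \beta$, $P_{123} = 0$ with $\alpha + \beta = -6m$. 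Harmonicity also yields $P_{122} = -P_{111}$ and $P_{222} = -P_{112}$, so that $P$ is determined by the four numbers $P_{111}, P_{112}, \alpha, m$.

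Imposing $P_{ijk}P^{ij}{}_{l} = \ka\, h_{kl}$ componentwise: the $(1,1)$ and $(2,2)$ components give $\alpha^{2} = \beta^{2}$, and since $m \neq 0$ rules out $\alpha = -\beta$, this forces $\alpha = \beta = -3m$; the $(1,3)$ and $(2,3)$ components then hold automatically, the $(3,3)$ component gives $\ka = 54 m^{2}$, and the $(1,1)$ component gives $P_{111}^{2} + P_{112}^{2} = 18 m^{2}$. Because $S = -3m\,\Id$ is now scalar, the whole rotation group of the $\langle e_{1}, e_{2}\rangle$-plane still fixes all data determined so far, and $(P_{111}, P_{112})$ appear as the coefficients of $\re(z^{3})$ and $\im(z^{3})$ with $z = x_{1} + \j x_{2}$ (compare Example \ref{2dharpolyexample}), so a suitable such rotation reduces to $P_{112} = 0$ and $P_{111} = 3\sqrt{2}\,m$. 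This leaves the normal form $6P = 3m\bigl(\sqrt{2}\,(x_{1}^{3} - 3x_{1}x_{2}^{2}) + 2x_{3}^{3} - 3(x_{1}^{2}+x_{2}^{2})x_{3}\bigr)$.

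The final step, which I expect to be the main obstacle, is to recognize this normal form not merely as reducible but as a product of \emph{orthonormal} linear forms. For this I would verify the factorization $\sqrt{2}\,(x_{1}^{3} - 3x_{1}x_{2}^{2}) + 2x_{3}^{3} - 3(x_{1}^{2}+x_{2}^{2})x_{3} = 6\sqrt{3}\,\ell_{1}\ell_{2}\ell_{3}$, where $\ell_{k} = \tfrac{1}{\sqrt{3}}x_{3} + \sqrt{2/3}\,\bigl(\cos\tfrac{2\pi k}{3}\,x_{1} + \sin\tfrac{2\pi k}{3}\,x_{2}\bigr)$ for $k = 1,2,3$: one checks $\langle \ell_{j}, \ell_{k}\rangle = \delta_{jk}$ at once, and the product is evaluated from the elementary identity $\prod_{k=1}^{3}\bigl(p + q\,\re(\omega^{k}z)\bigr) = p^{3} - \tfrac{3}{4}q^{2}|z|^{2}p + \tfrac{1}{4}q^{3}\re(z^{3})$ with $\omega = e^{2\pi \j/3}$, applied to $p = x_{3}/\sqrt{3}$ and $q = \sqrt{2/3}$. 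Hence $P = 3\sqrt{3}\,m\,\ell_{1}\ell_{2}\ell_{3}$ lies in the $O(3)$-orbit of $\kc x_{1}x_{2}x_{3}$ with $\kc = 3\sqrt{3}\,m = \sqrt{\ka/2}$, which is \eqref{dth3}. An alternative to the factorization is to write down the rotation carrying $(1,1,1)/\sqrt{3}$ — the maximizing direction of $x_{1}x_{2}x_{3}$ on $S^{2}$ — to $e_{3}$ and to check that it conjugates $\kc x_{1}x_{2}x_{3}$ into the normal form; a secondary point requiring care throughout is the sign and positivity bookkeeping securing $m > 0$ and excluding the spurious branch $\alpha = -\beta$.
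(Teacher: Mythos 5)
Your proposal is correct, and the structure is essentially the same as the paper's: normalize by placing the spherical maximum of $P$ at $e_{3}$, diagonalize the residual $2\times 2$ Hessian block, solve the resulting coefficient equations to force the block to be scalar $-3m\,\Id$, and then rotate the $(x_{1},x_{2})$-plane to kill one remaining coefficient. The paper accomplishes the normalization by invoking its general-$n$ preparation lemma (Lemma \ref{preparationlemma}, which packages exactly the maximum-on-the-sphere argument you carry out by hand) and then solves the reduced system \eqref{epreduced} for the single parameter $\la = -1/2$, whereas you work directly on the components $P_{ijk}$; both routes land on the same normal form up to normalization. For the final identification, the paper exhibits an explicit orthogonal factorization \eqref{poly2} of the normal form — precisely the rotation carrying the maximizing direction of $x_{1}x_{2}x_{3}$ to $e_{3}$, which is the alternative you mention — while you instead verify the factorization via the elementary identity $\prod_{k=1}^{3}\bigl(p + q\,\re(\omega^{k}z)\bigr) = p^{3} - \tfrac{3}{4}q^{2}|z|^{2}p + \tfrac{1}{4}q^{3}\re(z^{3})$; both computations are correct. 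Your self-contained version avoids the general lemma at the cost of a somewhat longer component-by-component computation, but is otherwise the same proof.
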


Given a few solutions of \eqref{einsteinpolynomials}, it is straightforward to construct new solutions of \eqref{einsteinpolynomials} by taking direct sums. 
The direct sum of Euclidean vector spaces $(\alg_{1}, h_{1})$ and $(\alg_{2}, h_{2})$ is the vector space direct sum $\alg_{1}\oplus \alg_{2}$ equipped with the direct sum metric $h_{1}\oplus h_{2}$ defined by $(h_{1}\oplus h_{2})(a_{1} + a_{2}, b_{1} + b_{2}) = h_{1}(a_{1}, b_{1})h_{2}(a_{2}, b_{2})$ for $a_{i}, b_{i} \in \alg_{i}$, $i = 1, 2$. The direct sum, $P_{1}\oplus P_{2} \in \pol^{k}(\alg_{1}\oplus \alg_{2})$ of $P_{i} \in \pol^{k}(\alg_{i})$, $i = 1, 2$, is defined by $(P_{1} \oplus P_{2})(a_{1} + a_{2}) = P_{1}(a_{1}) + P_{2}(a_{2})$ for $a_{i} \in \alg_{i}$, $i = 1, 2$. It follows that $(\hess (P_{1}\oplus P_{2})) = (\hess P_{1})\oplus (\hess P_{2})$ as elements of $S^{2}(\alg_{1}\oplus \alg_{2})$, so that $(P_{1}\oplus P_{2})$ solves \eqref{einsteinpolynomials} with constant $\ka$ if $P_{i} \in \pol^{k}(\alg_{i})$, $i = 1, 2$, solve \eqref{einsteinpolynomials} with constant $\ka$. (Note that given two solutions of \eqref{einsteinpolynomials}, the equality of the corresponding constants can always be arranged by rescaling $h_{1}$ or $h_{2}$.)

\begin{example}\label{splitexample}
If $P \in \pol^{3}(\rea^{p})$ and $Q \in \pol^{3}(\rea^{q})$ solve \eqref{einsteinpolynomials} for the same constant $\ka$, then $P \circ \pi_{p} + Q \circ \pi_{q} \in \pol^{3}(\rea^{p+q})$, in which $\pi_{p}$ and $\pi_{q}$ are the projections from $\rea^{p+q}= \rea^{p}\oplus \rea^{q}$ onto $\rea^{p}$ and $\rea^{q}$, solves \eqref{einsteinpolynomials}. Since any harmonic $P \in \pol^{3}(\rea^{2})$ satisfies \eqref{einsteinpolynomials}, there are solutions to \eqref{einsteinpolynomials} on $\rea^{2n}$ for any $n \geq 2$. Concretely, by Example \ref{2dharpoly} and Theorem \ref{3dpolytheorem}, for $p, q \geq 0$ such that $2p + 3q > 0$,
\begin{align}
P(x) =  \tfrac{1}{6}\sum_{i = 1}^{p}(x_{2i-1}^{3} - 3x_{2i-1}x_{2i}^{2}) + \sum_{j = 1}^{q}x_{2p + 3j-1}x_{2p + 3j-2}x_{2p + 3j} 
\end{align}
solves \eqref{einsteinpolynomials} on $\rea^{2p + 3q}$ with $\ka = 2$. 
See also Example \ref{lanminusoneexample} where, as a special case of Lemma \ref{preparationlemma}, an $(n+2)$-dimensional solution is obtained from an $n$-dimensional solution in a similar way.
\end{example}

A polynomial $P\in \pol^{k}(\alg)$ is \textit{linearly decomposable} if there is a nontrivial linear direct sum $\alg = \alg_{1}\oplus \alg_{2}$ such that $P = (P\circ\pi_{1}\oplus P\circ\pi_{2})$, where $\pi_{i}$ is the projection onto $\alg_{i}$ along its complement. Equivalently, if $x_{i} = \pi_{i}(x)$ is the component of $x$ in $\alg_{i}$, $P(x_{1} + x_{2}) = P(x_{1}) + P(x_{2})$. In this case, there is a unique polynomial $P_{i} \in \pol^{k}(\alg_{i})$ such that $P_{i}\circ \pi_{i} = P\circ \pi_{i}$. A polynomial $P \in \pol^{k}(\alg)$ is \textit{indecomposable} if it is not decomposable. 

If $\alg$ is equipped with a metric, $h$, then $P\in \pol^{k}(\alg)$ is \textit{$h$-orthogonally decomposable} if $P$ decomposes along a nontrivial direct sum $\alg = \alg_{1} \oplus \alg_{2}$ into $h$-orthogonal subspaces $\alg_{1}, \alg_{2} \subset \alg$. In this case $P$ \textit{decomposes $h$-orthogonally along the $h$-orthogonal subspaces  $\alg_{i}$ or along the $h$-orthogonal direct sum $\alg = \alg_{1} \oplus \alg_{2}$}. A polynomial $P \in \pol^{k}(\alg)$ is \textit{$h$-orthogonally indecomposable} if it is not $h$-orthogonally decomposable. 

\begin{example}\label{2ddecomposableexample}
The polynomial $P(x, y) = xy$ on $\rea^{2}$ is Euclidean orthogonally decomposable, along the diagonal $\alg_{1} = \{(x, x) \in \rea^{2}$ and the antidiagonal $\alg_{2} = \{(x, -x) \in \rea^{2}\}$, for 
\begin{align}
P(x, y) = xy = \tfrac{1}{4}(x + y)^{2} - \tfrac{1}{4}(x - y)^{2} = P \circ \pi_{1}(x, y) + P\circ \pi_{2}(x, y).
\end{align}
The harmonic polynomial $P(u, v) = u^{3} - 3uv^{2}$ is not Euclidean orthogonally decomposable. Were it, the decomposition would be by orthogonal one-dimensional subspaces, and there would be a rotation $g \in O(2)$ such that $(g \cdot P)(x, y)$ had the form $ax^{3} + by^{3}$ for some nonzero constants $a$ and $b$. However $P$ is harmonic, so $g \cdot P$ would be as well, which can be only if $a = 0 = b$.

Linear decomposability depends on the base field. For example, although this will not be proved here, $u^{3} - 3uv^{2}$ is not linearly decomposable over $\rea$, although it is linearly decomposable over $\com$, for $u^{3} - 3uv^{2} = \tfrac{1}{2}( u + \j v)^{3} + \tfrac{1}{2}(u - \j v)^{3}$.
\end{example}
\begin{example}\label{poly3example}
It may not be obvious if an explicitly given polynomial is orthogonally decomposable.
For example, the imaginary part of $\tfrac{1}{2}(x_{3} + \j x_{1})^{2}(x_{2} + \j x_{4}) - \tfrac{1}{6}(x_{2} + \j x_{4})^{3}$ is
\begin{align}
\label{poly3} 
\begin{split}
P(x) &= \tfrac{1}{6}x_{4}^{3} - \tfrac{1}{2}x_{4}(x_{1}^{2} + x_{2}^{2} - x_{3}^{2}) + x_{1}x_{2}x_{3}
\end{split}
\end{align}
and solves \eqref{einsteinpolynomials} with $\ka = 4$. It is equivalent modulo $O(4)$ to $\tfrac{2\sqrt{2}}{12}\left(x_{1}^{3} - 3x_{1}x_{2}^{2} + x_{3}^{3} - 3x_{3}x_{4}^{2} \right)$, for
\begin{align}
\label{poly3decompositiona} 
\begin{split}
P(x) &= \tfrac{1}{6}x_{4}^{3} - \tfrac{1}{2}x_{4}(x_{1}^{2} + x_{2}^{2} - x_{3}^{2}) + x_{1}x_{2}x_{3}\\
& = \tfrac{1}{6}\left(  (x_{4}^{3} - 3x_{4}x_{1}^{2}) + (x_{4}^{3} - 3x_{4}x_{2}^{2}) - (x_{4}^{3} - 3x_{4}x_{3}^{2}) + 6x_{1}x_{2}x_{3}\right)\\
& = \tfrac{2\sqrt{2}}{12}\left(u_{1}^{3} - 3u_{1}u_{2}^{2} + v_{1}^{3} - 3v_{1}v_{2}^{2} \right),
\end{split}
\end{align}
where
\begin{align}\label{poly3decompositionb}
\begin{split}
\sqrt{2}u_{1} & = -\tfrac{\sqrt{3}}{2}x_{1} + \tfrac{\sqrt{3}}{2}x_{2} - \tfrac{1}{2}x_{3} - \tfrac{1}{2}x_{4},\qquad
\sqrt{2}u_{2}  = -\tfrac{1}{2}x_{1} + \tfrac{1}{2}x_{2} + \tfrac{\sqrt{3}}{2}x_{3} + \tfrac{\sqrt{3}}{2}x_{4},\\
\sqrt{2}v_{1} & = -\tfrac{\sqrt{3}}{2}x_{1} - \tfrac{\sqrt{3}}{2}x_{2} + \tfrac{1}{2}x_{3} - \tfrac{1}{2}x_{4},\qquad
\sqrt{2}v_{2}  = -\tfrac{1}{2}x_{1} - \tfrac{1}{2}x_{2}-  \tfrac{\sqrt{3}}{2}x_{3} + \tfrac{\sqrt{3}}{2}x_{4}.
\end{split}
\end{align}
The decomposition \eqref{poly3decompositiona}-\eqref{poly3decompositionb} shows that \eqref{poly3} is orthogonally decomposable as it exhibits this polynomial as a sum of two variable polynomials defined on orthogonal subspaces. 
\end{example}
Solutions of \eqref{einsteinpolynomials} that are orthogonally decomposable as in Examples \ref{splitexample} and \ref{poly3example} are less interesting than orthogonally indecomposable solutions as they are obtained by combining solutions in lower dimensions. 
It is of primary interest to classify orbits of solutions of \eqref{einsteinpolynomials} that are orthogonally indecomposable. 

For $n \leq 4$, the equations \eqref{einsteinpolynomials} can be solved completely via direct elementary computations, and this is done in section \ref{directsolutionsection}. By Example \eqref{2dharpolyexample} and Theorem \ref{3dpolytheorem}, in dimensions $2$ and $3$ and Riemannian signature there is a unique solution up to conformal equivalence. By Example \ref{2ddecomposableexample} and the $n = 3$ case of Theorem \ref{ealgpolynomialtheorem} these solutions are orthogonally indecomposable. 

 The $4$-dimensional case requires a more involved analysis. The first dimension in which there is more than one orbit of solutions is $n = 4$. in this case there are exactly two solutions, up to conformal equivalence. One of the orbits is orthogonally indecomposable, and the other comprises the direct sums of $2$-dimensional solutions.
Theorem \ref{4dpolytheorem} is proved in section \ref{lowdpolysection}.
\begin{theorem}\label{4dpolytheorem}
If $P \in \pol^{3}(\rea^{4})$ solves \eqref{einsteinpolynomials} for a Euclidean metric with constant $\ka > 0$, then $P$ is $O(4)$-equivalent to exactly one of the polynomials 
\begin{align}
\label{minusonethird0}
\begin{split}
 \tfrac{\sqrt{3\ka}}{12}&\left(-\tfrac{1}{3}(3x_{4}x_{1}^{2} - x_{4}^{3}) -\tfrac{1}{3}(3x_{4}x_{2}^{2} - x_{4}^{3}) -\tfrac{1}{3}(3x_{4}x_{3}^{2} - x_{4}^{3}) + 2\sqrt{5}x_{1}x_{2}x_{3}\right)\\
& =  \tfrac{\sqrt{3\ka}}{12}\left(x_{4}^{3} - x_{4}(x_{1}^{2} + x_{2}^{2} + x_{3}^{2}) + 2\sqrt{5}x_{1}x_{2}x_{3}\right). 
\end{split}\\
\label{lazero}
\begin{split}
&  \tfrac{\sqrt{2\ka}}{12}\left(x_{4}^{3} - 3 x_{4}x_{3}^{2} +3x_{1}^{2}x_{2} - x_{2}^{3} \right),
\end{split}
\end{align}
which are not orthogonally equivalent.
\end{theorem}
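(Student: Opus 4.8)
The plan is to reduce $P$ to a normal form adapted to a maximum of its restriction to the unit sphere, to impose the second equation of \eqref{einsteinpolynomials} in that form, and then to run a short case analysis.

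\emph{Reduction to normal form.} A nonzero homogeneous cubic is an odd function, so $\la := 3\max_{S^{3}}P$ is positive; choosing a maximizer and acting by $O(4)$ one may assume it is $e_{4}$. Writing $x = x_{4}e_{4} + y$ with $y$ in the span of $e_{1},e_{2},e_{3}$, the first-order condition $\nabla P(e_{4}) = \la e_{4}$ forces the $x_{4}^{2}$-coefficient of $P$ (a linear form in $y$) to vanish identically, so $P = \tfrac{\la}{3}x_{4}^{3} + x_{4}a_{1}(y) + a_{0}(y)$ for a quadratic form $a_{1}$ and a cubic form $a_{0}$ on $\rea^{3}$. Diagonalizing $a_{1} = \sum_{i}\be_{i}y_{i}^{2}$ by a rotation of $y$ that fixes $e_{4}$, the second-order maximality condition (negative semidefiniteness of the Hessian of $P|_{S^{3}}$ at $e_{4}$) reads $\be_{i}\le\la/2$, and $\lap_{h}P=0$ becomes $\sum_{i}\be_{i}=-\la$ together with $\lap_{\rea^{3}}a_{0}=0$, i.e. $a_{0}$ is a harmonic cubic on $\rea^{3}$.

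\emph{The second equation, and the case analysis.} Expanding $|\hess P|^{2}_{h}$ in this form, the equation $|\hess P|^{2}_{h} = \ka|x|^{2}$ is equivalent to the three conditions $\ka = 4\big(\la^{2}+\sum_{i}\be_{i}^{2}\big)$; the vanishing of the cross term (linear in $y$, multiplied by $x_{4}$), which says $\sum_{i}\be_{i}(a_{0})_{iik}=0$ for all $k$; and $|\hess a_{0}|^{2}_{h}=\sum_{i}(\ka-8\be_{i}^{2})y_{i}^{2}$. If $\be_{1}=\be_{2}=\be_{3}=-\la/3$, the last condition reads $|\hess a_{0}|^{2}_{h} = \tfrac{40}{9}\la^{2}|y|^{2}$; then $a_{0}\neq 0$, so Theorem \ref{3dpolytheorem} puts $a_{0}$ in the $O(3)$-orbit of a positive multiple of $y_{1}y_{2}y_{3}$, and since here the residual symmetry is all of $O(3)$ one may take $a_{0}$ equal to it, recovering \eqref{minusonethird0}. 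If the $\be_{i}$ are not all equal, harmonicity and the second condition force the slices $((a_{0})_{11k},(a_{0})_{22k},(a_{0})_{33k})$ to be proportional to the fixed vector $w=(\be_{2}-\be_{3},\be_{3}-\be_{1},\be_{1}-\be_{2})$, so $a_{0}$ depends only on three scalars $t_{1},t_{2},t_{3}$ and the one free component $s=(a_{0})_{123}$.

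\emph{Completing the non-equal case and inequivalence.} Forming the Gram matrix $M_{k\ell}=\sum_{ij}(a_{0})_{ijk}(a_{0})_{ij\ell}$ and using $\sum_{i}w_{i}=0$, the requirement that $M$ be diagonal (part of the third condition) reduces to $w_{3}(t_{1}t_{2}w_{3}-st_{3})=0$ and its two cyclic analogues; were no $w_{i}$ zero these would force $w=0$, so after relabeling $\be_{1}=\be_{2}\neq\be_{3}$. Using the residual $O(2)$ on $(y_{1},y_{2})$ to set $s=0$, the remaining equations give $t_{1}t_{3}=t_{2}t_{3}=0$. The branch $t_{1}=t_{2}=0$ makes $a_{0}$ proportional to $y_{3}(y_{1}^{2}-y_{2}^{2})$, whence all $\ka-8\be_{i}^{2}$ coincide; with $\be_{1}=\be_{2}$ and not all $\be_{i}$ equal this forces $\be_{3}=\la$, contradicting $\be_{3}\le\la/2$. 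The branch $t_{3}=0$ makes $a_{0}$ a nonzero harmonic cubic in $(y_{1},y_{2})$ alone, so by Example \ref{2dharpolyexample} $|\hess a_{0}|^{2}_{h}$ is a multiple of $y_{1}^{2}+y_{2}^{2}$; the third condition then gives $\ka=8\be_{3}^{2}$, and substituting into the first condition together with $2\be_{1}+\be_{3}=-\la$ and $\be_{1}\le\la/2$ pins down $\be_{1}=\be_{2}=0$, $\be_{3}=-\la$, after which a rotation of $(y_{1},y_{2})$ produces \eqref{lazero}. Finally, $\la=3\max_{S^{3}}P$ and $\ka$ are $O(4)$-invariants, and $\ka/\la^{2}$ equals $16/3$ for \eqref{minusonethird0} and $8$ for \eqref{lazero} (equivalently, \eqref{lazero} is an orthogonal sum of two-dimensional solutions and \eqref{minusonethird0} is orthogonally indecomposable), so the two are inequivalent. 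I expect the delicate point to be exactly this non-equal case: tracking the residual symmetry at each step, and using the maximality bounds $\be_{i}\le\la/2$ to discard the branches that merely reproduce a solution viewed from a non-maximal critical point, so that precisely the two listed normal forms remain.
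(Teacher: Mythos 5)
Your reduction of $P$ to the form $\tfrac{\la}{3}x_{4}^{3} + x_{4}\sum_{i}\be_{i}y_{i}^{2} + a_{0}(y)$ with $\sum_{i}\be_{i}=-\la$, $\be_{i}\le\la/2$, $a_{0}$ harmonic, and the three residual conditions ($\ka=4(\la^{2}+\sum_{i}\be_{i}^{2})$, the cross-term, and $|\hess a_{0}|^{2}=\sum_{i}(\ka-8\be_{i}^{2})y_{i}^{2}$) is the paper's Lemma~\ref{preparationlemma} in a rescaled normalization (your $\la,\be_{i}$ are the paper's $3c,3c\la_{i}$), and the branches $\be_{1}=\be_{2}=\be_{3}$ and $\be_{1}=\be_{2}\neq\be_{3}$ are treated as the paper treats them; $\ka/\la^{2}$ is (up to a constant factor) the paper's invariant $\mkc$. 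So the plan is sound and is essentially the paper's own approach.

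There is, however, a genuine gap where you dispose of the sub-case that $\be_{1},\be_{2},\be_{3}$ are pairwise distinct. You assert that the off-diagonal Gram conditions $w_{i}(t_{j}t_{k}w_{i}-st_{i})=0$ (cyclic) ``would force $w=0$'' when all $w_{i}\neq 0$. They do not: $s=0$, $t_{1}=t_{2}=0$ with $t_{3}$ free satisfies all three identically for any $w$ with $\sum_{i}w_{i}=0$ and every $w_{i}\neq 0$. Eliminating the distinct case in fact requires the diagonal conditions $\ka-8\be_{k}^{2}=M_{kk}$ together with $\sum_{i}\be_{i}=-\la$ and $\be_{i}\le\la/2$. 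Concretely: in the surviving sub-case $s=0$, $t_{1}=t_{2}=0$, $t_{3}\neq 0$ one has $|\hess a_{0}|^{2}=t_{3}^{2}\bigl(2w_{1}^{2}y_{1}^{2}+2w_{2}^{2}y_{2}^{2}+|w|^{2}y_{3}^{2}\bigr)$, and subtracting pairs of the diagonal identities and dividing by the nonzero differences $\be_{i}-\be_{j}$ gives $4(\be_{1}+\be_{3})=t_{3}^{2}(\be_{1}-\be_{2})$ and $4(\be_{2}+\be_{3})=-t_{3}^{2}(\be_{1}-\be_{2})$, whose sum forces $\be_{3}=\la>\la/2$; while the sub-case $s\neq 0$ forces either $w_{i}w_{j}>0$ for all $i\neq j$ (incompatible with $\sum_{i}w_{i}=0$) or, when all $t_{i}=0$, the quantity $\ka-8\be_{k}^{2}$ independent of $k$ (incompatible with distinctness of the $\be_{i}$ and the constraints on them). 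The bulk of the paper's proof of Theorem~\ref{4dpolytheorem} is precisely this elimination; your proposal as written simply skips it.
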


\begin{corollary}\label{4dindecomposablecorollary}
An orthogonally indecomposable $P \in \pol^{3}(\rea^{4})$ solves \eqref{einsteinpolynomials} for a Euclidean metric with constant $\ka > 0$ if and only if $P$ is $O(4)$-equivalent to 
\eqref{minusonethird0}. The automorphism group of $P$ is the symmetric group $S_{5}$.
\end{corollary}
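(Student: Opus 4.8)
The plan is to obtain the classification from Theorem~\ref{4dpolytheorem} and then to identify the orthogonally indecomposable orbit with that of the Clebsch diagonal cubic surface, whose symmetry group is classical.

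\emph{Classification.} By Theorem~\ref{4dpolytheorem} every solution is $O(4)$-equivalent to exactly one of \eqref{minusonethird0} or \eqref{lazero}, so it suffices to show that \eqref{lazero} is orthogonally decomposable and \eqref{minusonethird0} is not. The former is immediate: \eqref{lazero} equals $\tfrac{\sqrt{2\ka}}{12}\bigl((x_{4}^{3}-3x_{4}x_{3}^{2})+(3x_{1}^{2}x_{2}-x_{2}^{3})\bigr)$, a sum of two-variable harmonic cubics on the orthogonal planes $\spn\{e_{3},e_{4}\}$ and $\spn\{e_{1},e_{2}\}$. For the latter, suppose \eqref{minusonethird0} decomposed orthogonally as $P=P_{1}\circ\pi_{1}+P_{2}\circ\pi_{2}$ along $\rea^{4}=\alg_{1}\oplus\alg_{2}$ with $\alg_{1}\perp\alg_{2}$; then $\dim\alg_{1}\in\{1,2\}$. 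If $\dim\alg_{1}=1$ then $P_{1}(t)=ct^{3}$ and $\lap_{h}P=6ct+\lap_{h}P_{2}$ is a sum of functions of disjoint variables, forcing $c=0$; then $P$ is independent of $\alg_{1}$, so $|\hess P|_{h}^{2}$ is a function of $\pi_{2}(x)$ alone and cannot equal $\ka|x|_{h}^{2}$ with $\ka\neq0$. If $\dim\alg_{1}=\dim\alg_{2}=2$, matching Laplacians makes $P_{1},P_{2}$ harmonic cubics in two variables, so by Example~\ref{2dharpolyexample} there are orthonormal coordinates with $P_{i}=\tfrac{r_{i}}{6}(s_{1}^{3}-3s_{1}s_{2}^{2})$ and $|\hess P_{i}|_{h}^{2}=2r_{i}^{2}|\pi_{i}(x)|_{h}^{2}$; matching $\ka|x|_{h}^{2}$ forces $r_{1}=r_{2}=\sqrt{\ka/2}$, so $P$ is $O(4)$-equivalent to $\tfrac{1}{6}\sqrt{\ka/2}\,(s_{1}^{3}-3s_{1}s_{2}^{2}+t_{1}^{3}-3t_{1}t_{2}^{2})$, which is identified with \eqref{lazero} by a signed permutation of coordinates (cf.\ Example~\ref{poly3example}), contradicting the $O(4)$-inequivalence asserted in Theorem~\ref{4dpolytheorem}. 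Hence \eqref{minusonethird0} is orthogonally indecomposable, and by Theorem~\ref{4dpolytheorem} its $O(4)$-orbit is the unique orbit of orthogonally indecomposable solutions.

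\emph{Automorphism group.} Let $H=\{y\in\rea^{5}:\textstyle\sum_{i}y_{i}=0\}$ with the induced Euclidean metric and $Q=\sum_{i=1}^{5}y_{i}^{3}\big|_{H}$. The ambient Laplacian and ambient second normal derivative of $\sum_{i}y_{i}^{3}$ are both multiples of $\sum_{i}y_{i}$, so $Q$ is harmonic on $H$; writing the Hessian operator of $Q$ as $\Pi\,L\,\Pi$, with $\Pi$ the orthogonal projection onto $H$ and $L$ the diagonal operator with entries $6y_{i}$, a short trace computation gives $|\hess Q|_{h}^{2}=\tfrac{108}{5}|y|_{h}^{2}$, so $Q$ solves \eqref{einsteinpolynomials}. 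One checks, by an explicit orthogonal change of coordinates, that $Q$ is a positive multiple of an $O(4)$-image of \eqref{minusonethird0}; hence $\Aut(P)\cong\Aut(Q)$. The group $S_{5}$ acts on $\rea^{5}$ by coordinate permutations orthogonally, preserving $H$, its metric, and $\sum_{i}y_{i}^{3}$, which embeds $S_{5}\hookrightarrow\Aut(Q)$ through the standard $4$-dimensional representation. For the reverse inclusion, complexify: the zero locus of $\sum_{i}y_{i}^{3}$ in the projective hyperplane $\{\sum_{i}y_{i}=0\}\subset\proj^{4}$ is smooth (a singular point would have all $z_{i}^{2}$ equal, impossible with $\sum_{i}z_{i}=0$ and five coordinates) and is the Clebsch diagonal cubic surface, whose group of projective automorphisms is classically $S_{5}$; so any $g\in GL(H)$ fixing $Q$ induces an element of $S_{5}$ in $PGL(H)$, and the kernel of $\Aut(Q)\to PGL(H)$ consists of scalars $\mu\,\Id$ with $\mu^{3}=1$, hence is trivial over $\rea$. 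Therefore $\Aut(Q)=S_{5}$, and $\Aut(P)\cong S_{5}$.

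\emph{Where the difficulty lies.} The classification half is routine once Theorem~\ref{4dpolytheorem} is available. The automorphism statement carries the content, and two steps there are not formal. First, identifying \eqref{minusonethird0} with the Clebsch cubic form — either the explicit real orthogonal change of variables, or an indirect argument through indecomposability. Second, and this is the real obstacle, the bound $\Aut(P)\le S_{5}$: the manifest symmetries of \eqref{minusonethird0} — the permutations of $x_{1},x_{2},x_{3}$ together with the even coordinate sign changes — generate only a copy of $S_{4}$, so the extra coset enlarging $S_{4}$ to $S_{5}$ is not visibly linear, and the upper bound must be imported, either from the known automorphism group of the Clebsch surface, or — after first checking $\Aut(P)\subseteq O(4)$, for instance via uniqueness up to scale of the Euclidean metric compatible with an indecomposable solution — from the faithful action of $\Aut(P)$ on the five global maxima of $P|_{S^{3}}$, on which $S_{5}$ already acts as the full symmetric group.
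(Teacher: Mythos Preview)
Your argument is correct but proceeds along a genuinely different route from the paper's. For orthogonal indecomposability of \eqref{minusonethird0}, the paper invokes Lemma~\ref{decomposablecriticallemma} and computes all fifteen critical lines of $P$ explicitly by solving the Lagrange system, then (implicitly) verifies that no two are orthogonal; you instead argue structurally on $\dim\alg_{1}\in\{1,2\}$, reducing the $2+2$ case to \eqref{lazero} via Example~\ref{2dharpolyexample} and invoking the inequivalence clause of Theorem~\ref{4dpolytheorem}. For the automorphism group, the paper reads off from its critical-line computation that the five weight-$1$ critical lines form the vertices of a regular simplex (an $A_{4}$ root system), and that the reflections in the hyperplanes orthogonal to their pairwise differences generate an $S_{5}$ acting faithfully on these five lines; since $\Aut(P)$ permutes critical lines preserving weights, this is the full group. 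You instead realize $P$ as the Clebsch diagonal cubic $\sum y_{i}^{3}$ restricted to $\{\sum y_{i}=0\}$ and import the classical identification of the projective automorphism group of the Clebsch surface with $S_{5}$.

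Each approach has its tradeoff. The paper's is entirely self-contained and yields the full critical-line structure as a byproduct, at the cost of a page of explicit equation-solving. Yours is shorter and more conceptual, but leans on an external classical fact and leaves the orthogonal identification of $Q$ with \eqref{minusonethird0} as an asserted computation; note that this last step can be made noncomputational: once you know $Q$ solves \eqref{einsteinpolynomials} and admits an irreducible $S_{5}$-action, $Q$ is orthogonally indecomposable (an orthogonal decomposition, being a finest decomposition into $h$-orthogonal summands, is permuted by $\Aut(Q)$, and irreducibility forces triviality), so Theorem~\ref{4dpolytheorem} itself places $Q$ in the orbit of \eqref{minusonethird0}. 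Your closing remark about the faithful action on the five global maxima is in fact precisely the paper's mechanism for the upper bound $\Aut(P)\le S_{5}$, made systematic in Corollary~\ref{ndimexistencecorollary}.
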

As \eqref{lazero} is evidently orthogonally decomposable, Corollary \ref{4dindecomposablecorollary} follows from Theorem \ref{minusonethird0} once it is shown that \eqref{minusonethird0} is orthogonally indecomposable. Although a direct proof of the orthogonal indecomposability is given at the end of Section \ref{directsolutionsection}, it also follows from Theorem \ref{ealgpolynomialtheorem}.

Lemma \ref{recursivepolynomiallemma} shows that a solution of \eqref{einsteinpolynomials} in dimension $n$ determines a solution in dimension $n+1$. This is used to establish that orthogonally indecomposable solutions of \eqref{einsteinpolynomials} exist in all dimensions. A solution $P \in \pol^{3}(\alg)$ of \eqref{einsteinpolynomials} is \emph{conformally associative} if 
\begin{align}
2P_{l[i}\,^{p}P_{j]kp} = \tfrac{2\ka}{n-1}h_{k[i}h_{j]l}.
\end{align}
This notion is motivated and explained in more detail in Section \ref{associativitysection}.
\begin{theorem}\label{ealgpolynomialtheorem}
On a Euclidean vector space $(\alg, h)$ of dimension $n \geq 2$, there is a unique $CO(h)$ orbit $\orb{P_{n}}\subset\pol^{3}(\alg)$ comprising conformally associative solutions to \eqref{einsteinpolynomials} and represented by the polynomial
\begin{align}\label{simplicialpoly}
P_{n}(x) & = \tfrac{\sqrt{n(n+1)}}{6}\sum_{j = 2}^{n}\tfrac{1}{\sqrt{j(j+1)}}\sum_{ i = 1}^{j-1}(x_{j}^{3} - 3x_{j}x_{i}^{2}),
\end{align}
that solves \eqref{einsteinpolynomials} with constant $\ka = n(n-1)$.
Any $P \in \orb{P_{n}}$ is orthogonally indecomposable and its automorphism group is the symmetric group $S_{n+1}$ acting as reflections through the hyperplanes orthogonal to lines on which $P_{n}$ vanishes and permuting the critical points of the restriction of $P_{n}$ to the unit sphere.
\end{theorem}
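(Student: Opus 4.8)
The plan is to establish Theorem~\ref{ealgpolynomialtheorem} in three stages: first, verify directly that the explicit polynomial $P_{n}$ in \eqref{simplicialpoly} solves \eqref{einsteinpolynomials} with $\ka = n(n-1)$ and is conformally associative; second, prove the uniqueness of the $CO(h)$ orbit of conformally associative solutions; and third, analyze the automorphism group and orthogonal indecomposability. For the first stage, I would observe that $P_{n}$ is built by the iterated direct-sum-like construction suggested in Example~\ref{splitexample} and the recursive Lemma~\ref{recursivepolynomiallemma}: writing $Q_{j}(x) = \sum_{i=1}^{j-1}(x_{j}^{3} - 3x_{j}x_{i}^{2})$, each summand is (up to normalization) the result of adjoining a new coordinate $x_{j}$ to the span of $x_{1},\dots,x_{j-1}$, and the $1/\sqrt{j(j+1)}$ weights are exactly those making the Hessian norm come out constant. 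The cleanest check is that the associated cubic tensor $(P_{n})_{ijk}$ is, after a conformal rescaling, the structure tensor of the simplicial (or ``Cartan-type'') commutative algebra on the trace-free part of the permutation representation of $S_{n+1}$ — equivalently the regular representation of $S_{n+1}$ restricted to its standard $n$-dimensional irreducible summand — and that for this algebra the identity $P_{l[i}{}^{p}P_{j]kp} = \tfrac{\ka}{n-1}h_{k[i}h_{j]l}$ holds by a direct contraction. I expect this stage to be routine but calculation-heavy; the conformal associativity identity is the one to verify with care.

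For the uniqueness stage, the strategy is to exploit conformal associativity as an algebraic constraint. Given a conformally associative solution $P$, normalize via $CO(h)$ so that $\ka = n(n-1)$, and define a commutative product $\mlt$ on $\alg$ by $h(x \mlt y, z) = P_{ijk}x^{i}y^{j}z^{k}$ (up to a universal constant). The harmonic condition $\lap_{h}P = 0$ says this product is trace-free, $\tr(L_{x}) = 0$ where $L_{x}$ is multiplication by $x$; the norm condition $|\hess P|_{h}^{2} = \ka|x|_{h}^{2}$ says $\tr(L_{x}^{2}) = c|x|_{h}^{2}$ for a constant $c$; and the conformal associativity identity is precisely the statement that the commutator-type expression $[L_{x}, L_{y}]$ acts as a multiple of the corresponding element of $\so(h)$, i.e.\ $L_{x}L_{y} - L_{y}L_{x}$ is a skew derivation of a controlled form. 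Together these force $\mlt$ to satisfy a Jordan-type or ``metrised commutative algebra'' axiom, and I would argue — citing the structure theory referenced in the paper (the Nadirashvili--Tkachev--Vl{\u a}du{\c t} circle of ideas, or Bryant's work on second-order PDE) — that the only such algebra in each dimension, up to isomorphism preserving $h$, is the simplicial one above. The main obstacle I anticipate is exactly here: turning the tensor identity of conformal associativity into a clean algebraic characterization and then matching it to a uniqueness theorem for the relevant class of algebras; this may require either an explicit spectral analysis of the operators $L_{x}$ (showing the eigenvalues of $L_{e}$ for a generic unit $e$ are forced to be the specific rationals appearing in the simplicial algebra) or a reduction-by-idempotents argument.

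For the automorphism group, once uniqueness is known it suffices to compute $\Aut(P_{n})$ for the model \eqref{simplicialpoly}. The polynomial $P_{n}$ vanishes on a distinguished finite set of lines (the ``edges'' of the simplex — the lines spanned by differences $e_{a} - e_{b}$ of the $n+1$ vertices of a regular simplex centered at the origin in $\alg$), and the reflections through the hyperplanes orthogonal to these lines generate a copy of $S_{n+1}$ acting as the Weyl group of type $A_{n}$; each such reflection manifestly preserves $P_{n}$ because it permutes the vertices and $P_{n}$ is symmetric under vertex permutations. Conversely, any $g \in \Aut(P_{n}) \subset O(h)$ must permute the critical points of $P_{n}|_{S^{n-1}}$ and the zero-line configuration, and since this configuration is the root system $A_{n}$ (whose symmetry group is exactly $S_{n+1}$, the $A_{n}$ Coxeter group having trivial outer part for $n \geq 2$ apart from the $n=2$ diagram automorphism, which here is realized inside $S_{3}$ already), one concludes $\Aut(P_{n}) = S_{n+1}$. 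Orthogonal indecomposability then follows because the root system $A_{n}$ is irreducible, so no nontrivial orthogonal splitting $\alg = \alg_{1}\oplus\alg_{2}$ is compatible with the zero set of $P_{n}$; alternatively, as the paper notes, indecomposability is a formal consequence of conformal associativity together with $\ka \neq 0$, since a decomposition would force the trace-free structure constants to block-diagonalize in a way incompatible with the identity $P_{l[i}{}^{p}P_{j]kp} = \tfrac{\ka}{n-1}h_{k[i}h_{j]l}$ coupling the two blocks. I would present the indecomposability via this last short argument and relegate the detailed root-system bookkeeping for $\Aut(P_{n})$ to a lemma.
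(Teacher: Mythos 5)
There is a genuine gap in the uniqueness stage, and you correctly flag it as the anticipated obstacle, but the idea you are missing is simpler than the spectral or idempotent analysis you propose. The paper does not invoke structure theory of metrised commutative algebras or spectral data of the operators $L_{x}$. Instead, it passes to the \emph{affine extension} $\hat{P}(x,r) = \tfrac{1}{6}r^{3} + \tfrac{1}{2}r|x|^{2} + P(x)$ on $\alg \oplus \rea$ (algebraically, adjoining a unit to the underlying commutative product). Lemma~\ref{asshatlemma} shows $\ass(\hat{P})$ is $\ass(P)$ shifted by the Kulkarni--Nomizu-type term $h \varowedge h$, so Corollary~\ref{confassextcorollary} gives: after normalizing $\ka = n-1$, $P$ is conformally associative if and only if $\hat{P}$ is associative. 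But an \emph{associative} cubic polynomial is extremely rigid: the self-adjoint operators $L(x)$ all commute, hence are simultaneously orthogonally diagonalizable, and Lemma~\ref{asseqlemma} then forces $\hat{P}$ to be $\tfrac{\sqrt{n+1}}{6}\sum x_{i}^{3}$ up to $O(n+1)$. Lemma~\ref{confextautlemma} descends orthogonal equivalence from affine extensions to the originals. That chain (Theorem~\ref{confassequivalencetheorem}) is the entire content of the uniqueness claim, and it is elementary linear algebra rather than classification of algebras. Without that device your proof does not close, since the references you propose to cite do not contain a uniqueness theorem for precisely this class.

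On the other points you are in better shape, though your route diverges from the paper's. For existence and conformal associativity of $P_{n}$ the paper also proceeds by induction via the recursion in Lemma~\ref{recursivepolynomiallemma}, starting from $n=3$ (where conformal associativity is automatic because the Weyl-type tensor in dimension three vanishes) and propagating via the identity $\cass(Q_{n+1}) = c\,\cass(Q_{n})$; your sketch is compatible with this. For the automorphism group the paper runs an explicit induction using Lemma~\ref{zeroautomorphismlemma} to exhibit the reflections through $\hat{x}-\hat{y}$ for $\hat{x},\hat{y}\in\extreme(P_{n+1})$, and then uses the weights of critical lines from Corollary~\ref{ndimexistencecorollary}\eqref{simp8} to pin down the full group; your root-system argument points in the right direction but needs care, because $\zero(P_{n})$ is a hypersurface and not the finite set of $A_{n}$ root lines, and because $-\id$ preserves the root configuration but negates $P_{n}$ — so one must work with the configuration of critical lines together with their weights (and the sign of $P_{n}$ on them), not merely with the zero lines, to conclude $\Aut(P_{n})=S_{n+1}$ rather than $S_{n+1}\times\ztwo$. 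Your short argument for orthogonal indecomposability directly from the conformal associativity identity — evaluating $2P_{l[i}{}^{p}P_{j]kp}=\tfrac{2\ka}{n-1}h_{k[i}h_{j]l}$ with $i,l$ in one orthogonal summand and $j,k$ in the other, which makes the left side vanish while the right side does not when $\ka\neq 0$ — is correct and is actually cleaner than the paper's route via irreducibility of the $S_{n+1}$ representation; it is worth keeping.
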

The proof of Theorem \ref{ealgpolynomialtheorem} is given following the proof of Lemma \ref{recursivepolynomiallemma}. See in particular Corollary \ref{ndimexistencecorollary}. In dimensions $2$, $3$, and $4$, the orbits described in Theorem \ref{ealgpolynomialtheorem} are generated by constant multiples of the polynomials $\tfrac{1}{6}(x_{1}^{3} -3x_{1}x_{2}^{2})$, \eqref{basepoly}, and \eqref{minusonethird0}.

The polynomial $P_{n}$ of \eqref{simplicialpoly} (or any polynomial in the orbit $\orb{P_{n}}$) is called the \emph{simplicial polynomial} of dimension $n$. 

\begin{remark}
The polynomials $P_{n}$ of \eqref{simplicialpoly} appear in the proof of Corollary $22$ of \cite{Andrews-gauss} where they are given (modulo slight changes in normalizations) in the equivalent recursive form \eqref{pnrecursion}. They are used to construct examples of highly symmetric nonspherical homothetically contracting solutions of certain Gauss curvature flows. In \cite{Andrews-gauss} it is stated without proof that these polynomials have the symmetries of a regular simplex and this suggests that this construction was known classically, but I do not know a reference. Here this is proved together with many other interesting properties of the polynomials $P_{n}$ as part of Corollary \ref{ndimexistencecorollary}. 

An alternative construction of the polynomials $P_{n}$, that justifies calling them \emph{simplicial} and from which their $S_{n+1}$ invariance is obvious is given in Example \ref{simplicialframepolynomialexample}, using the machinery of equiangular tight frames discussed in Section \ref{framesection}. The uniqueness part of Theorem \ref{ealgpolynomialtheorem} provides the most effective way to check that different constructions of $P_{n}$ really do yield conformally equivalent polynomials, as in practice conformal associativity can be checked.
\end{remark}

\begin{corollary}[Corollary of Theorems \ref{3dpolytheorem} and \ref{4dpolytheorem}]\label{autocorollary}
If a solution $P \in \pol^{3}(\rea^{n})$ of \eqref{einsteinpolynomials} for a Euclidean metric and constant $\ka > 0$ has automorphism group of positive dimension, then $n \geq 5$.
\end{corollary}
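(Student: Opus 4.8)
The plan is to reduce to the classifications already obtained in dimensions $\le 4$, using the fact that the dimension of the automorphism group is an invariant of the $CO(h)$-orbit. Throughout, the \emph{automorphism group of $P$} should be read as the isometric automorphism group $\Aut(P) = \{k \in O(h) : k\cdot P = P\}$ appearing in Theorems \ref{3dpolytheorem}, \ref{4dpolytheorem}, and \ref{ealgpolynomialtheorem}; equivalently, since $P$ is homogeneous of degree $3$ and $\ka \neq 0$ forces the positive scaling factor of a $CO(h)$-element fixing $P$ to equal $1$, it is the stabilizer of $P$ in $CO(h)$. First I would record that $\Aut(g\cdot P) = g\,\Aut(P)\,g^{-1}$ for $g \in CO(h)$ and $\Aut(\la P) = \Aut(P)$ for $\la \in \reat$, so that $\dim\Aut(P)$ depends only on the orbit $\orb{P}$. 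It then suffices to exhibit, in each dimension $n$ with $n \le 4$, a representative of each orbit of solutions having $0$-dimensional (indeed finite) automorphism group.

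Next I would go through the cases. For $n = 1$ there is no nonzero harmonic cubic, so $\harpol^3_h(\rea) = 0$ and there is nothing to check. For $n = 2$, Example \ref{2dharpolyexample} states that every nonzero solution is $CO(2)$-equivalent to $x_1^3 - 3x_1x_2^2$, with automorphism group $S_3$. For $n = 3$, Theorem \ref{3dpolytheorem} states that every nonzero solution is $O(3)$-equivalent to $\kc x_1x_2x_3$; an orthogonal automorphism permutes the three coordinate hyperplanes $\{x_i = 0\}$ on which this polynomial vanishes, and the kernel of the induced homomorphism to $S_3$ consists of the diagonal sign changes fixing the monomial $x_1x_2x_3$, so $\Aut(\kc x_1x_2x_3)$ is finite. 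For $n = 4$, Theorem \ref{4dpolytheorem} states that every solution is $O(4)$-equivalent to \eqref{minusonethird0} or to \eqref{lazero}; the former has automorphism group $S_5$ by Corollary \ref{4dindecomposablecorollary}, and the latter is an orthogonal direct sum of two copies of a $2$-dimensional harmonic cubic, so any automorphism either stabilizes or interchanges the two defining planes and $\Aut$ of \eqref{lazero} is an extension of a subgroup of $\ztwo$ by $S_3 \times S_3$, hence finite. This yields $\dim\Aut(P) = 0$ for all $n \le 4$, which is the contrapositive of the assertion.

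The one point deserving care is the meaning of ``automorphism group'': with the full $GL(\alg)$-stabilizer the assertion fails already for $n = 3$, since the diagonal matrices of determinant $1$ fix $x_1x_2x_3$ and form a $2$-dimensional torus; this example also shows that the Euclidean metric for which $P$ solves \eqref{einsteinpolynomials} need not be determined by $P$ up to scale, so the subtlety cannot be avoided by appeal to canonicity of $h$. With the isometric automorphism group, as in Theorems \ref{ealgpolynomialtheorem}, \ref{3dpolytheorem}, and \ref{4dpolytheorem}, the remaining work is just bookkeeping over the finitely many normal forms produced by Example \ref{2dharpolyexample} and Theorems \ref{3dpolytheorem}--\ref{4dpolytheorem}, and there is no substantive obstacle.
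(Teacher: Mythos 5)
Your overall strategy coincides with what the paper intends: the corollary is stated as a ``Corollary of Theorems \ref{3dpolytheorem} and \ref{4dpolytheorem}'' precisely because the classification in dimensions $\leq 4$ reduces the claim to checking that each orbit has a finite automorphism group, and you carry out exactly that enumeration. Your observation about the meaning of $\Aut(P)$ (that it must be the $O(h)$-stabilizer, not the $GL(\alg)$-stabilizer, for the statement to be true) is correct and worth recording.

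The one place that needs shoring up is the finiteness of $\Aut$ for the decomposable $4$-dimensional solution \eqref{lazero}. You assert that ``any automorphism either stabilizes or interchanges the two defining planes,'' but for a general orthogonal direct sum $P = Q_1 \oplus Q_2$ this is not automatic: one must know that the decomposition into orthogonal summands is intrinsic to $P$ in a sense that automorphisms respect. It is in fact true here, because each $Q_i$ is orthogonally indecomposable (Example \ref{2ddecomposableexample}), and orthogonal indecomposability of a solution of \eqref{einsteinpolynomials} corresponds to simplicity of the associated commutative algebra, whose decomposition into simple ideals is unique — so automorphisms do permute the summands. Alternatively, and more in the spirit of the tools the paper sets up, one can invoke the remark preceding Lemma \ref{zeroautomorphismlemma}: $\Aut(P)$ is finite whenever $\critline(P)$ is finite, and $\critline$ of \eqref{lazero} is finite (it can be enumerated directly from the system $2dP(x) = \la h(x,\cdot)$, or read off from the product structure). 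Either repair closes the gap without changing the structure of your proof.
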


In section \ref{isoparametricsection} it is shown that the cubic isoparametric polynomials of Cartan solve \eqref{einsteinpolynomials}. These polynomials are defined for $n \in \{5, 9, 14, 26\}$ and their automorphism groups are Lie groups, so this shows that the result of Corollary \ref{autocorollary} is the best possible.  

A metric and a covariant cubic symmetric tensor determine the structure tensor of a commutative multiplication $\mlt$ on $\alg$. Its multiplication operators $L:\alg \to \eno(\alg)$ defined by $L(x)y = x\mlt y$ are determined by the Hessian of $P$, $L(x)_{i}\,^{j} = h^{jp}P(x)_{ip}$. For $P$ solving \eqref{einsteinpolynomials}, the $h$-harmonicity of $P$ is equivalent to $\tr L(x) = 0$ for all $x \in \alg$, while the condition on $\hess P$ and the complete symmetry of $P$ are equivalent to the nondegeneracy and invariance of the Killing type trace-form $\tau(x, y) = \tr L(x)L(y)$, where invariance means $\tau(x \mlt y, z) = \tau(x, y \mlt z)$. Since $CO(h)$-equivalence classes of solutions of \eqref{einsteinpolynomials} correspond bijectively to isomorphism classes of such algebras, the search for solutions of \eqref{einsteinpolynomials} can be profitably converted into a problem in pure algebra. The utility of this algebraic point of view has been emphasized in the study of some different but closely related problems by V.~G. Tkachev and his collaborators, for example \cite{Nadirashvili-Tkachev-Vladuts, Tkachev-generalization, Tkachev-correction, Tkachev-jordan, Tkachev-summary, Tkachev-universality}. For example, this correspondence was used by N. Nadirashvili, V.~G. Tkachev, and S. Vl\u{a}du\c{t} in \cite{Nadirashvili-Tkachev-Vladuts} to construct homogeneous and singular solutions of the equations for minimal cones and other geometrically motivated fully nonlinear elliptic partial differential equations of Hessian type. 

\begin{example}\label{parahurwitzexample}
The para-Hurwitz algebra is the complex numbers $\com$, viewed as a real Euclidean vector space, with the multiplication $z_{1} \mlt z_{2} = \bar{z}_{1}\bar{z}_{2}$. Writing $z = x_{1} + \j x_{2}$, the matrix of the multiplication operator $L_{\mlt}(z)$ with respect to the standard Euclidean orthonormal basis is 
\begin{align}
L_{\mlt}(z) = \begin{pmatrix}
x_{1} & - x_{2}\\-x_{2} & -x_{1}
\end{pmatrix}
\end{align}
so that $\tau_{\mlt}(z, z) = \tr L_{\mlt}(z)^{2} = 2|z|^{2}$ and the cubic polynomial $P(z)$ determined by $\mlt$ and the Euclidean metric is given by 
\begin{align}
6P(z) = \lb z\mlt z, z\ra = \tfrac{1}{2}(z^{3} + \bar{z}^{3}) = \re z^{3} = x_{1}^{3} - 3x_{1}x_{2}^{2}, 
\end{align}
which recovers the solution of \eqref{einsteinpolynomials} given in Example \ref{2dharpolyexample}.
\end{example}

\begin{example}
As will be explained in detail elsewhere, the commutative nonassociative algebra associated with the polynomial $P_{n}$ of \eqref{simplicialpoly} of Theorem \ref{ealgpolynomialtheorem} is isomorphic to that studied by K. Harada in \cite{Harada} where it is proved that its automorphism group is $S_{n+1}$ (this was proved independently by R. Griess; see the appendix to \cite{Dong-Griess}).  
\end{example}

As an example of the utility of the algebraic point of view, the observation that the cubic polynomial $P$ of $(\alg, \mlt, h)$ is orthogonally indecomposable if and only if $(\alg, \mlt)$ is simple as an algebra provides a criterion for checking orthogonal indecomposability of $P$ that is useful in practice. Similarly, critical points of $P$ on spheres correspond with idempotent and square-zero elements of the associated algebra; since such elements are permuted by automorphisms of the algebra, much information about the automorphisms of $P$ can be obtained from their study. 

Although the algebraic perspective provides a lot of useful information about \eqref{einsteinpolynomials}, in order to keep the scope of the present article manageable, its applications to \eqref{einsteinpolynomials} will be treated elsewhere as part of a collaboration with V.~G. Tkachev. The only place in the paper where the algebraic correspondence is used is in the proof of claim \eqref{simp8} of Corollary \ref{ndimexistencecorollary}, where it is used to save space by using results from \cite{Harada}.

\begin{example}\label{decomposableexample}
The polynomial \eqref{d2poly2} 
\begin{align}
\label{d2poly2}
\begin{split}
P(x) & = x_{1}x_{2}x_{3} + x_{1}x_{4}x_{5} + x_{2}x_{4}x_{6} + x_{3}x_{5}x_{6} \\
&=  \tfrac{1}{2}(x_{1} + x_{6})(x_{2} + x_{5})(x_{3} + x_{4}) + \tfrac{1}{2}(x_{1} - x_{6})(x_{2} - x_{5})(x_{3} - x_{4}).
\end{split}
\end{align}
solves \eqref{einsteinpolynomials} on $\rea^{6}$ with the Euclidean metric and $\ka = 4$. 

It is apparent from the decomposition exhibited in \eqref{d2poly2} that $P$ orthogonally decomposable; precisely it is equivalent to a direct sum of polynomials as in Theorem \ref{3dpolytheorem}. As this example (and Example \ref{poly3example}) suggests, determining whether a solution of \eqref{einsteinpolynomials} is decomposable is not always straightforward; in general some technique is needed. Although the details are omitted here, he decomposition \eqref{d2poly2} was obtained by analyzing idempotent elements in the commutative algebra determined by $P$.
\end{example}

In section \ref{tensorproductsection} it is shown that tensor products of solutions of \eqref{einsteinpolynomials} yield new solutions. Lemma \ref{parahurwitzificationlemma} shows that the tensor product of a solution $P$ with the polynomial of \eqref{2dharpoly}, called the \emph{parahuwitzification} of $P$, can be interpreted as the real part of the extension of $P$ over $\com$. Lemma \ref{triplepolynomiallemma} shows that the tensor product of a solution $P$ with the polynomial of \eqref{basepoly}, called the \emph{triple} of $P$, equals that obtained by polarizing $P$. 
For example:
\begin{enumerate}
\item The polynomial 
\begin{align}
\label{tripleparahurwitz}
\begin{split}
P(x) & = x_{1}x_{3}x_{5} - x_{1}x_{4}x_{6} - x_{2}x_{3}x_{6} - x_{2}x_{4}x_{5} ,
\end{split}
\end{align}
is obtained either as the triple of the polynomial $\tfrac{1}{6}(x_{2}^{3}- 3x_{1}^{2}x_{2})$ or as the parahurwitzification of the polynomial $x_{1}x_{2}x_{3}$. It follows that it is orthogonally indecomposable, and so is not orthogonally equivalent to the polynomial \eqref{d2poly2}, from which it differs only by sign changes. By Lemma \ref{pmnlemma}, \eqref{tripleparahurwitz} is not conformally equivalent to the simplicial polynomial $P_{6}$.

\item The polynomial \eqref{permpoly} 
\begin{align}
\label{permpoly}
\begin{split}
P(x)  & = \perm \begin{pmatrix} x_{1} & x_{2} & x_{3} \\ x_{4} & x_{5} & x_{6}\\ x_{7} & x_{8} & x_{9}\end{pmatrix}\\
& = x_{1}x_{5}x_{9} + x_{2}x_{6}x_{7} + x_{3}x_{4}x_{8} + x_{1}x_{6}x_{8} + x_{2}x_{4}x_{9} + x_{3}x_{5}x_{7},
\end{split}
\end{align}
is obtained as the triple of \eqref{basepoly}. By Lemma \ref{pmnlemma}, \eqref{permpoly} is not conformally equivalent to the simplicial polynomial $P_{9}$.
\end{enumerate}

A special case of Lemma \ref{anticommutativetensortheorem} shows that the cubic polynomial of the tensor product of two compact simple real Lie algebras solves \eqref{einsteinpolynomials}. For example the tensor product $\so(3) \tensor \so(3)$ yields the determinant
\begin{align}
\label{nahmpolyso3}
\begin{split}
P(x)  & = \det \begin{pmatrix} x_{1} & x_{2} & x_{3} \\ x_{4} & x_{5} & x_{6}\\ x_{7} & x_{8} & x_{9}\end{pmatrix}\\
& = x_{1}x_{5}x_{9} + x_{2}x_{6}x_{7} + x_{3}x_{4}x_{8} - x_{1}x_{6}x_{8} - x_{2}x_{4}x_{9} - x_{3}x_{5}x_{7},
\end{split}
\end{align}
which solves \eqref{einsteinpolynomials}. Lemma \ref{anticommutativetensortheorem} applies more generally to certain anticommutative algebras, for example the imaginary octonions with the commutator bracket; see Example \ref{imagoctonionexample}.

\begin{remark}
The  immanant determined by the character of $S_{3}$ associated with the partition $(2, 1)$ (see \cite{Ye-immanants}) is
\begin{align}
\label{immpoly}
\begin{split}
P(x)  & = \imm_{(2, 1)}\begin{pmatrix} 2^{-1/3}x_{1} & x_{2} & x_{3} \\ x_{4} & 2^{-1/3}x_{5} & x_{6}\\ x_{7} & x_{8} & 2^{-1/3}x_{9}\end{pmatrix}\\
& = x_{1}x_{5}x_{9} - x_{2}x_{6}x_{7} - x_{3}x_{4}x_{8}.
\end{split}
\end{align}
It solves \eqref{einsteinpolynomials} with $\ka = 2$ (the factors of $2^{-1/3}$ are so that $|\hess P|^{2}$ is a multiple of the standard Euclidean inner product in the $x_{i}$ variables). The other immanants associated with characters of $S_{3}$ are the determinant and the permanent, so that \eqref{permpoly} and \eqref{nahmpolyso3} solve \eqref{einsteinpolynomials} suggests that \eqref{immpoly} will as well. However, as \eqref{immpoly} is orthogonally decomposable, obtained from three copies of \eqref{basepoly}, it is less interesting than the examples \eqref{permpoly} and \eqref{nahmpolyso3}. 
\end{remark}

Sections \ref{steinersection} and \ref{framesection} describe combinatorial constructions of solutions of \eqref{einsteinpolynomials} based on Steiner triple systems and equiangular tight frames. For example, as is indicated in Example \ref{dualaffineplaneexample}, the solution \eqref{d2poly2} is associated with the partial Steiner triple system determined by the affine plane of order $2$ in the manner explained in Section \ref{steinersection}, while as in indicated in Example \ref{d2poly2frameexample}, it is associated with two different two-distance tight frames in $\rea^{6}$ in the manner described in Section \ref{framesection}. 

These constructions are of interest principally for two reasons. First, Theorem \ref{ststheorem} shows that the solution of \eqref{einsteinpolynomials} associated with a Steiner triple system on a set of cardinality $n$ equal to $1$ or $9$ modulo $12$ is not conformally linearly equivalent to the simplicial polynomial $P_{n}$. This shows that not conformally associative solutions to \eqref{einsteinpolynomials} exist in arbitrarily large dimensions. Second, the construction of Section \ref{framesection} associates with a two-distance tight frame a solution of \eqref{einsteinpolynomials}. For example, in Example \ref{simplicialframepolynomialexample} this is used to construct the simplicial polynomials $P_{n}$ from the data of their critical lines (in the sense of section \ref{criticalsection}). Although the problem of reconstructing a solution of \eqref{einsteinpolynomials} from data such as its zero lines and critical lines is by no means solved, these results are suggestive for what might be possible. 
A virtue of such combinatorial constructions is that the resulting solution of \eqref{einsteinpolynomials} automatically inherits any symmetries of the underlying combinatorial data.

The constructions of solutions described in this paper by no means exhaust those known to the author and some of the most interesting examples are omitted. However, the description of most of the omitted examples requires or is facilitated by working in terms of the commutative nonassociative algebra associated with a solution of \eqref{einsteinpolynomials} and an adequate development of the necessary background requires extensive development. Consequently these examples will be detailed elsewhere in that language. Some of the more interesting examples include:
\begin{itemize}
\item The cubic polynomials of \emph{Griess algebras} of certain vertex operator algebras of type OZ. That these solve \eqref{einsteinpolynomials} can be deduced using formulas in \cite{Matsuo}. These include the cubic polynomial of the $196883$-dimensional commutative, nonassociative, nonunital Griess algebra the automorphism group of which is the monster finite simple group.
\item The cubic polynomials of the algebras of Weyl curvature tensors and Kähler Weyl curvature tensors studied by the author in \cite{Fox-curvtensor}. 
\item The cubic polynomials of the \emph{Hsiang algebras} studied by Tkachev in \cite{Tkachev-hsiang} (see also \cite{Nadirashvili-Tkachev-Vladuts, Tkachev-universality, Tkachev-summary}).
\end{itemize}

\begin{remark}
The author's original motivation for studying \eqref{einsteinpolynomials} in \cite{Fox-ahs} (see also \cite{Fox-crm, Fox-2dahs}) was that, by Lemma \ref{einsteinpolynomialslemma}, a solution of \eqref{einsteinpolynomials} yields a torsion-free affine connection on $\alg$ with respect to which the metric $h$ is a Codazzi tensor, and the Ricci curvature of which is a multiple of $h$ (so $\nabla$ is in some sense \emph{Einstein-like}). These motivations, which will otherwise be ignored here, are treated in a far more general context in \cite{Fox-ahs}.

\begin{lemma}\label{einsteinpolynomialslemma}
Let $D$ be the flat affine connection on the $n$-dimensional vector space $\alg$ whose geodesics are the affinely parameterized straight lines and let $h$ be a $D$-parallel pseudo-Riemannian metric on $\alg$.
The systems of equations \eqref{einsteinpolynomials0}-\eqref{einsteinpolynomials2} for a homogeneous cubic polynomial $P(x) \in \pol^{3}(\alg)$ are equivalent in the sense that $P$ solves one of these systems for a constant $\ka \neq 0$ if and only if it solves all the others for the same constant $\ka \neq 0$. 
\begin{align}\label{einsteinpolynomials0}
&\lap_{h} P = 0,& &\text{and}& &|\hess P|^{2}_{h} = \ka|x|^{2}_{h},
\\\label{einsteinpolynomialsa}
&\lap_{h} P = 0,& &\text{and}& &\lap_{h}|DP|_{h}^{2} = 2\ka |x|^{2}_{h},
\\\label{einsteinpolynomialsb}
&\lap_{h} P = 0,& &\text{and}& &\lap_{h}^{2} P^{2} = 4\ka |x|^{2}_{h},
\\\label{einsteinpolynomials2}
&P_{ip}\,^{p} = 0,& &\text{and}& &P_{ip}\,^{q}P_{jq}\,^{p} = \ka h_{ij}.
\end{align}
If $P$ solves any of these equivalent systems of equations then the affine connection $\nabla = D + P_{ij}\,^{k}$ has the following properties:
\begin{enumerate}
\item $\nabla$ is torsion-free.
\item $\nabla_{[i}h_{j]k} = 0$.
\item The curvature of $\nabla$ is $R_{ijk}\,^{l} = 2P_{p[i}\,^{l}P_{j]k}\,^{p}$ and satisfies $R_{ij(kl)} = 0$.
\item The Ricci curvature $R_{ij} = R_{pij}\,^{p}$ satisfies $R_{ij} = -\ka h_{ij}$.
\end{enumerate} 
\end{lemma}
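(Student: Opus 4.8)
The plan is to prove the equivalence of the four systems by chains of elementary tensor manipulations, exploiting that $P$ is homogeneous cubic, so that $P_{ij}$ is linear, $P_{ijk}$ is constant, and $DP$ is quadratic. The Euler identities for a cubic polynomial give $x^{i}P_{i} = 3P$, $x^{j}P_{ij} = 2P_{i}$, and $x^{k}P_{ijk} = P_{ij}$; these will be used repeatedly. First I would dispatch the equivalence of \eqref{einsteinpolynomials0} and \eqref{einsteinpolynomials2}. The harmonicity $\lap_{h}P = h^{ij}P_{ij} = 0$ is literally $P_{ip}{}^{p} = 0$, so only the second equations need comparison. Contracting $|\hess P|_{h}^{2} = P^{ij}P_{ij}$ is not itself linear in $x$, so instead I work with the tensor identity: since $P_{ijk}$ is constant, $D_{k}\left(P^{ab}P_{ab}\right) = 2P^{ab}P_{abk}$, and a second derivative gives $D_{l}D_{k}\left(P^{ab}P_{ab}\right) = 2P^{ab}{}_{l}... $ — more usefully, one observes $|\hess P|_{h}^{2}(x) = P^{ij}(x)P_{ij}(x)$ is a quadratic form in $x$ whose coefficient tensor is exactly $P_{ipq}P_{j}{}^{pq}$, i.e. $P^{ij}(x)P_{ij}(x) = x^{i}x^{j}P_{ipq}P_{j}{}^{pq}$ by the Euler identity $x^{k}P_{ijk} = P_{ij}$. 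Hence $|\hess P|_{h}^{2} = \ka|x|_{h}^{2}$ holds identically in $x$ if and only if $P_{ipq}P_{j}{}^{pq} = \ka h_{ij}$, which (after relabeling the contracted indices, using the total symmetry of $P_{ijk}$) is precisely the second equation of \eqref{einsteinpolynomials2}.

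Next I would handle \eqref{einsteinpolynomialsa} and \eqref{einsteinpolynomialsb}. Note $|DP|_{h}^{2} = P^{i}P_{i}$ and $P^{2}$ are respectively quartic; their Laplacians are quadratic, matching the right-hand sides. Compute $\lap_{h}(P^{i}P_{i}) = h^{jk}D_{j}D_{k}(P^{i}P_{i}) = 2h^{jk}(P^{i}{}_{j}P_{ik} + P^{i}P_{ijk})$. The first term is $2P^{ij}P_{ij} = 2|\hess P|_{h}^{2}$ and the second is $2P^{i}(\lap_{h}P)_{i}$, which vanishes once $\lap_{h}P = 0$ (since $\lap_{h}P$ is a polynomial, its vanishing forces the vanishing of all its derivatives). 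So, given harmonicity, $\lap_{h}|DP|_{h}^{2} = 2|\hess P|_{h}^{2}$, and \eqref{einsteinpolynomialsa} is equivalent to \eqref{einsteinpolynomials0}. Similarly $\lap_{h}(P^{2}) = 2(|DP|_{h}^{2} + P\lap_{h}P) = 2|DP|_{h}^{2}$ under harmonicity, so $\lap_{h}^{2}P^{2} = 2\lap_{h}|DP|_{h}^{2} = 4|\hess P|_{h}^{2}$, giving \eqref{einsteinpolynomialsb}. This closes the cycle of equivalences among all four systems.

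For the final list of properties of $\nabla = D + P_{ij}{}^{k}$: torsion-freeness (1) is immediate since $P_{ij}{}^{k}$ is symmetric in $i,j$. For (2), $\nabla_{i}h_{jk} = D_{i}h_{jk} - P_{ij}{}^{p}h_{pk} - P_{ik}{}^{p}h_{pj} = -2P_{i(jk)} = -2P_{ijk}$, which is totally symmetric, hence $\nabla_{[i}h_{j]k} = 0$ (the antisymmetrization kills the symmetric tensor). For (3), the standard formula for the curvature of a connection $D + A$ with $A$ the difference tensor reads $R_{ijk}{}^{l} = 2D_{[i}A_{j]k}{}^{l} + 2A_{[i|p|}{}^{l}A_{j]k}{}^{p}$; since $D$ is flat and $A_{ijk}$ is constant (being essentially $P_{ijk}$), the first term vanishes and $R_{ijk}{}^{l} = 2P_{p[i}{}^{l}P_{j]k}{}^{p}$; the identity $R_{ij(kl)} = 0$ then follows by symmetrizing in $k,l$ and using total symmetry of $P_{ijk}$ to see the result is antisymmetric in the now-symmetrized pair. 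For (4), contract: $R_{ij} = R_{pij}{}^{p} = 2P_{q[p}{}^{p}P_{i]j}{}^{q} = P_{qp}{}^{p}P_{ij}{}^{q} - P_{qi}{}^{p}P_{pj}{}^{q}$; the first term vanishes by harmonicity $P_{qp}{}^{p} = 0$, and the second is $-P_{ip}{}^{q}P_{jq}{}^{p} = -\ka h_{ij}$ by \eqref{einsteinpolynomials2}. I expect the only mildly delicate point to be bookkeeping the index placements and signs in the curvature contraction for (3)–(4), together with being careful that "a polynomial vanishes identically iff all its derivatives do" is what licenses passing from $\lap_{h}P = 0$ as an equation to the vanishing of $P^{i}(\lap_{h}P)_{i}$ as a polynomial; neither is a real obstacle.
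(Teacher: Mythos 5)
Your proof is correct and follows essentially the same route as the paper: the key steps — harmonicity trivially equals $P_{ip}{}^{p}=0$, the identity $\lap_{h}^{2}P^{2}=2\lap_{h}|DP|_{h}^{2}=4|\hess P|_{h}^{2}$ under harmonicity, passing between $|\hess P|_{h}^{2}=\ka|x|_{h}^{2}$ and $P_{ip}{}^{q}P_{jq}{}^{p}=\ka h_{ij}$ via the Euler identity (resp.\ two $x$-derivatives), and the curvature/Ricci computations — are all the ones the paper uses, just spelled out in a slightly different order and in a bit more detail.
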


\begin{proof}[Proof of Lemma \ref{einsteinpolynomialslemma}]
 It is straightforward to check that if $\lap_{h}P = 0$ then $\lap_{h}^{2}P^{2} = 2\lap_{h}|DP|_{h}^{2} = 4|\hess P|^{2}_{h}$. This shows the equivalence of \eqref{einsteinpolynomials0}, \eqref{einsteinpolynomialsa}, and \eqref{einsteinpolynomialsb}. Contracting the second equation of \eqref{einsteinpolynomials2} with $x^{i}x^{j}$ gives the second equation of \eqref{einsteinpolynomials0}, while if $P$ solves \eqref{einsteinpolynomials0} then $0 = D_{i}D_{j}(|\hess P|^{2}_{h} - \ka |x|^{2}_{h}) = 2(P_{ip}\,^{q} P_{jq}\,^{p} - \ka h_{ij})$, showing that $P$ solves \eqref{einsteinpolynomials2}.

Suppose $P$ solves  \eqref{einsteinpolynomials0}-\eqref{einsteinpolynomials2}. By definition of $\nabla$, $\nabla_{i}h_{jk} = -P_{ijk} - P_{ikj} = -2P_{ijk}$, so $\nabla_{[i}h_{j]k} = 0$. The curvature of $\nabla$ is defined by defined by $2\nabla_{[i}\nabla_{j]}X^{k} = R_{ijp}\,^{k}X^{p}$. From the definition of $\nabla$ it follows that $R_{ijk}\,^{l} = 2P_{p[i}\,^{l}P_{j]k}\,^{p}$, from which there follows $R_{ij(kl)} = 0$. The Ricci curvature of $\nabla$ is defined by $R_{ij} = R_{pij}\,^{p} = R_{ip}\,^{p}\,_{j}$ and so $R_{jk} = P_{qp}\,^{q}P_{jk}\,^{p} -P_{jp}\,^{q}P_{kq}\,_{p}= -P_{jp}\,^{q}P_{kq}\,_{p} = - \ka h_{ij}$.
\end{proof}
\end{remark}

\begin{remark}
Suppose $h$ is Euclidean. The components with respect to an $h$-orthonormal basis of the differential $Q = dP:\alg \to \alg^{\ast}$ of a solution of \eqref{einsteinpolynomials} are $h$-harmonic homogeneous quadratic polynomials. This leads to at least two alternative viewpoints on \eqref{einsteinpolynomials}.

First, the differential $DQ = DdP:\alg \to S^{2}\alg^{\ast}$ can be identified with a linear map of $\alg$ into the subspace $S^{2}_{0}\alg^{\ast} \subset S^{2}\alg^{\ast}$ of $h$-trace free forms. That $P$ solve \eqref{einsteinpolynomials} means that this map sends the $h$-unit sphere in $\alg$ into the sphere in $S^{2}_{0}\alg^{\ast}$ having radius $\sqrt{\ka}$ with respect to the norm induced by complete contraction with $h$. Thus a solution of \eqref{einsteinpolynomials} yields a polynomial map between spheres whose components are the restrictions of linear forms. Such a map is called a linear eigenmap of spheres; see e.g. \cite{doCarmo-Wallach, Smith-harmonicmappings, Toth-eigenmaps, Wood} for background about polynomial maps between spheres. In the context of polynomial eigenmaps of spheres, those given by linear forms are regarded as trivial, but here $Q$ satisfies the additional auxiliary conditions imposed by requiring that it be a differential.

Second, the graph of a polynomial mapping $Q:\alg \to \alg^{\ast}$ is a submanifold $\Sigma$ of the para-Kähler space $\alg \oplus \alg^{\ast}$, where the symplectic form is $\Om(u + \mu, v + \nu) = \nu(u) - \mu(v)$ and the para-Kähler metric is $G(u + \mu, v + \nu) = \mu(v) + \nu(u)$. This graph is Lagrangian if and only if $Q$ is the differential of a polynomial, $P$, on $\alg$. In this case the restriction of $G$ to the submanifold $\Sigma$ equals $2D dP$, where $\nabla$ is the Levi-Civita connection of $G$. The cubic form $DD dP$ equals that obtained as $\Om(\Pi(\dum, \dum), \dum)$ by pairing the second fundamental form $\Pi$ of $\Sigma$ with the symplectic structure $\Om(u + \nu, v + \nu) = \mu(v) - \nu(u)$. In particular, that $\lap_{h}P = 0$ is equivalent to $\Sigma$ having vanishing mean curvature. 
\end{remark}

\begin{remark}
Lemma \ref{notpdelemma} shows that, although \eqref{einsteinpolynomials} is a system of partial differential equations, its content is essentially algebraic and combinatorial. More precisely, any smooth solution of \eqref{einsteinpolynomials} must be a cubic homogeneous polynomial plus a linear form.
\begin{lemma}\label{notpdelemma}
Let $(\alg, h)$ be a Euclidean vector space. If $F \in C^{2}(\alg)$ solves \eqref{einsteinpolynomials} for some constant $\ka \neq 0$, then $F$ is equal to the sum of a a harmonic cubic homogeneous polynomial $P \in \pol^{3}(\alg)$ solving \eqref{einsteinpolynomials} with constant $\ka$ and a linear form $\ell \in \alg^{\ast}$.
\end{lemma}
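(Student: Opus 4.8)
The plan is to first upgrade the regularity of $F$, then use the first equation of \eqref{einsteinpolynomials} to iterate the Laplacian on the second one and conclude that $F$ has vanishing fourth derivatives, and finally match homogeneous components. Since $h$ is Euclidean, the equation $\lap_{h}F = 0$ says $F$ is harmonic; a $C^{2}$ (a fortiori distributional) solution of Laplace's equation is real-analytic by elliptic regularity, so $F \in C^{\infty}(\alg)$. This legitimizes all the differentiations below, and in particular means the components of $\hess F$ are genuine smooth functions rather than merely a quantity whose sum of squares is prescribed.

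Next I would differentiate $|\hess F|^{2}_{h} = \ka|x|^{2}_{h}$, that is $F^{ij}F_{ij} = \ka x^{p}x_{p}$, and exploit harmonicity to kill the cross terms. Applying $\lap_{h} = D^{p}D_{p}$ to the product rule expansion produces a term $2F^{ijk}F_{ijk}$ and a term $2F^{ij}\lap_{h}F_{ij}$; the latter vanishes because $\lap_{h}F_{ij} = D_{i}D_{j}\lap_{h}F = 0$, so $2F^{ijk}F_{ijk} = \lap_{h}(\ka x^{p}x_{p}) = 2\ka n$, i.e.\ $|D^{3}F|^{2}_{h}$ equals the \emph{constant} $\ka n$. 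Applying $\lap_{h}$ once more, now using $\lap_{h}F_{ijk} = D_{i}D_{j}D_{k}\lap_{h}F = 0$, gives $2F^{ijkl}F_{ijkl} = \lap_{h}(\ka n) = 0$; since $h$ is positive definite this forces $F_{ijkl} \equiv 0$ on the connected space $\alg$, so $F$ is a polynomial of degree at most $3$.

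Finally I would decompose $F = F^{(0)} + F^{(1)} + F^{(2)} + F^{(3)}$ into homogeneous components and compare degrees in the two equations. One has $\hess F = A + \hess F^{(3)}$ with $A = \hess F^{(2)}$ a constant symmetric tensor and $\hess F^{(3)}$ linear in $x$, so $|\hess F|^{2}_{h} = |A|^{2}_{h} + 2\langle A, \hess F^{(3)}\rangle_{h} + |\hess F^{(3)}|^{2}_{h}$; comparing with $\ka x^{p}x_{p}$ degree by degree, the degree-zero part gives $|A|^{2}_{h} = 0$, hence $A = 0$, hence $F^{(2)} = 0$ (a homogeneous quadratic is half its Hessian), and the degree-two part gives $|\hess F^{(3)}|^{2}_{h} = \ka x^{p}x_{p}$. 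Since moreover $\lap_{h}F = \lap_{h}F^{(3)}$ once $F^{(2)}$ vanishes, the cubic $P = F^{(3)}$ is harmonic and solves \eqref{einsteinpolynomials} with the same $\ka$, and $F - P = F^{(1)} + F(0)$; taking $\ell = F^{(1)} \in \alg^{\ast}$ (and noting the harmless constant $F(0)$, which neither equation detects) yields the statement.

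I do not expect a genuine obstacle. The one place demanding care is the initial regularity step: one must invoke ellipticity to pass from $C^{2}$ to $C^{\infty}$ before the iterated-Laplacian computation even makes sense, since a priori the individual entries of $\hess F$ need not be polynomials. After that the argument is elementary bookkeeping with homogeneous components, the only blemish relative to the stated conclusion being the possibly nonzero constant $F(0)$.
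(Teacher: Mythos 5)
Your argument is correct and follows essentially the same route as the paper's: invoke elliptic regularity to get real-analyticity, iterate derivatives on $|\hess F|^{2}_{h}=\ka|x|^{2}_{h}$ while using harmonicity to kill the higher-derivative cross terms and conclude $F_{ijkl}\equiv 0$, then match homogeneous degrees to eliminate the quadratic part. The only difference is organizational: you package the four differentiations as two applications of $\lap_{h}$, giving the clean intermediate statement $|D^{3}F|^{2}_{h}=\ka n$, whereas the paper differentiates $|\hess F|^{2}_{h}$ twice without tracing, then differentiates and traces twice more; your version is arguably slightly tidier, and your observation about the possible additive constant $F(0)$ applies equally to the paper's own proof.
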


\begin{proof}
Because $F$ is harmonic, it is real analytic. Differentiating $F_{pq}F^{pq} = \ka |x|^{2}$ twice yields $F_{ip}\,^{q}F_{jq}\,^{p} + F_{ijp}\,^{q}F_{q}\,^{p} = \ka h_{ij}$. Differentiating this yields $3F_{p(ij}\,^{q}F_{k)q}\,^{p} + F_{ijkp}\,^{q}F_{q}\,^{p} = 0$. Tracing this in $jk$ and using that all derivatives of $F$ are harmonic yields $F_{iabc}F^{abc} = 0$. Differentiating this yields $F_{iabc}F_{j}\,^{abc} + F_{ijabc}F^{abc} = 0$ and tracing this in $ij$ shows that $F_{ijkl}F^{ijkl} = 0$ which implies that $F_{ijkl} = 0$, so that $F$ is a cubic polynomial.

Write $F = P + Q + R$ with $P \in \pol^{3}(\alg)$, $Q \in \pol^{2}(\alg)$, and $R \in \pol^{1}(\alg)$. 
Taking $x = 0$ in $\ka |x|^{2} = |\hess P|^{2} + 2\lb \hess P, \hess Q\ra + |\hess Q|^{2}$ yields $|\hess Q|^{2} = 0$ and $\ka |x|^{2} = |\hess P|^{2}$. Since $\hess Q$ is a constant, this implies $Q = 0$. Since $R$ is linear, $\lap_{h}F = \lap_{h}P$, so $P$ solves \eqref{einsteinpolynomials}.
\end{proof}
\end{remark}

\begin{remark}
Some version of the results presented here appeared in \cite{Fox-ahs} and was briefly summarized in \cite{Fox-crm}. 
\end{remark}

\section{Critical lines and zeros}\label{criticalsection}
This section gives some lemmas useful for deciding orthogonal equivalence or orthogonal indecomposability of solutions.

Let $\crit(P) \subset \alg$ denote the set of critical points of $P$ and let $\proj(\crit(P))$ be its image in the projectivization $\proj(\alg)$. 

A \emph{critical line} of $P \in \pol^{3}(\alg)$ is a one-dimensional subspace $[v] \in \proj(\alg)$ spanned by a vector $v$ such that $h(v, \dum) \wedge dP(v) = 0$. If $h$ is Euclidean, a vector $v$ spanning a critical line of $P$ is a critical point of the restriction of $P$ to the $h$-sphere containing $v$. Since $-v$ is also a critical point of the restriction of $P$ to the same sphere, it is more convenient to speak of critical lines than of the (pairs of) critical points that generate them. 

Because the restriction of $P$ to a sphere of nontrivial radius has a critical point, the set of critical lines of $P$, $\critline(P) \subset \proj(\alg)$, is nonempty. Because $P$ is an odd function, there can always be chosen a generator of a critical line on which $P$ is positive.

Let $\zero(P) = \{[v] \in \proj(\alg): P(v) =0\}\subset \proj(\alg)$ be the set of one-dimensional subspaces of $\alg$ generated by zeros of $P$. 

Because $P$ is homogeneous, $\proj(\crit(P)) = \critline(P) \cap \zero(P)$. 

\begin{lemma}\label{decomposablecriticallemma}
Let $(\alg, h)$ be a Euclidean vector space. If $P \in \pol^{3}(\alg)$ is orthogonally decomposable then there are $h$-orthogonal $[v_{1}], [v_{2}] \in \critline(P)$ such that $(\hess P)(x)(v_{1}, v_{2}) = 0$ for all $x \in \alg$.
\end{lemma}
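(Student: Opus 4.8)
The plan is to derive everything directly from the orthogonal direct sum decomposition supplied by the hypothesis. Since $P$ is $h$-orthogonally decomposable, by definition there is a nontrivial $h$-orthogonal direct sum $\alg = \alg_{1}\oplus\alg_{2}$ together with polynomials $P_{i}\in\pol^{3}(\alg_{i})$ such that $P = P_{1}\circ\pi_{1} + P_{2}\circ\pi_{2}$, where $\pi_{i}\colon\alg\to\alg_{i}$ is the $h$-orthogonal projection; nontriviality forces $\dim\alg_{i}\geq 1$ for $i = 1,2$.

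First I would show that critical lines of the summands $P_{i}$ are critical lines of $P$. Fix $v\in\alg_{1}\setminus\{0\}$. Because $P_{2}$ is cubic homogeneous, $(dP_{2})_{0} = 0$, so $(dP)_{v} = (dP_{1})_{v}\circ\pi_{1}$, and this covector, like $h(v,\dum)$, annihilates $\alg_{2}$. Hence $h(v,\dum)\wedge (dP)_{v} = 0$ if and only if the restrictions to $\alg_{1}$ of $h(v,\dum)$ and $(dP_{1})_{v}$ are proportional, i.e.\ if and only if $[v]$ is a critical line of $P_{1}$. Thus, under the inclusion $\proj(\alg_{1})\subset\proj(\alg)$, one has $\critline(P_{1})\subset\critline(P)$, and symmetrically $\critline(P_{2})\subset\critline(P)$. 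Since $\dim\alg_{i}\geq 1$, the restriction of $P_{i}$ to a sphere of positive radius in $\alg_{i}$ has a critical point, so $\critline(P_{i})\neq\emptyset$; I would then pick $[v_{i}]\in\critline(P_{i})$ for $i = 1,2$. As $v_{1}\in\alg_{1}$ and $v_{2}\in\alg_{2}$, the lines $[v_{1}]$ and $[v_{2}]$ are $h$-orthogonal elements of $\critline(P)$.

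It then remains to verify that $(\hess P)(x)(v_{1},v_{2}) = 0$ for every $x\in\alg$. This is immediate from the fact, recorded in the discussion of direct sums earlier in the paper, that $\hess P$ is block-diagonal with respect to $\alg = \alg_{1}\oplus\alg_{2}$, namely $\hess P = \hess P_{1}\oplus\hess P_{2}$: for any $x$ the bilinear form $(\hess P)(x)$ pairs $\alg_{1}$ only with $\alg_{1}$ and $\alg_{2}$ only with $\alg_{2}$, so the mixed value $(\hess P)(x)(v_{1},v_{2})$ with $v_{1}\in\alg_{1}$ and $v_{2}\in\alg_{2}$ vanishes. There is no substantial obstacle here; the only points needing a little care are that nontriviality of the decomposition is precisely what guarantees each factor carries a critical line, and the elementary bookkeeping that a critical line of $P_{i}$ sitting inside $\alg_{i}$ remains a critical line of $P$ inside $\alg$.
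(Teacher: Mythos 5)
Your proof is correct and follows essentially the same route as the paper's: extract the orthogonal decomposition $\alg=\alg_{1}\oplus\alg_{2}$ with $P = P_{1}\circ\pi_{1}+P_{2}\circ\pi_{2}$, observe that a critical line of a summand $P_{i}$ inside $\alg_{i}$ remains a critical line of $P$, pick one from each factor (which exist by compactness of the unit sphere in each $\alg_{i}$, $\dim\alg_{i}\geq 1$), and conclude from the block-diagonal form of $\hess P$. The only stylistic difference is that you spell out the covector bookkeeping (both $h(v,\dum)$ and $dP(v)$ annihilating $\alg_{2}$) where the paper compresses it to a one-line identity $h(\inc_{i}(v_{i}),\dum)\wedge dP(v_{i}) = h(v_{i},\dum)\wedge dQ_{i}(v_{i})=0$.
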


\begin{proof}
By assumption there is a splitting $\alg = \alg_{1}\oplus\alg_{2}$ where $\alg_{1}$ and $\alg_{2}$ are nontrivial proper $h$-orthogonal subspaces and there are polynomials $Q_{i} \in \pol^{3}(\alg_{i})$ such that $P = Q_{1}\circ \pi_{1} + Q_{2} \circ \pi_{2}$ where $\pi_{i}$ is the orthogonal projection onto $\alg_{i}$. let $\inc_{i}:\alg_{i} \to \alg$ be the inclusion. 
For $i = 1, 2$, if $v_{i} \in \alg_{i}$ is such that $v_{i}$ spans a critical line of $Q_{i}$, then $h(\inc_{i}(v_{i}), \dum)\wedge dP(v_{i}) = h(v_{i}, \dum)\wedge dQ_{i}(v_{i}) = 0$, so $\inc_{1}(v_{1})$ and $\inc_{2}(v_{2})$ generate orthogonal critical lines of $P$. For $x \in \alg$ let $x_{i} = \pi_{i}(x)$. Then $(\hess P)(x)(v_{1}, v_{2}) = \pi_{1}^{\ast}(\hess Q_{1})(x_{1}))(v_{1}, v_{2}) + \pi_{2}^{\ast}(\hess Q_{2})(x_{2}))(v_{1}, v_{2}) = 0$.
\end{proof}

\begin{example}
By Lemma \ref{decomposablecriticallemma} to check that a solution $P$ of \eqref{einsteinpolynomials} is orthogonally indecomposable it suffices to show that no two of its critical lines are orthogonal.
For example, the cubic polynomial $P = \tfrac{1}{6}(x_{1}^{3} - 3x_{1}x_{2}^{2})$ of Example \ref{parahurwitzexample} is orthogonally indecomposable by Lemma \ref{decomposablecriticallemma}. Its critical lines are given by the solutions of the equations $x_{1}^{2} - x_{2}^{2} = 2\la x_{1}$ and $-x_{1}x_{2}= \la x_{2}$ which are generated by $\ga_{0} = (1, 0)$, $\ga_{1} = (-1/2, \sqrt{3}/2)$ and $(-1/2, -\sqrt{3}/2)$, which are not pairwise orthogonal.
\end{example}

\begin{example}
The polynomial $x_{1}x_{2}x_{3}$ is orthogonally indecomposable but among its critical lines those generated by the coordinate axes $e_{1}$, $e_{2}$, and $e_{3}$, \emph{are} pairwise orthogonal. However, for $x = x_{1}e_{1} + x_{2}e_{2} + x_{3}e_{3}$, $(\hess P)(x)(e_{1}, e_{2}) = x_{3}$ is not zero for all $x$, so the full condition of Lemma \ref{decomposablecriticallemma} is not satisfied.
\end{example}

Define the \emph{weight} of a critical line $\ell \in \critline(P)$ to be $|v|^{-6}_{h}P(v)^{2}$ for any vector $v$ spanning $\ell$. 

Since the automorphism group of $P$, $\Aut(P)$, permutes the critical lines of $P$ and preserves their weights, $\Aut(P)$ is a finite group if $\critline(P)$ is finite.

\begin{lemma}\label{zeroautomorphismlemma}
Let $(\alg, h)$ be a Euclidean vector space and let $P \in \pol^{3}(\alg)$. The reflection $\si_{r}$ through the hyperplane $\rperp$ $h$-orthogonal to $0 \neq r \in \alg$ is an automorphism of $P$ if and only if $[r] \in \zero(P)$ and $\lb dP(x), r\ra = (\hess P)(r)(x, x) =0$ for all $x \in \rperp = \{y \in \alg: h(y, r) = 0\}$.
\end{lemma}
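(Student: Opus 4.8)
The plan is to compute directly the action of the reflection $\si_{r}$ on $P$ and expand in powers of the $r$-component.

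First I would set up coordinates adapted to $r$. Write $\alg = \lb r\ra \oplus \rperp$, and for $x \in \alg$ decompose $x = t\,r + y$ with $y \in \rperp$ and $t = h(x,r)/|r|^{2}_{h}$. The reflection is $\si_{r}(x) = -t\,r + y$, so $(\si_{r}\cdot P)(x) = P(\si_{r}^{-1}x) = P(\si_{r}x) = P(-t r + y)$. Since $P$ is a cubic homogeneous polynomial, both $P(tr+y)$ and $P(-tr+y)$ expand as polynomials in $t$ of degree $3$ with coefficients that are polynomials in $y$; explicitly, using the polarization $6P(x) = \om(x,x,x)$ with $\om$ the associated symmetric trilinear form,
\begin{align*}
6P(tr + y) &= t^{3}\om(r,r,r) + 3t^{2}\om(r,r,y) + 3t\,\om(r,y,y) + \om(y,y,y),\\
6P(-tr + y) &= -t^{3}\om(r,r,r) + 3t^{2}\om(r,r,y) - 3t\,\om(r,y,y) + \om(y,y,y).
\end{align*}
Thus $\si_{r}\cdot P = P$ if and only if the odd-in-$t$ terms vanish identically, i.e. $\om(r,r,r) = 0$ and $\om(r,y,y) = 0$ for all $y \in \rperp$.

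Second I would translate these two conditions into the stated ones. The first, $\om(r,r,r) = 0$, is exactly $6P(r) = 0$, i.e. $[r] \in \zero(P)$. For the second, note $\om(r,y,y) = (\hess P)(r)(y,y)$ by definition of the Hessian as the synonym $P_{ij}$ of the second polarization, and also $\om(r,y,y) = \om(y,y,r) = 2\lb dP(y), r\ra$ since $dP(y) = \tfrac12\om(y,y,\dum)$ (one factor of $\tfrac12$ from $6P(x)=\om(x,x,x)$ giving $dP$ in terms of $\om$, matching the normalization used elsewhere in the paper); so the vanishing of $\om(r,y,y)$ for all $y \in \rperp$ is precisely $\lb dP(x), r\ra = (\hess P)(r)(x,x) = 0$ for all $x \in \rperp$, as claimed. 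A small point to be careful about is that the middle quadratic coefficient $\om(r,r,y)$ and the constant coefficient $\om(y,y,y)$ are automatically equal in the two expansions, so they impose no condition — the reflection fixes the "even part" of $P$ regardless — which is why only the two stated conditions appear.

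I do not expect a serious obstacle here: the argument is a short polarization computation. The only thing requiring minor care is bookkeeping the constant $\tfrac16$ in the normalization $6P(x) = \om(x,x,x)$ so that the identifications $\om(r,y,y) = (\hess P)(r)(y,y) = \lb dP(y),r\ra$ (up to the harmless positive scalar that cancels in an equation whose right-hand side is $0$) are stated consistently with the conventions fixed in the introduction; since both displayed conditions on the right-hand side are homogeneous and set equal to zero, any nonzero scalar discrepancy is immaterial. I would also remark that it suffices to test the condition $\lb dP(x),r\ra = 0$ on $\rperp$ rather than on all of $\alg$, and that restricting $(\hess P)(r)$ to $\rperp$ rather than to $\alg$ is likewise what is meant, matching the quantifier "for all $x \in \rperp$" in the statement.
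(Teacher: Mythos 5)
Your proof is correct and rests on the same core idea as the paper's: expanding the cubic in the $r$-component via the symmetric trilinear form $\om$ and observing that $\si_{r}$-invariance is exactly the vanishing of the odd-in-$t$ coefficients $\om(r,r,r)$ and $\om(r,y,y)$. The paper's proof is organized slightly differently — it treats the forward direction by a derivative trick ($\lb dP(x),r\ra = \tfrac{d}{dt}\big|_{0}P(x+tr) = -\lb dP(x),r\ra$) and the backward direction by the polarization identity $P(w_1+w_2)=P(w_1)+P(w_2)+\lb dP(w_1),w_2\ra+\lb dP(w_2),w_1\ra$ — but both of these are equivalent to your single expansion, so this is the same argument with a more unified presentation. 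Your remark that $(\hess P)(r)(y,y)$ and $\lb dP(y),r\ra$ are both positive multiples of $\om(r,y,y)$, and hence the displayed equality-to-zero is a single condition, is a small clarifying point the paper does not spell out.
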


\begin{proof}
 Suppose $0 \neq r \in \alg$ and $\si_{r} \in \Aut(P)$. Then $P(r) = P(\si_{r}(r)) = P(-r) = -P(r)$, so $P(r) = 0$ and $[r] \in \zero(P)$. Let $x \in \rperp$. Then 
\begin{align}
\lb dP(x), r\ra = \tfrac{d}{dt}\big|_{t = 0}P(x + tr) = \tfrac{d}{dt}\big|_{t = 0}P(\si_{r}(x - tr)) = \tfrac{d}{dt}\big|_{t = 0}P(x - tr) = -\lb dP(x), r\ra,
\end{align}
so $\lb dP(x), r\ra  = 0$. 

Now suppose $[r] \in \zero(P)$ and $\lb dP(x), r\ra  = 0$ for all $x \in \rperp$. Any $w \in \alg$ has the form $w = \al r + z$ with $z \in \rperp$, and $\si_{r}(w) = -\al r + z$. Because $P$ is cubic homogeneous, $P(x + y) = P(x) + P(y) + \lb dP(x), y\ra + \lb dP(y), x\ra$ for all $x, y \in \alg$. Hence
\begin{align}
\begin{split}
P(w) &= P(\al r + z) = \al^{3}P(r) + \al^{2}\lb dP(r), z\ra + \al \lb dP(z), r\ra + P(z)  =  \al^{2}\lb dP(r), z\ra  + P(z) \\
&= -\al^{3}P(r) + \al^{2}\lb dP(r), z\ra - \al \lb dP(z), r\ra + P(z) = P(-\al r + z) = P(\si_{r}(w)),
\end{split}
\end{align}
which shows that $P(\si_{r}(w)) = P(w)$ for all $w \in \alg$.
\end{proof}

\section{Tensor products}\label{tensorproductsection}
The \emph{tensor product} $\al \tensor \be \in S^{k}(\alg \tensor \balg)^{\ast}$ of symmetric $k$-forms $\al \in S^{k}\alg^{\ast}$ and $\be \in S^{k}\balg^{\ast}$ is the unique element of $S^{k}(\alg \tensor \balg)^{\ast}$ satisfying
\begin{align}\label{formproduct}
(\al \tensor \be)(a_{1}\tensor b_{1}, \dots, a_{k}\tensor b_{k}) = \al(a_{1}, \dots, a_{k})\be(b_{1}, \dots, b_{k})
\end{align}
for all $a_{1}, \dots, a_{k} \in \alg$ and $b_{1}, \dots, b_{k} \in \balg$. Let $P^{\al} \in \pol^{k}(\alg)$, $P^{\be} \in \pol^{k}(\balg)$ and $P^{\al\tensor \be} \in \pol^{k}(\alg \tensor \balg)$ be the associated degree $k$ homogeneous polynomials. By definition, for $a \in \alg$ and $b \in \balg$,
\begin{align}\label{paltbe}
P^{\al \tensor \be}(a \tensor b) & = \tfrac{1}{k!}(\al \tensor \be)(a \tensor b, \dots, a \tensor b) = \tfrac{1}{k!}\al(a, \dots, a)\be(b, \dots, b) = k!P^{\al}(a)P^{\be}(b),
\end{align}
where the dots indicated $k$fold repetitions. The identity \eqref{paltbe} motivates defining the \emph{tensor product} $P\tensor Q \in \pol^{k}(\alg \tensor \balg)$ of $P \in \pol^{k}(\alg)$ and $Q \in \pol^{k}(\balg)$ to be the element of $\pol^{k}(\alg \tensor \balg)$ corresponding to the tensor product of the symmetric $k$ forms corresponding to $P$ and $Q$. By definition, 
\begin{align}\label{tensorpol}
(P \tensor Q)(a\tensor b) = k!P(a)Q(b)
\end{align}
for $a \in \alg$ and $b \in \balg$.

The tensor product $(\alg \tensor \balg, g \tensor h)$ of the Euclidean vector spaces $(\alg, g)$ and $(\balg, h)$ is $\alg \tensor \balg$ equipped with the tensor product bilinear form $g \tensor h$, defined as in \eqref{formproduct}.

The relation of the coordinate expressions of the tensor product of two polynomials to the coordinate expressions of the tensored polynomials is not visually obvious, but can be understood as a Kronecker product of tensorial arrays. If $\{e_{i}: 1 \leq i \leq \dim \alg\}$ and $\{f_{\al}: 1 \leq \al \leq \dim \balg\}$ are orthonormal bases of $(\alg, g)$ and $(\balg, h)$, then $\{k_{i\al} = e_{i}\tensor f_{\al}: 1 \leq i \leq \dim \alg, 1 \leq \al \leq \dim \balg\}$ is an orthonormal basis of $(\alg\tensor \balg, g \tensor h)$ and $(P \tensor Q)(k_{ia})  = k!P(e_{i})Q(f_{\al})$. 

Write $P_{i_{1}\dots i_{k}} = P(e_{i_{1}}, \dots, e_{i_{k}})$ and $Q_{\al_{1}, \dots, \al_{k}} = Q(f_{\al_{1}}, \dots, f_{\al_{k}})$. By the definition \eqref{formproduct},
\begin{align}\label{kronecker}
\begin{split}
 (P \tensor Q)_{(i_{1}\al_{i})\dots (i_{k}\al_{k})} &= (P\tensor Q)(e_{i_{1}}\tensor f_{\al_{1}}, \dots, e_{i_{k}}\tensor f_{\al_{k}}) \\
&=  P(e_{i_{1}}, \dots, e_{i_{k}})Q(f_{\al_{1}}, \dots, f_{\al_{k}}) = P_{i_{1}\dots i_{k}} Q_{\al_{1}, \dots, \al_{k}} 
\end{split}
\end{align}
shows that the array representing $P \tensor Q$ with respect to the orthonormal basis $\{k_{i\al}\}$ of $(\alg \tensor \balg, g \tensor h)$ is the Kronecker product of the arrays representing $P$ and $Q$ with respect to the orthonormal bases $\{e_{i}\}$ and $\{f_{\al}\}$. 

\begin{lemma}\label{tensorproductlemma}
Let $(\alg \tensor \balg, g \tensor h)$ be the tensor product of the Euclidean vector spaces $(\alg, g)$ and $(\balg, h)$.
\begin{enumerate}
\item\label{tpharmonic} If at least one of $P \in \pol^{3}(\alg)$ and $Q \in \pol^{3}(\balg)$ is harmonic, then $P\tensor Q$ is harmonic on $(\alg \tensor \balg, g \tensor h)$.
\item\label{tphess} If $P \in \pol^{3}(\alg)$ and $Q \in \pol^{3}(\balg)$ solve the second equation of \eqref{einsteinpolynomials} with constants $0 \neq \ka_{P}, \ka_{Q} \in \rea$, then $P\tensor Q \in \pol^{3}(\alg \tensor \balg)$ solves the second equation of \eqref{einsteinpolynomials} on $(\alg \tensor \balg, g \tensor h)$ with constant $\ka_{P}\ka_{Q}$.
\item\label{tpeinstein} If at least one of $P \in \pol^{3}(\alg)$ and $Q \in \pol^{3}(\balg)$ is harmonic and $P$ and $Q$ solves \eqref{einsteinpolynomials} with constants $0 \neq \ka_{P}, \ka_{Q} \in \rea$, then $P\tensor Q \in \pol^{3}(\alg \tensor \balg)$ solves \eqref{einsteinpolynomials} on $(\alg \tensor \balg, g \tensor h)$ with constant $\ka_{P}\ka_{Q}$.
\end{enumerate}
\end{lemma}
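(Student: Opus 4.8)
The plan is to reduce all three parts to two elementary ``factoring'' observations. The first is exactly \eqref{kronecker}: with respect to orthonormal bases $\{e_i\}$ of $(\alg,g)$ and $\{f_\al\}$ of $(\balg,h)$, the coefficient array of $P\tensor Q$ is the Kronecker product $(P\tensor Q)_{(i\al)(j\be)(k\ga)} = P_{ijk}\,Q_{\al\be\ga}$. The second is the complementary statement that the inverse of the tensor product metric factors: since the basis $\{k_{i\al}=e_i\tensor f_\al\}$ is orthonormal for $g\tensor h$, as observed above, one has $(g\tensor h)^{(i\al)(j\be)} = g^{ij}h^{\al\be}$. Every operator occurring in \eqref{einsteinpolynomials} is obtained by contracting copies of the coefficient array against the inverse metric, so each such operator ``factors'' across the tensor product, and the three parts become bookkeeping. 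Working with the coefficient arrays $P_{ijk}$, $Q_{\al\be\ga}$ rather than with the polynomial values sidesteps the factorial normalization in \eqref{tensorpol}.

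For part \eqref{tpharmonic}: harmonicity of $P\tensor Q$ is the vanishing, for each fixed $(k\ga)$, of $(g\tensor h)^{(i\al)(j\be)}(P\tensor Q)_{(i\al)(j\be)(k\ga)}$. Substituting the two identities above, this contraction equals $(g^{ij}P_{ijk})(h^{\al\be}Q_{\al\be\ga})$, i.e. the product of the coefficient array of $\lap_g P$ with that of $\lap_h Q$. If $P$ or $Q$ is harmonic, one factor vanishes identically, hence so does the product, so $P\tensor Q$ is harmonic on $(\alg\tensor\balg,g\tensor h)$.

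For part \eqref{tphess}: by differentiating twice, the second equation of \eqref{einsteinpolynomials0} is equivalent to the purely algebraic identity $P_{ip}\,^{q}P_{jq}\,^{p} = \ka_P g_{ij}$ (this is part of the proof of Lemma~\ref{einsteinpolynomialslemma}, and uses no harmonicity), so the hypotheses say $P_{ip}\,^{q}P_{jq}\,^{p} = \ka_P g_{ij}$ and $Q_{\al\pi}\,^{\rho}Q_{\be\rho}\,^{\pi} = \ka_Q h_{\al\be}$, and it suffices to prove the analogous identity $(P\tensor Q)_{(i\al)(p\pi)}\,^{(q\rho)}(P\tensor Q)_{(j\be)(q\rho)}\,^{(p\pi)} = \ka_P\ka_Q (g\tensor h)_{(i\al)(j\be)}$. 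Raising the index $(q\rho)$ by $(g\tensor h)^{(q\rho)(a\si)} = g^{qa}h^{\rho\si}$ and inserting the Kronecker form gives $(P\tensor Q)_{(i\al)(p\pi)}\,^{(q\rho)} = P_{ip}\,^{q}\,Q_{\al\pi}\,^{\rho}$, so the left-hand side of the desired identity separates as $\bigl(P_{ip}\,^{q}P_{jq}\,^{p}\bigr)\bigl(Q_{\al\pi}\,^{\rho}Q_{\be\rho}\,^{\pi}\bigr) = (\ka_P g_{ij})(\ka_Q h_{\al\be}) = \ka_P\ka_Q\, g_{ij}h_{\al\be}$, which is $\ka_P\ka_Q(g\tensor h)_{(i\al)(j\be)}$, as wanted.

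Part \eqref{tpeinstein} is then immediate: under its hypotheses at least one of $P,Q$ is harmonic, so $P\tensor Q$ is harmonic by \eqref{tpharmonic}; and $P$ and $Q$ each satisfy the second equation of \eqref{einsteinpolynomials} with constants $\ka_P$ and $\ka_Q$, so $P\tensor Q$ satisfies it with constant $\ka_P\ka_Q$ by \eqref{tphess}; hence $P\tensor Q$ solves \eqref{einsteinpolynomials} with constant $\ka_P\ka_Q$. I expect the only friction to be notational --- keeping the paired indices $(i\al)$ and the two inverse-metric factors straight in the contraction for \eqref{tphess} --- rather than any genuine mathematical obstacle; all the content sits in the two factoring identities for the coefficient array and for the inverse metric.
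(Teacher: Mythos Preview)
Your proposal is correct and follows essentially the same approach as the paper: both arguments work in orthonormal bases, invoke the Kronecker factorization \eqref{kronecker} of the coefficient array, and observe that the relevant metric contractions (the trace for harmonicity, the double contraction $P_{ip}{}^{q}P_{jq}{}^{p}$ for the Hessian norm) factor across the tensor product. Your version is slightly more explicit about the inverse metric factoring and about invoking Lemma~\ref{einsteinpolynomialslemma} to pass to the tensorial form of the second equation, but the substance is identical.
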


\begin{proof}
Let $\{e_{i}: 1 \leq i \leq \dim \alg\}$ and $\{f_{\al}: 1 \leq \al \leq \dim \balg\}$ be orthonormal bases of $(\alg, g)$ and $(\balg, h)$ and let $\{k_{i\al} = e_{i}\tensor f_{\al}: 1 \leq i \leq \dim \alg, 1 \leq \al \leq \dim \balg\}$ be the corresponding orthonormal basis of $(\alg\tensor \balg, g \tensor h)$ .
Write $P_{ijk} = P(e_{i}, e_{j}, e_{k})$, $Q_{\al\be\ga} = Q(f_{\al}, f_{\be}, f_{\ga})$ and $(P \tensor Q)_{(i\al)(j\be)(k\ga)} = (P\tensor Q)(k_{i\al}, k_{j\be}, k_{k\ga})= P_{ijk}Q_{\al\be\ga}$. If at least one of $P$ and $Q$ is harmonic, then
\begin{align}
\begin{split}
(\lap_{g\tensor h}(P\tensor Q))(k_{i\al}) & = \sum_{p, q, \mu, \nu}(P\tensor Q)(k_{i\al}, k_{p\mu}, k_{q\nu}) \\&= \sum_{p, q, \mu, \nu}P_{ipq}Q_{\al\mu \nu} = (\lap_{g}P)(e_{i})(\lap_{h}Q)(f_{\al}) = 0.
\end{split}
\end{align}
This shows \eqref{tpharmonic}. The equation \eqref{einsteinpolynomials} for $P$ is equivalent to $\sum_{p, q}P_{ipq}P_{jpq} = \ka_{p}g_{ij}$ where $g_{ij} = g(e_{i}, e_{j})$. Using the similar relation for $Q$ there results
\begin{align}\begin{split}
\sum_{p, q, \mu, \nu}(P \tensor Q)_{(i\al)(p\mu)(q\nu)}(P \tensor Q)_{(j\be)(p\mu)(q\nu)} &= \sum_{p, q, \mu, \nu}P_{ipq}Q_{\al\mu\nu}P_{jpq}Q_{\be\mu\nu}\\& = \ka_{P}\ka_{Q}g_{ij}h_{\al\be} = \ka_{P}\ka_{Q}(g\tensor h)_{(i\al)(j\be)},
\end{split}
\end{align}
which is equivalent to $P\tensor Q$ solving \eqref{einsteinpolynomials} on $(\alg \tensor \balg, g \tensor h)$ with constant $\ka_{P}\ka_{Q}$. This shows \eqref{tphess}.

Claim \eqref{tpeinstein} follows from \eqref{tpharmonic} and \eqref{tphess}.
\end{proof}

\begin{example}
If $x_{1}, x_{2}$ and $y_{1}, y_{2}, y_{3}$ are coordinate systems with respect to the standard orthonormal bases $\{e_{1}, e_{2}\}$ and $\{f_{1}, f_{2}, f_{3}\}$ in Euclidean $\rea^{2}$ and $\rea^{3}$, and $P(x_{1}, x_{2}) = \tfrac{1}{6}(x_{1}^{3} - 3x_{1}x_{2}^{2})$ and $Q(y_{1}, y_{2}, y_{3}) = y_{1}y_{2}y_{3}$, then the expression for $P\tensor Q$ with respect to the coordinates $z_{1}, \dots, z_{6}$ with respect to the orthonormal basis $k_{\al + 3(i -1)} = e_{i}\tensor f_{\al}$ of $\rea^{6} \simeq \rea^{2}\tensor \rea^{3}$ is
\begin{align}\label{phtriple}
(P\tensor Q)(z) = z_{1}z_{2}z_{3} - z_{1}z_{5}z_{6} - z_{4}z_{2}z_{6} - z_{4}z_{5}z_{3}.
\end{align}
\end{example}

The tensor products of $P$ with \eqref{2dharpoly} and \eqref{basepoly} admit alternative interpretations that are described in Lemmas \ref{parahurwitzificationlemma} and \ref{triplepolynomiallemma}.

\begin{lemma}\label{parahurwitzificationlemma}
Suppose $P \in \pol^{3}(\alg)$ solves $|\hess P|_{h}^{2} = \ka|x|_{h}^{2}$ with constant $0 \neq \ka \in \rea$ on the Euclidean vector space $(\alg, h)$. Equip $\alg\tensor_{\rea}\com$ with the Hermitian metric induced by the Euclidean metric on $\alg$. Let $Q \in \pol^{3}(\alg \tensor_{\rea}\com)$ be the real part of the polynomial on $\alg\tensor_{\rea}\com$ obtained from $P$ by extension of scalars.
\begin{enumerate}
\item $Q$ solves \eqref{einsteinpolynomials} with constant $2\ka$ on $\alg\tensor_{\rea}\com$.
\item\label{ealg2tensor} $Q = P \tensor R$ where $R(x_{1}, x_{2}) = \tfrac{1}{6}(x_{1}^{3} - 3x_{1}x_{2}^{2})$ in coordinates $x_{1}, x_{2}$ with respect to a standard orthonormal basis on Euclidean $\rea^{2}$.
\end{enumerate}
\end{lemma}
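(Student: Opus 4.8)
The plan is to establish part \ref{ealg2tensor} first, identifying $Q$ with the tensor product $P\tensor R$, and then to read off the first assertion directly from Lemma \ref{tensorproductlemma}.

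First I would fix an $h$-orthonormal basis $\{u_{i}\}$ of $\alg$ and write $P(x) = \tfrac{1}{6}\sum_{ijk}P_{ijk}x^{i}x^{j}x^{k}$, so that $P_{ijk} = \omega(u_{i}, u_{j}, u_{k})$ for the polarization $\omega$ of $P$. Identifying $\com$ with Euclidean $\rea^{2}$ by sending $1$ and $\j$ to the standard basis vectors $e_{1}, e_{2}$ identifies $\alg\tensor_{\rea}\com$ with $\alg\tensor_{\rea}\rea^{2}$, and since $\{u_{i}\tensor e_{1}, u_{i}\tensor e_{2}\}$ corresponds to $\{u_{i}, \j u_{i}\}$, which is orthonormal both for the real part of the Hermitian metric induced on $\alg\tensor_{\rea}\com$ by $h$ and for the tensor product of $h$ with the standard metric on $\rea^{2}$, this identification is an isometry. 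Thus it suffices to prove both statements with $\alg\tensor_{\rea}\com$ replaced by $\alg\tensor_{\rea}\rea^{2}$ carrying the tensor product metric. Under this identification a general element is $\xi = \sum_{i,\al}c^{i}_{\al}\,u_{i}\tensor e_{\al}$ with $i$th complex coordinate $\zeta_{i} = c^{i}_{1} + \j c^{i}_{2}$, the polynomial obtained from $P$ by extension of scalars is $P^{\com}(\xi) = \tfrac{1}{6}\sum_{ijk}P_{ijk}\zeta_{i}\zeta_{j}\zeta_{k}$, and hence $6Q(\xi) = \re\big(\sum_{ijk}P_{ijk}\zeta_{i}\zeta_{j}\zeta_{k}\big)$.

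Next I would compute $(P\tensor R)(\xi)$. Writing $6R(y) = y_{1}^{3} - 3y_{1}y_{2}^{2} = \re\big((y_{1} + \j y_{2})^{3}\big)$ and polarizing shows that the symmetric array $R_{\al\be\ga}$ with $6R(y) = \sum_{\al\be\ga}R_{\al\be\ga}y^{\al}y^{\be}y^{\ga}$ satisfies $\sum_{\al\be\ga}R_{\al\be\ga}a^{\al}b^{\be}c^{\ga} = \re(z_{a}z_{b}z_{c})$ with $z_{(s, t)} = s + \j t$. By the Kronecker product formula \eqref{kronecker} the array of $P\tensor R$ in the orthonormal basis $\{u_{i}\tensor e_{\al}\}$ is $P_{ijk}R_{\al\be\ga}$, so
\begin{align*}
6(P\tensor R)(\xi) &= \sum_{ijk}\sum_{\al\be\ga}P_{ijk}R_{\al\be\ga}c^{i}_{\al}c^{j}_{\be}c^{k}_{\ga}\\
&= \sum_{ijk}P_{ijk}\re(\zeta_{i}\zeta_{j}\zeta_{k}) = \re\Big(\sum_{ijk}P_{ijk}\zeta_{i}\zeta_{j}\zeta_{k}\Big) = 6Q(\xi),
\end{align*}
which would prove part \ref{ealg2tensor}. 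For the first assertion, Example \ref{2dharpolyexample} (with $r = 1$ and $\theta = 0$) shows that $R$ is harmonic and solves the second equation of \eqref{einsteinpolynomials} on Euclidean $\rea^{2}$ with constant $2$; since $P$ solves the second equation of \eqref{einsteinpolynomials} with constant $\ka$, Lemma \ref{tensorproductlemma}\eqref{tpharmonic} then gives that $Q = P\tensor R$ is harmonic and Lemma \ref{tensorproductlemma}\eqref{tphess} gives $|\hess Q|^{2} = 2\ka|x|^{2}$, so $Q$ solves \eqref{einsteinpolynomials} with constant $2\ka$.

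I expect no deep step; the real work is keeping the three identifications straight — real versus complex scalars, the complexification $\alg\tensor_{\rea}\com$ versus the real tensor product $\alg\tensor_{\rea}\rea^{2}$, and the match of the Hermitian-induced metric with the tensor product metric — together with the normalization constants (the factor $k!$ in \eqref{tensorpol}, the $\tfrac16$ in $R$, and the relation between $P^{\com}$ and the complex-trilinear extension of $\omega$). Once these are pinned down, the single computational input is the identity $\sum_{\al\be\ga}R_{\al\be\ga}a^{\al}b^{\be}c^{\ga} = \re(z_{a}z_{b}z_{c})$, after which both parts of the lemma follow essentially formally.
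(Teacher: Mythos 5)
Your proof is correct, and its logical organization is the reverse of the paper's. The paper proves claim (1) directly by writing $2Q(x,y) = P(z) + P(\bar z)$ with $z = x + \j y$ and computing the Hessian in block form as \eqref{parahurwitzificationhessian}, from which harmonicity and $|\hess Q|^{2} = 2\ka|(x,y)|^{2}$ are read off; claim (2) is then dispatched with a remark that one checks the Hessian of $P\tensor R$ has the same block form. You instead prove claim (2) directly, via the Kronecker-product identity \eqref{kronecker} together with the pleasant observation that the polarization of $R$ satisfies $R_{\al\be\ga}a^{\al}b^{\be}c^{\ga} = \re(z_{a}z_{b}z_{c})$, and then deduce claim (1) from Lemma \ref{tensorproductlemma} applied to $P$ and $R$ (whose constants are $\ka$ and $2$). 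The trade-off: the paper's route gives claim (1) with a short self-contained Hessian calculation that does not invoke Lemma \ref{tensorproductlemma} at all, while your route is more modular, exhibiting the parahurwitzification as a literal tensor product first and inheriting everything from the general tensor lemma, at the modest cost of needing the polarization identity for $R$ and the metric matching between $\alg\tensor_{\rea}\com$ (with Hermitian-induced real metric) and $(\alg, h)\tensor(\rea^{2}, \text{std})$, which you correctly note and handle. Both are valid; yours arguably explains better \emph{why} the two descriptions of $Q$ coincide.
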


\begin{proof}
Let $z = (z_{1} = x_{1} + \j y_{1}, \dots, z_{n} = x_{n} + \j y_{n}) = x + \j y$ be coordinates on $\alg\tensor_{\rea}\com$ such that $dz_{1}, \dots, dz_{n}$ is a unitary basis of parallel one-forms. By definition $2Q(x, y) = P(z) + P(\bar{z})$. Write $P_{i} = \tfrac{\pr P}{\pr x_{i}}$ and $P_{ij} = \tfrac{\pr^{2}P}{\pr x_{i}\pr x_{j}}$. Then $2dQ = \sum_{i = 1}^{n}(P_{i}(z) + P_{i}(\bar{z}))dx_{i} + \j \sum_{i = 1}^{n}(P_{i}(z) - P_{i}(\bar{z}))dy_{i}$ and, because $P_{ij}(x)$ is a linear form in $x$, 
\begin{align}
\begin{split}
2&(DdQ)(z)  = \sum_{i , j= 1}^{n}(P_{ij}(z) + P_{ij}(\bar{z}))dx_{i}\tensor dx_{j} - \sum_{i , j= 1}^{n}(P_{ij}(z) + P_{ij}(\bar{z}))dy_{i}\tensor dy_{j} \\
&\quad + \j \sum_{i, j = 1}^{n}(P_{ij}(z) - P_{ij}(\bar{z}))(dx_{i}\tensor dy_{j} + dy_{j}\tensor dx_{i})\\
& = 2\sum_{i , j= 1}^{n}P_{ij}(x)dx_{i}\tensor dx_{j} - 2\sum_{i , j= 1}^{n}P_{ij}(x) dy_{i}\tensor dy_{j} 
- 2\sum_{i, j = 1}^{n}P_{ij}(y)(dx_{i}\tensor dy_{j} + dy_{j}\tensor dx_{i}).
\end{split}
\end{align}
In block matrix form this means
\begin{align}\label{parahurwitzificationhessian}
(\hess Q)(x, y) = \begin{pmatrix*}[r] (\hess P)(x) & -(\hess P)(y)\\ -(\hess P)(y)& -(\hess P)(x)\end{pmatrix*},
\end{align}
from which it follows that $Q$ is harmonic and $|\hess Q(x, y)|^{2} = 2|\hess P(x)|^{2} + 2|\hess P(y)|^{2} = 2\ka |(x, y)|^{2}$.

Claim \eqref{ealg2tensor} is most easily proved by checking that the matrix with respect to a suitable orthonormal basis of the Hessian of the $Q$ of \eqref{ealg2tensor} has the form \eqref{parahurwitzificationhessian}.
\end{proof}

\begin{remark}
Note that in Lemma \ref{parahurwitzificationlemma}, $P$ need not be harmonic.
\end{remark}

Because $R(x_{1}, x_{2}) = \tfrac{1}{6}(x_{1}^{3} - 3x_{1}x_{2}^{2})$ is the cubic polynomial of the para-Hurwitz algebra, claim \eqref{ealg2tensor} of Lemma \ref{parahurwitzificationlemma} motivates calling the polynomial $Q$ constructed from $P$ in that lemma the \emph{parahurwitzification} of $P$.

\begin{example}
Let $P= x_{1}x_{2}x_{3}$. The parahurwitzification $Q$ is the real part of $z_{1}z_{2}z_{3}$ where $z_{i} = x_{i} + \j y_{i}$, $i = 1, 2, 3$ and there results
\begin{align}
Q(x_{1}, y_{1}, x_{2}, y_{2}, x_{3}, y_{3}) = x_{1}x_{2}x_{3} - x_{1}y_{2}y_{3} - y_{1}x_{2}y_{3} - y_{1}y_{2}x_{3}.
\end{align}
which is the polynomial \eqref{phtriple} after relabeling of variables.
\end{example}

\begin{example}\label{parahurwitzebasepolyexample}
Let $P= \tfrac{1}{6}(x_{1}^{3} - 3x_{1}x_{2}^{2})$. The parahurwitzification $Q$ is the real part of $\tfrac{1}{6}(z_{1}^{3} - 3z_{1}z_{2}^{2})$ where $z_{i} = x_{i} + \j y_{i}$, $i = 1, 2$ and there results
\begin{align}
\begin{split}
Q(x_{1}, y_{1}, x_{2}, y_{2}) &= \tfrac{1}{6}\left(x_{1}^{3} - 3x_{1}y_{1}^{2} - 3x_{1}(x_{2}^{2} - y_{2}^{2}) + 6y_{1}x_{2}y_{2} \right)\\
& = \tfrac{1}{6}x_{1}^{3} - \tfrac{1}{2}x_{1}(x_{2}^{2} + y_{1}^{2} - y_{2}^{2}) + y_{1}x_{2}y_{2},
\end{split}
\end{align}
which is the polynomial \eqref{poly3} after relabeling of variables.
\end{example}

\begin{lemma}\label{triplepolynomiallemma}
Let $(\alg, h)$ be a Euclidean vector space. 
Suppose $P \in \pol^{3}(\alg)$ solves $|\hess P|_{h}^{2} = \ka|x|_{h}^{2}$ with constant $0 \neq \ka \in \rea$. 
\begin{enumerate}
\item The polynomial $Q(x, y, z) \in \pol^{3}(\alg \oplus \alg \oplus \alg)$ defined for $(x, y, z) \in \alg \oplus \alg \oplus \alg$ by
\begin{align}
Q(x, y, z) = P(x + y + z) - P(x + y) - P(y + z) - P(z + x) + P(x) + P(y) + P(z)
\end{align}
solves \eqref{einsteinpolynomials} with constant $2\ka$ for the metric $|(x, y, z)|^{2}_{h} = |x|^{2}_{h} + |y|^{2}_{h} + |z|^{2}_{h}$ on $\alg \oplus \alg \oplus \alg$.
\item \label{ealg3tensor} $Q = P \tensor R$ where $R(x_{1}, x_{2}, x_{3}) = x_{1}x_{2}x_{3}$ in coordinates $x_{1}, x_{2}, x_{3}$ with respect to a standard orthonormal basis on Euclidean $\rea^{3}$.
\end{enumerate}
\end{lemma}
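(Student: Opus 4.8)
The plan is to deduce Lemma \ref{triplepolynomiallemma} from Lemma \ref{tensorproductlemma}, essentially by recognizing the polarization-type expression for $Q$ as the tensor product $P \tensor R$ with $R(x_1,x_2,x_3) = x_1x_2x_3$. I would start with claim \eqref{ealg3tensor}, since claim (1) then follows immediately: by Example \ref{2dharpolyexample} (rather, Theorem \ref{3dpolytheorem} and Example \ref{parahurwitzexample}) the polynomial $R(x_1,x_2,x_3)=x_1x_2x_3$ solves \eqref{einsteinpolynomials} on Euclidean $\rea^3$ with $\ka_R = 2$ and is harmonic; hence by Lemma \ref{tensorproductlemma}\eqref{tpeinstein}, $P \tensor R$ solves \eqref{einsteinpolynomials} on $(\alg \tensor \rea^3, h \tensor \ste)$ with constant $\ka_P \ka_R = 2\ka$. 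So the whole content is to identify $\alg \tensor \rea^3$ (with its tensor-product metric) isometrically with $\alg \oplus \alg \oplus \alg$ (with the orthogonal direct-sum metric) in such a way that $P \tensor R$ becomes the stated expression $Q(x,y,z)$.

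For the identification, let $\{f_1,f_2,f_3\}$ be a standard orthonormal basis of $\rea^3$, so that $\alg \tensor \rea^3 \cong (\alg \tensor f_1) \oplus (\alg \tensor f_2) \oplus (\alg \tensor f_3)$, an orthogonal direct sum of three copies of $(\alg,h)$ under the tensor metric (since $\ste(f_\al,f_\be)=\delta_{\al\be}$). Under the isomorphism sending $(x,y,z)\mapsto x\tensor f_1 + y \tensor f_2 + z \tensor f_3$, I must show that $(P\tensor R)(x\tensor f_1 + y \tensor f_2 + z\tensor f_3)$ equals the polarization expression defining $Q$. The cleanest route is to use the polarized (multilinear) form. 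Writing $\om$ for the symmetric $3$-form polarizing $P$ (so $6P(v)=\om(v,v,v)$) and recalling from \eqref{formproduct} that $P\tensor R$ corresponds to the $3$-form $\om \tensor \upsilon$ where $\upsilon$ polarizes $R$, one computes, for $u = x\tensor f_1 + y\tensor f_2 + z\tensor f_3$,
\begin{align*}
6(P\tensor R)(u) &= (\om\tensor\upsilon)(u,u,u) = \sum_{a,b,c\in\{1,2,3\}} \om(w_a,w_b,w_c)\,\upsilon(f_a,f_b,f_c),
\end{align*}
where $w_1=x$, $w_2=y$, $w_3=z$. Since $\upsilon$ polarizes $x_1x_2x_3$, the $3$-form $\upsilon(f_a,f_b,f_c)$ is nonzero exactly when $\{a,b,c\}=\{1,2,3\}$, in which case it equals $1$; there are $3!=6$ such ordered triples, so the right-hand side collapses to $6\,\om(x,y,z)$. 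Thus $(P\tensor R)(u) = \om(x,y,z)$, the full polarization of $P$ evaluated at the triple $(x,y,z)$. It remains to check that the inclusion–exclusion expression defining $Q$ recovers $\om(x,y,z)$: expanding each term $6P(\cdot)=\om(\cdot,\dots,\cdot)$ by trilinearity and symmetry, the degree-$3$-in-one-variable, degree-$(2,1)$, and degree-$(1,1,1)$ monomials are separated out by the alternating sum, and only the $\om(x,y,z)$ (counted with multiplicity $3!/1 = 6$, cancelled against... ) survives — this is exactly the standard polarization identity $\om(x_{i_1},\dots,x_{i_k})=\sum_{1\le i_1<\dots<i_r\le k}(-1)^{k-r}k!\,P(x_{i_1}+\dots+x_{i_r})$ quoted in the introduction, specialized to $k=3$ with three distinct arguments. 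Hence $6Q(x,y,z) = \om(x,y,z) \cdot 6 /6$... more precisely $Q(x,y,z) = \om(x,y,z) = 6(P\tensor R)(x\tensor f_1 + \dots)$, and after matching the normalization constant (the factor of $k!=6$ built into the definition \eqref{tensorpol}) one gets $Q = P\tensor R$ as elements of $\pol^3(\alg\oplus\alg\oplus\alg)$.

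The only genuinely delicate point is bookkeeping the combinatorial constant: the definition \eqref{tensorpol} carries a factor $k!=6$, the polarization formula in the introduction carries another, and the inclusion–exclusion defining $Q$ must be matched against these. I would settle this once and for all by evaluating both sides on a single well-chosen vector — e.g.\ taking $P$ itself to be (a piece of) the base polynomial and checking a single coefficient — which pins the constant without a full expansion. Alternatively, following the hint used for Lemma \ref{parahurwitzificationlemma}\eqref{ealg2tensor}, one can verify the claim at the level of Hessians: the Kronecker-product description \eqref{kronecker} shows that $\hess(P\tensor R)$ with respect to the orthonormal basis $e_i \tensor f_\al$ has the block structure obtained by Kronecker-multiplying the arrays $P_{ijk}$ and $R_{\al\be\ga}$, and since $R_{\al\be\ga}$ is the fully-symmetrized indicator of $\{\al,\be,\ga\}=\{1,2,3\}$, this block structure is precisely that of the Hessian of the polarization expression $Q$; invoking that a cubic polynomial is determined by its Hessian then finishes claim \eqref{ealg3tensor}. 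With \eqref{ealg3tensor} in hand, claim (1) is immediate from Lemma \ref{tensorproductlemma}\eqref{tpeinstein} as noted above.
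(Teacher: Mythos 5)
Your proposal is correct, and it reorganizes the argument relative to the paper. The paper proves claim (1) directly: using the linearity of the components of $\hess P$, it computes the block form \eqref{hesstriple}
\begin{align*}
(\hess Q)(x, y, z) = \begin{pmatrix} 0 & (\hess P)(z) & (\hess P)(y) \\ (\hess P)(z) & 0 & (\hess P)(x)\\  (\hess P)(y)&  (\hess P)(x) & 0\end{pmatrix},
\end{align*}
from which harmonicity (trace $= 0$) and $|\hess Q|^2 = 2\ka|(x,y,z)|^2$ fall out immediately; then for claim \eqref{ealg3tensor} the paper merely remarks that one checks the Hessian of $P\tensor R$ has the same block form. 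You go the other way around: you first prove \eqref{ealg3tensor} by passing to the polarized $3$-forms, observing that $Q(x,y,z)$ is \emph{precisely} the full polarization $\om(x,y,z)$ (via the inclusion–exclusion identity quoted in the introduction, with $k=3$ and all arguments distinct), while $\upsilon(f_a,f_b,f_c)$ for $R = x_1x_2x_3$ is the indicator of $\{a,b,c\}=\{1,2,3\}$, so $(\om\tensor\upsilon)(u,u,u) = 6\om(x,y,z)$ collapses the combinatorics to give $(P\tensor R)(u) = \om(x,y,z) = Q(x,y,z)$; claim (1) is then an immediate corollary of Lemma \ref{tensorproductlemma}\eqref{tpeinstein}, using only that $R$ is harmonic (so it is fine that $P$ need not be). Your route is conceptually cleaner in that it makes (1) a consequence of the general tensor-product machinery rather than a separate computation, and it explains \emph{why} the inclusion–exclusion formula arises (it is the polarization); the paper's route is more self-contained and yields the explicit Hessian block form \eqref{hesstriple}, which the paper reuses. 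The middle of your write-up wobbles on the factor of $6$ (the line ``$6Q(x,y,z)=\om(x,y,z)\cdot6/6$\dots'' is confused), but the constants do match: $6P(v)=\om(v,v,v)$, $Q=\om$, $\upsilon(x,x,x)=6R(x)$, and the $6$ from the $3!$ orderings cancels the $6$ in $6(P\tensor R)(u)=(\om\tensor\upsilon)(u,u,u)$. Tightening that one paragraph would make the argument fully rigorous.
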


\begin{proof}
Since the components of $\hess P$ are linear functions, $(\hess P)(x + y +z) - (\hess P)(x + y) = (\hess P)(z)$. Using this and similar identities yields
\begin{align}\label{hesstriple}
(\hess Q)(x, y, z) = \begin{pmatrix} 0 & (\hess P)(z) & (\hess P)(y) \\ (\hess P)(z) & 0 & (\hess P)(x)\\  (\hess P)(y)&  (\hess P)(x) & 0\end{pmatrix}.
\end{align}
Tracing this shows that $Q$ is harmonic, while taking its squared-norm yields 
\begin{align}
|\hess Q|^{2}_{h}(x, y, z) = 2|\hess P|^{2}_{h}(x) +  2|\hess P|^{2}_{h}(y) +  2|\hess P|^{2}_{h}(z) = 2\ka |(x, y, z)|^{2}_{h},
\end{align}
so $Q$ solves \eqref{einsteinpolynomials} with constant $2\ka$.

Claim \eqref{ealg3tensor} is most easily proved by checking that the matrix with respect to a suitable orthonormal basis of the Hessian of the $Q$ of \eqref{ealg3tensor} has the form \eqref{hesstriple}.
\end{proof}

\begin{remark}
Note that in Lemma \ref{triplepolynomiallemma}, $P$ need not be harmonic.
\end{remark}

Claim \eqref{ealg3tensor} of Lemma \ref{triplepolynomiallemma} motivates calling the polynomial $Q$ constructed from $P$ in that lemma the \emph{triple} of $P$.

\begin{example}\label{1dseedexample}
The triple of the one-variable polynomial $P = \tfrac{1}{6}x^{3}$ is $Q(x_{1}, x_{2}, x_{3}) = x_{1}x_{2}x_{3}$. 
\end{example}

\begin{example}\label{2dseedexample}
The triple of $P = \tfrac{1}{6}(x_{1}^{3} - 3x_{1}x_{2}^{2})$ is \eqref{phtriple} also obtained as the parahurwitzification of $x_{1}x_{2}x_{3}$ in Example \ref{parahurwitzebasepolyexample}.
\end{example}

\begin{example}
The tensor product of $x_{1}x_{2}x_{3}$ with itself yields $\sum_{\si \in S_{3}}x_{\si(1)}y_{\si(2)}z_{\si(3)}$, which is the permanent of the $3 \times 3$ matrix whose columns are the vectors $x$, $y$, and $z$,  as in \eqref{permpoly}.
\end{example}

\begin{example}\label{prehomexample}
There are examples of polynomials solving only the second equation of \eqref{einsteinpolynomials}.
The determinant of a $3 \times 3$ symmetric matrix,
\begin{align}
\label{prehomogpoly}
\begin{split}
P(x)  & = \det \begin{pmatrix} x_{11} & x_{12}/\sqrt{2} & x_{13}/\sqrt{2} \\ x_{12}/\sqrt{2} & x_{22} & x_{23}/\sqrt{2}\\ x_{13}/\sqrt{2} & x_{23}/\sqrt{2} & x_{33}\end{pmatrix}\\
 &= x_{11}x_{22}x_{33} - \tfrac{1}{2}\left(x_{11}x_{23}^{2} + x_{22}x_{13}^{2} + x_{33}x_{12}^{2}\right) + \tfrac{1}{\sqrt{2}}x_{12}x_{23}x_{13},
\end{split}
\end{align}
solves $|\hess P|^{2}_{h} = 3|x|^{2}_{h}$ where $h(x, x) = \tr x^{2}$. (The factors of $1/\sqrt{2}$ are used for convenience; with them $h(x, x)$ is the standard Euclidean metric in the given coordinates.) However, $P$ is not harmonic, for $\lap_{h}P = -x_{11} - x_{22} - x_{33} = - \tr x$. (While $P$ is not harmonic, it is biharmonic, that is $\lap_{h}^{2}P = 0$.)
The polynomial $P$ arises as the relative invariant of a real form of a reduced irreducible prehomogeneous vector space (see $(2)$ of table I in section $7$ of \cite{Sato-Kimura}). 

The parahurwitzification and triple of $P$ yield $12$ and $18$-variable solutions of \eqref{einsteinpolynomials}.
\end{example}

\begin{remark}
It is well known that polarization maps $G$-invariant polynomials on $\alg$ to $G$-invariant polynomials on the direct sum $\alg^{k}$. See \cite{Losik-Michor-Popov} for details.
\end{remark}

The cubic polynomials of the tensor products of certain anticommutative algebras yield solutions of \eqref{einsteinpolynomials}.

A symmetric bilinear form $h$ on an algebra $(\alg, \mlt)$ is \emph{invariant} if $h(x\mlt y, z) = h(x, y\mlt z)$. For example, the Killing form of a Lie algebra is invariant.

\begin{theorem}\label{anticommutativetensortheorem}
Let $(\g_{1}, [\dum, \dum]_{1})$ and $(\g_{2}, [\dum, \dum]_{2})$ be anticommutative algebras with invariant positive definite inner products $B_{1}$ and $B_{2}$. Let $h = B_{1}\tensor B_{2}$ be the tensor product bilinear form on the tensor product algebra $(\alg, \mlt) = (\g_{1}\tensor \g_{2}, \mlt_{1}\tensor \mlt_{2})$ (which is commutative). Then $P \in \pol^{3}(\g_{1}\tensor\g_{2})$ defined for $x \in \g_{1}\tensor \g_{2}$ by $6P(x) = h(x\mlt x, x)$ solves \eqref{einsteinpolynomials} for a nonzero constant.
\end{theorem}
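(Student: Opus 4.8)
The plan is to use the algebraic reformulation of \eqref{einsteinpolynomials} explained in the introduction: up to $CO(h)$-equivalence, a solution of \eqref{einsteinpolynomials} is the same thing as a commutative algebra $(\alg,\mlt)$ carrying a nondegenerate invariant symmetric bilinear form that is a scalar multiple of the metric. So I would verify directly that the commutative algebra $(\alg,\mlt) = (\g_1\tensor\g_2,\; [\dum,\dum]_1\tensor[\dum,\dum]_2)$ with $h = B_1\tensor B_2$ has these properties, reducing every computation to the factors $\g_i$ by means of the identity
\begin{align}
L_\mlt(a\tensor b) = \ad^{1}_{a}\tensor\ad^{2}_{b}, \qquad a\in\g_1,\ b\in\g_2,
\end{align}
where $\ad^{i}_{x}y = [x,y]_i$ and $L_\mlt(x)y = x\mlt y$, which is immediate from the definition of the tensor product multiplication.

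First I would record the facts that make $P$, defined by $6P(x) = h(x\mlt x,x)$, the cubic polynomial of an admissible algebra: $\mlt$ is commutative because the product of the two anticommutative brackets is symmetric; $h = B_1\tensor B_2$ is $\mlt$-invariant because each $B_i$ is $[\dum,\dum]_i$-invariant; and commutativity of $\mlt$ together with invariance of $h$ make the cubic form $h(x\mlt y,z)$ totally symmetric, so that $L_\mlt(x)_i{}^j = h^{jp}(\hess P)(x)_{ip}$ and $P$ is the polynomial determined by $(\alg,\mlt,h)$. By Lemma \ref{einsteinpolynomialslemma} it then suffices to prove two things: $\tr L_\mlt(x) = 0$ for all $x$ (harmonicity of $P$), and $\tr(L_\mlt(x)L_\mlt(y)) = \ka\,h(x,y)$ for a constant $\ka\neq 0$ (which is \eqref{einsteinpolynomials2}).

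For harmonicity, invariance of $B_i$ together with anticommutativity of $[\dum,\dum]_i$ shows that each $\ad^{i}_{x}$ is skew-symmetric for $B_i$, hence $\tr\ad^{i}_{x} = 0$; since $\tr L_\mlt(a\tensor b) = (\tr\ad^{1}_a)(\tr\ad^{2}_b) = 0$ and decomposable tensors span $\g_1\tensor\g_2$ while $\tr L_\mlt$ is linear, $\tr L_\mlt\equiv 0$. For the second statement the same factorization gives, on decomposable vectors,
\begin{align}
\tr\bigl(L_\mlt(a\tensor b)L_\mlt(a'\tensor b')\bigr) = \tr\bigl(\ad^{1}_a\ad^{1}_{a'}\bigr)\tr\bigl(\ad^{2}_b\ad^{2}_{b'}\bigr) = K_{1}(a,a')\,K_{2}(b,b'),
\end{align}
where $K_{i}(x,y) = \tr(\ad^{i}_x\ad^{i}_y)$ is the Killing-type form of $\g_i$. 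By skew-symmetry of $\ad^{i}_x$ one has $K_{i}(x,x) = -|\ad^{i}_x|^{2}\le 0$, with equality exactly when $x$ is central in $\g_i$. The key point is that $K_i$ is a scalar multiple $c_i B_i$ of the metric; granting this, the bilinearity and Kronecker-product bookkeeping used in the proof of Lemma \ref{tensorproductlemma} upgrades the displayed identity to $\tr(L_\mlt(x)L_\mlt(y)) = c_1 c_2\,h(x,y)$ on all of $\g_1\tensor\g_2$, so $\ka = c_1 c_2$; since $c_i<0$ precisely when $\g_i$ is not abelian (the abelian case makes $\mlt$ and hence $P$ vanish), $\ka>0$, as it must be since $h = B_1\tensor B_2$ is positive definite.

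The main obstacle is exactly the proportionality $K_i = c_i B_i$: it is not a formal consequence of anticommutativity together with the existence of an invariant positive-definite form — for instance it fails for $\g_1 = \su(2)\oplus\su(3)$ — so it must be supplied by additional structure of the factors. For a compact simple real Lie algebra it is the classical uniqueness, up to scale, of the invariant symmetric bilinear form; for the imaginary octonions under the commutator bracket it follows from representation-theoretic rigidity ($S^{2}$ of the $7$-dimensional representation of $G_2$ contains the trivial summand with multiplicity one). I would therefore either incorporate this proportionality into the hypotheses on the $\g_i$ or check it in each case of interest, after which the remainder of the argument is the routine reduction sketched above; this is also the step accounting for the restriction to \emph{certain} anticommutative algebras.
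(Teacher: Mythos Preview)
Your approach is exactly the paper's: it checks commutativity of $\mlt$ and invariance of $h$, then says ``the rest of the proof is formally the same as the proof of Lemma~\ref{tensorproductlemma}, so is omitted.'' You have carried out that formal computation and, in doing so, have spotted something the paper glosses over. The Kronecker-product argument of Lemma~\ref{tensorproductlemma} needs each factor to satisfy $\sum_{p,q}c^{i}_{jpq}c^{i}_{kpq} = \kappa_{i}(B_{i})_{jk}$, which, as you say, is precisely the proportionality $K_{i} = c_{i}B_{i}$ of the Killing-type form to the given invariant metric. Your counterexample $\g_{1}=\su(2)\oplus\su(3)$ with $B_{1}$ an invariant positive-definite form not proportional to the Killing form shows this is not a consequence of the stated hypotheses; so the theorem as literally stated is too strong, and the paper's one-line proof does not close the gap.

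In the paper's actual applications the issue disappears: in Corollary~\ref{lietensorcorollary} the $B_{i}$ \emph{are} the Killing forms, and for the imaginary octonions in Example~\ref{imagoctonionexample} the uniqueness of the $G_{2}$-invariant quadratic form forces the proportionality --- exactly as you note. The phrase ``certain anticommutative algebras'' preceding the theorem suggests the author had this restriction in mind. Your proposed remedy (either add $K_{i}=c_{i}B_{i}$ as a hypothesis, or verify it in each case of interest) is the correct one, and with it your argument is complete.
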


\begin{proof}
For decomposable elements $a_{1}\tensor a_{2}, b_{1}\tensor b_{2} \in \g_{1}\tensor \g_{2}$, the multiplication $\mlt$ is defined by $(a_{1}\tensor a_{2})\mlt(b_{1}\tensor b_{2}) = [a_{1}, b_{1}]_{1}\tensor [a_{2}, b_{2}]_{2}$ and is evidently commutative. For decomposable $a_{1}\tensor a_{2}, b_{1}\tensor b_{2}, c_{1}\tensor c_{2} \in \g_{1}\tensor \g_{2}$, 
\begin{align}
\begin{split}
h((a_{1}\tensor a_{2})&\mlt(b_{1}\tensor b_{2}), c_{1}\tensor c_{2}) = h( [a_{1}, b_{1}]\tensor [a_{2}, b_{2}], c_{1}\tensor c_{2}) \\
& = B_{1}([a_{1}, b_{1}], c_{1})B_{2}([a_{2}, b_{2}], c_{2}) = B_{1}(a_{1}, [b_{1}, c_{1}])B_{2}(a_{2}, [b_{2}, c_{2}]) \\
&= h(a_{1}\tensor a_{2}, (b_{1}\tensor b_{2})\mlt(c_{1}\tensor c_{2})).
\end{split}
\end{align}
Hence $h(x\mlt, y)$ is completely symmetric for all $x, y, z \in \g_{1}\tensor \g_{2}$, so it makes sense to define the associated cubic polynomial $P(x)$ by $6P(x) = h(x\mlt x, x)$.

The rest of the proof is formally the same as the proof of Lemma \ref{tensorproductlemma}, so is omitted.
\end{proof}

\begin{corollary}\label{lietensorcorollary}
Let $\g_{1}$ and $\g_{2}$ be compact semisimple real Lie algebras with Killing forms $B_{1}$ and $B_{2}$. Let $h = B_{1}\tensor B_{2}$ be the tensor product bilinear form on the tensor product algebra $(\g_{1}\tensor \g_{2}, \mlt)$ (which is commutative). Then $P \in \pol^{3}(\g_{1}\tensor\g_{2})$ defined for $x \in \g_{1}\tensor \g_{2}$ by $6P(x) = h(x\mlt x, x)$ solves \eqref{einsteinpolynomials} for a nonzero constant.
\end{corollary}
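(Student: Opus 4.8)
The plan is to obtain Corollary \ref{lietensorcorollary} as an immediate consequence of Theorem \ref{anticommutativetensortheorem}: all that is needed is to check the hypotheses of that theorem for compact semisimple real Lie algebras, and the single subtle point is the sign of the Killing form.

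First I would note that a Lie algebra $(\g, [\dum, \dum])$ is tautologically an anticommutative algebra (the bracket is alternating), so the multiplicative structures on $\g_{1}$ and $\g_{2}$ are of the type to which Theorem \ref{anticommutativetensortheorem} applies. Next I would recall the invariance of the Killing form: for $x, y, z \in \g_{i}$, using that $\ad$ is a Lie algebra homomorphism, $\ad[u, v] = [\ad u, \ad v]$, and the cyclicity of the trace,
\begin{align}
B_{i}([x, y], z) = \tr\big(\ad[x, y]\,\ad z\big) = \tr\big([\ad x, \ad y]\,\ad z\big) = \tr\big(\ad x\,[\ad y, \ad z]\big) = B_{i}(x, [y, z]),
\end{align}
so $B_{i}$ is an invariant symmetric bilinear form on $\g_{i}$.

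The one genuine point is positivity. The Killing form of a compact semisimple real Lie algebra is negative definite: compactness makes it negative semidefinite (with respect to an $\Ad$-invariant inner product each $\ad x$ is skew, so $(\ad x)^{2}$ is negative semidefinite and $\tr (\ad x)^{2} \leq 0$), and semisimplicity makes it nondegenerate by Cartan's criterion. Hence $-B_{i}$ is a positive definite invariant inner product on $\g_{i}$, and Theorem \ref{anticommutativetensortheorem} applied to the pair $(\g_{1}, -B_{1})$, $(\g_{2}, -B_{2})$ shows that the cubic polynomial determined by the commutative product $\mlt = \mlt_{1}\tensor\mlt_{2}$ on $\g_{1}\tensor\g_{2}$ and the metric $(-B_{1})\tensor(-B_{2})$ solves \eqref{einsteinpolynomials} for a nonzero constant. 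But $(-B_{1})\tensor(-B_{2}) = B_{1}\tensor B_{2} = h$ (the tensor product of two negative definite forms is positive definite), and the associated cubic polynomial, $6P(x) = h(x\mlt x, x)$, is exactly the one in the statement, so the proof is complete.

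The main obstacle is thus merely bookkeeping with the sign of the Killing form; Theorem \ref{anticommutativetensortheorem} supplies everything else, including the nonvanishing of the constant $\ka$. It is worth recording in one line why semisimplicity rather than mere compactness is required: for a compact Lie algebra with nontrivial center (such as $\mathfrak{u}(n)$) the Killing form is degenerate, so $-B_{i}$ would fail to be an inner product and $h$ would be degenerate.
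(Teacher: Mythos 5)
Your proof is correct and follows the same route as the paper: apply Theorem \ref{anticommutativetensortheorem} to $(\g_{1},-B_{1})$ and $(\g_{2},-B_{2})$, using that the Killing form of a compact semisimple real Lie algebra is negative definite so that the signs cancel in the tensor product. The paper's proof is terser and leaves the definiteness of $-B_i$ as a standard fact, but the substance is identical.
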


\begin{proof}
Because $\g_{1}$ and $\g_{2}$ are compact and semisimple, $-B_{1}$ and $-B_{2}$ are positive definite and invariant, so $h = B_{1}\tensor B_{2}$ is a positive definite symmetric bilinear form and the claim follows from Theorem \ref{anticommutativetensortheorem}.
\end{proof}

\begin{example}\label{so3example}
View $\so(3)$ as the imaginary quaternions, $\im \quat$, equipped with the cross product algebra structure given by the commutator.
Up to a constant factor, the cubic polynomial of the tensor product $\in \quat\tensor \im \quat = \so(3)\tensor \so(3)$ is the determinant of a $3 \times 3$ matrix as in \eqref{nahmpolyso3}.
\end{example}

The extra generality of Theorem \ref{anticommutativetensortheorem} relative to Corollary \ref{lietensorcorollary} is not superfluous. Theorem \ref{anticommutativetensortheorem} applies to the tensor product of a compact semisimple real Lie algebra with the $7$-dimensional cross product algebra given by the imaginary octonions equipped with the commutator bracket.

\begin{example}\label{imagoctonionexample}
Up to a constant factor the cubic polynomial $P(z)$ determined by the tensor product $\im \quat \tensor \im \cayley$ where the imaginary octonions $\im \cayley$ are equipped with the cross-product algebra structure given by the commutator, is the $21$-variable polynomial $P(z) = \re(z_{1}(z_{2}z_{3}))$ for $z = (z_{1}, z_{2}, z_{3}) \in \im \cayley\oplus\im \cayley\oplus \im \cayley$.
The automorphism group of the octonions is the $14$-dimensional compact real form of the simple Lie group of type $G_{2}$. It preserves the cross-product on $\im \cayley$, so acts as automorphisms of $P$.
\end{example}

\begin{remark}
The commutative algebras associated to the cubic polynomials of Examples \ref{so3example} and \ref{imagoctonionexample} are among the Hsiang algebras defined and studied by Tkachev in \cite{Tkachev-hsiang}; see section $7$ of \cite{Tkachev-summary} for a summary situating these examples in the general theory.
\end{remark}

\section{An invariant of solutions}
Let $(\alg, h)$ be a Euclidean vector space. For $P \in \pol^{3}(\alg)$ define
\begin{align}\label{extremepdefined}
\extreme(P) = \argmax_{x \in \sphere_{h}(1)}P(x)
\end{align}
to be the set points at which $P$ attains its maximum on $h$-unit sphere $\sphere_{h}(1) = \{x:|x|^{2}_{h} = 1\}$. When helpful, the dependence on $h$ is indicated with a subscript, as in $\extreme_{h}(P)$.

\begin{lemma}\label{mkclemma}
Let $(\alg, h)$ be a Euclidean vector space. For $P \in \pol^{3}(\alg)$ not identically zero, the number
\begin{align}\label{mkcdefined}
\mkc(P) = \sup_{e \in \extreme(P)}\tfrac{|\hess P(e)|^{2}_{h}}{36P(e)^{2}} 
\end{align}
is constant on the $CO(h)$ orbit of $P$. If $P$ solves \eqref{einsteinpolynomials} with constant $\ka \neq 0$, then
\begin{align}\label{einsteinmkc}
\mkc(P) = \frac{\ka}{\left(\max_{x \in \sphere_{h}(1)}6P(x)\right)^{2}}.
\end{align}
\end{lemma}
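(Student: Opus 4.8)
The statement has two parts: (i) $\mkc(P)$ is a $CO(h)$-invariant, and (ii) for solutions of \eqref{einsteinpolynomials} the explicit formula \eqref{einsteinmkc} holds. I would treat them separately, doing (ii) first since it is essentially a substitution and motivates the normalization in (i).

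First I would establish the invariance. The group $CO(h) = GL(1,\rea)\times O(h)$ acts, and I would check the two factors separately. For $g \in O(h)$, the action $(g\cdot P)(x) = P(g^{-1}x)$ preserves the $h$-unit sphere and commutes with $\hess$ in the sense that $\hess(g\cdot P)(x) = g\cdot\big((\hess P)(g^{-1}x)\big)$, an $O(h)$-conjugate of $(\hess P)(g^{-1}x)$; since $|\cdot|_h^2$ is $O(h)$-invariant, the ratio $|\hess P(e)|_h^2/(36P(e)^2)$ at $e$ equals the same ratio for $g\cdot P$ at $ge$, and $e \mapsto ge$ is a bijection $\extreme(P)\to\extreme(g\cdot P)$. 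Hence the supremum is unchanged. For the scaling factor $r \in GL(1,\rea)$, with $(r\cdot P)(x) = P(r^{-1}x)$: on the unit sphere $(r\cdot P)(x) = r^{-3}P(r^{-1}x)$... more carefully, $\extreme(r\cdot P) = \extreme(P)$ since scaling a positive constant (namely $r^{-3}$, or $|r|^{-3}$ with an extra sign/relabeling if $r<0$—one can restrict to $r>0$ on each orbit up to replacing $P$ by $-P$, or note $\extreme$ of a negative multiple is $\mathbb{M}$ of the negative which still gives the same ratio since everything is squared) times $P$ does not move the argmax; and at $e \in \extreme(P)$, $(\hess(r\cdot P))(e) = r^{-2}(\hess P)(e)$ while $(r\cdot P)(e) = r^{-3}P(e)$... wait, I need the ratio $|\hess(r\cdot P)(e)|^2/(36(r\cdot P)(e)^2)$ evaluated at the \emph{argmax of $r\cdot P$}. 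Since that argmax is $e$ again (for $r>0$), this is $r^{-4}|\hess P(e)|^2/(36 r^{-6}P(e)^2) = r^2 \cdot |\hess P(e)|^2/(36P(e)^2)$ — so this is NOT invariant unless there is a compensating factor. Let me reconsider: actually I suspect the $\hess$ here is $DdP$ evaluated as a function, so $(\hess(r\cdot P))(x) = r^{-2}(\hess P)(r^{-1}x)$, and at a point of the unit sphere $e$ this is $r^{-2}(\hess P)(r^{-1}e)$; but $r^{-1}e$ is no longer on the unit sphere, and the whole point is that $P(e)$ and $\hess P(e)$ are both being compared at the same point. The resolution must be that the ratio is a ratio of a degree-$2$ quantity squared (degree $4$) to a degree-$2$ polynomial ($P(e)^2$ is degree $6$)... this does not match, so I must be misreading and in fact $\hess P(e)$ means the Hessian tensor at $e$, whose entries are linear in $e$, hence $|\hess P(e)|_h^2$ is homogeneous of degree $2$ in $e$, and $P(e)^2$ is degree $6$; the ratio is degree $-4$, hence genuinely changes under scaling of $e$. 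But $e$ is constrained to the unit sphere, so the "scaling" issue is that $\extreme(r\cdot P)$ consists of unit vectors too. The key realization: $\extreme$ is defined on the unit sphere, so for $r>0$, $\extreme(r\cdot P) = \extreme(P)$ exactly (same maximizers on the same sphere, since $r\cdot P = r^{-3}P$ on all of $\alg$ wait no — $(r\cdot P)(x) = P(r^{-1}x) = r^{-3}P(x)$ only if $P$ is homogeneous of degree $3$, which it is, so $r\cdot P = r^{-3}P$ as functions, a positive scalar multiple, so same argmax). Then $\hess(r\cdot P) = r^{-3}\hess P$ as tensor-valued functions, so $|\hess(r\cdot P)(e)|^2 = r^{-6}|\hess P(e)|^2$ and $(r\cdot P)(e)^2 = r^{-6}P(e)^2$; the ratio is invariant. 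Good — so the mistake above was double-counting the homogeneity. This is the clean argument and I would present it this way, with the one caveat that for $r<0$, $r\cdot P = r^{-3}P$ is a \emph{negative} multiple so $\extreme(r\cdot P) = \mathbb{M}(-|r|^{-3}P)$; handle this either by restricting to the identity component $CO^+$, or by noting the defining ratio only involves $P(e)^2$ and $\hess P(e)$ appears squared in norm, so replacing $P$ by $-P$ (equivalently applying $-\id \in O(h)$ when $n$... or just directly) changes nothing — I would spell out that $\mkc(-P) = \mkc(P)$ because $\extreme(-P) = -\extreme(P)$ (by oddness of $P$, $= \extreme(P)$ up to the sphere's central symmetry, but in any case the ratio is even).

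Second, for the formula \eqref{einsteinmkc}: if $P$ solves \eqref{einsteinpolynomials} with constant $\ka$, then $|\hess P(x)|_h^2 = \ka|x|_h^2$ \emph{identically}, so for any $e \in \sphere_h(1)$, in particular for $e \in \extreme(P)$, we have $|\hess P(e)|_h^2 = \ka$. Therefore the ratio inside the supremum is $\ka/(36 P(e)^2)$, and this is maximized over $e \in \extreme(P)$ by minimizing $P(e)^2$; but on $\extreme(P)$, $P(e)$ is constant and equal to $\max_{x\in\sphere_h(1)}P(x) =: M$. Hence $\mkc(P) = \ka/(36 M^2) = \ka/(6M)^2 = \ka/\big(\max_{x\in\sphere_h(1)}6P(x)\big)^2$, which is \eqref{einsteinmkc}. (One should note $M > 0$: since $P$ is odd and not identically zero it takes a positive value somewhere on the sphere, so $M>0$, and the formula is well-posed; also $\ka>0$ in the Euclidean case as recalled after \eqref{einsteinpolynomials}, consistent with $\mkc \geq 0$.)

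The only genuinely delicate point is the bookkeeping of homogeneity degrees in the invariance argument — getting straight that for homogeneous $P$ the map $P \mapsto r\cdot P$ is multiplication by the scalar $r^{-3}$ (not a reparametrization of the sphere), so both numerator and denominator of the defining ratio scale by $r^{-6}$ and cancel; and the parallel point for $O(h)$, that conjugation by an orthogonal map preserves the complete-contraction norm. Everything else — the $O(h)$-equivariance of $\hess$, the constancy of $P$ on $\extreme(P)$, the identity substitution $|\hess P(e)|_h^2 = \ka$ — is immediate. So the plan is: (1) reduce invariance to the two generating one-parameter families $O(h)$ and positive scalars (plus the trivial observation $\mkc(-P)=\mkc(P)$ to cover all of $CO(h)$); (2) verify each by the homogeneity/equivariance remarks above; (3) derive \eqref{einsteinmkc} by plugging the identity $|\hess P|_h^2 = \ka|x|_h^2$ into \eqref{mkcdefined} and using that $P$ is constant on its set of sphere-maximizers.
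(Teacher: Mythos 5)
Your proof is correct and follows the paper's overall strategy: verify $O(h)$-invariance and scale-invariance separately, then derive \eqref{einsteinmkc} by using that $|\hess P(e)|^2_h=\ka$ holds identically, so the supremum in \eqref{mkcdefined} is attained at every $e\in\extreme(P)$, where $P$ is constant. The one place you diverge in implementation is the scale-invariance step: you use the clean observation that for cubic homogeneous $P$ and $r>0$ the action $(r\cdot P)(x)=P(r^{-1}x)$ is simply multiplication by the scalar $r^{-3}$, hence $\extreme(r\cdot P)=\extreme(P)$ and both $|\hess P(e)|^2_h$ and $P(e)^2$ acquire the same factor $r^{-6}$, which cancels. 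The paper instead fixes $P$ and replaces $h$ by $\tilde h=r^2h$, tracking how the unit sphere, $\extreme_{\tilde h}(P)$, and the tensor norm each rescale; this is equivalent but heavier on bookkeeping, and the paper's displayed computation actually contains small slips (the correct correspondence is $e\in\extreme_{\tilde h}(P)$ if and only if $re\in\extreme_h(P)$, not $r^{-1}e$, and the stated intermediate powers $r^4/r^6$ do not balance), which your direct homogeneity argument sidesteps entirely. Your explicit treatment of $r<0$ via $\mkc(-P)=\mkc(P)$ is a detail the paper leaves implicit; as you note it is subsumed by $O(h)$-invariance because $-\id\in O(h)$.
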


Note that part of the conclusion of Lemma \ref{mkclemma} is that $\mkc(P)$ is unchanged if $h$ is replaced by $e^{t}h$. The numerical factor $36$ is a normalization.
\begin{proof}
Because $P$ is nontrivial and $\sphere_{h}$ is compact, $\extreme(P)$ is nonempty and if $e, f \in\extreme(P)$, then $P(e) = P(f)$. Since $|\hess P(e)|^{2}_{h} \leq \max_{x \in \sphere_{h}(1)}|\hess P(x)|^{2}_{h}$, the supremum in \eqref{mkcdefined} exists. It is clear from the definition of $\mkc(P)$ that $\mkc(g\cdot P) = \mkc(P)$ for $g \in O(h)$. If $r > 0$ and $\tilde{h} = r^{2}h$, then $e \in \extreme_{\tilde{h}}(P)$ if and only if $r^{-1}e\in \extreme_{h}(P)$, so
\begin{align}
\begin{split}
&\sup_{e \in \extreme_{\tilde{h}}(P)}\tfrac{|\hess P(e)|^{2}_{\tilde{h}}}{P(e)^{2}} = \sup_{f \in  \extreme_{h}(P)}\tfrac{|\hess P(rf)|^{2}_{r^{2}h}}{P(rf)^{2}}
= \sup_{f \in  \extreme_{h}(P)}\tfrac{r^{4}|\hess P(f)|^{2}_{h}}{r^{6}P(f)^{2}} = \sup_{e \in  \extreme_{h}(P)}\tfrac{|\hess P(e)|^{2}_{h}}{P(e)^{2}} ,
\end{split}
\end{align}
showing that $\mkc(P)$ is well defined and constant on the $CO(h)$ orbit of $P$.

Suppose $P$ solves \eqref{einsteinpolynomials} with constant $\ka \neq 0$. Then $|\hess P(e)|^{2}_{h} = \ka |e|^{2}_{h} = \ka$ for all $e \in  \extreme(P)$, so the supremum in \eqref{mkcdefined} is attained at any $e\in \extreme(P)$ and equals \eqref{einsteinmkc}.
\end{proof}

By Lemma \ref{mkclemma} two solutions, $P$ and $Q$, of \eqref{einsteinpolynomials} for which $\mkc(P) \neq \mkc(Q)$ are not $CO(h)$-equivalent.

\begin{lemma}\label{mkcboundlemma}
Let $(\alg, h)$ be an $n$-dimensional Euclidean vector space. If $P \in \pol^{3}(\alg)$ is harmonic, then
\begin{align}\label{mkclowerbound}
\tfrac{n}{n-1} \leq \mkc(P),
\end{align}
with equality if and only if $P(e)_{ij} = \tfrac{6P(e)}{n-1}(ne_{i}e_{j} - h_{ij})$ for all $e \in \extreme(P)$.
\end{lemma}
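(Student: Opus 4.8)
The plan is to work at a point where $P$ attains its maximum on the unit sphere and to reduce the inequality to the elementary bound $|A|_{h}^{2} \geq (\tr A)^{2}/m$ for an $h$-self-adjoint operator $A$ on an $m$-dimensional space, applied on the hyperplane $h$-orthogonal to the maximizer.

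First I would fix $e \in \extreme(P)$ and write $p = P(e)$; since $P$ is odd and not identically zero, $p = \max_{\sphere_{h}(1)}P > 0$. The Lagrange multiplier condition at the constrained maximum gives $dP(e) = \la\, h(e, \dum)$ for some $\la \in \rea$, and pairing with $e$ together with the Euler identity $x^{i}P_{i}(x) = 3P(x)$ forces $\la = 3p$, so $P_{i}(e) = 3p\, e_{i}$. Since $P$ is cubic, $P(x)_{ij} = P_{ijk}x^{k}$ depends linearly on $x$, so $P(e)_{ij}e^{j} = P_{ijk}e^{j}e^{k} = 2P_{i}(e) = 6p\, e_{i}$; thus $e$ is an eigenvector of $\hess P(e)$ with eigenvalue $6p$. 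Harmonicity of $P$ gives $h^{ij}P(e)_{ij} = (\lap_{h}P)(e) = 0$, so $\hess P(e)$ is trace free.

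Next I would decompose $\alg = \rea e \oplus \eperp$. Because $\hess P(e)$ is $h$-self-adjoint with eigenvector $e$, it preserves $\eperp$; let $H'$ denote its restriction to $\eperp$, a space of dimension $n-1$. Then $|\hess P(e)|_{h}^{2} = 36p^{2} + |H'|_{h}^{2}$ while $0 = \tr\hess P(e) = 6p + \tr H'$, so $\tr H' = -6p$. Cauchy--Schwarz gives $|H'|_{h}^{2} \geq (\tr H')^{2}/(n-1) = 36p^{2}/(n-1)$, hence $|\hess P(e)|_{h}^{2} \geq 36p^{2}\bigl(1 + \tfrac{1}{n-1}\bigr) = \tfrac{n}{n-1}\,36P(e)^{2}$. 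Dividing by $36P(e)^{2}$ and taking the supremum over $e \in \extreme(P)$ yields $\mkc(P) \geq n/(n-1)$.

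For the equality case, $\mkc(P) = n/(n-1)$ forces $|\hess P(e)|_{h}^{2}/(36P(e)^{2}) = n/(n-1)$ for every $e \in \extreme(P)$, since the supremum equals $n/(n-1)$ while every term is at least $n/(n-1)$; hence Cauchy--Schwarz is an equality for each such $e$, which happens exactly when $H' = \tfrac{\tr H'}{n-1}\Id_{\eperp} = -\tfrac{6p}{n-1}\Id_{\eperp}$. Together with $\hess P(e)e = 6p\,e$ this is equivalent to $P(e)_{ij} = \tfrac{6p}{n-1}(n e_{i}e_{j} - h_{ij})$, the asserted condition; conversely, this formula gives $|\hess P(e)|_{h}^{2} = \tfrac{n}{n-1}36P(e)^{2}$ at each $e$, so $\mkc(P) = n/(n-1)$. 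I do not anticipate a real obstacle; the only care needed is with the normalization constants in the Euler/Lagrange step, and it is worth noting that maximality of $e$ enters only to guarantee $P(e) \neq 0$ — the second-order condition at $e$ plays no role in the bound.
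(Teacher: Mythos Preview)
Your proof is correct and follows essentially the same route as the paper: both arguments use the Lagrange/Euler identities to identify $e$ as an eigenvector of $\hess P(e)$ with eigenvalue $6P(e)$, reduce via the trace-free condition to the orthogonal complement, and apply Cauchy--Schwarz there. The only difference is cosmetic---the paper normalizes by working with $L = (6P(e))^{-1}\hess P(e)$ and its eigenvalues on $\eperp$, whereas you work directly with $\hess P(e)$ and its restriction $H'$.
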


\begin{proof}
Let $e \in \extreme(P)$. Then there is $\theta \in \rea$ such that $P(e)_{i} = 2\theta e_{i}$. Contracting with $e$ shows $\theta = \tfrac{3}{2}P(e)$ so that $P(e)_{i} = 3P(e)e_{i}$. The endomorphism $L_{i}\,^{j} = \tfrac{1}{6P(e)}P(e)_{i}\,^{j}$ is $h$-self-adjoint. There holds $L_{i}\,^{j}e_{j} = \tfrac{1}{6P(e)}P(e)_{ij}e^{j} = \tfrac{P(e)_{i}}{3P(e)} = e_{i}$. In particular, if $v \in \spn\{e\}^{\perp} = \{u \in \alg: h(e, u) = 0\}$, then $v^{i}L_{i}\,^{j}e_{j} = e_{i}v^{i} = 0$, so $L_{i}\,^{j}$ preserves $\spn\{e\}^{\perp}$. Let $\la_{1}, \dots, \la_{n-1}$ be the eigenvalues of $L_{i}\,^{j}$ on $\spn\{e\}^{\perp}$. Because $P$ is harmonic $L_{p}\,^{p} = 0$, so, by the preceding, $\sum_{i = 1}^{n-1}\la_{i} = -1$. By the Cauchy-Schwarz inequality, $(n-1)\sum_{i = 1}^{n-1}\la_{i}^{2} \geq (\sum_{i = 1}^{n-1}\la_{i})^{2} = 1$, with equality if and only if all the $\la_{i}$ are equal to $-\tfrac{1}{n-1}$. Consequently, 
\begin{align}\label{mckb1}
\tfrac{|\hess P(e)|^{2}}{36P(e)^{2}} = L_{p}\,^{q}L_{q}\,^{p} = 1 + \sum_{i = 1}^{n-1}\la_{i}^{2} \geq \tfrac{n}{n-1},
\end{align}
with equality if and only if $\la_{i} = -1/(n-1)$ for $1 \leq i \leq n-1$, and this holds if and only if $P(e)_{ij} = 6P(e)L_{ij} = \tfrac{6P(e)}{n-1}(ne_{i}e_{j} - h_{ij})$.
\end{proof}

\begin{remark}
It would be interesting to characterize those harmonic $P$ for which there is equality in \eqref{mkclowerbound}. Corollary \ref{ndimexistencecorollary} shows that equality holds in the bound \eqref{mkclowerbound} for the simplicial polynomial $P_{n}$ defined in \eqref{simplicialpoly}. On the other hand, together Lemma \ref{stsextremelemma} and Theorem \ref{ststheorem} show that there is a solution of \eqref{einsteinpolynomials} that is not equivalent to $P_{n}$ but for which equality holds in \eqref{mkclowerbound}.
\end{remark}

\begin{corollary}
Let $(\alg, h)$ be an $n$-dimensional Euclidean vector space. If $P \in \pol^{3}(\alg)$ solves \eqref{einsteinpolynomials} with constant $\ka$ then
\begin{align}
P(x) \leq \sqrt{\tfrac{\ka(n-1)}{n}}|x|^{3}
\end{align}
for all $x \in \alg$.
\end{corollary}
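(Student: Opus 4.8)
The plan is to read the bound off Lemmas~\ref{mkclemma} and~\ref{mkcboundlemma} with essentially no extra work. Since $P$ solves \eqref{einsteinpolynomials}, in particular $\lap_{h}P = 0$, so $P$ is harmonic and Lemma~\ref{mkcboundlemma} applies, giving $\tfrac{n}{n-1} \leq \mkc(P)$. At the same time $P$ solves the second equation of \eqref{einsteinpolynomials} with the same constant $\ka$, so \eqref{einsteinmkc} of Lemma~\ref{mkclemma} evaluates $\mkc(P) = \ka\big/\bigl(\max_{x \in \sphere_{h}(1)}6P(x)\bigr)^{2}$. Combining the two, $\bigl(\max_{x \in \sphere_{h}(1)}6P(x)\bigr)^{2} \leq \tfrac{\ka(n-1)}{n}$. (Here $\ka>0$ because $h$ is Euclidean, so the square root below is real.)

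The second step is to convert this into the asserted pointwise estimate by homogeneity. Because $\ka \neq 0$ forces $\hess P \not\equiv 0$, hence $P \not\equiv 0$, and because $P$ is an odd function while $\sphere_{h}(1)$ is compact and invariant under $x \mapsto -x$, the number $M := \max_{x \in \sphere_{h}(1)}P(x)$ is strictly positive; thus $\max_{x \in \sphere_{h}(1)}6P(x) = 6M$ and therefore $6M \leq \sqrt{\ka(n-1)/n}$. For $x \neq 0$, write $x = |x|\,u$ with $u \in \sphere_{h}(1)$; degree-three homogeneity gives $P(x) = |x|^{3}P(u) \leq |x|^{3}M \leq \tfrac{1}{6}|x|^{3}\sqrt{\ka(n-1)/n} \leq |x|^{3}\sqrt{\ka(n-1)/n}$, and the inequality is trivial at $x = 0$. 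This is the claimed bound.

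No genuine obstacle is expected: the corollary is a bookkeeping consequence of the two lemmas. The only points that call for a moment of care are that the harmonicity needed to invoke Lemma~\ref{mkcboundlemma} is already part of \eqref{einsteinpolynomials}, and that one should verify $M > 0$ before replacing $\max_{\sphere_{h}(1)} 6P$ by $6M$. One may also remark that the argument in fact yields the sharper inequality $6P(x) \leq \sqrt{\ka(n-1)/n}\,|x|^{3}$, with equality at a maximizer $e \in \extreme(P)$ precisely when $P(e)_{ij} = \tfrac{6P(e)}{n-1}(n e_{i}e_{j} - h_{ij})$, that is, exactly when $\mkc(P)$ attains its minimal value $\tfrac{n}{n-1}$; by Corollary~\ref{ndimexistencecorollary} this occurs for the simplicial polynomial $P_{n}$.
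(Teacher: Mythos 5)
Your proof is correct and follows exactly the paper's route: combine the lower bound $\tfrac{n}{n-1}\le\mkc(P)$ from Lemma~\ref{mkcboundlemma} with the evaluation $\mkc(P)=\ka\big/\bigl(\max_{\sphere_{h}(1)}6P\bigr)^2$ from \eqref{einsteinmkc}, then invoke homogeneity. Your closing observation is also right: the argument in fact delivers the stronger bound $6P(x)\le\sqrt{\ka(n-1)/n}\,|x|^3$, so the corollary as printed is weaker than what the lemmas give by a factor of $6$ (almost certainly a slip in the statement), and the equality analysis you cite from Lemma~\ref{mkcboundlemma} and Corollary~\ref{ndimexistencecorollary} is consistent with that sharper form.
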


\begin{proof}
This follows from \eqref{mkclowerbound}, \eqref{einsteinmkc} of Lemma \ref{mkclemma}, and the homogeneity of $P$.
\end{proof}

\begin{remark}
Lemma \ref{mkcmultlemma} gives a criterion that can sometimes be used to show inequivalence of solutions of \eqref{einsteinpolynomials}. 
\end{remark}

\begin{lemma}\label{mkcmultlemma}
Let $(\alg, g)$ and $(\balg, h)$ be Euclidean vector spaces. Suppose $P \in \pol^{3}(\alg)$ and $Q \in \pol^{3}(\balg)$ solve \eqref{einsteinpolynomials}. Then 
\begin{align}\label{mkcpq}
\mkc(P \tensor Q) \leq \mkc(P)\mkc(Q).
\end{align}
\end{lemma}
\begin{proof}
By Lemma \ref{tensorproductlemma}, $P \tensor Q$ solves \eqref{einsteinpolynomials} on $(\alg\tensor \balg, g \tensor h)$ with constant $\ka_{P}\ka_{Q}$ where $P$ and $Q$ solve \eqref{einsteinpolynomials} with constants $\ka_{P}$ and $\ka_{Q}$. By \eqref{einsteinmkc} of Lemma \ref{mkclemma},
\begin{align}\label{mkcpq1}
\begin{split}
\mkc(P \tensor Q) = \tfrac{\ka_{P}\ka_{Q}}{\left(\max_{z \in \sphere_{g\tensor h}(1)}6(P\tensor Q)(z)\right)^{2}}.
\end{split}
\end{align}
By \eqref{tensorpol},
\begin{align}\label{maxpq}
\begin{split}
\max_{z \in \sphere_{g\tensor h}(1)}&6(P\tensor Q)(z) \geq \max_{x\tensor y: x \in \sphere_{g}(1), y \in \sphere_{h}(1)}6(P\tensor Q)(x \tensor y) \\
&=  \max_{x\tensor y: x \in \sphere_{g}(1), y \in \sphere_{h}(1)}(6P(x))(6Q(y)) = \left(\max_{x \in \sphere_{g}(1)}6P(x) \right)\left(\max_{y \in \sphere_{h}(1)}6Q(y) \right).
\end{split}
\end{align}
Substituting \eqref{maxpq} in \eqref{mkcpq1} and using \eqref{einsteinmkc} of Lemma \ref{mkclemma} yields
\begin{align}\label{mkcpq2}
\begin{split}
\mkc(P \tensor Q) \leq  \tfrac{\ka_{P}\ka_{Q}}{ \left(\max_{x \in \sphere_{g}(1)}6P(x) \right)^{2}\left(\max_{y \in \sphere_{h}(1)}6Q(y) \right)^{2}} = \mkc(P)\mkc(Q),
\end{split}
\end{align}
which shows \eqref{mkcpq}.
\end{proof}

\section{Associativity and conformal associativity equations}\label{associativitysection}
Let $(\alg, h)$ be an $n$-dimensional Euclidean vector space. The \emph{nonassociativity tensor} associated with $F \in \cinf(\alg)$ is defined by
\begin{align}
\ass(F)_{ijkl} = 2F_{l[i}\,^{p}F_{j]kp},
\end{align}
where $F_{i_{1}\dots i_{k}} = D_{i_{1}}\dots D_{i_{k}}F$ and indices are raised and lowered using $h_{ij}$ and $h^{ij}$. 

Let $L(x)_{i}\,^{j} \in \eno(\alg)$ be the $h$-self-adjoint endomorphism defined by $L(x)_{i}\,^{p}h_{pj} = F(x)_{ij}$. Then $x^{i}y^{j}\ass(F)_{ijkl} = [L(x), L(y)]_{kl}$, where $[L(x), L(y)]$ denotes the commutator of endomorphisms. This observation motivates calling $\ass(F)$ the nonassociativity tensor, for $\ass(F)$ vanishes if and only if the commutative multiplication $\mlt$ defined by $x \mlt y= L(x)y$ is associative. The equations $\ass(F)_{ijkl} = 0$ are called the \emph{associativity} or \emph{WDVV} equations; see \cite{Chen-Kontsevich-Schwarz} for background and references.

A straightforward computation shows that the curvature $R_{ijk}\,^{l}$ of the torsion-free affine connection $\nabla = D + tF_{ij}\,^{k}$ satisfies $R_{ijkl} = t^{2}\ass(F)_{ijkl}$ where $R_{ijkl} = R_{ijk}\,^{p}h_{pl}$.

\begin{lemma}\label{asseqlemma}
Let $(\alg, h)$ be an $n$-dimensional Euclidean vector space. For $P \in \pol^{3}(\alg)$ there holds $\ass(P)_{ijkl} = 0$ if and only if $P$ is orthogonally equivalent to a polynomial of the form $\tfrac{1}{6}\sum_{i = 1}^{n}\la_{i}x_{i}^{3}$ for constants $\la_{1}, \dots, \la_{n} \in \rea$. If, moreover, $|\hess P|^{2} = \ka |x|^{2}$ for some $0 < \ka \in \rea$, then $\la_{1} = \dots = \la_{n} = \sqrt{\ka}$.
\end{lemma}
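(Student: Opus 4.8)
The plan is to exploit the interpretation of $\ass(P)$ via the commutator of the multiplication operators $L(x)$. By the remark preceding the statement, $\ass(P)_{ijkl} = 0$ is equivalent to the condition that the commutative multiplication $x \mlt y = L(x)y$ determined by $\hess P$ is associative, i.e.\ $[L(x), L(y)] = 0$ for all $x, y \in \alg$. Thus the family $\{L(x): x \in \alg\}$ is a commuting family of $h$-self-adjoint endomorphisms of the Euclidean space $\alg$. By the spectral theorem for commuting self-adjoint operators, there is an $h$-orthonormal basis $e_{1}, \dots, e_{n}$ simultaneously diagonalizing every $L(x)$; equivalently, in this basis every Hessian $\hess P(x)$ is diagonal. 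First I would fix such a basis and write $\hess P(x) = \operatorname{diag}(\mu_{1}(x), \dots, \mu_{n}(x))$ where each $\mu_{i}$ is a linear form in $x$ (since the components of $\hess P$ are linear). Then $P_{ij}(e_{k}) = 0$ for $i \neq j$, which forces all mixed third partials $P_{ijk}$ with not all indices equal to vanish; hence $6P(x) = \sum_{i=1}^{n} \la_{i} x_{i}^{3}$ for the constants $\la_{i} = P_{iii}$. The converse direction is an immediate computation: for $P = \tfrac16\sum \la_i x_i^3$ the Hessian $\hess P(x) = \operatorname{diag}(\la_1 x_1, \dots, \la_n x_n)$ is diagonal, so any two Hessians commute and $\ass(P) = 0$.

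For the second assertion, suppose in addition $|\hess P|^{2}_{h} = \ka|x|^{2}_{h}$ with $\ka > 0$. In the diagonalizing orthonormal coordinates, $|\hess P(x)|^{2}_{h} = \sum_{i=1}^{n} \la_{i}^{2} x_{i}^{2}$, and this must equal $\ka\sum_{i=1}^{n} x_i^2$ as a quadratic form, so $\la_{i}^{2} = \ka$ for every $i$, hence $\la_i = \pm\sqrt{\ka}$. To upgrade this to $\la_i = \sqrt{\ka}$ for all $i$, I would observe that the signs can be normalized within the orthogonal group: replacing $e_i$ by $-e_i$ (an element of $O(h)$) sends $x_i^3 \mapsto -x_i^3$, hence flips the sign of $\la_i$ while preserving orthonormality of the basis and leaving the remaining $\la_j$ unchanged. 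Composing such reflections, $P$ is orthogonally equivalent to $\tfrac16\sqrt{\ka}\sum_{i=1}^{n} x_i^3$, i.e.\ one may take $\la_1 = \dots = \la_n = \sqrt{\ka}$.

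The only genuinely substantive point is the simultaneous diagonalization step, and it is not really an obstacle — it is the classical spectral theorem for a commuting family of self-adjoint operators on a finite-dimensional Euclidean space, applied to $\{L(x)\}$. Everything else is bookkeeping: translating $\ass(P) = 0$ into the commutativity statement (already done in the paragraph before the lemma), reading off that a simultaneously diagonal Hessian forces a diagonal cubic, and normalizing signs by coordinate reflections. One small care is to note that the number of distinct joint eigenspaces and their dimensions are irrelevant: a basis adapted to the joint eigenspace decomposition diagonalizes all $L(x)$ regardless, because each $L(x)$ is scalar on each joint eigenspace. I would also remark that the statement about $\la_i = \sqrt{\ka}$ presupposes $\ka > 0$, which in the Euclidean (Riemannian) signature is automatic, as noted after \eqref{einsteinpolynomials} in the introduction.
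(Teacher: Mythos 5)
Your proposal is correct and follows essentially the same route as the paper: both translate $\ass(P)=0$ into commutativity of the self-adjoint family $\{L(x)\}$, invoke simultaneous orthogonal diagonalization, and then use the full symmetry of $P_{ijk}$ to force the diagonal entries to be $\la_i x_i$, with the last claim following from $|\hess P(e_i)|^2 = \la_i^2$. The one difference is cosmetic but worth noting: the paper's proof stops at $\la_i^2 = \ka$ and asserts ``this shows the last claim,'' leaving implicit the normalization $\la_i = \sqrt{\ka}$ via the reflections $e_i \mapsto -e_i$, whereas you spell that step out — a small but genuine improvement in completeness.
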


\begin{proof}
That $\ass(P)_{ijkl} = 0$ means that the  family of $h$-self-adjoint endomorphisms $\{L(x):x \in \alg\}$ is commuting, so is simultaneously orthogonally diagonalizable. This means that there is an $h$-orthonormal basis $\{e_{1}, \dots, e_{n}\}$ with respect to which the matrix of $L(x)$ is diagonal of the form
\begin{align}
\begin{pmatrix} \ell_{1}(x) & &\\
& \ddots &\\
&&\ell_{n}(x)
\end{pmatrix},
\end{align}
where $\ell_{1}, \dots, \ell_{n} \in \alg^{\ast}$ are linear forms. Let $x^{1}, \dots x^{n}$ be coordinates with respect to $\{e_{1}, \dots, e_{n}\}$. If $i \neq j$, then $\tfrac{\pr}{\pr x^{i}}\ell_{j}(x) = \tfrac{\pr}{\pr x^{i}}L(x)_{j}\,^{j} = \tfrac{\pr}{\pr x^{j}}L(x)_{i}\,^{j} = 0$, so $\ell_{i}(x)  = \la_{i}x_{i}$ for some $\la_{i} \in \rea$. That $P$ has the form claimed now follows from the homogeneity of $P$.

If, moreover, $|\hess P|^{2} = \ka |x|^{2}$ for some $0 < \ka \in \rea$,then $\ka = |\hess P(e_{i})|^{2} = \la_{i}^{2}$ for $1 \leq i \leq n$. This shows the last claim.
\end{proof}

Let $\rictr(\ass(F))_{ij} = \ass(F)_{pij}\,^{p}$ and $\scal(\ass(F)) = \rictr(\ass(F))_{p}\,^{p}$. When $n \geq 3$, the \emph{conformal nonassociativity tensor} is the completely trace-free tensor defined by 
\begin{align}
\cass(F)_{ijkl} = \ass(F)_{ijkl} + \tfrac{2}{n-2}\left(h_{k[i}\rictr(\ass(F))_{j]l} - h_{l[i}\rictr(\ass(F))_{j]k}\right) - \tfrac{2}{(n-1)(n-2)}\scal(\ass(F))h_{k[i}h_{j]l}.
\end{align}
(It is defined in the same way as the Weyl tensor of a Riemannian metric.) 
A $P \in \pol^{3}(\alg)$ is \emph{associative} or \emph{conformally associative} if $\ass(P)_{ijkl} = 0$ or $\cass(P)_{ijkl} = 0$.

\begin{example}\label{epcassexample}
If $F \in \cinf(\alg)$ solves \eqref{einsteinpolynomials} with constant $\ka$, then $\rictr(\ass(F))_{ij} = -\ka h_{ij}$ and $\scal(\ass(F)) = -\ka n$, so 
\begin{align}
\ass(F)_{ijkl} = 2F_{l[i}\,^{p}F_{j]kp} - \tfrac{2\ka}{n-1}h_{k[i}h_{j]l}.
\end{align}
Equivalently
\begin{align}\label{casscommutator}
x^{i}y^{j}\cass(F)_{ijkl} = [L(x), L(y)]_{kl} - \tfrac{2\ka}{n-1}x_{[k}y_{l]}.
\end{align}
\end{example}

The \emph{affine extension} of $P \in \pol^{3}(\alg)$ is the polynomial $\hat{P} \in \pol^{3}(\alg \oplus \rea)$ defined by
\begin{align}
\hat{P}(x, r) = \tfrac{1}{6}r^{3} + \tfrac{1}{2}r|x|^{2} + P(x).
\end{align}

\begin{lemma}\label{confextautlemma}
Let $(\alg, h)$ be a Euclidean vector space. The affine extensions of $P, Q \in \pol^{3}(\alg)$ are orthogonally equivalent with respect to the metric $\hat{h}$ on $\hat{\alg} = \alg \oplus \rea$ defined by $\hat{h}((x, a), (y, b)) = h(x, y) + ab$ for $(x, a), (y, b) \in \hat{\alg}$ if and only if $P$ and $Q$ are $h$-orthogonally equivalent.
\end{lemma}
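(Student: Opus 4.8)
The plan is to treat the two implications separately: the backward one is immediate, and the forward one rests on an intrinsic description of the distinguished line $\rea \subset \hat{\alg}$.

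For the easy direction, suppose $Q = g\cdot P$ with $g \in O(h)$. Extend $g$ to $\hat{g} = g \oplus \Id_{\rea}$ on $\hat{\alg} = \alg \oplus \rea$. Since $g$ preserves $h$, the map $\hat{g}$ preserves $\hat{h}$; and since $\hat{g}$ fixes the $\rea$-summand and carries $|x|^{2}_{h}$ to itself, direct substitution into $\hat{P}(x, r) = \tfrac{1}{6} r^{3} + \tfrac{1}{2} r|x|^{2}_{h} + P(x)$ shows $\hat{g} \cdot \hat{P} = \hat{Q}$. Hence $h$-orthogonal equivalence of $P$ and $Q$ implies $\hat{h}$-orthogonal equivalence of $\hat{P}$ and $\hat{Q}$.

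For the converse, the key observation is that the vector $e = (0, 1) \in \hat{\alg}$ can be singled out purely in terms of $\hat{P}$ and $\hat{h}$: I would compute the Hessian of $\hat{P}$ at an arbitrary point and show that $e$ is the \emph{unique} point of $\hat{\alg}$ at which $\hess \hat{P}$ equals $\hat{h}$. Concretely, viewing the Hessian as a symmetric bilinear form on $\hat{\alg}$ relative to the splitting, at $(x, r)$ it has $rr$-entry equal to $r$, mixed $\alg$--$\rea$ entries equal to $x_{i}$, and $\alg$-block equal to $r h_{ij} + P_{ij}(x)$; since $P$ is cubic its Hessian vanishes at the origin, so $(\hess \hat{P})(e) = \hat{h}$, while conversely $(\hess \hat{P})(x, r) = \hat{h}$ forces $r = 1$ (from the $rr$-entry) and then $x = 0$ (from the mixed entries). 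In the language of the commutative algebra determined by $\hat{h}$ and $\hat{P}$, this says exactly that $e$ is its unique identity element, i.e. $\hat{P}$ is the unitalization of $P$. The one point requiring care is to phrase this characterization equivariantly — as a condition on the Hessian \emph{as a bilinear form}, not on its matrix in a fixed basis — so that it is respected by any $\hat{h}$-orthogonal equivalence.

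Granting this, suppose $\hat{g} \in O(\hat{h})$ satisfies $\hat{g} \cdot \hat{P} = \hat{Q}$. Differentiating the relation $\hat{Q}(y) = \hat{P}(\hat{g}^{-1}y)$ twice gives $(\hess \hat{Q})(\hat{g} y)(u, v) = (\hess \hat{P})(y)(\hat{g}^{-1}u, \hat{g}^{-1}v)$; specializing to $y = e$ and using $\hat{g}^{-1} \in O(\hat{h})$ yields $(\hess \hat{Q})(\hat{g} e) = \hat{h}$. Since $\hat{Q}$ is the affine extension of $Q$ it has the same shape as $\hat{P}$, so the uniqueness statement applied to $\hat{Q}$ forces $\hat{g} e = e$. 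Being $\hat{h}$-orthogonal and fixing $e$, the map $\hat{g}$ preserves $e^{\perp} = \alg$ and so has block form $\hat{g} = g \oplus \Id_{\rea}$ with $g \in O(h)$. Substituting back, $\hat{Q}(x, r) = \hat{P}(g^{-1}x, r) = \tfrac{1}{6} r^{3} + \tfrac{1}{2} r|x|^{2}_{h} + P(g^{-1}x)$, and comparing with the defining formula for $\hat{Q}$ gives $Q = g\cdot P$. Thus $\hat{h}$-orthogonal equivalence of $\hat{P}$ and $\hat{Q}$ implies $h$-orthogonal equivalence of $P$ and $Q$. Apart from the short Hessian computation everything here is formal, so I do not anticipate a genuine obstacle; the only real idea is the intrinsic characterization of $e$.
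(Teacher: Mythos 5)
Your proof is correct, and it takes a genuinely different route from the paper's. The paper argues directly: it writes a candidate orthogonal equivalence $\Phi \in O(\hat{h})$ in block form $\Phi(x, r) = (\phi(x) + ru,\, h(x, v) + cr)$, extracts the three scalar constraints imposed by $\hat{h}$-orthogonality, and runs a short case analysis (is $c = 0$ or not, is $v = 0$ or not), eventually ruling out everything except $\Phi = \phi \oplus \Id_{\rea}$ with $\phi \in O(h)$; along the way it must separately eliminate the sign $c = -1$ by plugging in $x = 0$. Your argument replaces this bookkeeping with an intrinsic characterization: $(0,1)$ is the unique vector at which $\hess \hat{P}$, viewed as an $\hat{h}$-valued bilinear form, equals $\hat{h}$ itself, and since this is an equivariant condition any $\hat{h}$-orthogonal $\hat{g}$ carrying $\hat{P}$ to $\hat{Q}$ must fix $(0,1)$ and hence preserve its orthogonal complement $\alg$. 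This avoids the case split entirely, and the sign ambiguity never arises because $\hess\hat{P}$ is odd (it is linear in the base point, so $(\hess\hat{P})(-e) = -\hat{h} \neq \hat{h}$). Your version is cleaner and also makes the algebraic meaning transparent: $(0,1)$ is the unit of the commutative algebra attached to $(\hat{\alg}, \hat{h}, \hat{P})$, and the lemma says that passing to the unitalization is faithful on isomorphism classes. The only thing to polish in a final write-up is to state the Hessian computation once and for all as an identity of bilinear forms (rather than conditionally), but that is a matter of exposition, not substance.
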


\begin{proof}
It is clear from the definition that the affine extensions of orthogonally equivalent polynomials are orthogonally equivalent.

Suppose $\Phi\in \eno(\hat{\alg})$ is $\hat{h}$-orthogonal and $\hat{P}(x, r) = \hat{Q}(\Phi(x, r))$. Write $\Phi(x, r) = (\phi(x) + ru, h(x, v) + cr)$ for $\phi \in \eno(\alg)$, $u, v \in \alg$, and $c \in \rea$. That $\Phi$ be orthogonal implies $|\phi(x)|^{2} + h(x, v)^{2} + 2r\left(h(u, \Phi(x)) + ch(x, v)\right) + r^{2}(|u|^{2} + c^{2}) = |x|^{2} + r^{2}$. This yields the equations
\begin{align}\label{hateq}
& |\phi(x)|^{2} + h(x, v)^{2} = |x|^{2},& & h(u, \phi(x)) + ch(x, v) = 0, & & |u|^{2} + c^{2} = 1,
\end{align}
for all $x \in \alg$. Taking $x = v$ in the first equation yields $|\phi(v)|^{2} = 0$, so that $\phi(v) = 0$. In the second equation this yields $c|v|^2 = 0$, so that either $c = 0$ or $v = 0$. If $c = 0$, then 
\begin{align}
\begin{split}
\tfrac{1}{6}r^{3} + \tfrac{1}{2}r|v|^{2} + P(v) & = \hat{P}(v, r) = \hat{Q}(\phi(v) + ru, |v|^{2}) \\&= \hat{Q}(ru, |v|^{2}) = \tfrac{1}{6}|v|^{6} + \tfrac{1}{2}r^{2}|v|^{2}|u|^{2} + r^{3}Q(u),
\end{split}
\end{align}
for all $r \in \rea$. This forces $|v|^{2} = 0$, so $v = 0$, which contradicts the invertibility of $\Phi$.

If $c \neq 0$, then $v = 0$. In this case the first equation of \eqref{hateq} yields $|\phi(x)|^{2} = |x|^{2}$, so that $\phi$ is $h$-orthogonal, and the second equation of \eqref{hateq} yields $0 = h(u, \phi(x)) = h(\phi(u), x)$ for all $x \in \alg$, so that $\phi(u) = 0$ and hence $u = 0$. In the third equation of \eqref{hateq} this yields $c = \pm 1$. Hence $\hat{P}(x, r) = \hat{Q}(\Phi(x, r)) = \hat{Q}(\phi(x), c r)$. Taking $x = 0$ yields $c^{3} = 1$, so that $c = 1$ and $\Phi(x, r) = (\phi(x), r)$. Taking $r = 0$ yields $P(x) = \hat{P}(x, 0) = \hat{Q}(\Phi(x, 0))= \hat{Q}(\phi(x), 0) = Q(\phi(x))$ so $P$ and $Q$ are orthogonally equivalent.  
\end{proof}

\begin{lemma}\label{asshatlemma}
Let $(\alg, h)$ be a Euclidean vector space of dimension $n \geq 3$. Equip $\hat{\alg} = \alg \oplus \rea$ with the Euclidean metric $\hat{h}$ defined by $\hat{h}((x, a), (y, b)) = h(x, y) + ab$ for $(x, a), (y, b) \in \hat{\alg}$. For $P \in \pol^{3}(\alg)$ and $(x, a), (y, b), (z, c), (w, d) \in\hat{\alg}$,
\begin{align}\label{asshatp}
\ass(\hat{P})((x, a), (y, b), (z, c), (w, d)) = (\ass(P)(x, y, z, w) + h(y, z)h(x, w) - h(x, z)h(y, w), 0).
\end{align}
\end{lemma}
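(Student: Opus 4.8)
The plan is to recast \eqref{asshatp} as an identity between commutators of multiplication operators and to verify it in block form relative to the $\hat{h}$-orthogonal splitting $\hat{\alg}=\alg\oplus\rea$. Recall the identity, stated before the lemma, that for $F\in\cinf$ of a Euclidean space and the self-adjoint multiplication operator $L^{F}$ determined by its Hessian one has $X^{I}Y^{J}\ass(F)_{IJKL}=[L^{F}(X),L^{F}(Y)]_{KL}$. Applying this on $(\hat{\alg},\hat{h})$, and using that $\ass$ is a tensor, it suffices to compute the endomorphism $[\hat{L}(x,r),\hat{L}(y,s)]$ of $\hat{\alg}$ for arbitrary $(x,r),(y,s)\in\hat{\alg}$, where $\hat{L}$ denotes the multiplication operator of $\hat{P}$.

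First I would differentiate $\hat{P}(x,r)=\tfrac{1}{6}r^{3}+\tfrac{1}{2}r|x|^{2}_{h}+P(x)$ twice; the resulting Hessian at $(x,r)$, evaluated against pairs $(y,b),(z,c)\in\hat{\alg}$, is $(\hess P)(x)(y,z)+r\,h(y,z)+b\,h(x,z)+c\,h(x,y)+rbc$, so that raising an index with the block-orthogonal metric $\hat{h}$ gives
\[
\hat{L}(x,r)=\begin{pmatrix} L(x)+r\,\Id_{\alg} & \iota_{x} \\ h(x,\dum) & r \end{pmatrix},
\]
where $L(x)$ is the multiplication operator of $P$ on $\alg$, $\iota_{x}\colon\rea\to\alg$ sends $t$ to $tx$, and $h(x,\dum)\colon\alg\to\rea$ sends $y$ to $h(x,y)$. (Complete symmetry of $P_{ijk}$ forces $h(L(x)y,z)$ to be symmetric in $x,y,z$; in particular $L(x)y=L(y)x$. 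This is the only structural fact about $P$ that enters, and harmonicity is never used.)

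Next I would multiply two such block matrices and subtract. In the $\alg$-block the scalar-times-identity pieces $r\,\Id$ and $s\,\Id$ drop out of the commutator, leaving $[L(x),L(y)]$ together with the contribution $\iota_{x}\circ h(y,\dum)-\iota_{y}\circ h(x,\dum)$ of the rank-one off-diagonal pieces, namely the operator $z\mapsto h(y,z)x-h(x,z)y$. The upper-right block of the product is $\iota_{L(x)y+ry+sx}$ and the lower-left block is $h(L(y)x+ry+sx,\dum)$; since $L(x)y=L(y)x$, both are symmetric under $(x,r)\leftrightarrow(y,s)$, and the lower-right scalar $h(x,y)+rs$ is manifestly symmetric, so all three cancel in the commutator. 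Hence
\[
[\hat{L}(x,r),\hat{L}(y,s)]=\begin{pmatrix} [L(x),L(y)]+\bigl(z\mapsto h(y,z)x-h(x,z)y\bigr) & 0 \\ 0 & 0 \end{pmatrix}.
\]

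Feeding this back into $X^{I}Y^{J}\ass(\hat{P})_{IJKL}=[\hat{L}(X),\hat{L}(Y)]_{KL}$ together with $x^{i}y^{j}\ass(P)_{ijkl}=[L(x),L(y)]_{kl}$ yields \eqref{asshatp}; the vanishing of the lower blocks of the commutator is exactly the statement that no component of $\ass(\hat{P})$ pointing in the $\rea$-direction survives, which is what the trailing $0$ on the right-hand side records. The only step that is not pure bookkeeping is the cancellation of the off-diagonal and corner blocks of the commutator — verifying that every cross term between the $\alg$- and $\rea$-factors disappears — and this is precisely where commutativity of the multiplication (equivalently, total symmetry of $\hess P$) is used.
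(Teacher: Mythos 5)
Your proof is correct and follows essentially the same route as the paper: compute the block form of $\hat{L}(x,r)$ from the Hessian of $\hat{P}$, evaluate the commutator block-by-block, and invoke $L(x)y=L(y)x$ (total symmetry of $P_{ijk}$) to cancel the off-diagonal and corner entries. Your version is slightly more explicit about the cancellation of the cross blocks, but the argument is the paper's.
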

\begin{proof}
With respect to an $\hat{h}$-orthonormal basis of $\hat{\alg}$ adapted to the splitting $\hat{\alg} = \alg \oplus \rea$, the matrix of $\hat{L}(x, a) \in \eno(\hat{\alg})$ defined by $(\hess \hat{P})(x, a) = \hat{h}(\hat{L}(x, a)(\dum), \dum)$ has the form
\begin{align}\label{hatl}
\hat{L}(x, a) = \begin{pmatrix} L(x) + a \id & x \\ h(x, \dum) & a \end{pmatrix}.
\end{align}
Because $L(x) y = L(y)x$, there follows
\begin{align}
[\hat{L}(x, a), \hat{L}(y, b)]  = \begin{pmatrix} [L(x), L(y)] + x \tensor h(y, \dum)  - y\tensor h(x, \dum) & 0 \\ 0 & 0 \end{pmatrix},
\end{align}
which is equivalent to \eqref{asshatp}.
\end{proof}

\begin{corollary}\label{confassextcorollary}
Let $(\alg, h)$ be a Euclidean vector space of dimension $n \geq 3$. If $P \in \pol^{3}(\alg)$ solves \eqref{einsteinpolynomials} with constant $0 \neq \ka \in \rea$ then $P$ is conformally associative if and only if the affine extension of $\sqrt{\tfrac{\ka}{n-1}}P$ is associative.
\end{corollary}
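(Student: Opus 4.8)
The plan is to show that both properties in the statement reduce to the \emph{same} linear constraint on the nonassociativity tensor $\ass(P)$, so that their equivalence amounts to matching a single numerical coefficient, the rescaling $\lambda=\sqrt{\ka/(n-1)}$ being exactly what performs this match. The only inputs are Lemma~\ref{asshatlemma} and Example~\ref{epcassexample}, together with the elementary observation that $\ass(F)_{ijkl}=2F_{l[i}\,^{p}F_{j]kp}$ is homogeneous of degree two in $F$, so that $\ass(\lambda P)=\lambda^{2}\ass(P)$.

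First I would unwind the associativity side. Put $P'=\lambda P\in\pol^{3}(\alg)$ and apply Lemma~\ref{asshatlemma} (legitimate since $n\ge3$): for all vectors of $\hat\alg=\alg\oplus\rea$,
\[
\ass(\hat{P'})\bigl((x,a),(y,b),(z,c),(w,d)\bigr)=\bigl(\ass(P')(x,y,z,w)+h(y,z)h(x,w)-h(x,z)h(y,w),\,0\bigr).
\]
Since the $\rea$-component vanishes identically and the $\alg$-component is independent of $a,b,c,d$, the affine extension $\hat{P'}$ is associative if and only if $\ass(P')(x,y,z,w)=h(x,z)h(y,w)-h(y,z)h(x,w)$ for all $x,y,z,w\in\alg$. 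Contracting with $x^{i}y^{j}z^{k}w^{l}$ and using $2h_{k[i}h_{j]l}x^{i}y^{j}z^{k}w^{l}=h(x,z)h(y,w)-h(y,z)h(x,w)$ turns this into the tensor identity $\ass(P')_{ijkl}=2h_{k[i}h_{j]l}$, i.e.\ $\lambda^{2}\ass(P)_{ijkl}=2h_{k[i}h_{j]l}$.

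Next I would unwind the conformal associativity side and compare. Since $P$ solves \eqref{einsteinpolynomials} with constant $\ka$, Example~\ref{epcassexample} gives $\rictr(\ass(P))_{ij}=-\ka h_{ij}$ and $\scal(\ass(P))=-\ka n$, whence $\cass(P)_{ijkl}=\ass(P)_{ijkl}-\tfrac{2\ka}{n-1}h_{k[i}h_{j]l}$; thus $P$ is conformally associative precisely when $\ass(P)_{ijkl}=\tfrac{2\ka}{n-1}h_{k[i}h_{j]l}$. Each property therefore asserts that $\ass(P)_{ijkl}$ is a prescribed scalar multiple of $h_{k[i}h_{j]l}$ — the multiple being $2/\lambda^{2}$ for the affine-extension condition and $2\ka/(n-1)$ for conformal associativity — and matching these two multiples is exactly what the rescaling in the statement accomplishes. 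Chaining the equivalences gives $\cass(P)=0$ if and only if $\ass(\hat{P'})=0$, which is the assertion.

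I do not expect a serious obstacle, since once Lemma~\ref{asshatlemma} and Example~\ref{epcassexample} are available the argument is purely algebraic. The delicate point is the bookkeeping: passing cleanly between the fully contracted four-vector expression of Lemma~\ref{asshatlemma} and the index form of $\cass$, keeping track of the antisymmetrization $h_{k[i}h_{j]l}$, and verifying that the sign of the metric correction $h(y,z)h(x,w)-h(x,z)h(y,w)$ produced by the affine extension opposes that of the term $-\tfrac{2\ka}{n-1}h_{k[i}h_{j]l}$ in $\cass(P)$, so the two coefficients are genuinely matched by a \emph{positive} $\lambda$. It is also worth recording at the outset that $n\ge3$ is needed both for $\cass$ to be defined and for Lemma~\ref{asshatlemma} to apply.
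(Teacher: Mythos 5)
Your strategy — reducing both conditions to the single constraint that $\ass(P)_{ijkl}$ be a prescribed multiple of $h_{k[i}h_{j]l}$, via Example~\ref{epcassexample} and Lemma~\ref{asshatlemma} together with the quadratic homogeneity $\ass(\lambda P)=\lambda^{2}\ass(P)$ — is exactly the paper's, unpacked more explicitly. But you stop one step short. Writing $P'=\lambda P$, you correctly arrive at the pair of equivalences
\begin{align*}
\ass(\hat{P'})=0 &\iff \ass(P)_{ijkl}=\tfrac{2}{\lambda^{2}}h_{k[i}h_{j]l},\\
\cass(P)=0 &\iff \ass(P)_{ijkl}=\tfrac{2\ka}{n-1}h_{k[i}h_{j]l},
\end{align*}
and then \emph{assert} that the rescaling in the statement makes the two multiples agree, without carrying out the comparison. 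Substituting the statement's $\lambda^{2}=\ka/(n-1)$ gives $2/\lambda^{2}=2(n-1)/\ka$, which equals $2\ka/(n-1)$ only when $\ka=n-1$; matching the two multiples in general forces $\lambda^{2}=(n-1)/\ka$, the \emph{reciprocal} of the factor in the statement. This is also consistent with the proof of Theorem~\ref{confassequivalencetheorem}, where one rescales $P\mapsto\lambda P$ so that the new constant $\lambda^{2}\ka$ equals $n-1$, which again requires $\lambda=\sqrt{(n-1)/\ka}$. So the corollary as printed appears to have the rescaling factor inverted, and the one ``delicate bookkeeping'' step you flagged but skipped is precisely the one that would have caught this. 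The remainder of your argument is sound.
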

\begin{proof}
By Example \ref{epcassexample}, because $P$ solves \eqref{einsteinpolynomials}, $\rictr(\ass(P))_{ij} = -P_{ip}\,^{q}P_{jq}\,^{p} = -\ka h_{ij}$, $\scal(\ass(P)) = -n\ka$, and $\cass(P)_{ijkl}= \ass(P)_{ijkl} - \tfrac{2\ka}{n-1}h_{k[i}h_{j]l}$. The claim follows upon applying these observations to the affine extension of $\sqrt{\tfrac{\ka}{n-1}}P$ and using \eqref{asshatp}.
\end{proof}

\begin{theorem}\label{confassequivalencetheorem}
On a Euclidean vector space $(\alg, h)$ of dimension $n \geq 3$, any two conformally associative solutions of \eqref{einsteinpolynomials} with the same constant $0 < \ka \in \rea$ are orthogonally equivalent.
\end{theorem}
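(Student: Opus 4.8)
The plan is to reduce to Lemma~\ref{asseqlemma} by passing to affine extensions. Let $P, Q \in \pol^{3}(\alg)$ be conformally associative solutions of \eqref{einsteinpolynomials} with the same constant $\ka > 0$, and set $c = \sqrt{\ka/(n-1)} > 0$. By Corollary~\ref{confassextcorollary}, the affine extensions $\widehat{cP}$ and $\widehat{cQ}$ are \emph{associative} cubic polynomials on the $(n+1)$-dimensional Euclidean space $(\hat{\alg}, \hat{h})$. By the first assertion of Lemma~\ref{asseqlemma}, $\widehat{cP}$ is $O(\hat{h})$-equivalent to a diagonal cubic $\tfrac{1}{6}\sum_{a = 0}^{n}\mu_{a}y_{a}^{3}$ and $\widehat{cQ}$ to $\tfrac{1}{6}\sum_{a = 0}^{n}\nu_{a}y_{a}^{3}$ for some real constants $\mu_{a}, \nu_{a}$. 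If these two diagonal cubics are shown to be $O(\hat{h})$-equivalent, then so are $\widehat{cP}$ and $\widehat{cQ}$, whence by Lemma~\ref{confextautlemma} $cP$ and $cQ$ are $O(h)$-equivalent; dividing by $c$ then shows $P$ and $Q$ are $O(h)$-equivalent, which is the assertion.

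The crux, and the main obstacle, is to pin down the coefficients $\mu_{a}$ and $\nu_{a}$. Here the sharp form of Lemma~\ref{asseqlemma} (which would force all coefficients to coincide) cannot be applied directly, because $\widehat{cP}$ does \emph{not} solve the second equation of \eqref{einsteinpolynomials}. Instead I would compute the norm-squared of the Hessian of $\widehat{cP}$ by hand. Writing the Hessian of $\widehat{cP}(x, r) = \tfrac{1}{6}r^{3} + \tfrac{1}{2}r\,|x|^{2}_{h} + cP(x)$ in block form with respect to an $\hat{h}$-orthonormal basis adapted to $\hat{\alg} = \alg \oplus \rea$ (exactly as in the proof of Lemma~\ref{asshatlemma}), and using $\lap_{h}P = 0$ and $|\hess P|^{2}_{h} = \ka\,|x|^{2}_{h}$, a short computation gives
\begin{align*}
|\hess \widehat{cP}|^{2}_{\hat{h}}(x, r) = (n+1)\,r^{2} + (c^{2}\ka + 2)\,|x|^{2}_{h}.
\end{align*}
This quadratic form depends only on $n$ and $\ka$, so the same computation yields $|\hess \widehat{cQ}|^{2}_{\hat{h}} = |\hess \widehat{cP}|^{2}_{\hat{h}}$.

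To conclude, recall that $O(\hat{h})$ preserves the norm on symmetric $2$-tensors, so $|\hess(g\cdot F)|^{2}_{\hat{h}} = g\cdot |\hess F|^{2}_{\hat{h}}$ for $g \in O(\hat{h})$; in particular the quadratic form $|\hess(\tfrac{1}{6}\sum_{a}\mu_{a}y_{a}^{3})|^{2}_{\hat{h}} = \sum_{a}\mu_{a}^{2}y_{a}^{2}$ is $O(\hat{h})$-conjugate to $|\hess \widehat{cP}|^{2}_{\hat{h}}$, so the multiset $\{\mu_{a}^{2}\}_{a = 0}^{n}$ equals the spectrum of the displayed form, namely $\{\,n+1,\ c^{2}\ka + 2,\ \dots,\ c^{2}\ka + 2\,\}$ (with $n$ entries equal to $c^{2}\ka + 2$); likewise $\{\nu_{a}^{2}\}_{a = 0}^{n}$ equals this same multiset. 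Since replacing $y_{a}$ by $-y_{a}$ lies in $O(\hat{h})$ and changes $\mu_{a}$ to $-\mu_{a}$, one may take all $\mu_{a} \geq 0$ and all $\nu_{a} \geq 0$; then $\{\mu_{a}\} = \{\nu_{a}\}$ as multisets, and a permutation of the coordinates $y_{a}$ (also in $O(\hat{h})$) exhibits the $O(\hat{h})$-equivalence of $\tfrac{1}{6}\sum_{a}\mu_{a}y_{a}^{3}$ and $\tfrac{1}{6}\sum_{a}\nu_{a}y_{a}^{3}$. Chaining this with Lemma~\ref{confextautlemma} as above finishes the proof. The only step with any computation is the evaluation of $|\hess \widehat{cP}|^{2}_{\hat{h}}$, which is immediate from the block form of the Hessian; the conceptual content is the passage through the affine extension together with the observation that a diagonal cubic is determined up to orthogonal equivalence by the spectrum of the norm-squared of its Hessian.
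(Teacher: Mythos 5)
Your strategy---pass to affine extensions, reduce via Lemma~\ref{asseqlemma} to a diagonal cubic, then descend via Lemma~\ref{confextautlemma}---is the same as the paper's proof, and your spectrum-matching of $|\hess\cdot|^{2}$ is a valid way to pin down the diagonal coefficients. However, as written the proof has a gap: the scaling $c = \sqrt{\ka/(n-1)}$ is not the one that makes $\widehat{cP}$ associative, so Lemma~\ref{asseqlemma} does not in fact apply to $\widehat{cP}$. Combining Lemma~\ref{asshatlemma} with the identity $\cass(P)_{ijkl} = \ass(P)_{ijkl} - \tfrac{2\ka}{n-1}h_{k[i}h_{j]l}$ from Example~\ref{epcassexample} gives, when $\cass(P) = 0$,
\begin{align*}
\ass(\widehat{tP})\bigl((x,a),\dots,(w,d)\bigr) = \Bigl(1 - \tfrac{t^{2}\ka}{n-1}\Bigr)\bigl(h(y,z)h(x,w) - h(x,z)h(y,w)\bigr),
\end{align*}
so $\widehat{tP}$ is associative precisely when $t^{2} = (n-1)/\ka$---the reciprocal of the exponent printed in the statement of Corollary~\ref{confassextcorollary}, which appears to carry a typo. (The paper's own proof is immune: it normalizes $\ka = n-1$ before invoking the Corollary, where the two candidate factors both equal $1$.) With your $c$, $\widehat{cP}$ is in general not associative, and everything downstream of the diagonal reduction is unjustified.

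Once the normalization $c = \sqrt{(n-1)/\ka}$ is used, your own Hessian computation gives $c^{2}\ka + 2 = n + 1$, hence $|\hess\widehat{cP}|^{2}_{\hat{h}} = (n+1)\,|(x,r)|^{2}_{\hat{h}}$. The affine extension therefore \emph{does} satisfy the second equation of \eqref{einsteinpolynomials}, the ``crux'' you flagged dissolves, and the sharp clause of Lemma~\ref{asseqlemma} forces all diagonal coefficients equal to $\sqrt{n+1}$---exactly the paper's argument. Your spectrum-matching step is correct and would be the appropriate fallback if the Hessian norm were genuinely non-isotropic, but with the corrected scaling it produces the constant multiset $\{n+1,\dots,n+1\}$ and adds nothing.
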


\begin{proof}
Let $P \in \pol^{3}(\alg)$ and let $\hat{P} \in \pol^{3}(\hat{\alg})$ be its affine extension. By \eqref{hatl}, $|\hess \hat{P}|^{2}_{\hat{h}}(x, r) = |\hess P|^{2}_{h}(x) + 2r\lap_{h}P(x) + (n+1)r^{2} + 2|x|^{2}_{h}$.  If $P$ solves \eqref{einsteinpolynomials}, then $|\hess \hat{P}|^{2}_{\hat{h}}(x, r) = (\ka + 2)|x|^{2} + (n+1)r^{2}$. Rescaling $P$ it can be supposed without any loss of generality that $\ka = (n-1)$, so that $|\hess \hat{P}|^{2}_{\hat{h}}(x, r) = (n+1)|(x, r)|^{2}_{\hat{h}}$.

Now suppose additionally that $P$ is conformally associative. By Corollary \ref{confassextcorollary}, the conformal extension $\hat{P}$ of $P$ is associative. By Lemma \ref{asseqlemma}, $\hat{P}$ is orthogonally equivalent to the polynomial $\tfrac{\sqrt{n+1}}{6}\sum_{i = 1}^{n+1}x_{i}^{3}$. 

If conformally associative $P, Q \in \pol^{3}(\alg)$ solve \eqref{einsteinpolynomials} with the same constant, it may be supposed without loss of generality that this constant is $n-1$. The preceding shows that $\hat{P}$ and $\hat{Q}$ are both orthogonally equivalent to $\tfrac{\sqrt{n+1}}{6}\sum_{i = 1}^{n+1}x_{i}^{3}$, so are orthogonally equivalent. By Lemma \ref{confextautlemma} this means $P$ and $Q$ are orthogonally equivalent.
\end{proof}

\section{Direct solution of \texorpdfstring{\eqref{einsteinpolynomials}}{ep} via normal forms}\label{directsolutionsection}
The naive approach to solving \eqref{einsteinpolynomials} is the brute force approach. This entails expressing a putative solution $P$ in terms of some basis of the space of cubic harmonic polynomials and rewriting the equations \eqref{einsteinpolynomials} in terms of the coefficients with respect to this basis. This section describes this approach to the extent that it works. It is most viable in low dimensions. The principal defect of this approach is that analyzing the properties of the solutions obtained, for example something so basic as deciding whether two solutions are orthogonally equivalent, is not straightforward.

In this section the $n$-dimensional real vector space $\alg$ is referred to as $\rea^{n}$ to indicate that there is fixed an $h$-orthonormal basis $e_{1}, \dots, e_{n}$ with respect to which $x_{i}$ are coordinates. 

\begin{lemma}\label{polorthonormalbasislemma}
Let $(\alg, h)$ be an $n$-dimensional Euclidean vector space. Let $x_{1}, \dots, x_{n}$ be coordinates such that $dx_{1}, \dots, dx_{n}$ is an $h$-orthonormal parallel coframe. With respect to the tensor norm determined by complete contraction with $h_{ij}$, the collection
\begin{align}\label{polobasis}
\{\sqrt{6}x_{i}x_{j}x_{k}: 1 \leq i < j < k \leq n\} \cup \{\tfrac{1}{2}(3x_{i}^{2}x_{j} - x_{j}^{3}): 1 \leq i \neq j \leq n\}
\end{align}
is a unit norm basis of $\pol^{3}(\alg)$.
\end{lemma}

\begin{proof}
The tensor corresponding to $x_{i}x_{j}x_{k}$ via polarization has six nonzero components, corresponding to the six permutations of $ijk$, each equal to $1/6$, so its squared tensor norm is $1/6$.
The tensor corresponding to $3x_{i}^{2}x_{j} - x_{j}^{3}$ has one component equal to $-1$, corresponding to the three times repeated index $j$, and three components equal to $1$, corresponding to the three permutations of $iij$, so has squared tensor norm $4$. In a similar fashion it can be checked that the vectors in \eqref{polobasis} are pairwise orthogonal except for those of the form $\tfrac{1}{2}(3x_{i}^{2}x_{j} - x_{j}^{3})$ and $\tfrac{1}{2}(3x_{k}^{2}x_{j} - x_{j}^{3})$ which are nonetheless linearly independent. 
\end{proof}

\begin{lemma}\label{harpreplemma}
Let $h_{ij}$ be a Euclidean metric on $\rea^{n}$.
Let $P \in \pol^{3}(\rea^{n})$ be harmonic and write
\begin{align}
P(x_{1}, \dots, x_{n}) = \sum_{1\leq i < j < k\leq n}\al_{ijk}x_{i}x_{j}x_{k} + \tfrac{1}{6}\sum_{1 \leq i \neq j \leq n}\be_{ij}(3x_{i}^{2}x_{j} - x_{j}^{3}).
\end{align}
For $ijk$ distinct but not ordered, define $\al_{ijk}$ to be equal to the coefficient corresponding to the ordering of $ijk$ from least to greatest. Then $P$ solves \eqref{einsteinpolynomials} with coefficient $\ka$ if and only if there hold the equations
\begin{align}
\begin{split}
0 & = - \sum_{k \neq i, k \neq j}\left(\be_{ki}\be_{ij} + \be_{kj}\be_{ji}\right) + \sum_{k \neq i, k\neq j}\be_{ki}\be_{kj}  \\
&\quad + 2\sum_{k \neq i, k \neq j}\be_{ik}\al_{ikj}+ 2\sum_{k \neq i, k \neq j}\be_{jk}\al_{jki} + 2\sum_{k, l \notin\{i, j\}, k< l}\al_{ikl}\al_{jkl},\\
&\ka = 2\sum_{k \neq i}(\be_{ki}^{2} + \be_{ik}^{2}) + 2\sum_{k \neq i, l \neq i, k < l}\be_{ki}\be_{li}+ 2\sum_{k\neq i, l \neq i, k < l}\al_{ikl}^{2}.
\end{split}
\end{align}
\end{lemma}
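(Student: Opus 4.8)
The plan is to convert \eqref{einsteinpolynomials} into a finite system of algebraic identities in the coefficients $\al_{ijk},\be_{ij}$ and then to verify that system by directly contracting third-derivative tensors.

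First I would reduce to an algebraic statement. By Lemma~\ref{einsteinpolynomialslemma}, which shows \eqref{einsteinpolynomials0} is equivalent to \eqref{einsteinpolynomials2}, a harmonic $P$ solves \eqref{einsteinpolynomials} with constant $\ka$ if and only if $P_{ip}{}^{q}P_{jq}{}^{p} = \ka h_{ij}$. Since everything is computed in the fixed $h$-orthonormal coordinates $x_{1},\dots,x_{n}$, the metric $h_{ij}$ is the identity matrix and indices may be raised freely, so this reads $G_{ij} := \sum_{p,q}P_{ipq}P_{jpq} = \ka h_{ij}$. (Equivalently, $|\hess P(x)|^{2}_{h} = \sum_{k,l}x_{k}x_{l}G_{kl}$ is a quadratic form in $x$, hence equals $\ka|x|^{2}_{h}$ precisely when $G_{kl} = \ka h_{kl}$.) Thus the two displayed equations should emerge as the $i\neq j$ and $i=j$ cases of $G_{ij}=\ka h_{ij}$ once $G_{ij}$ is expressed in terms of $\al$ and $\be$.

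Next I would record the third-derivative tensor $T_{abc} := P_{abc} = D_{a}D_{b}D_{c}P$. Differentiating the given expansion three times, and extending $\al$ to pairwise-distinct index triples as in the statement, the only nonzero components are $T_{abc}=\al_{abc}$ for $a,b,c$ pairwise distinct, $T_{aab}=T_{aba}=T_{baa}=\be_{ab}$ for $a\neq b$, and $T_{aaa}=-\sum_{b\neq a}\be_{ba}$; the factor $\tfrac{1}{6}$ in the expansion is precisely what makes $D_{a}^{2}D_{b}(3x_{a}^{2}x_{b})$ contribute $\be_{ab}$ and $D_{b}^{3}(-x_{b}^{3})$ contribute $-\be_{ab}$. (As a check $\sum_{p}T_{app}=0$, so $P$ is automatically harmonic, consistent with every basis polynomial in Lemma~\ref{polorthonormalbasislemma} being harmonic.) Then I would compute $G_{ij}=\sum_{p,q}T_{ipq}T_{jpq}$ by splitting the sum over ordered pairs $(p,q)$ according to the coincidence pattern of $p,q$ with each other and with $i,j$, using that an unordered pair with distinct entries accounts for two ordered pairs. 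For $i=j$ the pairs $(i,i)$, $(i,m)$ and $(m,i)$ with $m\neq i$, $(m,m)$ with $m\neq i$, and $(m,l)$ with $m\neq l$ both $\neq i$ contribute $T_{iii}^{2}$, $2\sum_{m\neq i}\be_{im}^{2}$, $\sum_{m\neq i}\be_{mi}^{2}$, and $2\sum_{m<l;\,m,l\neq i}\al_{iml}^{2}$; expanding $T_{iii}^{2}=\big(\sum_{k\neq i}\be_{ki}\big)^{2}$ into a sum of squares plus cross terms and collecting yields the stated formula for $\ka$. For $i\neq j$ the contributing pairs are $(i,i)$ and $(j,j)$, giving $-\be_{ij}\sum_{k\neq i}\be_{ki}$ and $-\be_{ji}\sum_{k\neq j}\be_{kj}$; $(m,m)$ with $m\notin\{i,j\}$, giving $\sum_{m\neq i,j}\be_{mi}\be_{mj}$; the two orderings of $\{i,j\}$, giving $2\be_{ij}\be_{ji}$; the orderings of $\{i,m\}$ and of $\{j,m\}$ with $m\notin\{i,j\}$, giving $2\sum_{m\neq i,j}\be_{im}\al_{ijm}$ and $2\sum_{m\neq i,j}\be_{jm}\al_{ijm}$; and the orderings of $\{k,l\}$ with $k,l\notin\{i,j\}$, giving $2\sum_{k<l;\,k,l\neq i,j}\al_{ikl}\al_{jkl}$. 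Isolating the $k=j$ and $k=i$ summands of the first two sums cancels the $\be_{ij}\be_{ji}$ contributions, and the symmetry $\al_{ijk}=\al_{ikj}=\al_{jki}$ identifies the rest with the right-hand side of the first displayed equation, completing the proof.

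The hard part will not be conceptual but organizational: the bookkeeping in the contraction $G_{ij}$, in particular keeping the factors of $2$ consistent (ordered versus unordered index pairs), correctly expanding the ``diagonal'' components $T_{aaa}$, which are themselves $\be$-sums, so that $T_{iii}^{2}$ and $T_{iii}T_{jii}$ split into exactly the pieces appearing in the statement, and recognizing that it is precisely the ordered pairs $(p,q)$ with exactly one of $p,q$ equal to $i$ or $j$ that produce the mixed $\al$--$\be$ terms. I would carry this out as an explicit finite case split rather than attempt it in a single pass.
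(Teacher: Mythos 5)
Your proof is correct and follows essentially the same approach as the paper: both reduce \eqref{einsteinpolynomials} to the algebraic identity $\sum_{p,q}P_{ipq}P_{jpq}=\ka h_{ij}$ and then expand, the paper by reading off coefficients of $x_ix_j$ in $\sum_{a,b}P_{ab}(x)^2$ with $P_{ab}(x)$ linear in $x$, you by directly contracting the constant third-derivative tensor, which is the identical computation organized at one higher order of differentiation. (Your resulting expressions match the paper's final displayed equations; the paper's unnumbered intermediate expression for the coefficient of $x_i^2$ drops a factor of $2$ on the $\be_{ik}^2$ and $\al_{ikl}^2$ sums, but this is a typographical slip that does not affect the stated lemma.)
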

\begin{proof}
If $i \neq j$, 
\begin{align}
\tfrac{\pr^{2}P}{\pr x_{i}\pr x_{j}} = \sum_{k \neq i, k \neq j}\al_{ijk}x_{k} + \be_{ij}x_{i} + \be_{ji}x_{j}, 
\end{align}
and 
\begin{align}
\tfrac{\pr^{2}P}{\pr x_{i}\pr x_{i}} = -\sum_{k \neq i}\be_{ki}x_{i} + \sum_{k \neq i}\be_{ik}x_{k}. 
\end{align}
Summing the squares of these elements and simplifying yields that if $i \neq j$, the coefficient of $x_{i}x_{j}$ in $|\hess P|^{2} $ is 
\begin{align}
\begin{split}
-2&\sum_{k \neq i}\be_{ki}\be_{ij} - 2\sum_{k \neq j}\be_{kj}\be_{ji} + 2\sum_{k \neq i, k\neq j}\be_{ki}\be_{kj} + 4\be_{ij}\be_{ji} \\
&+ 4\sum_{k \neq i, k \neq j}\be_{ik}\al_{ikj}+ 4\sum_{k \neq i, k \neq j}\be_{jk}\al_{jki} + 4\sum_{k, l \notin\{i, j\}, k< l}\al_{ikl}\al_{jkl},
\end{split}
\end{align}
and the coefficient of $x_{i}^{2}$ in $|\hess P|^{2} $ is 
\begin{align}
\begin{split}
&\left(\sum_{k \neq i}\be_{ki}\right)^{2} + \sum_{k \neq i}(\be_{ki}^{2} + \be_{ik}^{2}) + \sum_{k\neq i, l \neq i, k < l}\al_{ikl}^{2}\\
& =  2\sum_{k \neq i}(\be_{ki}^{2} + \be_{ik}^{2}) + 2\sum_{k \neq i, l \neq i, k < l}\be_{ki}\be_{li}+ 2\sum_{k\neq i, l \neq i, k < l}\al_{ikl}^{2}.
\end{split}
\end{align}
The claim follows.
\end{proof}

\begin{lemma}\label{harmonicpreparationlemma}
A nontrivial cubic homogeneous polynomial $P \in \pol^{3}(\rea^{n+1})$ harmonic with respect to a Euclidean metric $h$ is equivalent modulo $O(n+1)$ to a polynomial of the form
\begin{align}\label{harmonicprenormal}
\begin{split}
P(x_{1}, \dots, x_{n+1}) &= c\left( x_{n+1}^{3}  + 3 x_{n+1}\sum_{i = 1}^{n}\la_{i}x_{i}^{2} + Q(x_{1}, \dots, x_{n})\right) \\&= c\left(\sum_{i = 1}^{n}\la_{i}\left( 3x_{n+1}x_{i}^{2} - x_{n+1}^{3}\right) + Q(x_{1}, \dots, x_{n}) \right),
\end{split}
\end{align}
where $c > 0$, $\sum_{i = 1}^{n}\la_{i} = -1$, and $\lap Q = 0$.
\end{lemma}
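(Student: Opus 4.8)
The plan is to place a well-chosen critical point of $P$ on the unit sphere at the last coordinate axis. Since $P \in \pol^{3}(\rea^{n+1})$ is not identically zero and is an odd function, its restriction to the unit sphere $\sphere_{h}(1)$ attains a strictly positive maximum; let $e$ be a maximizing point and apply an element of $O(n+1)$ so that $e = e_{n+1}$. Set $c = P(e_{n+1}) > 0$ and let $\om$ be the symmetric trilinear form on $\rea^{n+1}$ with $6P(x) = \om(x,x,x)$. Writing $x = x_{n+1}e_{n+1} + v$ with $v \in \langle e_{n+1}\rangle^{\perp}$ and expanding trilinearly,
\begin{align*}
6P(x) = x_{n+1}^{3}\om(e_{n+1}, e_{n+1}, e_{n+1}) + 3x_{n+1}^{2}\om(e_{n+1}, e_{n+1}, v) + 3x_{n+1}\om(e_{n+1}, v, v) + \om(v, v, v).
\end{align*}

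First I would eliminate the $x_{n+1}^{2}$ term. Since $6\,dP(x)(w) = 3\om(x, x, w)$, the Lagrange condition that $e_{n+1}$ be critical for $P|_{\sphere_{h}(1)}$ says precisely that $\om(e_{n+1}, e_{n+1}, w) = 0$ for all $w \in \langle e_{n+1}\rangle^{\perp}$, so the second term above vanishes. Next, apply the spectral theorem to the quadratic form $v \mapsto \om(e_{n+1}, v, v)$ on $\langle e_{n+1}\rangle^{\perp}$: an $h$-orthonormal basis $e_{1}, \dots, e_{n}$ of $\langle e_{n+1}\rangle^{\perp}$ diagonalizing it (an orthogonal change of basis, extending to an element of $O(n+1)$ fixing $e_{n+1}$) gives $\om(e_{n+1}, v, v) = \sum_{i=1}^{n}\mu_{i}x_{i}^{2}$ for some $\mu_{i} \in \rea$ when $v = \sum_{i=1}^{n}x_{i}e_{i}$. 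Setting $\lambda_{i} = \mu_{i}/(6c)$ and $Q(x_{1}, \dots, x_{n}) = \tfrac{1}{6c}\om(v, v, v)$, a cubic in $x_{1}, \dots, x_{n}$ alone, puts $P$ in the first form in \eqref{harmonicprenormal}, namely $P = c\big(x_{n+1}^{3} + 3x_{n+1}\sum_{i=1}^{n}\lambda_{i}x_{i}^{2} + Q\big)$.

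It remains to read off the constraints from harmonicity. Computing the Laplacian on $\rea^{n+1}$ of this expression gives $\lap_{h}P = 6c\big(1 + \sum_{i=1}^{n}\lambda_{i}\big)x_{n+1} + c\,\lap Q$, where $\lap Q$ is the Laplacian of $Q$ on $\rea^{n}$; since $Q$ is cubic, $\lap Q$ is a homogeneous linear polynomial in $x_{1}, \dots, x_{n}$ and so does not involve the monomial $x_{n+1}$. Therefore $\lap_{h}P = 0$ forces the two summands to vanish separately, that is, $\sum_{i}\lambda_{i} = -1$ and $\lap Q = 0$. The second displayed form in \eqref{harmonicprenormal} is then obtained from the first via the identity $\sum_{i}\lambda_{i}(3x_{n+1}x_{i}^{2} - x_{n+1}^{3}) = 3x_{n+1}\sum_{i}\lambda_{i}x_{i}^{2} + x_{n+1}^{3}$, which holds because $\sum_{i}\lambda_{i} = -1$. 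The only subtle step is the choice of distinguished axis in the first paragraph: it must be placed at a critical point of $P|_{\sphere_{h}(1)}$ (so that the $x_{n+1}^{2}$-coefficient, being a linear form in $v$, is forced to vanish) and at a point where $P > 0$ (so that $c > 0$), and choosing a maximizer achieves both precisely because $P$ is odd and nonzero; the remainder is routine expansion of $\om$ and a one-line Laplacian computation.
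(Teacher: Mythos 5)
Your proof is correct and follows essentially the same path as the paper's: place the maximizer of $P|_{\sphere_{h}(1)}$ (positive by oddness) on the $x_{n+1}$-axis, kill the $x_{n+1}^{2}$-coefficient via the Lagrange critical-point condition, orthogonally diagonalize the $x_{n+1}$-coefficient quadratic form, and read the constraints off the vanishing of $\lap_{h}P$. The only cosmetic difference is that you work with the polarized trilinear form $\om$ to organize the expansion, whereas the paper simply expands $P$ as a polynomial in $x_{n+1}$ and invokes $dP(e)$; these are the same computation.
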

\begin{proof}
First it is claimed that a nontrivial cubic homogeneous polynomial $P \in \pol^{3}(\rea^{n+1})$ is equivalent modulo $O(n+1)$ to a polynomial of the form
\begin{align}\label{pprenormal0}
P = cx_{n+1}^{3} + x_{n+1}\sum_{i = 1}^{n}a_{i}x_{i}^{2} + B(x_{1}, \dots, x_{n}), 
\end{align}
where $B \in \pol^{3}(\rea^{n})$ and $c > 0$. The restriction of a nontrivial $P \in \pol^{3}(\rea^{n+1})$ to the sphere $\sn = \{x \in \rea^{n+1}: |x|  = 1\}$ has a maximum at some $e \in \sn$, and at $e$ there holds $P_{i}(e) = 3c e_{i}$ for some $c \in \rea$. Since $3P(e) = e^{i}P_{i}(e) = 3c |e| ^{2}$, were $x \leq 0$ then $P(x) \leq 0$ for all $x \in \sn$, which cannot be because $P$ is an odd function, so $c > 0$. By an orthogonal change of variables it may be supposed that the point $e$ at which the maximum occurs is $x_{1} = 0, \dots, x_{n} = 0$, $x_{n+1} = 1$, and that at this point $P_{i}(e)$ equals $\la dx^{n+1}$. Write $P$ in the form $P = cx_{n+1}^{3} + x_{n+1}^{2}L(x_{1}, \dots, x_{n}) + x_{n+1}A(x_{1}, \dots x_{n}) + B(x_{1}, \dots, x_{n})$, in which $L$, $A$, and $B$ are homogeneous polynomials on $\rea^{n}$ of degrees $1$, $2$, and $3$, respectively, and $c$ is the positive constant from before. Then $3c dx_{n+1} = dP(e) = 3c dx_{n+1} + dL$, so it must be that the linear form $L(x)$ vanishes. By an orthogonal change of variables it may be further supposed that $A$ has the form $A = \sum_{i = 1}^{n}a_{i}x_{i}^{2}$, so that, modulo $O(n+1)$, $P$ has the form \eqref{pprenormal0}. 

If $P$ is harmonic, then 
\begin{align}\label{harmonicpreduced}
\begin{split}
0 = \lap P &= \lap  B + 2(3c + \sum_{i = 1}^{n}a_{i})x_{n+1},
\end{split}
\end{align}
so $\lap B = 0$ and $\sum_{i = 1}^{n}a_{i} = -3c$. Let $\la_{i} = -\tfrac{a_{i}}{\sum_{j = 1}^{n}a_{j}} = \tfrac{a_{i}}{3c}$ and $Q = c^{-1}B$. Then $\sum_{i = 1}^{n}\la_{i} = -1$ and $P$ has the form \eqref{harmonicprenormal} where $\lap  Q = 0$.
\end{proof}

\begin{lemma}\label{preparationlemma}
A nontrivial cubic homogeneous polynomial $P \in \pol^{3}(\rea^{n+1})$ solving \eqref{einsteinpolynomials} with the constant $\ka$ is equivalent modulo $O(n+1)$ to a polynomial of the form
\begin{align}\label{pprenormal}
\begin{split}
P(x_{1}, \dots, x_{n+1}) &= c\left(\sum_{i = 1}^{n}\la_{i}\left(3x_{n+1}x_{i}^{2} - x_{n+1}^{3}\right) + Q(x_{1}, \dots, x_{n}) \right)\\
&= c\left(x_{n+1}^{3} + 3x_{n+1}\left(\sum_{i = 1}^{n}\la_{i}x_{i}^{2}\right) + Q(x_{1}, \dots, x_{n}) \right), 
\end{split}
\end{align}
where $c > 0$, $\sum_{i = 1}^{n}\la_{i} = -1$, $36c^{2}(1 + \sum_{i = 1}^{n}\la_{i}^{2}) = \ka$,  $-\tfrac{n+1}{2} \leq \la_{i} \leq \tfrac{1}{2}$, there hold the equivalent systems of inequalities
\begin{align}\label{laineq}
&1 + \sum_{i = 1}^{n}\la_{i}^{2} \geq 2\la_{k}^{2},& & 2(1 + \la_{k}) \geq  \sum_{i \neq k, j \neq k, i \neq j}\la_{i}\la_{j} ,& &1 - \la_{k}^{2} \geq \sum_{1\leq i < j \leq n}\la_{i}\la_{j}, 
\end{align}
for all $1 \leq k \leq n$, and $Q \in \pol^{3}(\rea^{n})$ satisfies
\begin{align}\label{epreduced}
\begin{split}
\lap Q &= 0, \quad \sum_{i = 1}^{n}\la_{i}\tfrac{\pr^{2}Q}{\pr^{2} x^{i}} = 0,\\
|\hess Q| ^{2}& =36(1 + \sum_{i = 1}^{n}\la_{i}^{2})E_{n}(x) -  72\sum_{i= 1}^{n}\la_{i}^{2}x_{i}^{2} ,
\end{split}
\end{align} 
where $E_{n}(x) = E_{n}(x_{1}, \dots, x_{n})$ is the quadratic form defined by the standard Euclidean metric on $\rea^{n}$.

Either at least $2$ of the $\la_{i}$ are negative, or $n-1$ of $\la_{i}$ equal $0$ and the $n$th equals $-1$. 
In the latter case, after a permutation of the indices $\{1, \dots, n\}$ it can be assumed that $\la_{1} = \dots = \la_{n-1} = 0$ and $\la_{n} = -1$, and $Q$ has the form $Q(x_{1}, \dots, x_{n}) = R(x_{1}, \dots, x_{n-1})$ where $R(x_{1}, \dots, x_{n-1}) \in \pol^{3}(\rea^{n-1})$ solves \eqref{einsteinpolynomials} with constant $\ka = 72$.
\end{lemma}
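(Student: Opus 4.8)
The plan is to derive \eqref{pprenormal} by feeding the harmonic normal form of Lemma~\ref{harmonicpreparationlemma} into the second equation of \eqref{einsteinpolynomials}. A solution of \eqref{einsteinpolynomials} is harmonic, so Lemma~\ref{harmonicpreparationlemma} permits the assumption, after acting by $O(n+1)$, that
\[
P = c\left(x_{n+1}^{3} + 3x_{n+1}\sum_{i=1}^{n}\la_{i}x_{i}^{2} + Q(x_{1},\dots,x_{n})\right),
\]
with $c>0$, $\sum_{i}\la_{i}=-1$, $\lap Q=0$, and --- this is used below --- with the basis vector $e_{n+1}$ a point where $P$ attains its maximum on the unit sphere, since that is how the normal form is produced. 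Write $y=x_{n+1}$ and $x=(x_{1},\dots,x_{n})$, let $\Lambda=\operatorname{diag}(\la_{1},\dots,\la_{n})$, and note that a direct computation gives, in block form for $\rea^{n+1}=\rea^{n}\oplus\rea$,
\[
(\hess P)(x,y) = c\begin{pmatrix} 6y\Lambda+\hess Q & 6\Lambda x \\ 6x^{T}\Lambda & 6y\end{pmatrix}.
\]
Since $\hess Q$ has linear entries, squaring and contracting gives
\[
|\hess P|^{2}=c^{2}\left(36y^{2}\left(1+\sum_{i}\la_{i}^{2}\right)+12y\sum_{i}\la_{i}Q_{ii}+72\sum_{i}\la_{i}^{2}x_{i}^{2}+|\hess Q|^{2}\right).
\]
Setting this equal to $\ka\left(E_{n}(x)+y^{2}\right)$ and comparing, in turn, the $y^{2}$ part, the part linear in $y$, and the part free of $y$, produces $36c^{2}(1+\sum_{i}\la_{i}^{2})=\ka$, the relation $\sum_{i}\la_{i}Q_{ii}=0$, and $|\hess Q|^{2}=36(1+\sum_{i}\la_{i}^{2})E_{n}(x)-72\sum_{i}\la_{i}^{2}x_{i}^{2}$; together with $\lap Q=0$ these are \eqref{epreduced}.

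Next I would read off the constraints on the $\la_{i}$. The function $|\hess Q|^{2}$ is nonnegative and, by the identity just obtained, equals the diagonal quadratic form whose $x_{k}^{2}$-coefficient is $36(1+\sum_{i}\la_{i}^{2})-72\la_{k}^{2}$; hence that form is positive semidefinite, which is precisely $1+\sum_{i}\la_{i}^{2}\ge 2\la_{k}^{2}$ for every $k$, the first system in \eqref{laineq}, and the identity $1=(\sum_{i}\la_{i})^{2}=\sum_{i}\la_{i}^{2}+2\sum_{i<j}\la_{i}\la_{j}$ rewrites it as the other two. The bound $\la_{k}\le \tfrac12$ I would obtain instead from the maximality of $e_{n+1}$: evaluating the block matrix at $x=0$, $y=1$ (where $\hess Q$ vanishes) gives $(\hess P)(e_{n+1})$ restricted to $\rea^{n}$ equal to $6c\Lambda$ and $P(e_{n+1})=c$, so the second-order condition $(\hess P)(e_{n+1})(v,v)\le 3P(e_{n+1})|v|^{2}$ for $v$ tangent to the sphere (which uses Euler's identity $\langle dP(e_{n+1}),e_{n+1}\rangle = 3P(e_{n+1})$), applied to $v=e_{k}$, forces $6c\la_{k}\le 3c$. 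Then $\la_{k}=-1-\sum_{i\ne k}\la_{i}\ge -1-(n-1)\tfrac12=-\tfrac{n+1}{2}$.

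For the dichotomy, $\sum_{i}\la_{i}=-1<0$ already forces some $\la_{i}<0$. Suppose exactly one is negative, say $\la_{n}<0$ and $\la_{1},\dots,\la_{n-1}\ge 0$, and put $T=\sum_{i<n}\la_{i}\ge 0$ (so $\la_{n}=-1-T$) and $U=\sum_{i<n}\la_{i}^{2}$. Expanding the third inequality of \eqref{laineq} with $k=n$, both sides become polynomials in $T$ and $U$ and the inequality collapses to $U\ge T(T+2)$. On the other hand $0\le\la_{i}\le\tfrac12$ gives $\la_{i}^{2}\le\tfrac12\la_{i}$, hence $U\le\tfrac12 T$; combining, $T>0$ would give the absurdity $\tfrac12\ge T+2$. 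Therefore $T=0$, whence $\la_{1}=\dots=\la_{n-1}=0$ and $\la_{n}=-1$.

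In that degenerate case, after permuting the indices so $\la_{n}=-1$, \eqref{pprenormal} reads $P=c\left(x_{n+1}^{3}-3x_{n+1}x_{n}^{2}+Q(x_{1},\dots,x_{n})\right)$, the relation $\sum_{i}\la_{i}Q_{ii}=0$ becomes $Q_{nn}=0$, and the third equation of \eqref{epreduced} becomes $|\hess Q|^{2}=72\,E_{n-1}(x_{1},\dots,x_{n-1})$. Writing $Q=Q_{0}+x_{n}Q_{1}+x_{n}^{2}M+x_{n}^{3}N$ with $Q_{0},Q_{1},M,N$ polynomials in $x_{1},\dots,x_{n-1}$, the identity $Q_{nn}=2M+6x_{n}N=0$ forces $M=N=0$, and then the coefficient of $x_{n}^{2}$ in $|\hess Q|^{2}$ equals $\sum_{i,j\le n-1}(Q_{1})_{ij}^{2}$, which must vanish because the right-hand side is free of $x_{n}$; thus $Q_{1}$ has vanishing Hessian and, being homogeneous quadratic, is $0$. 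Hence $Q=R(x_{1},\dots,x_{n-1})$, and $\lap R=\lap Q=0$ together with $|\hess R|^{2}=72\,E_{n-1}$ say exactly that $R$ solves \eqref{einsteinpolynomials} with constant $72$. The bulk of the work is bookkeeping --- separating the identity $|\hess P|^{2}=\ka|x|^{2}$ into its homogeneous pieces in $y$, and tracking the $x_{n}$-dependence of $Q$ in the degenerate case --- while the one genuinely pointed step is recognizing that $\la_{k}\le\tfrac12$ must be imported from the maximality of $e_{n+1}$ rather than extracted from \eqref{laineq}.
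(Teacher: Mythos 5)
Your proposal is correct and follows essentially the same route as the paper: reduce to the form of Lemma \ref{harmonicpreparationlemma}, compute the block Hessian, split $|\hess P|^{2}=\ka|x|^{2}$ by degree in $x_{n+1}$ to get \eqref{epreduced}, extract $\la_k\le\tfrac12$ from the second-order condition at the maximizer, get \eqref{laineq} from positive semidefiniteness of the diagonal form, and then analyze the degenerate case by expanding $Q$ in powers of $x_n$ and killing the extra terms via $Q_{nn}=0$ and the $x_n^2$-coefficient of $|\hess Q|^2$. The one mild variation is the dichotomy step: the paper picks the index $m$ at which $\la_m$ is minimal and notes directly that if all other $\la_i$ are $\ge 0$ then the nonnegative quantity $2\sum_{i\ne m}\la_i+\sum_{i\ne m,j\ne m,i\ne j}\la_i\la_j$ must vanish (using only $\sum\la_i=-1$ and \eqref{laineq}, not the bound $\la_i\le\tfrac12$), whereas you parametrize by $T=\sum_{i<n}\la_i$, $U=\sum_{i<n}\la_i^2$ and combine $U\ge T(T+2)$ with $U\le\tfrac12T$; both are short, valid, and reach the same conclusion.
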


\begin{proof}
If $P$ solves \eqref{einsteinpolynomials} with the constant $\ka$, then it is in particular harmonic, so may be assumed to have the form \eqref{harmonicprenormal} with $c > 0$, $\sum_{i = 1}^{n}\la_{i} = -1$, and $\lap Q = 0$. The second equation of \eqref{einsteinpolynomials} yields
\begin{align}
\begin{split}
\ka&(E_{n}(x) + x_{n+1}^{2})  = |\hess P |^{2} \\
&= c^{2}\left(|\hess Q|^{2}  + 72\sum_{i = 1}^{n}\la_{i}^{2}x_{i}^{2}\right) + 12c^{2}\left(\sum_{i = 1}^{n}\la_{i}\tfrac{\pr^{2}Q}{\pr^{2} x^{i}}\right)x_{n+1}  +  36c^{2}(1 + \sum_{i = 1}^{n}\la_{i}^{2})x_{n+1}^{2}.
\end{split}
\end{align}
Hence there must hold
\begin{align}\label{epreduced2}
\begin{split}
&\lap Q = 0,\qquad \sum_{i = 1}^{n}\la_{i} = -1, \qquad 36c^{2}(1 + \sum_{i = 1}^{n}\la_{i}^{2}) = \ka,\\
&\sum_{i = 1}^{n}\la_{i}\tfrac{\pr^{2}Q}{\pr^{2} x^{i}} = 0,\qquad  |\hess Q| ^{2}  + 72\sum_{i= 1}^{n}\la_{i}^{2}x_{i}^{2} = 36(1 + \sum_{i = 1}^{n}\la_{i}^{2})E_{n}(x),
\end{split}
\end{align}
which yields \eqref{epreduced}. 
Since $P$ attains its maximum on $\sn$ at $e$, for $1 \leq i \leq n$, 
\begin{align}
0 \geq (\hess P(e))(\tfrac{\pr}{\pr x_{i}}, \tfrac{\pr}{\pr x_{i}}) - 3|e|^{-2}P(e)|\tfrac{\pr}{\pr x_{i}}|^{2} = 6c\la_{i} - 3c, 
\end{align}
so $\la_{i} \leq \tfrac{1}{2}$. This yields $-1 = \sum_{j = 1}^{n}\la_{j}  \leq \la _{i} + (n-1)/2$, so that $\la_{j} \geq -(n+1)/2$.

Since $|\hess Q| ^{2} \geq 0$ at all $x$, evaluating the last expression of \eqref{epreduced} at the standard basis vectors shows that the coefficients $\la_{i}$ satisfy 
\begin{align}\label{laineq0}
1 + \sum_{j = 1}^{n}\la_{j}^{2} \geq 2\la_{i}^{2}
\end{align}
for all $1 \leq i \leq n$. Using $\sum_{i= 1}^{n}\la_{i} = -1$ yields
\begin{align}\label{laineqequivalences}
\begin{split}
1 + \sum_{i = 1}^{n}\la_{i}^{2} - 2\la_{k}^{2} & = 1 + \sum_{i \neq k}\la_{i}^{2} - \la_{k}^{2}  =1 + \sum_{i \neq k}\la_{i}^{2} - \left(1 + \sum_{i \neq k}\la_{i}\right)^{2}\\
&  = - 2\sum_{i \neq k}\la_{i} - \sum_{i \neq k, j \neq k, i \neq j}\la_{i}\la_{j} = 2(1 + \la_{k}) -  \sum_{i \neq k, j \neq k, i \neq j}\la_{i}\la_{j} \\
&= 2(1 - \la_{k}^{2} - \sum_{1\leq i < j \leq n}\la_{i}\la_{j}).
\end{split}
\end{align}
With \eqref{laineq0} this shows that there hold the inequalities \eqref{laineq} and that each of these inequalities implies the others.

Suppose $m \in \{1, \dots, n\}$ is such that $\la_{m} \leq \la_{i}$ for all $i \in \{1, \dots, n\}$. Because $\sum_{i = 1}^{n}\la_{i} = -1$, it must be that $\la_{m} < 0$. There holds
\begin{align}\label{lam}
\begin{split}
0 & \geq \la_{m}^{2} - (1 + \sum_{i \neq m}\la_{i}^{2})   =\left(1 + \sum_{i \neq m}\la_{i}\right)^{2} - (1 + \sum_{i \neq m}\la_{i}^{2}) \\
& = 2\sum_{i \neq m}\la_{i} + \sum_{i \neq m, j \neq m, i \neq j}\la_{i}\la_{j}.
\end{split}
\end{align}
If $\la_{i} \geq 0$ for all $i \neq m$, then the right-hand side of \eqref{lam} is nonnegative, so it must be $\la_{i} = 0$ for $i \neq m$, in which case $\la_{m} = -1$. Otherwise, there is some index $k\neq m$ such that $\la_{k} < 0$.
Suppose $n-1$ of the $\la_{i}$ equal $0$. After a permutation of the indices it can be assumed $\la_{1} = \dots = \la_{n-1} = 0$ and $\la_{n} = 0$. Then \eqref{epreduced} implies $\tfrac{\pr^{2}Q}{\pr x_{n}\pr x_{n}} = 0$ and $|\hess Q|^{2}  = 72E_{n-1}(x_{1}, \dots, x_{n-1})$. Consequently there are $R(x_{1}, \dots, x_{n-1}) \in \pol^{3}(\rea^{n+1})$ and $B(x_{1}, \dots, x_{n-1}) \in \pol^{2}(\rea^{n-1})$ such that $Q(x_{1}, \dots, x_{n}) = R(x_{1}, \dots, x_{n-1}) + x_{n}B(x_{1}, \dots, x_{n-1})$. Then, for $1 \leq i, j \leq n-1$,
\begin{align}
&\tfrac{\pr^{2}Q}{\pr x_{i}\pr x_{j}} = \tfrac{\pr^{2}R}{\pr x_{i}\pr x_{j}} + x_{n}\tfrac{\pr^{2}B}{\pr x_{i}\pr x_{j}}, &  &\tfrac{\pr^{2}Q}{\pr x_{i}\pr x_{n}} = \tfrac{\pr B}{\pr x_{i}},
\end{align}
so
\begin{align}
\begin{split}
72&E_{n-1}(x_{1}, \dots, x_{n-1})  = |\hess Q|^{2}  \\&= |\hess R|^{2}  + 2x_{n}\sum_{1 \leq i, j \leq n-1}\tfrac{\pr^{2}R}{\pr x_{i}\pr x_{j}}\tfrac{\pr^{2}B}{\pr x_{i}\pr x_{j}} + x_{n}^{2}|\hess B|^{2} .
\end{split}
\end{align}
Consequently $|\hess B|^{2}  = 0$, and, since $\hess B$ is a constant matrix, this implies $\hess B = 0$, so $B = 0$. Hence $Q(x_{1}, \dots, x_{n}) = R(x_{1}, \dots, x_{n-1})$ is harmonic and satisfies $72E_{n-1}(x_{1}, \dots, x_{n-1})  = |\hess R|^{2} $, so $R$ solves \eqref{einsteinpolynomials} with constant $\ka = 72$.
\end{proof}

\begin{example}\label{lanminusoneexample}
The case $\la_{1} = \dots = \la_{n-1} = 0$, $\la_{n} = -1$ of Lemma \ref{preparationlemma} yields to the following (decomposable) example. Let $R(x_{1}, \dots, x_{n-1}) \in \pol^{3}(\rea^{n-1})$ solve \eqref{einsteinpolynomials} with constant $\si > 0$. Then $P(x_{1}, \dots, x_{n+1}) = x_{n+1}^{3} - 3x_{n+1}x_{n}^{2} + \tfrac{6\sqrt{2}}{\sqrt{\si}}R(x_{1}, \dots, x_{n-1})$ solves \eqref{einsteinpolynomials} with constant $\ka = 72$. This follows upon taking $Q(x_{1}, \dots, x_{n}) = \tfrac{6\sqrt{2}}{\sqrt{\si}}R(x_{1}, \dots, x_{n-1})$ in Lemma \ref{preparationlemma}. Since there is no solution of \eqref{einsteinpolynomials} in dimension $1$, this construction works for $n+1 \geq 4$ (because it requires $n-1 \geq 2$). On the other hand, it is evident that $P$ is orthogonally decomposable, as it has the form of the solutions given in Example \ref{splitexample}. 
\end{example}

\begin{lemma}\label{qpreplemma}
Let $h_{ij}$ be a Euclidean metric on $\rea^{n}$.
Consider a harmonic cubic polynomial
\begin{align}
Q(x_{1}, \dots, x_{n}) = \sum_{1\leq i < j < k\leq n}\al_{ijk}x_{i}x_{j}x_{k} + \tfrac{1}{6}\sum_{1 \leq i \neq j \leq n}\be_{ij}(3x_{i}^{2}x_{j} - x_{j}^{3}) \in \pol^{3}(\rea^{n})
\end{align}
where $a_{ijk}$ is completely symmetric in its indices. Let $\sum_{i = 1}^{n}\la_{i} = -1$ and write $\la_{ij} = \la_{i} - \la_{j}$. Then $Q$ solves \eqref{epreduced} if and only if there hold the equations
\begin{align}
\begin{split}
0 & = \sum_{j\neq i}\la_{ji}\be_{ji}, \quad 1\leq i \leq n,\\
0 & = -\sum_{k \neq i, k \neq j}\left(\be_{ki}\be_{ij} + \be_{kj}\be_{ji}\right) + \sum_{k \neq i, k\neq j}\be_{ki}\be_{kj}  \\
&\quad+ 2\sum_{k \neq i, k \neq j}\be_{ik}\al_{ikj}+ 2\sum_{k \neq i, k \neq j}\be_{jk}\al_{jki} + 2\sum_{k, l \notin\{i, j\}, k< l}\al_{ikl}\al_{jkl},\\
18\left(1 - \la_{i}^{2} + \sum_{j \neq i}\la_{j}^{2}\right)& = \sum_{k \neq i}(\be_{ki}^{2} + \be_{ik}^{2}) + \sum_{k \neq i, l \neq i, k < l}\be_{ki}\be_{li}+ \sum_{k\neq i, l \neq i, k < l}\al_{ikl}^{2}, \quad 1 \leq i \leq n.
\end{split}
\end{align}
\end{lemma}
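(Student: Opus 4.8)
The plan is to turn the three polynomial equations comprising \eqref{epreduced} into equations among the coefficients $\al_{ijk}$ and $\be_{ij}$. Since $Q$ is harmonic by hypothesis, the first equation $\lap Q = 0$ of \eqref{epreduced} holds automatically, so only the other two need be examined; each is an equality of homogeneous polynomials of a fixed degree, hence is equivalent to the vanishing of all coefficients of the difference of the two sides. The starting point is the list of second partial derivatives of a harmonic cubic already obtained in the proof of Lemma \ref{harpreplemma}: for $i \neq j$,
\begin{align*}
Q_{ij} = \sum_{k \neq i,\, k \neq j}\al_{ijk}x_{k} + \be_{ij}x_{i} + \be_{ji}x_{j}, \qquad Q_{ii} = -\sum_{k \neq i}\be_{ki}x_{i} + \sum_{k \neq i}\be_{ik}x_{k},
\end{align*}
where $Q_{pq}$ abbreviates $\pr^{2}Q/\pr x_{p}\pr x_{q}$.

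First I would treat the equation $\sum_{i}\la_{i}Q_{ii} = 0$. Substituting the formula for $Q_{ii}$ and swapping the names of the summation indices in the double sum $\sum_{m}\sum_{k\neq m}\la_{m}\be_{mk}x_{k}$ shows that the coefficient of $x_{i}$ in $\sum_{m}\la_{m}Q_{mm}$ equals $\sum_{j \neq i}(\la_{j} - \la_{i})\be_{ji} = \sum_{j \neq i}\la_{ji}\be_{ji}$; since the $x_{i}$ are linearly independent, $\sum_{i}\la_{i}Q_{ii} = 0$ is equivalent to the first system of the statement.

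Next I would treat the norm equation $|\hess Q|^{2} = 36(1 + \sum_{i}\la_{i}^{2})E_{n}(x) - 72\sum_{i}\la_{i}^{2}x_{i}^{2}$. Both sides are quadratic forms; the right side has no $x_{i}x_{j}$ term for $i \neq j$ and has coefficient $36(1 + \sum_{j}\la_{j}^{2}) - 72\la_{i}^{2} = 2\cdot 18(1 - \la_{i}^{2} + \sum_{j \neq i}\la_{j}^{2})$ on $x_{i}^{2}$. Expanding $|\hess Q|^{2} = \sum_{p,q}Q_{pq}^{2}$ exactly as in the proof of Lemma \ref{harpreplemma} gives, for $i \neq j$, the coefficient of $x_{i}x_{j}$ as twice the right-hand side of the second system, and the coefficient of $x_{i}^{2}$ as twice the right-hand side of the third system. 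Equating the coefficient of $x_{i}x_{j}$ to $0$ yields the second system, and equating the coefficient of $x_{i}^{2}$ to $36(1 - \la_{i}^{2} + \sum_{j \neq i}\la_{j}^{2})$ yields the third. Conversely, if all three systems hold, then by these same coefficient identifications $\sum_{i}\la_{i}Q_{ii}$ vanishes and $|\hess Q|^{2}$ coincides with the prescribed quadratic form, so $Q$ solves \eqref{epreduced}.

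There is no conceptual obstacle; the computation is the one already carried out for Lemma \ref{harpreplemma}, and the only delicate part is the index bookkeeping in $\sum_{p,q}Q_{pq}^{2}$ — in particular combining the three contributions $-2\sum_{k \neq i}\be_{ki}\be_{ij}$, $-2\sum_{k \neq j}\be_{kj}\be_{ji}$ and $4\be_{ij}\be_{ji}$ to the $x_{i}x_{j}$ coefficient into $-2\sum_{k \neq i,\, k \neq j}(\be_{ki}\be_{ij} + \be_{kj}\be_{ji})$, and expanding $(\sum_{k \neq i}\be_{ki})^{2}$ into its square and cross terms when collecting the $x_{i}^{2}$ coefficient. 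Since this bookkeeping is identical to that for the closely parallel Lemma \ref{harpreplemma}, the present lemma follows by repeating it and reading off the extra constraints imposed by the term $-72\sum_{i}\la_{i}^{2}x_{i}^{2}$ and by the new equation $\sum_{i}\la_{i}Q_{ii}=0$.
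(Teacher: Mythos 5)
Your proposal is correct and takes essentially the same route as the paper, which simply states that the result follows from Lemmas \ref{harpreplemma} and \ref{preparationlemma}. You make the coefficient bookkeeping explicit — the first system is the coefficient-of-$x_{i}$ identity for $\sum_{m}\la_{m}Q_{mm}=0$, and the second and third systems are the vanishing of the $x_{i}x_{j}$ coefficient and the matching of the $x_{i}^{2}$ coefficient in the norm equation, both read off from the expansion already performed in Lemma \ref{harpreplemma} — which is precisely the content the paper leaves to the reader.
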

\begin{proof}
This follows straightforwardly from Lemmas \ref{harpreplemma} and \ref{preparationlemma}.
\end{proof}

\begin{remark}
In general it is not clear to what extent either Lemma \ref{harpreplemma} or \ref{qpreplemma} is useful for obtaining solutions in general. These equations recall those defining a simple real Lie algebra in the following sense. The Jacobi identity is a system of quadratic equations in the structure constants of the algebra, and the nondegeneracy of the Killing form yields further quadratic equations in the structure constants. These quadratic equations are not usually the basis for the study of Lie algebras, except in low dimensions (and in the presence of further auxiliary conditions, such as solvability), in which case the small number of variables makes them susceptible to direct study. Similarly, in the $3$ and $4$ dimensional cases the equations given by Lemmas \ref{harpreplemma} and \ref{qpreplemma} are amenable to direct solution, as is illustrated by the proofs of Theorems \ref{3dpolytheorem} and \ref{4dpolytheorem}. 
\end{remark}

Using Lemma \ref{preparationlemma} it can be shown that there exist nontrivial solutions to \eqref{einsteinpolynomials} for all $n \geq 2$. 

\begin{lemma}\label{epreductionlemma}
If a homogeneous cubic polynomial $P(x_{1}, \dots, x_{n+1}) \in \pol^{3}(\rea^{n+1})$ solving \eqref{einsteinpolynomials} with constant $\ka$ has the form \eqref{pprenormal} with $\la_{1} = \dots = \la_{n}$, then $\ka  = 36\tfrac{n+1}{n}c^{2}$ and $Q(x_{1}, \dots, x_{n}) \in \pol^{3}(\rea^{n})$ solves \eqref{einsteinpolynomials} with constant $36\tfrac{(n+2)(n-1)}{n^{2}}$.
\end{lemma}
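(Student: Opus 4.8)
The plan is to note that the hypothesis $\la_{1} = \dots = \la_{n}$, combined with the constraint $\sum_{i=1}^{n}\la_{i} = -1$ supplied by Lemma \ref{preparationlemma}, forces $\la_{i} = -1/n$ for every $i$, and then to substitute this value into the relations already recorded in \eqref{pprenormal}--\eqref{epreduced}.

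First I would record that with $\la_{i} = -1/n$ one has $\sum_{i=1}^{n}\la_{i}^{2} = n\cdot n^{-2} = 1/n$. Plugging this into the identity $36c^{2}(1 + \sum_{i=1}^{n}\la_{i}^{2}) = \ka$ from Lemma \ref{preparationlemma} gives $\ka = 36c^{2}(1 + 1/n) = 36\tfrac{n+1}{n}c^{2}$, which is the first assertion. Next I would examine the three conditions \eqref{epreduced} that $Q$ must satisfy in this normal form. The condition $\lap Q = 0$ is harmonicity of $Q$. The condition $\sum_{i=1}^{n}\la_{i}\tfrac{\pr^{2}Q}{\pr^{2}x^{i}} = 0$ becomes $-\tfrac{1}{n}\lap Q = 0$ and so is automatically satisfied, imposing nothing new on $Q$. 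For the third condition, using that $\sum_{i=1}^{n}x_{i}^{2} = E_{n}(x)$ one computes
\[
|\hess Q|^{2} = 36\bigl(1 + \tfrac{1}{n}\bigr)E_{n}(x) - \tfrac{72}{n^{2}}E_{n}(x) = \tfrac{36}{n^{2}}\bigl(n(n+1) - 2\bigr)E_{n}(x) = 36\tfrac{(n+2)(n-1)}{n^{2}}E_{n}(x),
\]
where the last equality uses the factorization $n^{2} + n - 2 = (n+2)(n-1)$. Hence $Q$ is a harmonic cubic polynomial on $\rea^{n}$ the squared-norm of whose Hessian is $36\tfrac{(n+2)(n-1)}{n^{2}}$ times the Euclidean quadratic form, i.e. $Q$ solves \eqref{einsteinpolynomials} with constant $36\tfrac{(n+2)(n-1)}{n^{2}}$.

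There is no serious obstacle: the entire content of the lemma is already packaged in the normal form of Lemma \ref{preparationlemma}, so the argument reduces to the substitution $\la_{i} = -1/n$ and the arithmetic simplification above. The one point worth stating carefully is that in the equal-$\la_{i}$ case the second equation of \eqref{epreduced} is genuinely vacuous, so no constraint on $Q$ survives beyond $\lap Q = 0$ and the Hessian-norm identity; this is what makes $Q$ an honest solution of \eqref{einsteinpolynomials} rather than merely a solution of some overdetermined restriction of it.
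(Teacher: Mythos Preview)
Your proof is correct and follows essentially the same approach as the paper: both derive $\la_{i}=-1/n$ from the constraint $\sum_{i}\la_{i}=-1$, observe that the second equation of \eqref{epreduced} then reduces to $\lap Q=0$ and is therefore redundant, and substitute into the Hessian-norm identity to obtain the constant $36\tfrac{(n+2)(n-1)}{n^{2}}$. Your write-up is in fact slightly more complete, as you also explicitly verify the formula for $\ka$ from $36c^{2}(1+\sum_{i}\la_{i}^{2})$.
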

\begin{proof}
If all $\la_{i}$ are equal, they equal $-n^{-1}$. In this case the condition $\sum_{i = 1}^{n}\la_{i}\tfrac{\pr^{2}Q}{\pr^{2} x^{i}} = 0$ of \eqref{epreduced} is redundant with the condition $\lap Q = 0$, and $|\hess Q| ^{2} =36(1 + \sum_{i = 1}^{n}\la_{i}^{2})E_{n}(x) -  72\sum_{i= 1}^{n}\la_{i}^{2}x_{i}^{2} = 36\tfrac{(n+2)(n-1)}{n^{2}}E_{n}(x)$, so $Q$ solves \eqref{einsteinpolynomials} with constant $72\tfrac{n-1}{n}$. 
\end{proof}
Lemma \ref{epreductionlemma} motivates Lemma \ref{recursivepolynomiallemma}.

\begin{lemma}\label{recursivepolynomiallemma}
Suppose $Q_{n}(x) \in \pol^{3}(\rea^{n})$ solves \eqref{einsteinpolynomials} with constant $\ka_{n}$. Define
\begin{align}\label{qns}
\begin{split}
Q_{n+1}&(x, x_{n+1})  =  \sqrt{\tfrac{\ka_{n+1}}{(n+1)n}}\left(\tfrac{1}{6}\sum_{i = 1}^{n}\left(x_{n+1}^{3} - 3x_{n+1}x_{i}^{2}\right) + \sqrt{\tfrac{(n+2)(n-1)}{\ka_{n}}}Q_{n}(x)\right)\\
& = \sqrt{\tfrac{\ka_{n+1}}{(n+1)n}}\left(\tfrac{n+3}{6}\left(x_{n+1}^{3} - \tfrac{3}{n+3}x_{n+1}E_{n+1}(x, x_{n+1})\right) + \sqrt{\tfrac{(n+2)(n-1)}{\ka_{n}}}Q_{n}(x)\right)
\end{split}
\end{align}
where $x \in \rea^{n}$ and $E_{n+1}(x, x_{n+1}) = E_{n}(x) + x_{n+1}^{2} = |x|^{2} + x_{n+1}^{2}$ are the quadratic forms determined by the metrics $\hat{h}(\hat{x}, \hat{y}= h(x, y) + x_{n+1}y_{n+1}$ and $h$ on $\rea^{n+1}$ and $\rea^{n}$.
\begin{enumerate}
\item\label{rcp0} $Q_{n+1}$ solves \eqref{einsteinpolynomials} with constant $\ka_{n+1}$, 
\item\label{gqns} If $g \in O(n)$ then $\hat{Q}_{n+1}$ defined by \eqref{qns} with $g\cdot Q_{n}$ in place of $Q_{n}$ equals $i(g)\cdot Q_{n+1}$ where $i:O(n) \to O(n+1)$ is the inclusion as the subgroup fixing $x_{n+1}$.
\item\label{rcp1} The critical lines of $Q_{n+1}$ not generated by $(0, 1)$ (which is a critical line of $Q_{n+1}$) come in pairs of distinct critical lines projecting orthogonally onto critical lines of $Q_{n}$. In particular if the cardinality $c_{n}$ of $\critline(Q_{n})$ is finite, then the cardinality $c_{n+1}$ of $\critline(Q_{n+1})$ is $c_{n+1} = 2c_{n} + 1$. Consequently $\Aut(Q_{n+1})$ is finite if and only if $\Aut(Q_{n})$ is finite.
\item\label{rcp2} The conformal nonassociativity tensors of $Q_{n}$ and $Q_{n+1}$ are related by
\begin{align}\label{confextnonass}
\cass(Q_{n+1})(\hat{x}, \hat{y}, \hat{z}, \hat{w}) = \tfrac{\ka_{n+1}}{\ka_{n}}\tfrac{(n+2)(n-1)}{(n+1)n}\cass(Q_{n})(x, y, z, w).
\end{align}
for $\hat{x}= (x, x_{n+1}), \hat{y}= (y, y_{n+1}), \hat{z}= (z, z_{n+1}), \hat{w}= (w, w_{n+1}) \in \rea^{n+1}$. In particular, $Q_{n+1}$ is conformally associative if and only if $Q_{n}$ is conformally associative.
\end{enumerate}
\end{lemma}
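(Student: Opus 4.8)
The plan is to route every part of the lemma through one block–matrix identity. Rewriting \eqref{qns} as $Q_{n+1}(\hat x)=c\bigl(\tfrac{n}{6}x_{n+1}^{3}-\tfrac12 x_{n+1}|x|_{h}^{2}+d\,Q_{n}(x)\bigr)$ with $c=\sqrt{\ka_{n+1}/((n+1)n)}$ and $d=\sqrt{(n+2)(n-1)/\ka_{n}}$, one reads off directly, in an $\hat h$-orthonormal basis adapted to $\hat\alg=\alg\oplus\rea$,
\[
\hess Q_{n+1}(\hat x)=c\begin{pmatrix} d\,L_{n}(x)-x_{n+1}\Id_{\alg} & -x\\ -x^{\mathsf T} & n\,x_{n+1}\end{pmatrix},\qquad L_{n}(x):=\hess Q_{n}(x),
\]
with $x$ treated as a column and $x^{\mathsf T}$ as a row. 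Each assertion then follows from this matrix using only $\operatorname{tr}L_{n}(x)=\lap_{h}Q_{n}=0$, $|L_{n}(x)|_{h}^{2}=\ka_{n}|x|_{h}^{2}$, the commutativity identity $L_{n}(x)y=L_{n}(y)x$, and Example \ref{epcassexample}. For \eqref{rcp0}: the trace of the matrix is $c\,d\,\lap_{h}Q_{n}=0$, so $Q_{n+1}$ is $\hat h$-harmonic, and its squared norm, after inserting $\operatorname{tr}L_{n}(x)=0$ and $d^{2}\ka_{n}=(n+2)(n-1)$, collapses to $c^{2}n(n+1)\bigl(|x|_{h}^{2}+x_{n+1}^{2}\bigr)=\ka_{n+1}E_{n+1}(\hat x)$ because $c^{2}n(n+1)=\ka_{n+1}$. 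For \eqref{gqns}: since $g\in O(n)$ preserves $|x|_{h}^{2}$, replacing $Q_{n}$ by $g\cdot Q_{n}$ in \eqref{qns} merely replaces $Q_{n}(x)$ by $Q_{n}(g^{-1}x)$, which is precisely precomposition of $Q_{n+1}$ with $i(g)^{-1}$; in particular $i(\Aut(Q_{n}))\subseteq\Aut(Q_{n+1})$.

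For \eqref{rcp1}: a line $[\hat v]$ with $\hat v=(v,v_{n+1})$ lies in $\critline(Q_{n+1})$ iff $\hat h(\hat v,\cdot)\wedge dQ_{n+1}(\hat v)=0$; from the block form the $\alg$-component of $dQ_{n+1}(\hat v)$ equals $c\bigl(d\,dQ_{n}(v)-v_{n+1}h(v,\cdot)\bigr)$ and its $dx_{n+1}$-component equals $c\bigl(\tfrac n2 v_{n+1}^{2}-\tfrac12|v|_{h}^{2}\bigr)$. If $v=0$ this holds for $\hat v=(0,1)$, so $[(0,1)]\in\critline(Q_{n+1})$. If $v\neq 0$, proportionality of the $\alg$-component forces $dQ_{n}(v)\wedge h(v,\cdot)=0$, i.e. $[v]\in\critline(Q_{n})$; normalizing $|v|_{h}=1$ and writing $dQ_{n}(v)=\nu\,h(v,\cdot)$ (so $\nu=3Q_{n}(v)$ by homogeneity), the two components together reduce to the single quadratic $(n+2)v_{n+1}^{2}-2d\nu\,v_{n+1}-1=0$, whose discriminant is positive and whose roots have product $-1/(n+2)$; hence there are exactly two roots, nonzero and of opposite sign, giving two distinct elements of $\critline(Q_{n+1})$, neither equal to $[(0,1)]$, and replacing $v$ by $-v$ only swaps them, so the pair depends only on $[v]$. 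Thus $[\hat v]\mapsto[v]$ is a well-defined two-to-one surjection $\critline(Q_{n+1})\setminus\{[(0,1)]\}\to\critline(Q_{n})$, which yields $c_{n+1}=2c_{n}+1$ when finite and, in general, shows $\critline(Q_{n+1})$ is finite iff $\critline(Q_{n})$ is. For the automorphism statement, finiteness of $\Aut(Q_{n+1})$ forces finiteness of $\Aut(Q_{n})$ via the injection $i$ above; conversely, the two roots of the quadratic are bounded in absolute value by a constant depending only on $\max_{S^{n-1}}|Q_{n}|$, so $[(0,1)]$ is an isolated point of $\critline(Q_{n+1})$ in $\proj(\hat\alg)$, whence the connected group $\Aut(Q_{n+1})^{\circ}$ fixes $[(0,1)]$, preserves $\alg=\langle(0,1)\rangle^{\perp}$, and — matching the coefficient of $x_{n+1}^{3}$ exactly as in the proof of Lemma \ref{confextautlemma} — restricts to a subgroup of $\Aut(Q_{n})$; so if $\Aut(Q_{n})$ is finite then $\Aut(Q_{n+1})^{\circ}$ is trivial and $\Aut(Q_{n+1})$ is finite.

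For \eqref{rcp2} (with $n\geq 3$, so that $\cass$ is defined): since $Q_{n}$ and $Q_{n+1}$ both solve \eqref{einsteinpolynomials}, Example \ref{epcassexample} expresses their conformal nonassociativity tensors through the commutators of multiplication operators — $\hat x^{i}\hat y^{j}\cass(Q_{n+1})_{ijkl}$ is a fixed multiple of $[\hat L(\hat x),\hat L(\hat y)]_{kl}$ plus a multiple of $\hat x_{[k}\hat y_{l]}$, with $\hat L(\hat x)=\hess Q_{n+1}(\hat x)$, and likewise for $Q_{n}$. Using the block form of $\hat L$ together with $L_{n}(x)y=L_{n}(y)x$, the commutator $[\hat L(\hat x),\hat L(\hat y)]$ works out to a block matrix whose $\alg$-block is $c^{2}\bigl(d^{2}[L_{n}(x),L_{n}(y)]+x y^{\mathsf T}-y x^{\mathsf T}\bigr)$, whose off-diagonal blocks are $\pm c^{2}(n+1)(x_{n+1}y-y_{n+1}x)$, and whose $(n+1,n+1)$ entry vanishes. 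Feeding the $Q_{n}$ identity for $[L_{n}(x),L_{n}(y)]$ back into the $\alg$-block and collecting terms, the arithmetic relations $c^{2}(n+1)=\ka_{n+1}/n$ and $c^{2}d^{2}=\tfrac{\ka_{n+1}}{\ka_{n}}\tfrac{(n+2)(n-1)}{(n+1)n}$ force the off-diagonal blocks to drop out and cancel the spurious multiples of $x y^{\mathsf T}-y x^{\mathsf T}$, leaving $\cass(Q_{n+1})(\hat x,\hat y,\cdot,\cdot)=c^{2}d^{2}\,\cass(Q_{n})(x,y,\cdot,\cdot)$, the latter regarded on $\hat\alg$ by extension by zero. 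Contracting against $\hat z=(z,z_{n+1})$ and $\hat w=(w,w_{n+1})$ gives \eqref{confextnonass}, and $Q_{n+1}$ is conformally associative iff $Q_{n}$ is because $c^{2}d^{2}\neq 0$.

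I expect the computation in \eqref{rcp2} to be the part demanding real care: the corrections appearing in $[\hat L(\hat x),\hat L(\hat y)]$ and in the trace-free completion defining $\cass$ are individually nonzero, and the whole content of \eqref{confextnonass} is that they cancel exactly, so the sign and index conventions of Example \ref{epcassexample} must be tracked scrupulously. Everything else reduces to bookkeeping with the single block matrix above, the only genuinely structural input being the isolation of $[(0,1)]$ in $\critline(Q_{n+1})$ used in the converse half of the automorphism claim in \eqref{rcp1}.
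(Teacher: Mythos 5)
Your proposal is correct and follows the paper's own route: the same block--matrix expression for $\hess Q_{n+1}$, the same elimination to a quadratic in $x_{n+1}$ for the critical lines, and the same commutator computation for $\cass$. A few small remarks. In part \eqref{rcp1} the paper's displayed quadratic $n x_{n+1}^{2}-2\theta x_{n+1}-E_{n}(x)=0$ is solved with $\theta$ treated as a constant, even though the first equation of the paper's \eqref{qcrit} makes $\theta$ depend on $x_{n+1}$; after eliminating $\theta$ the correct reduced quadratic is the one you wrote, $(n+2)v_{n+1}^{2}-2d\nu v_{n+1}-1=0$, and your normalization $|v|_{h}=1$ gives the cleaner form. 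The qualitative conclusion (two nonzero roots of opposite sign, pairs swapped under $v\mapsto -v$) is the same either way. In part \eqref{rcp1} the paper dispatches the automorphism iff with ``the remaining claims follow''; your argument supplies the missing converse direction by showing $\Aut(Q_{n+1})^{\circ}$ must fix the isolated critical line $[(0,1)]$ and hence, by the coefficient--matching argument of Lemma \ref{confextautlemma}, restricts to $\Aut(Q_{n})$, so triviality of $\Aut(Q_{n})^{\circ}$ forces triviality of $\Aut(Q_{n+1})^{\circ}$. This is a genuine completion worth having, since the paper's stated logic (via finiteness of $\critline$) only gives one direction directly. Part \eqref{rcp2} matches the paper's computation, and you correctly flag that the cancellation of the rank--one correction terms against the trace completion in $\cass$ is the nontrivial content; verifying the arithmetic with $c^{2}(n+1)=\ka_{n+1}/n$ and $c^{2}d^{2}\ka_{n}/(n-1)=\ka_{n+1}(n+2)/((n+1)n)$ indeed makes everything except $c^{2}d^{2}\cass(Q_{n})$ drop out.
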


\begin{proof}
Consider $P$ and $Q$ as in \eqref{pprenormal} of Lemma \ref{preparationlemma} and define $Q_{n+1}(x, x_{n+1})$ and $Q_{n}(x)$ by $P(x, x_{n+1}) = c_{n+1}Q_{n+1}(x, x_{n+1})$ and $Q(x) = b_{n}^{-1}Q_{n}(x)$ where $x \in \rea^{n}$ and $b_{n}$ and $c_{n}$ are nonzero constants to be specified presently. Assume $Q_{n}$ solves \eqref{einsteinpolynomials} with the constant $\ka_{n}$. With these assumptions there holds
\begin{align}
Q_{n+1}(x, x_{n+1}) = c_{n+1}\left(\tfrac{1}{n}\sum_{i = 1}^{n}(x_{n+1}^{3} - 3x_{n+1}x_{i}^{2}) + b_{n}^{-1}Q_{n}(x)\right)
\end{align}
and it follows from \eqref{epreduced} that $\ka_{n+1} = 36 c_{n+1}^{2}\tfrac{n+1}{n}$ and $\ka_{n} = 36b_{n}^{2}\tfrac{(n+2)(n-1)}{n^{2}}$. Expressing $c_{n+1}$ and $b_{n}$ in terms of $\ka_{n+1}$ and $\ka_{n}$ yields claim \eqref{rcp0}.

If $g \in O(n)$, then $g \cdot Q_{n}$ solves \eqref{einsteinpolynomials} with constant $\ka_{n}$, and the equation \eqref{qns} with $g \cdot Q_{n}$ in place of $Q_{n}$ defines $\hat{Q}_{n+1}$ solving \eqref{einsteinpolynomials} with constant $\ka_{n+1}$. It is apparent from the second equality of \eqref{qns} that $\hat{Q}_{n+1} = i(g)\cdot Q_{n+1}$ where $i:O(n) \to O(n+1)$ is the inclusion as the subgroup fixing $x_{n+1}$. This shows \eqref{gqns}.

That $(x, x_{n+1})$ generate a critical line of $Q_{n+1}$ yields the equations
\begin{align}\label{qcrit}
&(\theta + x_{n+1})x  = \sqrt{\tfrac{(n+2)(n-1)}{\ka_{n}}} DQ_{n}(x),& &nx_{n+1}^{2} - 2\theta x_{n+1} - E_{n}(x) = 0,
\end{align}
for some $\theta \in \rea$. Plainly $(0, 1)$ is a solution. Suppose $x \neq 0$. The first equation of \eqref{qcrit} shows that $x$ generates a critical line of $Q_{n}$. The second equation is a quadratic equation in $x_{n+1}$ with positive discriminant, having the two distinct real roots of opposite signs, 
\begin{align}
\al^{\pm} = \tfrac{1}{n}(\theta \pm (\theta^{2} + nE_{n}(x))^{1/2}).
\end{align}
The vectors $(x, \al^{\pm})$ generate distinct critical lines of $Q_{n+1}$. The remaining claims in \eqref{rcp1} follow.

As in the proof of Lemma \ref{asshatlemma}, let $\hat{L}(\hat{x})$ be the endomorphism of $\rea^{n+1}$ satisfying $(\hess \hat{P})(\hat{x}) = \hat{h}(\hat{L}(\hat{x})(\dum), \dum)$. The matrix of $\hat{L}(\hat{x})$ with respect to an orthonormal basis of $\rea^{n+1}$ has the form
\begin{align}\label{hatlka}
\hat{L}(\hat{x}) = \sqrt{\tfrac{\ka_{n+1}}{(n+1)n}}\begin{pmatrix} \sqrt{\tfrac{(n+2)(n-1)}{\ka_{n}}}L(x) - x_{n+1} \id & -x \\ -h(x, \dum) & nx_{n+1} \end{pmatrix}.
\end{align}
Because $L(x) y = L(y)x$, from \eqref{hatlka} there follows
\begin{align}
\begin{split}
[&\hat{L}(\hat{x}), \hat{L}(\hat{y})]\hat{z}  \\
&= \tfrac{\ka_{n+1}}{(n+1)n}\left(\tfrac{(n+2)(n-1)}{\ka_{n}} [L(x), L(y)] z + h(y, z)x - h(x, z)y  + (n+1)z_{n+1}(x_{n+1}y - y_{n+1}x), \right.\\
&\qquad \qquad \left.(n+1)(x_{n+1}h(y, z) - y_{n+1}h(x, z)) \right).
\end{split}
\end{align}
There results
\begin{align}
\begin{split}
\cass&(Q_{n+1})(\hat{x}, \hat{y}, \hat{z}, \hat{w})  = \hat{h}([\hat{L}(\hat{x}), \hat{L}(\hat{y})]\hat{z}, \hat{w}) -\tfrac{\ka_{n+1}}{n}\left(\hat{h}(\hat{x}, \hat{z})\hat{h}(\hat{y}, \hat{w}) - \hat{h}(\hat{y}, \hat{z})\hat{h}(\hat{x}, \hat{w}) \right)\\
& = \tfrac{\ka_{n+1}}{\ka_{n}}\tfrac{(n+2)(n-1)}{(n+1)n}\left(\cass(Q_{n})(x, y, z, w) + \tfrac{\ka_{n}}{n-1}\left(h(x, z)h(y, w) - h(y, z)h(x, w) \right)\right)\\
&\quad + \tfrac{\ka_{n+1}}{(n+1)n}\left(h(y, z)h(x, w) - h(x, z)h(y, w) \right) \\
&\quad + \tfrac{\ka_{n+1}}{n}\left(x_{n+1}z_{n+1}h(y, w) - y_{n+1}z_{n+1}h(x, w) + x_{n+1}z_{n+1}h(y, z) - y_{n+1}w_{n+1}h(x, z)\right)\\
&\quad  -\tfrac{\ka_{n+1}}{n}\left(\hat{h}(\hat{x}, \hat{z})\hat{h}(\hat{y}, \hat{w}) - \hat{h}(\hat{y}, \hat{z})\hat{h}(\hat{x}, \hat{w}) \right)\\
 &= \tfrac{\ka_{n+1}}{\ka_{n}}\tfrac{(n+2)(n-1)}{(n+1)n}\cass(Q_{n})(x, y, z, w),
\end{split}
\end{align}
which shows \eqref{confextnonass}.
\end{proof}
Corollary \ref{ndimexistencecorollary} shows that \eqref{einsteinpolynomials} admits nontrivial orthogonally indecomposable solutions for all $n \geq 2$ and all $\ka > 0$.

The elementary Lemma \ref{calculuslemma} is needed in the proof of Corollary \ref{ndimexistencecorollary}.

\begin{lemma}\label{calculuslemma}
On $[-1, 1]$ the function $f(r) = (n+3)r^{3} - 3r + (n-1)\sqrt{\tfrac{n+2}{n}}(1 - r^{2})^{\tfrac{3}{2}}$ attains its maximum value at $r = 1$ and $r = -1/(n+1)$, and this maximum value is $n$.
\end{lemma}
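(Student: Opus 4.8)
The plan is to prove the equivalent statement that $g(r):=n-f(r)\ge 0$ on $[-1,1]$, with equality exactly at $r=1$ and $r=-1/(n+1)$; since $f(1)=(n+3)-3=n$, this yields the claim. The polynomial part $n-(n+3)r^{3}+3r$ of $g$ vanishes at $r=1$, so it factors as $(1-r)A(r)$ with $A(r)=(n+3)r^{2}+(n+3)r+n$. Using $(1-r^{2})^{3/2}=(1-r)^{3/2}(1+r)^{3/2}$ on $[-1,1]$, one obtains, for $r\in[-1,1)$,
\[
g(r)=(1-r)\,\phi(r),\qquad \phi(r)=A(r)-B(r),\qquad B(r)=(n-1)\sqrt{\tfrac{n+2}{n}}\,(1-r)^{1/2}(1+r)^{3/2}\ge 0 .
\]
Since the discriminant of $A$ is $(n+3)^{2}-4n(n+3)=3(n+3)(1-n)\le 0$, we have $A\ge 0$ on all of $\rea$. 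Hence for $r\in(-1,1)$ either $A(r)+B(r)=0$ — which forces $A(r)=B(r)=0$, so $\phi(r)=0$ — or $A(r)+B(r)>0$, in which case $\phi(r)$ has the same sign as $\phi(r)\bigl(A(r)+B(r)\bigr)=A(r)^{2}-B(r)^{2}$.

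The key step is to compute $H(r):=n\bigl(A(r)^{2}-B(r)^{2}\bigr)=nA(r)^{2}-(n-1)^{2}(n+2)(1-r)(1+r)^{3}$, a quartic, and verify the factorization
\[
H(r)=2(n+1)^{3}\Bigl(r+\tfrac{1}{n+1}\Bigr)^{2}\Bigl(r^{2}+\tfrac{2n}{n+1}\,r+\tfrac{3n-2}{2(n+1)}\Bigr).
\]
This is a routine expansion: using $(1-r)(1+r)^{3}=1+2r-2r^{3}-r^{4}$ and $A(r)^{2}=(n+3)^{2}r^{4}+2(n+3)^{2}r^{3}+3(n+1)(n+3)r^{2}+2n(n+3)r+n^{2}$, one matches the five coefficients of $H$, each collapsing to a power of $(n+1)$ times a small factor — e.g. the leading coefficient is $n(n+3)^{2}+(n-1)^{2}(n+2)=2(n+1)^{3}$ and the constant term is $n^{3}-(n-1)^{2}(n+2)=3n-2$. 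The quadratic factor $r^{2}+\tfrac{2n}{n+1}r+\tfrac{3n-2}{2(n+1)}$ has discriminant $\tfrac{4n^{2}}{(n+1)^{2}}-\tfrac{2(3n-2)}{n+1}=\tfrac{-2(n+2)(n-1)}{(n+1)^{2}}\le 0$ and positive leading coefficient, hence is $\ge 0$ on $\rea$. Therefore $H(r)\ge 0$ for all $r$, and on $[-1,1]$ it vanishes only at $r=-1/(n+1)$ (when $n=1$ the quadratic factor equals $(r+\tfrac{1}{2})^{2}$, so it still vanishes only there).

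Combining the two paragraphs: for $r\in(-1,1)$ we get $\phi(r)\ge 0$ with equality exactly at $r=-1/(n+1)$, while $\phi(-1)=n>0$; hence $g(r)=(1-r)\phi(r)\ge 0$ on $[-1,1)$, with equality there iff $r=-1/(n+1)$, and $g(1)=0$. Thus $f(r)\le n$ on $[-1,1]$, with equality exactly at $r=1$ and $r=-1/(n+1)$, and the maximum value is $f(1)=n$. (As a consistency check, at $r=-1/(n+1)$ one has $1-r^{2}=\tfrac{n(n+2)}{(n+1)^{2}}$, and a direct substitution shows that $A$ and $B$ both equal $\tfrac{n(n-1)(n+2)}{(n+1)^{2}}$ there, so indeed $\phi=0$.) The main obstacle is purely computational — carrying out the quartic expansion and confirming the factorization of $H$ — together with the small bookkeeping needed to legitimize dividing by $A+B$ at the single degenerate point where it vanishes, which occurs only for $n=1$ at $r=-\tfrac{1}{2}$; everything else is a one-variable extremum argument using no calculus beyond these algebraic factorizations.
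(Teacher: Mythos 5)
Your proof is correct, and it takes a genuinely different route from the paper's. The paper proceeds by calculus: it computes $f(\pm 1)=\pm n$, solves $f'(r)=0$ on $(-1,1)$ by isolating the radical and squaring to reach the quartic $\bigl((n+1)^{2}r^{2}-1\bigr)\bigl(2(n+1)^{2}r^{2}-n(n+1)\bigr)=0$, then discards the two spurious roots introduced by squaring and evaluates $f$ at the two genuine critical points $-1/(n+1)$ and $\sqrt{n/(2(n+1))}$. You instead prove $n-f(r)\ge 0$ directly by factoring out $(1-r)$ and multiplying the residual $\phi=A-B$ by its conjugate $A+B$, arriving at a different quartic $H=nA^{2}-(n-1)^{2}(n+2)(1-r)(1+r)^{3}$, whose factorization $2(n+1)^{3}\bigl(r+\tfrac{1}{n+1}\bigr)^{2}\bigl(r^{2}+\tfrac{2n}{n+1}r+\tfrac{3n-2}{2(n+1)}\bigr)$ I have checked coefficient by coefficient. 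Both routes hinge on recognizing a nice quartic factorization, but the trade-offs differ: the paper must rule out extraneous roots after squaring, while you must handle the single degenerate point where $A+B=0$ (only for $n=1$, where $B\equiv 0$), which you do correctly. Your approach has the advantage of being derivative-free and yields the slightly stronger statement that $f(r)<n$ for $r\in[-1,1]\setminus\{1,-1/(n+1)\}$, since equality in each step is pinned down exactly. As a small remark, the paper's proof contains a typo ($r^{2}=1/(n+1)$ should read $r^{2}=1/(n+1)^{2}$); your argument sidesteps that computation entirely.
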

\begin{proof}
There hold $f(-1) = -n$ and $f(1) = n$. A critical point in $r \in (-1, 1)$ solves $0 = f^{\prime}(r)/3 = (n+3)r^{2} - 1 - (n-1)\sqrt{\tfrac{n+2}{n}}r(1-r^{2})^{1/2}$. Squaring the equality $(n+3)r^{2} - 1 = (n-1)\sqrt{\tfrac{n+2}{n}}r(1-r^{2})^{1/2}$ and reorganizing the result shows that $0 = \left((n+1)^{2}r^{2} - 1\right)\left(2(n+1)^{2}r^{2} - n(n+1)\right)$ so that $r^{2} = 1/(n+1)$ or $r^{2} = n/(2(n+1))$. It is straightforward to check that of the $4$ solutions of these last two equations only $r = -1/(n+1)$ and $r = \sqrt{\tfrac{n}{2(n+1)}}$ are critical points of $f$. There hold $f(-1/(n+1)) = n$ and $f\left(\sqrt{\tfrac{n}{2(n+1)}}\right) = \tfrac{(n-2)\sqrt{n+1}}{\sqrt{2n}} < n$. The claim follows.
\end{proof}

Recall from \eqref{extremepdefined} the notation $\extreme(P)$ for the set of points at which $P$ assumes its maximum on the unit sphere.

\begin{corollary}\label{ndimexistencecorollary}
For $n \geq 2$ the polynomial $P_{n}(x) \in \pol^{3}(\rea^{n})$ defined in \eqref{simplicialpoly}
has the following properties:
\begin{enumerate}
\item\label{simp1} $P_{n}$ solves \eqref{einsteinpolynomials} with $\ka = n(n-1)$.
\item\label{simp2} $P_{n}$ is conformally associative.
\item\label{simp3} $\mkc(P_{n}) = \tfrac{n}{n-1}$. Equivalently, $\sup_{|x|^{2} = 1}P_{n}(x) = \tfrac{n-1}{6}$.
\item\label{simp4} The set $\extreme(P_{n})$ has cardinality $n+1$.
\item\label{simp4b} $\sum_{v \in \extreme(P_{n})}v = 0$.
\item\label{simp5} If $y \neq z \in \extreme(P_{n})$, then $\lb y, z \ra = -1/n$.
\item\label{simp5c} $\sum_{v \in \extreme(P_{n})}h(x, v)v = \tfrac{n+1}{n}x$ for all $x \in \rea^{n}$.
\item\label{simp5b} If $v \in \extreme(P_{n})$ then $P_{n}(v)_{ij} = nv_{i}v_{j} - h_{ij}$.
\item\label{simp6} $\Aut(P_{n})$ equals the symmetric group $S_{n+1}$ acting by permutations of $\extreme(P_{n})$ and is generated by the orthogonal reflections through the hyperplanes perpendicular to the differences $y  - z$ for $y, z \in \extreme(P_{n})$.
\item\label{simp7} $P_{n}$ is orthogonally indecomposable.
\item\label{simp8}
The critical points of the restriction to the unit sphere of $P_{n}$ have the form
\begin{align}\label{simplicialcritical}
v = \sqrt{\tfrac{n}{k(n+1-k)}}\sum_{i \in I}v_{i}, 
\end{align}
where $\extreme(P_{n}) = \{v_{0}, \dots, v_{n}\}$ and $I \subset \{0, \dots, n\}$ has cardinality $1 \leq k \leq n$. There holds $6P_{n}(v) = (n+1-2k)\sqrt{\tfrac{n}{k(n+1-k)}}$.
\end{enumerate}
\end{corollary}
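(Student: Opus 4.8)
The plan is to prove \eqref{simp1}--\eqref{simp8} by induction on $n$, with the recursion of Lemma \ref{recursivepolynomiallemma} as the engine. The first observation is that $P_{n}$ of \eqref{simplicialpoly} is exactly the polynomial $Q_{n}$ obtained from $P_{n-1}$ by \eqref{qns} with $\ka_{n-1}=(n-1)(n-2)$ and $\ka_{n}=n(n-1)$: for these values the two normalizing constants in \eqref{qns} become $1$ and $\sqrt{(n+1)/(n-1)}$, so $P_{n}=\tfrac{1}{6}\sum_{i=1}^{n-1}(x_{n}^{3}-3x_{n}x_{i}^{2})+\sqrt{\tfrac{n+1}{n-1}}\,P_{n-1}$, which is the recursion visibly satisfied by \eqref{simplicialpoly}. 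The base case $n=2$ is Example \ref{2dharpolyexample}: $P_{2}=\tfrac{1}{6}(x_{2}^{3}-3x_{2}x_{1}^{2})$ solves \eqref{einsteinpolynomials} with $\ka=2=n(n-1)$, has automorphism group $S_{3}$, is orthogonally indecomposable by Example \ref{2ddecomposableexample}, and (writing $P_{2}=-\tfrac{1}{6}\sin 3\theta$ on the unit circle) has $\extreme(P_{2})$ equal to three unit vectors at mutual angle $2\pi/3$; every asserted property then holds for $n=2$, with \eqref{simp2} vacuous since $\cass$ is defined only for $n\geq 3$. Granting the base case, \eqref{simp1} is immediate from Lemma \ref{recursivepolynomiallemma}\eqref{rcp0}, and \eqref{simp2} follows by induction from Lemma \ref{recursivepolynomiallemma}\eqref{rcp2} once $n=3$ is checked; there $\cass(P_{3})=0$ automatically, because $\ass(P_{3})$ is the curvature of the torsion-free connection $D+P_{3}$ with $R_{ij(kl)}=0$, hence is an algebraic curvature tensor (Lemma \ref{einsteinpolynomialslemma}), and $\cass$ is its Weyl part, which vanishes identically in dimension three.

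The heart of the inductive step for the remaining claims is an explicit description of $\extreme(P_{n})$. Writing a unit vector of $\rea^{n+1}$ as $(r\omega,s)$ with $r\geq 0$, $\omega\in\sphere^{n-1}$, $r^{2}+s^{2}=1$, one computes $P_{n+1}(r\omega,s)=\tfrac{1}{6}(ns^{3}-3sr^{2})+r^{3}\sqrt{\tfrac{n+2}{n}}\,P_{n}(\omega)$; maximizing first over $\omega$ (here the inductive form of \eqref{simp3}, namely $\max_{\sphere^{n-1}}P_{n}=\tfrac{n-1}{6}$, enters) leaves one maximizing $\tfrac{1}{6}f(s)$ over $s\in[-1,1]$ with $f$ exactly the function of Lemma \ref{calculuslemma}. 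That lemma gives $\max P_{n+1}=\tfrac{n}{6}$, attained at $s=1$ (the point $e_{n+1}$) and at $s=-\tfrac{1}{n+1}$ (the points $(\tfrac{\sqrt{n(n+2)}}{n+1}\omega,-\tfrac{1}{n+1})$ for $\omega\in\extreme(P_{n})$), so $\extreme(P_{n+1})=\{e_{n+1}\}\cup\{(\tfrac{\sqrt{n(n+2)}}{n+1}\omega,-\tfrac{1}{n+1}):\omega\in\extreme(P_{n})\}$; a short computation shows this is a set of $n+2$ unit vectors with pairwise inner products $-\tfrac{1}{n+1}$ and vanishing sum, which propagates \eqref{simp3}, \eqref{simp4}, \eqref{simp4b}, \eqref{simp5}, while \eqref{simp3} itself follows from $\max P_{n}=\tfrac{n-1}{6}$ via \eqref{einsteinmkc} of Lemma \ref{mkclemma}. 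From \eqref{simp4}, \eqref{simp4b}, \eqref{simp5} the Gram matrix of $\extreme(P_{n})=\{v_{0},\dots,v_{n}\}$ is $(1+\tfrac{1}{n})I_{n+1}-\tfrac{1}{n}J$, of rank $n$, so the $v_{j}$ span $\alg$; computing $\sum_{i}h(v_{j},v_{i})v_{i}=\tfrac{n+1}{n}v_{j}$ then gives $\sum_{i}v_{i}\otimes v_{i}=\tfrac{n+1}{n}\id$, which is \eqref{simp5c}. Claim \eqref{simp5b} is the equality case in Lemma \ref{mkcboundlemma}, which applies because \eqref{simp3} gives $\mkc(P_{n})=\tfrac{n}{n-1}$ and $6P_{n}(v)=n-1$ for $v\in\extreme(P_{n})$.

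For \eqref{simp6}: by linearity of the Hessian and \eqref{simp5b}, $(\hess P_{n})(y-z)=n(y\otimes y-z\otimes z)$ for $y,z\in\extreme(P_{n})$, so $6P_{n}(y-z)=(\hess P_{n})(y-z)(y-z,y-z)=0$ and $(\hess P_{n})(y-z)(x,x)=n\,h(y-z,x)\,h(y+z,x)$ vanishes on $\langle y-z\rangle^{\perp}$; by Lemma \ref{zeroautomorphismlemma} the reflection $\si_{y-z}$ is therefore an automorphism of $P_{n}$, and since $h(w,y-z)=0$ for every $w\in\extreme(P_{n})$ different from $y,z$ by \eqref{simp5}, it acts on $\extreme(P_{n})$ as the transposition interchanging $y$ and $z$. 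Transpositions generate the full symmetric group, and because $\extreme(P_{n})$ spans $\alg$ the map from $\Aut(P_{n})$ (the stabilizer of $P_{n}$ in $CO(h)$, which by homogeneity of $P_{n}$ coincides with the stabilizer in $O(h)$ and hence permutes the maximizers $\extreme(P_{n})$) to the symmetric group on $\extreme(P_{n})$ is injective; it is thus an isomorphism $\Aut(P_{n})\cong S_{n+1}$ realized by the reflections $\si_{y-z}$. For \eqref{simp7}, if $P_{n}=Q_{1}\circ\pi_{1}+Q_{2}\circ\pi_{2}$ along an orthogonal splitting with $\dim\alg_{i}=n_{i}$, then (Hessians being block-diagonal and additive) each $Q_{i}$ is a nontrivial harmonic cubic on $\alg_{i}$ solving \eqref{einsteinpolynomials} with the same $\ka=n(n-1)$, whence $2\leq n_{i}\leq n-1$; maximizing $r_{1}^{3}M_{1}+r_{2}^{3}M_{2}$ over the quarter circle shows $\max P_{n}=\max(M_{1},M_{2})$ with $M_{i}=\max Q_{i}>0$, so by \eqref{simp3} one of them, say $M_{1}$, equals $\tfrac{n-1}{6}$, giving $\mkc(Q_{1})=\tfrac{n(n-1)}{(n-1)^{2}}=\tfrac{n}{n-1}$ by \eqref{einsteinmkc}; but Lemma \ref{mkcboundlemma} forces $\mkc(Q_{1})\geq\tfrac{n_{1}}{n_{1}-1}>\tfrac{n}{n-1}$ since $t\mapsto t/(t-1)$ is decreasing and $n_{1}<n$ — a contradiction.

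Finally, for \eqref{simp8} one may argue directly: combining \eqref{simp5b} with the expansion $x=\tfrac{n}{n+1}\sum_{k}h(x,v_{k})v_{k}$ from \eqref{simp5c} yields $(\hess P_{n})(x)=\tfrac{n^{2}}{n+1}\sum_{k}h(x,v_{k})\,v_{k}\otimes v_{k}$ and hence $6P_{n}(x)=\tfrac{n^{2}}{n+1}\sum_{k}h(x,v_{k})^{3}$; a unit vector $v$ spans a critical line of $P_{n}$ exactly when $\sum_{k}\big(h(v,v_{k})^{2}-c\,h(v,v_{k})\big)h(v_{k},\cdot\,)=0$ for some constant $c$, and since the $h(v_{k},\cdot\,)$ span $\alg^{\ast}$ subject only to $\sum_{k}h(v_{k},\cdot\,)=0$, this forces the numbers $h(v,v_{k})$ to be roots of one fixed quadratic, so to take at most two values; they are not all zero, so they take exactly two, on a subset $I$ of size $k$ and its complement, and then $v=\tfrac{n}{n+1}\sum_{j}h(v,v_{j})v_{j}$ is proportional (using $\sum_{j}v_{j}=0$) to $\sum_{i\in I}v_{i}$, whose normalization is \eqref{simplicialcritical}; conversely each such $v$ has $h(v,v_{j})$ taking exactly two values and so is critical, and substituting into $6P_{n}(x)=\tfrac{n^{2}}{n+1}\sum_{k}h(x,v_{k})^{3}$ evaluates $6P_{n}(v)$. (Alternatively, as in the paper, one identifies the commutative algebra of $P_{n}$ with Harada's algebra and invokes the classification of its idempotents and square-zero elements from \cite{Harada}.) The work is largely bookkeeping; the one genuinely delicate point is the interlocking induction of the second paragraph — the single description of $\extreme(P_{n})$ must simultaneously feed \eqref{simp3}, \eqref{simp4}, \eqref{simp4b} and \eqref{simp5} at the next level — together with the small but essential check in \eqref{simp6} that $\Aut(P_{n})$ is a priori orthogonal, so that speaking of its action on $\extreme(P_{n})$ is legitimate.
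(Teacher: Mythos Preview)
Your proof is correct and shares the paper's inductive scaffolding — the recursion \eqref{qns} combined with Lemma \ref{calculuslemma} — for \eqref{simp1}–\eqref{simp5}, but diverges thereafter in ways that are mostly improvements. The paper carries \eqref{simp5c} and \eqref{simp5b} through the induction with explicit block-matrix computations using \eqref{hatlka}; you instead obtain \eqref{simp5c} directly from the Gram matrix and \eqref{simp5b} from the equality case of Lemma \ref{mkcboundlemma}, which is shorter and decouples these claims from the inductive machinery. For \eqref{simp6}, the paper verifies the hypotheses of Lemma \ref{zeroautomorphismlemma} by a two-case inductive calculation (splitting on whether one of $y,z$ is $e_{n+1}$); your route via the identity $(\hess P_{n})(y-z)=n(y\otimes y-z\otimes z)$, immediate from \eqref{simp5b} and linearity of the Hessian, is a clean one-line replacement. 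For \eqref{simp7}, the paper invokes the irreducibility of the $S_{n+1}$-action just established in \eqref{simp6}; your argument via $\mkc$ and Lemma \ref{mkcboundlemma} is logically independent of \eqref{simp6} and gives a genuinely different reason. Most notably, for \eqref{simp8} the paper appeals to Harada's classification \cite{Harada} of idempotents and square-zero elements in the associated commutative algebra; your direct argument — rewriting $6P_{n}(x)=\tfrac{n^{2}}{n+1}\sum_{k}h(x,v_{k})^{3}$ via \eqref{simp5b} and \eqref{simp5c}, then observing that criticality forces the numbers $h(v,v_{k})$ to satisfy a single quadratic because the only linear relation among the $h(v_{k},\cdot)$ is their vanishing sum — is entirely self-contained and removes the paper's one external algebraic dependence. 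The paper's approach has the modest advantage that its inductive computations make the recursive structure of $\extreme(P_{n})$ very explicit; your approach buys brevity, self-containment, and logical independence among the later claims.
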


\begin{proof}
The proof of claims \eqref{simp1}-\eqref{simp7} is by induction on $n$ with base case $n = 2$ for which all the claims are straightforward to verify via direct computations. There are written $x \in \rea^{n}$ and $\hat{x} = (x, x_{n+1}) \in \rea^{n+1}$. The formula \eqref{simplicialpoly} defines $P_{n+1}$ inductively in terms of $P_{n}$ by
\begin{align}\label{pnrecursion}
\begin{split}
P_{n+1}&(x, x_{n+1}) = \tfrac{1}{6}\sum_{i = 1}^{n}(x_{n+1}^{3} - 3x_{n+1}x_{i}^{2}) + \sqrt{\tfrac{n+2}{n}}P_{n}(x)\\
& = \tfrac{n+3}{6}\left(x_{n+1}^{3} - \tfrac{3}{n+3}x_{n+1}E_{n+1}(x, x_{n+1})\right) + \sqrt{\tfrac{n+2}{n}}P_{n}(x).
\end{split}
\end{align}
which has the form \eqref{qns} with $\ka_{n} = n(n-1)$ and $\ka_{n+1} = (n+1)n$,
so that, by Lemma \ref{recursivepolynomiallemma}, $P_{n+1}$ solves \eqref{einsteinpolynomials} with constant $\ka_{n+1}$ if $P_{n}$ solves \eqref{einsteinpolynomials} with constant $\ka_{n}$. Since $P_{2}(x_{1}, x_{2}) = \tfrac{1}{6}(x_{2}^{3} - 3x_{2}x_{1}^{2})$ solves \eqref{einsteinpolynomials} with $\ka_{2} = 2$, it follows by induction on $n$ that $P_{n}$ solves \eqref{einsteinpolynomials} with constant $\ka_{n} = n(n+1)$. Likewise, that $P_{n}$ is conformally associative follows from Lemma \ref{recursivepolynomiallemma} by induction on $n$, the base case $n = 2$ being immediate. 

Suppose $\hat{x} \in\extreme(P_{n})$. Because $|x|^{2} + x_{n+1}^{2} = 1$ there holds
\begin{align}\label{dpn}
\begin{split}
dP_{n+1}(x, x_{n+1}) &= -\sum_{i = 1}^{n}x_{n+1}x_{i}dx_{i} + \sqrt{\tfrac{n+2}{n}}dP_{n}(x) + \left( \tfrac{n}{2}x_{n+1}^{2} - \tfrac{1}{2}|x|^{2}\right)dx_{n+1}\\
& = -\sum_{i = 1}^{n}x_{n+1}x_{i}dx_{i} + \sqrt{\tfrac{n+2}{n}}dP_{n}(x) + \left( \tfrac{n+1}{2}x_{n+1}^{2} - \tfrac{1}{2}\right)dx_{n+1}.
\end{split}
\end{align}
Since $\hat{x} \in \extreme(P_{n}) \subset \critline(P_{n+1})$ it follows from \eqref{dpn} that there is $\theta \in \rea$ so that
\begin{align}
\label{mkc1}
\begin{split}
P_{n}(x)_{i} &= \sqrt{\tfrac{n}{n+2}}(x_{n+1}+ \theta)x_{i} ,\\
\end{split}\\
\label{mkc2}
\begin{split}
0 &= nx_{n+1}^{2} - 2\theta x_{n+1} - |x|^{2} = (n+1)x_{n+1}^{2} - 2\theta x_{n+1} - 1.
\end{split}
\end{align}
The homogeneity of $P_{n+1}$ implies that $\theta = 3|\hat{x}|^{-2}P_{n+1}(\hat(x)) = 3P_{n+1}(\hat{x})$.
The equation \eqref{mkc1} implies that $x$ generates a critical line of $P_{n}$. By the inductive hypothesis,
\begin{align}\label{mkc3}
P_{n}(x)  = |x|^{3}P_{n}(\tfrac{x}{|x|}) \leq  |x|^{3}\sup_{|z| = 1}P_{n}(z) \leq \tfrac{n-1}{6}|x|^{3},
\end{align}
so 
\begin{align}\label{mkc4}
\begin{split}
P_{n+1}(x, x_{n+1}) &\leq  \tfrac{1}{6}\left((n+3)x_{n+1}^{3} - 3x_{n+1}+ (n-1)\sqrt{\tfrac{n+2}{n}}|x|^{3}\right) \\
&=\tfrac{1}{6}\left((n+3)x_{n+1}^{3} - 3x_{n+1}+ (n-1)\sqrt{\tfrac{n+2}{n}}(1 - x_{n+1}^{2})^{\tfrac{3}{2}}\right) \leq \tfrac{n}{6},
\end{split}
\end{align}
where the last inequality follows from Lemma \ref{calculuslemma}.

Since $P(0, 1) = n/6$ and every element of $\extreme(P_{n+1})$ must have the same norm, equality must hold in \eqref{mkc4}, so $6P_{n+1}(\hat{x}) = n$. By \eqref{einsteinmkc} this is equivalent to $\mkc(P_{n+1}) = \tfrac{n+1}{n}$. In \eqref{mkc2} this forces $\theta = 3P_{n+1}(\hat{x}) = n/2$. If $x_{n+1} \neq 1$, then $x_{n+1} = -1/(n+1)$ and $|x|^{2} = \tfrac{n(n+2)}{(n+1)^{2}}$. Substituting theses values in \eqref{pnrecursion} yields
\begin{align}
6\sqrt{n+2}{n}P_{n}(\tfrac{x}{|x|}) = n + \tfrac{n+3}{(n+1)^{3}} - \tfrac{3}{n+1} = \tfrac{(n-1)n(n+2)^{2}}{(n+1)^{3}},
\end{align}
so that
\begin{align}
6P_{n}(\tfrac{x}{|x|}) = 6|x|^{-3}P_{n}(x) = \tfrac{(n+1)^{3}}{n^{3/2}(n+1)^{3/2}}\tfrac{(n-1)n^{3/2}(n+2)^{3/2}}{(n+1)^{3}} = n-1.
\end{align}
By the inductive hypothesis, this shows $\tfrac{x}{|x|} \in \extreme(P_{n})$.

The preceding shows that for each $\hat{x} \in \extreme(P_{n+1})$ for which $x_{n+1} \neq 1$, $\tfrac{x}{|x|} \in \extreme(P_{n})$. By the inductive hypothesis $\extreme(P_{n})$ has cardinality $n+1$, so $\extreme(P_{n+1})$ has cardinality $n+2$. This shows claim \eqref{simp4}. That $\sum_{v \in \extreme(P_{n})}v = 0$ follows by induction. Claim \eqref{simp5} is proved by straightforward induction.

In general,
\begin{align}\label{hatla}
\begin{split}
\hess P_{n+1}(\hat{x}) &= \begin{pmatrix} \sqrt{\tfrac{n+2}{n}}\hess P_{n}(x) - x_{n+1} \id & -x \\ -h(x, \dum) & nx_{n+1} \end{pmatrix}.
\end{split}
\end{align}
If $\hat{x} = (x, -1/(n+1))$ with $\tfrac{x}{|x|} \in \extreme(P_{n})$, then by the inductive hypothesis and \eqref{simp5b}, $P_{n}(\tfrac{x}{|x|})_{ij} = n\tfrac{x_{i}x_{j}}{|x|^{2}} - h_{ij}$, so 
\begin{align}\label{simp5bused}
\sqrt{\tfrac{n+2}{n}}P_{n}(x)_{ij} = (n+1)x_{i}x_{j} - \tfrac{n+2}{n+1}h_{ij}.
\end{align}
By the inductive hypothesis, in the form of claim \eqref{simp5c}, and \eqref{hatla}
\begin{align}\label{hatlb}
\begin{split}
\sqrt{\tfrac{n+2}{n}}(\hess P_{n})(x) &= \sqrt{\tfrac{n+2}{n}}|x|(\hess P_{n})(\tfrac{x}{|x|}) = \tfrac{n+2}{n+1}\left(nh(\tfrac{x}{|x|}, \dum)\tensor h(\tfrac{x}{|x|}, \dum) - \Id\right)\\
& = (n+1)h(x, \dum)\tensor h(x, \dum) -  \tfrac{n+2}{n+1}\Id.
\end{split}
\end{align}
By \eqref{hatl} and \eqref{hatlb},
\begin{align}
\begin{split}
\hess P_{n+1}(x, -\tfrac{1}{n+1}) 
&= \begin{pmatrix} \sqrt{\tfrac{n+2}{n}}\hess P_{n}(x) + \tfrac{1}{n+1} \id & -x \\ -h(x, \dum) & -\tfrac{n}{n+1} \end{pmatrix}\\
&= \begin{pmatrix}(n+1)h(x, \dum)\tensor h(x, \dum) -  \id & -x \\ -h(x, \dum) & -\tfrac{n}{n+1} \end{pmatrix} \\
&= (n+1)\hat{h}(\hat{x},\dum)\tensor \hat{h}(\hat{x}, \dum) - \Id,
\end{split}
\end{align}
which proves \eqref{simp5b}.

To prove \eqref{simp5c}, let $\hat{x} \in \extreme(P_{n+1})$ and $\hat{y} \in \rea^{n+1}$. Write $\hat{x} = (\tfrac{\sqrt{n(n+2)}}{n+1}x, -\tfrac{1}{n+1})$ so that $x \in \extreme(P_{n})$. Then, because $\sum_{x \in \extreme(P_{n})}x = 0$,
\begin{align}
\begin{split}
\sum_{\hat{x}  \in  \extreme(P_{n+1})}&\hat{h}(\hat{y}, \hat{x})\hat{x} = (0, \hat{h}(\hat{y}, (0, 1) )) + \sum_{x \in \extreme(P_{n})}(\tfrac{\sqrt{n(n+2)}}{n+1}h(y, x) - \tfrac{1}{n+1}y_{n+1})(\tfrac{\sqrt{n(n+2)}}{n+1}x, -\tfrac{1}{n+1})\\
& = (0, x_{n+1}) + \left(\tfrac{n(n+2)}{(n+1)^{2}}\sum_{x \in \extreme(P_{n})}h(y, x)x ,  \tfrac{1}{n+1}y_{n+1}\right) = \tfrac{n+2}{n+1}\hat{y},
\end{split}
\end{align}
the last equality by the inductive hypothesis. This shows \eqref{simp5c}.

Because $P_{n}$ solves \eqref{einsteinpolynomials}, the automorphism group $\Aut(P_{n})$ acts by orthogonal transformations permuting $\extreme(P_{n})$ so is isomorphic to a subgroup of $S_{n+1}$. 

It is claimed that $P_{n+1}(\hat{x} - \hat{y}) = 0$ for $\hat{x}, \hat{y} \in \extreme(P_{n+1})$. First, if $x_{n+1} = y_{n+1} = -1/(n+1)$, then
\begin{align}
P_{n+1}(\hat{x} - \hat{y}) = P_{n+1}(x - y, 0) = \sqrt{\tfrac{n+2}{n}}P_{n}(x - y) = 0
\end{align}
the last equality by the inductive hypothesis because $\tfrac{x}{|x|}, \tfrac{y}{|y|} \in \extreme(P_{n})$. From \eqref{hatl} there follows
\begin{align}\label{hatl2}
\hess P_{n+1}(\hat{x} - \hat{y}) & = \hess P_{n+1}(x - y, 0) = \begin{pmatrix} \sqrt{\tfrac{n+2}{n}}\hess P_{n}(x-y)  & y-x \\ h(y-x, \dum) & 0 \end{pmatrix}.
\end{align}
A $\hat{z}  \in \rea^{n+1}$ is orthogonal to $\hat{x} - \hat{y}$ if and only if $h(x, z) = h(y, z)$ in which case, by Lemma \ref{zeroautomorphismlemma} and the inductive hypothesis $z^{i}z^{j}P_{n}(x-y)_{ij} = 0$, which shows that $\hat{r} = \hat{x} - \hat{y}$ satisfies the condition of Lemma \ref{zeroautomorphismlemma} and so the reflection through the hyperplane orthogonal to $\hat{r}$ is an automorphism of $P_{n+1}$. 

Now suppose $\hat{y} = (0, 1)$. Then $P_{n+1}(\hat{x} - \hat{y}) = P_{n+1}(x , -\tfrac{n+2}{n+1}) = 0$ follows from \eqref{pnrecursion}, $|x|^{2} = \tfrac{n(n+2)}{(n+1)^{2}}$, and the fact that, since $\tfrac{x}{|x|} \in \extreme(P_{n})$, there holds $P_{n}(\tfrac{x}{|x|}) = \tfrac{n-1}{6}$ by the inductive hypothesis. From \eqref{hatl} there follows
\begin{align}\label{hatl3}
\hess P_{n+1}(\hat{x} - \hat{y}) & = \hess P_{n+1}(x ,  -\tfrac{n+2}{n+1}) = \begin{pmatrix} \sqrt{\tfrac{n+2}{n}}\hess P_{n}(x) + \tfrac{n+2}{n+1}\Id & -x \\ -h(x, \dum) & -\tfrac{n(n+2)}{n+1} \end{pmatrix}.
\end{align}
A $\hat{z} \in \rea^{n+1}$ is orthogonal to $\hat{x} - \hat{y}$ if and only if $h(x, z) = \tfrac{n+2}{n+1}z_{n+1}$ in which case, by \eqref{hatl3} and \eqref{simp5bused},
\begin{align}
\label{hatl3b}
\begin{split}
\hess P_{n+1}(x ,  -\tfrac{n+2}{n+1}) (\hat{z}, \hat{z})& =\sqrt{\tfrac{n+2}{n}}z^{i}z^{j}P_{n}(x)_{ij} + \tfrac{n+2}{n+1}|z|^{2} - \tfrac{(n+2)^{2}}{n+1}z_{n+1}^{2}\\
& = (n+1)h(x, z)^{2} -  \tfrac{(n+2)^{2}}{n+1}z_{n+1}^{2} = 0.
\end{split}
\end{align} 
This shows that $\hat{r} = \hat{x} - \hat{y}$ satisfies the condition of Lemma \ref{zeroautomorphismlemma} and so the reflection through the hyperplane orthogonal to $\hat{r}$ is an automorphism of $P_{n+1}$. 

It has been shown that for $\hat{x}, \hat{y} \in \extreme(P_{n+1})$ the pairwise difference $\hat{r} = \hat{x} - \hat{y}$ satisfies the hypotheses of Lemma \ref{zeroautomorphismlemma}, so the reflection through the hyperplane orthogonal to $\hat{r}$ is an automorphism of $P_{n+1}$. This reflection interchanges $\hat{x}$ and $\hat{y}$ and it is straightforward to check that it fixes $\extreme(P_{n+1}) \setminus\{\hat{x}, \hat{y}\}$, so acts as a transposition of $\extreme(P_{n+1})$. It follows that these reflections generate a group isomorphic to $S_{n+2}$ acting by automorphisms of $P_{n+1}$, and that all such automorphisms arise in this way.

Were $P_{n}$ were orthogonally decomposable, then its automorphism group would preserve the orthogonal decomposition, but the preceding shows that the action of $\Aut(P_{n}) = S_{n+1}$ on $\rea^{n}$ is its irreducible representation of dimension $n$. 

There remains to prove \eqref{simp8}. Let $\extreme(P_{n}) = \{v_{0}, \dots, v_{n}\}$ and for a cardinality $k$ subset $I  \subset \{0, \dots, n\}$ let $u_{I} = \sum_{i \in I}v_{i}$. By \eqref{simp5}, $(\hess P_{n}(v_{i}))(v_{j}, \dum) = -h(v_{i} + v_{j}, \dum)$ and $(\hess P_{n}(v_{i}))(v_{i}, \dum) = (n-1)h(v_{i}, \dum)$ for $0 \leq i \neq j \leq n$. The algebra structure given by the nonassociative commutative product $\mlt$ defined by $v_{i}\mlt v_{j} = -v_{i} - v_{j}$ if $i \neq j$ and $v_{i} \mlt v_{i} = (n-1)v_{i}$ was studied by K. Harada in \cite{Harada}. The square-zero and idempotent elements of this algebra generate the critical lines of $P_{n}$. Using \eqref{simp4b}, it follows that $(\hess P_{n}(u_{I}))(u_{I}, \dum)  = (n+1-2k)h(u_{I}, \dum)$, which shows that $u_{I}$ generates a critical line of $P_{n}$ and satisfies $6P_{n}(u_{I}) = (n+1 - 2k)|u_{I}|^{2}$, and it follows from Lemma $2$ and Corollary $3$ of \cite{Harada} that any element square-zero or idempotent with respect to $\mlt$ is a multiple of some $u_{I}$. By \eqref{simp5}, $|u_{I}|^{2} = \tfrac{k(n+1-k)}{n}$, so the rescaled element $v_{I} = \sqrt{\tfrac{n}{k(n+1-k)}}u_{I}$ has unit norm and satisfies $6P_{n}(v_{I}) = (n+1-2k)\sqrt{\tfrac{n}{k(n+1-k)}}$. This proves \eqref{simp8}.
\end{proof}

\begin{proof}[Proof of Theorem \ref{ealgpolynomialtheorem}]
By \eqref{rcp2} of Lemma \ref{recursivepolynomiallemma}, for any $0 < \ka \in \rea$ there exists a conformally associative solution of \eqref{einsteinpolynomials} in any dimension $n \geq 2$. The polynomial \eqref{simplicialpoly} is such a solution; when $n = 3$ any solution is conformally associative, and from Lemma \ref{recursivepolynomiallemma} it follows that \eqref{simplicialpoly} is conformally associative for any $n \geq 3$. By Theorem \ref{confassequivalencetheorem} any two such solutions are orthogonally equivalent. 
\end{proof}

\begin{remark}
Given $Q_{n}$ solving \eqref{einsteinpolynomials} with constant $\ka_{n}$, the equation \eqref{qns} defines $Q_{n+1}$ solving \eqref{einsteinpolynomials} with constant $\ka_{n+1}$. 
Starting with any seed polynomial solving \eqref{einsteinpolynomials} in any given dimension, Lemma \ref{recursivepolynomiallemma} constructs a sequence of polynomials of increasingly higher dimensions solving  \eqref{einsteinpolynomials}. The resulting sequence is determined up to orthogonal equivalence by the orthogonal equivalence class of the seed. More interesting examples can be constructed provided there can be found more interesting seeds for the iteration. 
\end{remark}

\begin{example}\label{pmnexample}
The tensor product of the degree $m$ and $n$ simplicial polynomials is not equivalent to the degree $mn$ simplicial polynomial.
\begin{lemma}\label{pmnlemma}
For $m, n \geq 2$ the orbits $\orb{P_{m}\tensor P_{n}}$ and $\orb{P_{mn}}$ are not $CO(mn)$ equivalent.
\end{lemma}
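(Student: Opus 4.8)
The plan is to show that $P_m\tensor P_n$ is \emph{not} conformally associative; since by Theorem \ref{ealgpolynomialtheorem} the conformally associative solutions of \eqref{einsteinpolynomials} in dimension $mn$ form exactly the single orbit $\orb{P_{mn}}$, this forces $\orb{P_m\tensor P_n}\neq\orb{P_{mn}}$. By Lemma \ref{tensorproductlemma}, $P_m\tensor P_n$ solves \eqref{einsteinpolynomials} on $(\rea^m\tensor\rea^n, g\tensor h)$ with constant $\ka=\ka_{P_m}\ka_{P_n}=m(m-1)n(n-1)$ (the constants for $P_m$, $P_n$ are those of Corollary \ref{ndimexistencecorollary}), so Example \ref{epcassexample} applies: conformal associativity of $P_m\tensor P_n$ is equivalent to $[L(x),L(y)]_{KL}=\tfrac{2\ka}{mn-1}x_{[K}y_{L]}$ for all $x,y\in\rea^m\tensor\rea^n$, where $L$ is its multiplication endomorphism.

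The crucial observation is that $L$ factors on decomposable elements. By the Kronecker product description \eqref{kronecker}, $\hess(P_m\tensor P_n)(a\tensor b)=\hess P_m(a)\tensor\hess P_n(b)$ for $a\in\rea^m$, $b\in\rea^n$, and raising an index with $g\tensor h$ gives $L(a\tensor b)=L_m(a)\tensor L_n(b)$, where $L_m$, $L_n$ are the multiplication operators of $P_m$, $P_n$. Using $(A\tensor B)(C\tensor D)=(AC)\tensor(BD)$, for any $a\in\rea^m$ and $b,d\in\rea^n$,
\begin{align*}
[L(a\tensor b),L(a\tensor d)]=L_m(a)^2\tensor[L_n(b),L_n(d)].
\end{align*}

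Now assume $P_m\tensor P_n$ is conformally associative, take $x=a\tensor b$ and $y=a\tensor d$ in the commutator identity, and pair the resulting skew $2$-form in $KL$ against $a'\tensor e$ in the first slot, where $0\neq a'\in\rea^m$ is $g$-orthogonal to $a$ and $e\in\rea^n$ is arbitrary. The right-hand side vanishes, since $\lb a\tensor b,a'\tensor e\ra$ and $\lb a\tensor d,a'\tensor e\ra$ both carry the factor $\lb a,a'\ra=0$; the left-hand side equals (up to sign) the covector dual to $(L_m(a)^2 a')\tensor([L_n(b),L_n(d)]e)$. Choosing $b,d$ linearly independent (possible as $n\geq 2$), conformal associativity of $P_n$ (Corollary \ref{ndimexistencecorollary}) and $\ka_{P_n}=n(n-1)>0$ give $[L_n(b),L_n(d)]\neq 0$, so one may choose $e$ outside its kernel; then vanishing of the decomposable tensor forces $L_m(a)^2 a'=0$, hence $L_m(a)a'=0$ because $L_m(a)$ is $g$-self-adjoint. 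Letting $a'$ range over $a^\perp$ and $a$ over $\rea^m$ shows $\rank L_m(a)\leq 1$ for all $a$; together with $\tr L_m(a)=\lap_g P_m(a)=0$ this forces $L_m(a)=0$ for all $a$, i.e.\ $\hess P_m\equiv 0$, contradicting that $P_m$ is a nontrivial cubic. Hence $P_m\tensor P_n$ is not conformally associative and the two orbits are distinct.

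The real content is the choice of route: once one reduces to disproving conformal associativity and uses the factorization $L(a\tensor b)=L_m(a)\tensor L_n(b)$, the contradiction is essentially formal, the only care needed being the correct identification of $2$-forms with skew endomorphisms and their pairings on $\rea^m\tensor\rea^n$. A more direct attack via the invariant $\mkc$ and Lemma \ref{mkcboundlemma}, although also sufficient in principle since $\mkc(P_{mn})=\tfrac{mn}{mn-1}$ is the extremal value there, is harder to carry out: it would require showing $\max_{|z|=1}6(P_m\tensor P_n)(z)<\sqrt{(m-1)(n-1)(mn-1)}$, and this maximum is \emph{not} attained on decomposable tensors (already for $m=n=2$), so the needed estimate is much less transparent.
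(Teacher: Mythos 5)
Your proof is correct and takes a genuinely different route from the paper's, which argues via the invariant $\mkc$: it asserts $\mkc(P_m\tensor P_n)\geq\mkc(P_m)\mkc(P_n)=\tfrac{mn}{(m-1)(n-1)}$ and concludes by comparing with $\mkc(P_{mn})=\tfrac{mn}{mn-1}$. That asserted inequality, however, runs opposite to the tensor-product bound \eqref{mkcpq}, which gives $\mkc(P\tensor Q)\leq\mkc(P)\mkc(Q)$, and it is in fact false already for $m=n=2$: by Examples \ref{parahurwitzebasepolyexample}, \ref{poly3example} and Theorem \ref{4dpolytheorem}, $P_2\tensor P_2$ is orthogonally decomposable and lies in the orbit of \eqref{lazero}, so $\mkc(P_2\tensor P_2)=2$ while $\mkc(P_2)^2=4$. (The conclusion of the lemma itself survives for $m=n=2$, since $2>\tfrac{4}{3}=\mkc(P_4)$, but the lemmas cited in the paper's proof deliver only the nonstrict bound $\mkc(P_m\tensor P_n)\geq\tfrac{mn}{mn-1}$, which by itself does not separate the orbits.) Your approach instead shows $P_m\tensor P_n$ fails conformal associativity and then invokes the uniqueness in Theorem \ref{ealgpolynomialtheorem}. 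The steps check out: the Kronecker factorization $L(a\tensor b)=L_m(a)\tensor L_n(b)$; the commutator identity $[L(a\tensor b),L(a\tensor d)]=L_m(a)^2\tensor[L_n(b),L_n(d)]$; the vanishing of the right side of \eqref{casscommutator} when paired with $a'\tensor e$, $a'\perp a$; the choice of independent $b,d$ with $[L_n(b),L_n(d)]\neq 0$ (from conformal associativity of $P_n$ and $\ka_n>0$) and $e$ outside its kernel; and the passage from $L_m(a)^2a'=0$ to $L_m(a)a'=0$ via self-adjointness, which together with $\tr L_m(a)=0$ forces $L_m(a)=0$ for all $a$, a contradiction. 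Given the gap in the paper's $\mkc$ computation, your conformal-associativity argument is not merely an alternative but an actual repair of the proof; you have also correctly identified exactly what a corrected $\mkc$-based argument would have to establish, namely $\max_{|z|=1}6(P_m\tensor P_n)(z)<\sqrt{(m-1)(n-1)(mn-1)}$, and why it is delicate.
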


\begin{proof}
If $v$ is a critical point of the restriction of $P_{n}$ to the $h$ unit sphere, then $r^{-1}v$ is a critical point of $r^{2}P_{n}$ to the $r^{2}h$ unit sphere, so if $P_{n}(v) \neq 0$, then $e = \tfrac{v}{6P_{n}(v)}$ and $c(v) = |(\hess P_{n})(e)|^{2}_{h}$ depend only on the homothety class of $h$.
By Corollary \ref{ndimexistencecorollary}, $e$ has the form $\tfrac{1}{n+1-2k}\sum_{i \in I}v_{i}$ for $v_{i} \in \extreme(P_{n})$ and $1 \leq k \leq n+1$ such that $2k \neq n+1$, and the possible values of $c(v)$ are $|(\hess P_{n})(e)|^{2} = n(n-1)|e|^{2} = \tfrac{k(n+1-k)(n-1)}{(n+1 - 2k)^{2}} = f_{n}(k)$ for the same range of $k$. Because $f_{n}(n+1 - k) = f_{n}(k)$ in what follows it suffices to consider $1 \leq k < (n+1)/2$. Let $u$ and $v$ be critical points of the restrictions of $P_{m}$ and $P_{n}$ to the unit spheres such that $P_{m}(u) \neq 0$, $P_{n}(v) \neq 0$, $c(u) = \tfrac{m}{m-1}$, and $c(v) = \tfrac{n}{n-1}$. Then $u \tensor v$ is a critical point of the restriction of $P_{m}\tensor P_{n}$ to the unit sphere such that $c(u\tensor v) = \tfrac{mn}{(m-1)(n-1)}$. Were the orbits $\orb{P_{m}\tensor P_{n}}$ and $\orb{P_{mn}}$ $CO(mn)$-equivalent, there would be a critical point $w$ of the restriction of $P_{mn}$ to the unit sphere such that $P_{mn}(w) \neq 0$ and $c(w) = \tfrac{mn}{(m-1)(n-1)}$. It will be shown that this is impossible. By the preceding, the possible values of $c(w)$ are $f_{mn}(k) = \tfrac{k(mn+1 - k)(mn-1)}{(mn + 1- 2k)^{2}}$ for $1 \leq k < (mn + 1)/2$. Because in this range $f_{mn}(k)$ is increasing in $k$, to complete the proof it suffices to check that $f_{mn}(1) = \tfrac{mn}{mn-1} < \tfrac{mn}{(m-1)(n-1)}$ and $f_{mn}(2) = \tfrac{2(mn-1)^{2}}{(mn-3)^{2}} > \tfrac{mn}{(m-1)(n-1)}$ for $m, n \geq 2$. The first inequality is immediate. For the second, if $m \geq 3$ and $n \geq 4$, then $\tfrac{mn}{(m-1)(n-1)} = 1 + \tfrac{1}{m-1} + \tfrac{1}{n-1} + \tfrac{1}{(m-1)(n-1)} \leq 2$ while $f_{mn}(2) > 2$; the remaining cases $(m, n) \in \{(2, 2), (2, 3), (2, 4), (3, 3)\}$ can be checked by direct evaluation.
\end{proof}
\end{example}

\section{Classification of solutions in dimension \texorpdfstring{$3$}{3} and \texorpdfstring{$4$}{4}: proofs of Theorems \texorpdfstring{\ref{3dpolytheorem}}{3d} and \texorpdfstring{\ref{4dpolytheorem}}{4d}}\label{lowdpolysection}
In this section the normalized equations \eqref{epreduced} of Lemma \ref{preparationlemma} are used to prove Theorems \ref{3dpolytheorem} and \ref{4dpolytheorem}, that characterize solutions of \eqref{einsteinpolynomials} (in Riemannian signature) in dimensions $3$ and $4$.

\begin{proof}[Proof of Theorem \ref{3dpolytheorem}]
For $c \neq 0$, the harmonic polynomial $P =\kc x_{1}x_{2}x_{3}$ solves 
\eqref{einsteinpolynomials} with $\ka = 2\kc^{2}$. Consequently, any polynomial in the $O(3)$ orbit of $P$ is also harmonic and solves \eqref{einsteinpolynomials} with $\ka = 2\kc^{2}$. This shows \eqref{dth3} implies \eqref{dth2}. 

Let $P$ be a nontrivial solution of \eqref{einsteinpolynomials} with parameter $\ka$. By Lemma \ref{preparationlemma}, it can be supposed that a nonzero positive multiple of $P$ has the form \eqref{pprenormal} with $c = 1$, so that
\begin{align}
P(x_{1}, x_{2}, x_{3}) = -\la(x_{3}^{3} - 3x_{3}x_{1}^{2}) + (1+\la)(x_{3}^{3} -3x_{3}x_{2}^{2}) + Q(x_{1}, x_{2}),
\end{align}
where $Q$ and $\la$ satisfy \eqref{epreduced}. 
By \eqref{epreduced}, $Q$ is harmonic, so, by Example \ref{2dharpolyexample}, $Q$ has the form \eqref{2dharpoly} for some $r > 0$ and solves \eqref{einsteinpolynomials} with parameter $2r^{2}$. This observation coupled with the last equation of \eqref{epreduced} yields
\begin{align}\label{3din0}
2r^{2}(x_{1}^{2} + x_{2}^{2}) + 72(\la^{2}x_{1}^{2} + (1+\la)^{2}x_{2}^{2}) = 36(1 + \la^{2} + (1+\la)^{2})(x_{1}^{2} + x_{2}^{2}).
\end{align}
By \eqref{3din0}, $-72 \la = 2r^{2} = 72(1 + \la)$, so that $\la = -1/2$ and $2r^{2} = 36$. Hence $P$ has the form 
\begin{align}\label{3din1}
\begin{split}
x_{3}^{3} - \tfrac{3}{2}x_{3}(x_{1}^{2} + x_{2}^{2}) + \tfrac{1}{\sqrt{2}}\cos \theta \left(x_{1}^{3} - 3x_{1}x_{2}^{2}\right)+\tfrac{1}{\sqrt{2}}\sin \theta \left(x_{2}^{3} - 3x_{2}x_{1}^{2}\right).
\end{split}
\end{align}
A rotation in the $(x_{1}, x_{2})$ plane preserves the first two terms of \eqref{3din1}, and, as in the discussion following \eqref{2dharpoly}, after such a rotation it may be supposed that $\theta = \pi/2$, so that $P$ has the the form 
\begin{align}
\label{poly2}
\begin{split}
 x_{3}^{3}& - \tfrac{3}{2}x_{3}(x_{1}^{2} + x_{2}^{2}) + \tfrac{1}{\sqrt{2}}\left(x_{2}^{3} - 3x_{2}x_{1}^{2}\right) \\
& = 3\sqrt{3}\left(\sqrt{\tfrac{2}{3}}x_{2} + \tfrac{1}{\sqrt{3}}x_{3}\right)\left( \tfrac{1}{\sqrt{2}}x_{1} -\tfrac{1}{\sqrt{6}}x_{2} + \tfrac{1}{\sqrt{3}}x_{3}\right)\left( - \tfrac{1}{\sqrt{2}}x_{1} -\tfrac{1}{\sqrt{6}}x_{2}+ \tfrac{1}{\sqrt{3}}x_{3}\right).
\end{split}
\end{align}
(\eqref{poly2} solves \eqref{einsteinpolynomials} with $\ka = 54$.) That the polynomial \eqref{poly2} and $3\sqrt{3}x_{1}x_{2}x_{3}$ lie on the same $SO(3)$-orbit is apparent from \eqref{poly2}, which exhibits $P(x)$ in the form $Q(g^{-1}x)$ for $Q(x) = 3\sqrt{3}x_{1}x_{2}x_{3}$ and the element $g$ of $SO(3)$ the rows of which are the coefficients in the factorization in the last line of \eqref{poly2}. This $g$ is the rotation sending the ray on which the coordinates are all equal (on which the restriction to the sphere of $Q$ achieves its maximum) into the direction of $x_{3}$, as in the normalization leading to \eqref{pprenormal}. This shows that \eqref{dth2} implies \eqref{dth3}.
\end{proof}

Theorem \ref{4dpolytheorem} classifies the solutions of \eqref{einsteinpolynomials} on $\rea^{4}$ in Riemannian signature, showing that any solution is orthogonally equivalent to one of the polynomials in Table \ref{4dpolynormalforms} and that for a given value of $\ka$ the two different solutions in Table \ref{4dpolynormalforms} are not orthogonally equivalent.
\begin{table}[h]
\caption{Normal forms for solutions of \eqref{einsteinpolynomials} in dimension $4$}\label{4dpolynormalforms}
\begin{tabular}{|p{.7\linewidth}|c|c|c|c|}
\hline
$P(x_{1}, x_{2}, x_{3}, x_{4})$ & $\la$ & $\ka$ & $\mkc(P)$\\
\hline
&&&\\
$\begin{aligned} \tfrac{c}{6}\left(x_{4}^{3} - 3 x_{4}x_{3}^{2} +3x_{1}^{2}x_{2} - x_{2}^{3} \right)\end{aligned}$ & $0$ & $2c^{2}$ & $2$ \\
\hline
&&&\\
$\begin{aligned} \tfrac{c}{6}\left(x_{4}^{3} - x_{4}(x_{1}^{2} + x_{2}^{2} + x_{3}^{2}) + 2\sqrt{5}x_{1}x_{2}x_{3}\right)\end{aligned}$ & $-\tfrac{1}{3}$ & $\tfrac{4}{3}c^{2}$ & $\tfrac{4}{3}$\\
\hline
\end{tabular}
\end{table}
(In either case, $\ka = 2c^{2}(1 + 2\la + 3\la^{2}) = c^{2}(6(\la + \tfrac{1}{3})^{2} + \tfrac{4}{3})$.)
The solution \eqref{lazero} (the second solution of Table \ref{4dpolynormalforms}) is that given by Example \ref{lanminusoneexample}. The solution \eqref{minusonethird0} (the first solution of Table \ref{4dpolynormalforms}) is obtained via Lemma \ref{recursivepolynomiallemma} and Theorem \ref{3dpolytheorem}.

\begin{proof}[Proof of Theorem \ref{4dpolytheorem}]
The proof has two parts. First it is shown that any solution of \eqref{einsteinpolynomials} on $\rea^{4}$ in Riemannian signature is conformally equivalent to \eqref{laminusonethird} or \eqref{lazero}, that is that there are no other solutions. Second, these two solutions are shown to be inequivalent.

Suppose $P \in \pol^{3}(\rea^{4})$ solves \eqref{einsteinpolynomials}. By Lemma \ref{preparationlemma}, after a conformal linear transformation, $P$ can be supposed to have the form \eqref{pprenormal}, that is the form 
\begin{align}\label{4dpprenormal}
P(x_{1}, x_{2}, x_{3}, x_{4}) = \la_{1}(3x_{4}x_{1}^{2} - x_{4}^{3}) + \la_{2}(3x_{4}x_{2}^{2} - x_{4}^{3}) +  \la_{3}(3x_{4}x_{3}^{2} - x_{4}^{3}) + Q(x_{1}, x_{2}, x_{3}),
\end{align}
where $\la_{1} +  \la_{2} +  \la_{3}  = -1$, $\la_{i} \in [-2, 1/2]$ and $Q(x_{1}, x_{2}, x_{3}) \in \pol^{3}(\rea^{3})$ solves the equations \eqref{epreduced}. By Lemma \ref{polorthonormalbasislemma}, $Q$ can be supposed to have the form
\begin{align}\label{4dq}
Q(x_{1}, x_{2}, x_{3}) & = \al x_{1}x_{2}x_{3} +\tfrac{1}{6} \sum_{1 \leq i \neq j \leq 3}\be_{ij}(3x_{i}^{2}x_{j} - x_{j}^{3}),
\end{align}
for constants $\al, \be_{ij} \in \rea$. Let $\la_{1}, \la_{2}, \la_{3} \in \rea$ sum to $-1$. Moreover, the $\la_{i}$ must satisfy $1 + \la_{1}^{2} + \la_{2}^{2} + \la_{3}^{2} \geq 2\la_{i}^{2}$. 
By Lemma \ref{qpreplemma}, that $Q$ solves \eqref{epreduced} yields the equations
\begin{align}\label{bebeeqns}
\begin{split}
0 & = 2\al(\be_{13} + \be_{23}) + \be_{31}\be_{32} - \be_{12}\be_{31} - \be_{21}\be_{32},\\
0 & = 2\al(\be_{12} + \be_{32}) + \be_{21}\be_{23} - \be_{13}\be_{21} - \be_{31}\be_{23},\\
0 & = 2\al(\be_{21} + \be_{31}) + \be_{12}\be_{13} - \be_{23}\be_{12} - \be_{32}\be_{13},
\end{split}
\end{align}
and
\begin{align}\label{purebeeqns}
\begin{split}
18(1 - \la_{1}^{2} + \la_{2}^{2} + \la_{3}^{2}) & = \al^{2} + \be_{12}^{2} + \be_{13}^{2} + \be_{21}^{2} + \be_{31}^{2} + \be_{21}\be_{31},\\
18(1 + \la_{1}^{2} - \la_{2}^{2} + \la_{3}^{2}) & = \al^{2} + \be_{21}^{2} + \be_{23}^{2} + \be_{12}^{2} + \be_{32}^{2} + \be_{12}\be_{32},\\
18(1 + \la_{1}^{2} + \la_{2}^{2} - \la_{3}^{2}) & = \al^{2} + \be_{31}^{2} + \be_{32}^{2} + \be_{13}^{2} + \be_{23}^{2} + \be_{13}\be_{23},
\end{split}
\end{align}
together equivalent to the requirement that $|\hess Q| ^{2}  + 72\sum_{i= 1}^{n}\la_{i}^{2}x_{i}^{2} = 36(1 + \sum_{i = 1}^{n}\la_{i}^{2})E_{n}(x)$, and the equations
\begin{align}\label{labeeqns}
\begin{split}
0 & = (\la_{2} - \la_{1})\be_{21} + (\la_{3} - \la_{1})\be_{31} = \la_{21}\be_{21} + \la_{31}\be_{31},\\
 0 & = (\la_{1} - \la_{2})\be_{12} + (\la_{3} - \la_{2})\be_{32} = \la_{12}\be_{12} + \la_{32}\be_{32},\\
0 & = (\la_{1} - \la_{3})\be_{13} + (\la_{2} - \la_{3})\be_{23} = \la_{13}\be_{13} + \la_{23}\be_{23},
\end{split}
\end{align}
equivalent to the requirement that $\sum_{i = 1}^{n}\la_{i}\tfrac{\pr^{2}Q}{\pr^{2} x^{i}} = 0$. Here $\la_{ij} = \la_{i} - \la_{j} = -\la_{ji}$. Note that $\la_{12} + \la_{23} + \la_{31} = 0$.

In each of the three triads of equations \eqref{bebeeqns}, \eqref{purebeeqns}, and \eqref{labeeqns}, the second and third equation are obtained from the first by cyclically permuting the indices $123$, so that the entire set of equations \eqref{bebeeqns}-\eqref{labeeqns} is invariant under permutation of the indices $123$. Since permutation of the variables $x_{1}, x_{2}, x_{3}$ is an orthogonal transformation, this means that where convenient such a permutation can be used to normalize the equations.

Multiplying the first equation of \eqref{bebeeqns} by $\la_{23}\la_{31}$ and using \eqref{labeeqns} to simplify the result yields
\begin{align}
\begin{split}
0 & = 2\al\la_{23}(\la_{31}\be_{13} + \la_{31}\be_{23}) + \la_{23}\be_{32}\la_{31}\be_{31} - \la_{23}\la_{31}\be_{12}\be_{31} - \la_{23}\be_{31}\la_{31}\be_{31}\\
& = 2\al\la_{23}(\la_{23} + \la_{31})\be_{23} + \la_{12}\la_{31}\be_{12}\be_{31} - \la_{23}\la_{31}\be_{12}\be_{31} - \la_{12}\la_{31}\be_{12}\be_{21}\\
& = 2\al\la_{21}\la_{23}\be_{23} + (\la_{12}\la_{31} - \la_{23}\la_{31} - \la_{31}^{2})\be_{12}\be_{31} = -2\al\la_{23}\la_{12}\be_{23} + 2\la_{12}\la_{31}\be_{12}\be_{31}.
\end{split}
\end{align}
Calculating in a similar fashion the result of multiplying the second and third equations of \eqref{bebeeqns} by $\la_{31}\la_{12}$ and $\la_{12}\la_{23}$ yields the three equations
\begin{align}
\label{bebeeqns2}
&\la_{12}\la_{23}\al\be_{23} = \la_{31}\la_{12}\be_{31}\be_{12},&&\la_{23}\la_{31}\al\be_{31} = \la_{12}\la_{23}\be_{12}\be_{23},&\la_{31}\la_{12}\al\be_{12} = \la_{23}\la_{31}\be_{23}\be_{31},
\end{align}
which are equivalent to \eqref{bebeeqns2}. Write $\Delta = \la_{12}\la_{23}\la_{31}$. From \eqref{bebeeqns2} there follow
\begin{align}\label{bebeeqns3}
\begin{split}
\Delta\be_{31}\be_{12}\be_{23} &= \la_{12}\la_{23}^{2}\al\be_{23}^{2} = \la_{23}\la_{31}^{2}\al\be_{31}^{2} =\la_{31}\la_{12}^{2}\al\be_{12}^{2},\\
\Delta^{2}\al^{3}\be_{31}\be_{12}\be_{23}& = \Delta^{2}\be_{31}^{2}\be_{12}^{2}\be_{23}^{2}. 
\end{split}
\end{align}
There are three essentially different special cases to consider, depending on whether $\{\la_{1}, \la_{2}, \la_{3}\}$ comprises $1$, $2$, or $3$ distinct values. The analysis will be made assuming that the $\la_{i}$ sum to $-1$, but ignoring the condition $\la_{i} \in [-2, 1/2]$. There will result extra putative solutions that will be eliminated when this condition on the $\la_{i}$ is imposed.

Next it is claimed that \textit{there is no $P \in \pol^{3}(\rea^{4})$ having the form \eqref{epreduced} with $\Delta = (\la_{1} - \la_{2})(\la_{2} - \la_{3})(\la_{3} - \la_{1}) \neq 0$ that solves \eqref{einsteinpolynomials}.}

That $\{\la_{1}, \la_{2}, \la_{3}\}$ be distinct is the same as that $\Delta \neq 0$. Were $\al \neq 0$, the equalities $\la_{12}\la_{23}^{2}\al\be_{23}^{2} = \la_{23}\la_{31}^{2}\al\be_{31}^{2} =\la_{31}\la_{12}^{2}\al\be_{12}^{2}$ given by \eqref{bebeeqns3} would imply that $\la_{12}\al$, $\la_{23}\al$, and $\la_{31}\al$ have the same sign, but these quantities sum to $0$, so this is impossible. Consequently, it must be that $\al = 0$. Then, since the $\la_{i}$ are distinct, the equations \eqref{bebeeqns2} imply that the products $\be_{12}\be_{23}=0$, $\be_{23}\be_{31} =0$, and $\be_{31}\be_{12} =0$, which is possible if and only if at least two of $\be_{12}$, $\be_{23}$, and $\be_{31}$ vanish. It cannot be that all three vanish, for, summing the three equations \eqref{purebeeqns}, this would imply $18(3 + \la_{1}^{2} + \la_{2}^{2} + \la_{3}^{2}) = 0$, which is absurd. Since the equations being considered are unchanged under cyclic permutations of the indices, it can without loss of generality be supposed that $\be_{23} = 0 = \be_{31}$. By \eqref{labeeqns} this implies $\be_{13} = 0$ and $\be_{21} = 0$, so the only possibly nonzero coefficients are $\be_{12}$ and $\be_{32}$. In this case the equations \eqref{purebeeqns} become
\begin{align}\label{purebeeqns2}
\begin{split}
18(1 - \la_{1}^{2} + \la_{2}^{2} + \la_{3}^{2}) & =  \be_{12}^{2},\\
18(1 + \la_{1}^{2} - \la_{2}^{2} + \la_{3}^{2}) & = \be_{12}^{2} + \be_{32}^{2} + \be_{12}\be_{32},\\
18(1 + \la_{1}^{2} + \la_{2}^{2} - \la_{3}^{2}) & = \be_{32}^{2}.
\end{split}
\end{align}
Multiplying the equations of \eqref{purebeeqns2} by $\la_{23}^{2}$ and using \eqref{labeeqns} to simplify the result yields
\begin{align}\label{purebeeqns3}
\begin{split}
18(1 - \la_{1}^{2} + \la_{2}^{2} + \la_{3}^{2})\la_{23}^{2} & =  \la_{23}^{2}\be_{12}^{2},\\
18(1 + \la_{1}^{2} - \la_{2}^{2} + \la_{3}^{2})\la_{23}^{2} & = (\la_{23}^{2} + \la_{12}^{2} + \la_{12}\la_{23})\be_{12}^{2}\\&= (\la_{23}^{2} + \la_{12}\la_{13})\be_{12}^{2} = (\la_{12}^{2} + \la_{13}\la_{23})\be_{12}^{2},\\
18(1 + \la_{1}^{2} + \la_{2}^{2} - \la_{3}^{2})\la_{23}^{2} & = \la_{12}^{2}\be_{12}^{2}.
\end{split}
\end{align}
Subtracting the first equation of \eqref{purebeeqns3} from the second equation, dividing the result by $\la_{12}$, and simplifying using $\la_{13} = 1 + 2\la_{1} + \la_{2}$ and $\la_{23} = 1 + \la_{1} + 2\la_{2}$ yields
\begin{align}\label{pureba}
36(\la_{1} + \la_{2})(1 + \la_{1} + 2\la_{2})^{2} = (1 + 2\la_{1} + \la_{2})\be_{12}^{2}.
\end{align}
Similarly, subtracting the third equation of \eqref{purebeeqns3} from the second equation and dividing the result by $\la_{23}$ yields
\begin{align}\label{purebb}
36(1 + \la_{1})(1 + \la_{1} + 2\la_{2})^{2} = (1 + 2\la_{1} + \la_{2})\be_{12}^{2}.
\end{align}
Since $\be_{12} \neq 0$, $1 + 2\la_{1} + \la_{2} = \la_{13} \neq 0$, and $1 + \la_{1} + 2\la_{2} = \la_{23} \neq 0$, the vanishing of the difference of the equations \eqref{pureba} and \eqref{purebb} implies $\la_{2} = 1$. Since $1 + \la_{1} + 2\la_{2} = \la_{23} \neq 0$, the sum of the equations \eqref{pureba} and \eqref{purebb} yields
\begin{align}
18(3 + \la_{1})^{2} = 18(1 + \la_{1} + 2\la_{2})^{2} = \be_{12}^{2}.
\end{align}
Taking $\la_{2} = 1$ in the first equation of \eqref{purebeeqns2} yields $36(3 + 2\la_{1}) = \be_{12}^{2} = 18(3 + \la_{1})^{2}$, but no real $\la_{1}$ solves this equation. 

The remaining case to consider is that at least two of the $\la_{i}$ coincide. Since a permutation of $\{123\}$ induces an orthogonal transformation of $\rea^{4}$ fixing the $x_{4}$ direction, if two of the $\la_{i}$ coincide, it can be assumed without loss of generality that $\la_{1} = \la_{2} = \la$ and $\la_{3} = -1 - 2\la$.

It is claimed that if $P \in \pol^{3}(\rea^{4})$ solves \eqref{einsteinpolynomials} with constant $\ka$ and has the form \eqref{epreduced} with $\la_{1} = \la_{2} = \la$ (so $\la_{3} = -1 -2\la$), then $\la \in \{-1/3, 0\}$ and $P$ is $O(4)$-equivalent to one of the polynomials in Table \ref{4dpolynormalforms}.

Suppose $P$ has the form \eqref{4dpprenormal} with $Q$ as in \eqref{4dq}.
The equations \eqref{labeeqns} imply $\be_{31} = 0 = \be_{32}$ and $\be_{23} = -\be_{13}$. The first of the equations \eqref{bebeeqns} is vacuous, and the other two equations of \eqref{bebeeqns} simplify to \begin{align}\label{albebebe}
&\al\be_{12} = -\be_{21}\be_{23}, &&\al\be_{21}  = \be_{12}\be_{23}. 
\end{align}
The equations \eqref{purebeeqns} yield $36(1 + 2\la + 2\la^{2}) = \al^{2} + \be_{12}^{2} + \be_{21}^{2} + \be_{23}^{2}$ and $-36(\la^{2} + 2\la) = \al^{2} + \be_{23}^{2}$. Combining these yields the equivalent pair of equations
\begin{align}\label{albebebe2}
\begin{split}
\be_{12}^{2}& + \be_{21}^{2} = 36(1 + 4\la + 3\la^{2}) = 108 (\la + 1)(\la + \tfrac{1}{3}),\\
\al^{2} & + \be_{23}^{2} = -36\la(\la+2).
\end{split}
\end{align}
The relations \eqref{albebebe2} force $\la \in [-2, -1]\cup [-1/3, 0]$. From \eqref{albebebe2} and the restrictions on the coefficients so far determined, it follows that $P$ is conformally equivalent to a polynomial of the form \eqref{lanotminusone} for some $\theta \in [0, 2\pi)$, where, however, it remains to determine with of the polynomials \eqref{lanotminusone} solves \eqref{einsteinpolynomials}.
Lemma \ref{preparationlemma} also requires $\{\la, -(1 + 2\la)\} \subset [-2, 1/2]$. That $-2 \leq -1 - 2\la \leq 1/2$ implies $-3/4\leq \la \leq 1/2$, so Lemma \ref{preparationlemma} implies that each solution for $\la \in [-2, -1]$ is equivalent to a solution with $\la \in [-1/3, 0]$. (The point is that $e_{4}$ is a maximum of $P$ restricted to the unit sphere when $\la \in [-1/3, 0]$, but is not a maximum of $P$ restricted to the unit sphere when $\la \in [-2, -1]$.) Hence $\la \in [-1/3, 0]$.

Together the equations \eqref{albebebe} yield $\al\be_{12}^{2} = -\be_{12}\be_{21}\be_{23} = - \al\be_{21}^{2}$, so $\al(\be_{12}^{2} + \be_{21}^{2}) = 0$. 

If $\la \in (-1/3, 0]$, then the first equation of \eqref{albebebe2} implies $\be_{12}^{2} +\be_{21}^{2} \neq 0$, so  $\al = 0$. In \eqref{albebebe} this implies $\be_{23} = 0$. In the second equation of \eqref{albebebe2} this forces $\la \in \{-2, 0\}$, so $\la = 0$. Consequently, either $\la = 0$, in which case $\al = 0$, $\be_{23} = 0$, and $\be_{12}^{2} + \be_{21}^{2} = 36$, or $\la = -1/3$, in which case $\be_{12} = 0 = \be_{21}$, and $\al^{2} + \be_{23}^{2} = 20$. 

If $\la_{1} = \la_{2} = \la = -1/3$, then $\la_{3} = -1/3$, so that $\la_{i}$ are all equal $-1/3$. In this case, by \eqref{epreduced} and Lemma \ref{epreductionlemma}, $P$ has the form $\tfrac{c}{6}(x_{4}^{3} - x_{4}(x_{1}^{2} + x_{2}^{2} + x_{3}^{2}) + Q(x_{1}, x_{2}, x_{3}))$ where $\ka = \tfrac{4}{3}c^{2}$ and $Q$ solves \eqref{einsteinpolynomials} with constant $40$. By Theorem \ref{3dpolytheorem}, $Q$ is equivalent to $2\sqrt{5}x_{1}x_{2}x_{3}$ via an orthogonal transformation of the subspace with coordinates $x_{1}$, $x_{2}$, and $x_{3}$. Since such a transformation preserves the form of $P$, $P$ is $O(4)$-equivalent to the polynomial \eqref{minusonethird0}.

In the case $\la = 0$, there is $\theta \in [0, 2\pi)$ such that $\be_{12} = 6\cos \theta$ and $\be_{21} = 6\sin\theta$, so $P$ has the form
\begin{align}\label{lazerotheta}
\begin{split}
& \tfrac{c}{6}\left(x_{4}^{3} -3x_{3}^{2}x_{4} + \cos\theta(3x_{1}^{2}x_{2} - x_{2}^{3}) + \sin \theta (3x_{2}^{2}x_{1} - x_{1}^{3})\right),
\end{split}
\end{align}
As in Example \eqref{2dharpolyexample}, by an element of $O(4)$ that acts as a rotation in the $x_{1}x_{2}$ plane and trivially on the $x_{3}x_{4}$ plane this is equivalent to the form \eqref{lazero}.

In either case $\ka = 2c^{2}(1 + 2\la + 3\la^{2}) = c^{2}(6(\la + \tfrac{1}{3})^{2} + \tfrac{4}{3})$. By the construction determined $P$, $P$ assumes its maximum on the $h$-unit sphere at $e_{4}$ where there holds $P(e_{4}) = c/6$, so, by Lemma \ref{mkclemma},
\begin{align}
\mkc(P) = \tfrac{\ka}{36P(e_{4})^{2}} = \tfrac{\ka}{c^{2}} = 6(\la + \tfrac{1}{3})^{2} + \tfrac{4}{3}.
\end{align}
Alternatively, $\la$ is determined by $\mkc(P) =\tfrac{\ka}{c^{2}}$ by $\la = \sqrt{\tfrac{1}{6}(\mkc(P) - \tfrac{4}{3})} - \tfrac{1}{3}$.

Since the values of $\mkc(P)$ are different for the polynomials of \eqref{minusonethird0} and \eqref{lazero}, by Lemma \ref{mkclemma} these polynomials are not equivalent modulo $CO(4)$. This completes the proof of Theorem \ref{4dpolytheorem}. 
\end{proof}

\begin{remark}\label{4dpolydiscriminationremark}
This remark addresses a subtle potential confusion regarding the parameter $\la$ in the proof of Theorem \ref{4dpolytheorem}.

Consider the two parameter family of polynomials
\begin{align}
\label{lanotminusone}
\begin{split}
 & \tfrac{c}{6}\left(x_{4}^{3} + 3x_{4}(\la(x_{1}^{2} + x_{2}^{2}) - (1+2\la)x_{3}^{2}) + 3\sqrt{-2\la - \la^{2}}(x_{1}^{2} - x_{2}^{2})x_{3}\right.\\
&\quad \left.+ \sqrt{1 + 4\la + 3\la^{2}}\left(\cos\theta(3x_{1}^{2}x_{2} - x_{2}^{3}) + \sin \theta (3x_{2}^{2}x_{1} - x_{1}^{3})\right)\right), 
\end{split}
\end{align}
where $\la \in [-2, -1] \cup [-1/3, 0]$, $\theta \in [0, 2\pi)$, and $\ka = 2c^{2}(1 + 2\la + 3\la^{2}) = c^{2}(6(\la + \tfrac{1}{3})^{2} + \tfrac{4}{3})$. 

When $\la = 0$ the polynomial \eqref{lanotminusone} becomes \eqref{lazerotheta} which is $O(4)$-equivalent to \eqref{lazero} as is explained in the proof of Theorem \ref{4dpolytheorem}. 

When $\la = -1/3$ the polynomial \eqref{lanotminusone} becomes
\begin{align}\label{laminusonethird}
P = \tfrac{c}{6}\left(x_{4}^{3} - x_{4}(x_{1}^{2} + x_{2}^{2} + x_{3}^{2}) + \sqrt{5}(x_{1}^{2} - x_{2}^{2})x_{3}\right). 
\end{align}
which is $O(4)$-equivalent to \eqref{minusonethird0} by a rotation by $\pi/2$ in the $x_{1}x_{2}$ plane.

(If $\la \in (-2, -1) \cup (-1/3, 0)$, the polynomials \eqref{lanotminusone} do not solve \eqref{einsteinpolynomials} because the coefficients of $x_{1}x_{3}$ and $x_{2}x_{3}$ in $|\hess P|^{2} $ do not vanish.)

It follows from the proof of Theorem \ref{4dpolytheorem} that the polynomials \eqref{lanotminusone} for $\la \in [-2, -1]$ are equivalent to polynomials of the form \eqref{lanotminusone} for some other $\la \in [-1/3, 0]$. 

The argument proving Theorem \ref{4dpolytheorem} shows that the polynomials
\begin{align}
\label{laminustwo}
\begin{split}
&\tfrac{c}{6}\left(x_{4}^{3} + 3 x_{4}(-2x_{1}^{2} - 2x_{2}^{2} + 3x_{3}^{2}) + \sqrt{5}(3x_{1}^{2}x_{2} - x_{2}^{3}) \right)\quad  \text{if}\,\,\la = -2,
\end{split}\\
\label{laminusone}
\begin{split}
& \tfrac{c}{6}\left(x_{4}^{3} - 3 x_{4}(x_{1}^{2} + x_{2}^{2} - x_{3}^{2}) +3 (x_{1}^{2} - x_{2}^{2}) x_{3} \right) \quad  \text{if}\,\,\la = -1.
\end{split}
\end{align}
corresponding to $\la = -2$ and $\la = -1$ solve \eqref{einsteinpolynomials}. Lemma \ref{preparationlemma} implies that these solutions are orthogonally equivalent to the solutions obtained for $\la = -1/3$ and $\la = 0$, respectively. The subtle point is that $e_{4}$ is a maximum of $P$ restricted to the unit sphere when $\la \in [-1/3, 0]$, but is not a maximum of $P$ restricted to the unit sphere when $\la \in [-2, -1]$, and the normalizations made in the proof of Lemma \ref{preparationlemma} require that $e_{4}$ be a maximum of the restriction of $P$ to the unit sphere. 
\end{remark}

There remains to prove Corollary \ref{4dindecomposablecorollary}. 
\begin{proof}[Proof of Corollary \ref{4dindecomposablecorollary}]
By Lemma \ref{decomposablecriticallemma} to show that $P$ is orthogonally indecomposable it suffices to show that no two of its critical lines are orthogonal.

The orthogonal indecomposability of \eqref{minusonethird0} is shown by calculating explicitly its critical lines. There are $15$ of them. 
Let 
\begin{align}
P = x_{4}^{3} - x_{4}(x_{1}^{2} + x_{2}^{2} + x_{3}^{2}) + 2\sqrt{5}x_{1}x_{2}x_{3}.
\end{align}
A nonzero $x$ generates a critical line of $P$ if there is $\la \in \rea$ such that
\begin{align}
\label{mta}\la x_{1} &= -2x_{1}x_{4} + 2\sqrt{5}x_{2}x_{3},\\
\label{mtb}\la x_{2} &= -2x_{2}x_{4} + 2\sqrt{5}x_{3}x_{1},\\
\label{mtc}\la x_{3} &= -2x_{3}x_{4} + 2\sqrt{5}x_{1}x_{2},\\
\label{mtd}\la x_{4} & = 3x_{4}^{2} - x_{1}^{2} - x_{2}^{2} - x_{3}^{2}.
\end{align}
It must be that $x_{4} \neq 0$, for otherwise, by \eqref{mtd}, $x$ would be $0$. For definiteness suppose that $|x|^{2} = r^{2}$.

Multiplying \eqref{mta}, \eqref{mtb}, and \eqref{mtc} by $x_{1}$, $x_{2}$, and $x_{3}$, respectively, yields
\begin{align}\label{mte}
2\sqrt{5}x_{1}x_{2}x_{3} = x_{1}^{2}(\la + 2x_{4})= x_{2}^{2}(\la + 2x_{4})= x_{3}^{2}(\la + 2x_{4}).
\end{align}
By \eqref{mte}, if $x_{4} \neq -\la/2$, then either all of $x_{1}$, $x_{2}$, and $x_{3}$ equal $0$ or none of them equals $0$, while if $x_{4} = -\la/2$, then $x_{1}x_{2}x_{3} = 0$.

Suppose $x_{1} = x_{2} = x_{3} = 0$. Then $x_{4} = \pm r$, so $(0, 0, 0, 1)$ generates a critical line of weight $1$.

Suppose $x_{4} \neq -\la/2$ and none of $x_{1}$, $x_{2}$, and $x_{3}$ is $0$. By \eqref{mte} there holds $x_{1}^{2} = x_{2}^{2} = x_{3}^{2}$. In \eqref{mtd} this yields $0 = 3x_{4}^{2} - \la x_{4} - 3x_{1}^{2}$, while in \eqref{mta}, because $x_{1} \neq 0$, it yields $\pm 2\sqrt{5}x_{1} = \la + 2x_{4}$. Substituting the latter expression into the $20$ times the former and simplifying yields
\begin{align}
\begin{split}
0 &= 60x_{4}^{2} - 20\la x_{4} - 60x_{1}^{2} = 60x_{4}^{2} - 20\la x_{4} - 3(\la + 2x_{4})^{2}\\
& = 48x_{4}^{2} - 32\la x_{4} - 3\la^{2} = (12x_{4} + \la)(4x_{4} - 3\la),
\end{split}
\end{align}
so that $x_{4} = -\la/12$ and $\pm x_{1} = \pm x_{2} = \pm x_{3} = \sqrt{5}\la/12$ or $x_{4} = -3\la/4$ and $\pm x_{1} = \pm x_{2} = \pm x_{3} = \sqrt{5}\la/4$, in both cases subject to the condition, that follows from \eqref{mte}, that the product $x_{2}x_{3}$ have the same sign as $x_{1}(\la + 2x_{4})$. In the first case $\la^{2} = 8r^{2}$ and in the second case $\la^{2} = 2r^{2}/3$, so there result the $4$ critical lines of weight $1$ with generators
\begin{align}
& ( \sqrt{5}, - \sqrt{5}, - \sqrt{5}, -1),&
& (- \sqrt{5},  \sqrt{5}, -\sqrt{5}, -1),&
&(- \sqrt{5}, - \sqrt{5},  \sqrt{5}, -1),&
& ( \sqrt{5},  \sqrt{5},  \sqrt{5}, -1),&
\end{align}
and the $4$ critical lines of weight $2/27$ with generators
\begin{align}
& ( \sqrt{5}, - \sqrt{5}, - \sqrt{5}, 3),&
& (- \sqrt{5},  \sqrt{5}, -\sqrt{5}, 3),&
&(- \sqrt{5}, - \sqrt{5},  \sqrt{5}, 3),&
& ( \sqrt{5},  \sqrt{5},  \sqrt{5}, 3),&
\end{align}

Suppose $x_{4} = -\la/2$ and $x_{1}x_{2}x_{3} =0$. Suppose $x_{3}  = 0$. Then, by \eqref{mtc}, $x_{1}x_{2} = 0$. Suppose $x_{2} = 0$ and $x_{1} \neq 0$. By \eqref{mtd}, $0 = 3x_{4}^{2} - \la x_{4} - x_{1}^{2} = 5x_{4}^{2} - x_{1}^{2}$. Substituting this into $r^{2} = x_{1}^{2} + x_{4}^{2}$ yields $2$ critical lines of weight $2/27$ generated by
\begin{align}
-( \sqrt{5}, 0, 0, 1).
\end{align}
Because the equations \eqref{mta}-\eqref{mtd} are unchanged by permutations of the variables $x_{1}$, $x_{2}$, and $x_{3}$, the same reasoning applies if $x_{1} = 0$ or $x_{2} = 0$ is supposed initially, yielding $4$ critical lines of weight $2/27$ generated by 
\begin{align}
&- (0, 0,  \sqrt{5},  1),& &-(0,  \sqrt{5},  0,  1).
\end{align}
(All the listed generators have been chosen so that $P$ is positive on the generator.) Suitably rescaled generators of the weight $1$ critical lines generate an $A_{4}$ root system. The reflections through the hyperplanes orthogonal to their pairwise differences permute the critical lines and act as automorphisms of $P$, showing that this group equals the symmetric group $S_{5}$ acting as permutations of the weight $1$ critical lines of $P$.
\end{proof}

\section{Cubic isoparametric polynomials}\label{isoparametricsection}
A hypersurface in the round sphere $S^{n-1}$ is \textit{isoparametric} if its principal curvatures are constant. See \cite{Cecil} for background and references. In \cite{Cartan-cubic}, Cartan classified the isoparametric hypersurfaces in a round sphere having at most three distinct principal curvatures. In this case, the principal curvatures must all have the same multiplicity $m$, which must be one of $1$, $2$, $4$, or $8$, and the hypersurface must be a tube of constant radius over the image in $S^{3m+1}$ of the Veronese embedding of the projective plane over one of the real definite signature unital composition algebras. This yields four one-parameter families (the parameter is the radius), each of which can be realized as the level sets $P(x) = \cos 3t$ of a homogeneous cubic polynomial $P \in \pol^{3}(\rea^{n})$ solving 
\begin{align}\label{cartaniso}
&\lap P = 0, && |dP| ^{2} = 9|x|^{4}.
\end{align}
These homogeneous cubic polynomials are determined by the multiplication and invariant bilinear form on the deunitalizations of the simple real Euclidean Jordan algebras of $3\times 3$ symmetric matrices over the real definite signature unital composition algebras. (The deunitalization of a unital metrized algebra means the subspace orthogonal to the unit equipped with the multiplication obtained by projecting the given product orthogonally onto this subspace; in this concrete context it means the trace-free Hermitian matrices equipped with the product given by the trace-free part of the usual Jordan product of Hermitian matrices). The corresponding polynomials are defined in dimensions $5$, $8$, $14$, and $26$.

\begin{lemma}[\cite{Fox-ahs}, \cite{Nadirashvili-Tkachev-Vladuts}]\label{isoparametricpolynomiallemma}
If $P \in \pol^{3}(\rea^{n})$ solves \eqref{cartaniso} then it solves \eqref{einsteinpolynomials} with $\ka = 18(n+2)$.
\end{lemma}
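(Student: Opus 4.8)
The plan is to differentiate the Cartan identity $|dP|^2 = 9|x|^4$ twice and use harmonicity to extract the value of $|\hess P|^2$. First I would rewrite the hypotheses in index notation: $\lap P = P_{ip}\,^p = 0$ and $P_i P^i = 9(x_p x^p)^2$, where $P_i = D_i P$. Since $P$ is a homogeneous cubic, $P_i$ is a homogeneous quadratic and $P_{ij} = D_i D_j P$ is linear in $x$, so all expressions that arise are polynomial and one may simply compare coefficients (equivalently, differentiate the polynomial identities, which hold identically in $x$).

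The key computation is to apply $D^i D^j$ to the identity $P_p P^p = 9(x_q x^q)^2$ and then contract with $h_{ij}$, i.e.\ apply $\lap_h$. Differentiating once gives $2 P_{ip}P^p = 36 (x_q x^q) x_i$. Differentiating again gives $2P_{ijp}P^p + 2P_{ip}P_j\,^p = 36(x_qx^q)h_{ij} + 72 x_i x_j$. Now take the $h$-trace in $ij$: the term $2P_{ijp}\,^{ij}P^p$ is $2(\lap_h P)_p P^p = 0$ by harmonicity (here one uses that $\lap_h$ commutes with $D_p$, valid since $h$ is $D$-parallel), leaving $2|\hess P|_h^2 = 2P_{ip}\,^q P^{ip}\,_q$ on the left, wait---more carefully, the trace of $2P_{ip}P_j\,^p$ in $ij$ is $2 P_{ip}\,^{j}P_j\,^{ip}$... let me instead trace directly: contracting with $h^{ij}$ yields $2(\lap_h P)_p P^p + 2 P_{ip}\,^q P^{i}\,_{q}\,^{p}$; the correct bookkeeping gives $2|\hess P|_h^2 = 36 n (x_qx^q) + 72 (x_qx^q) = (36n + 72)|x|_h^2$, hence $|\hess P|_h^2 = 18(n+2)|x|_h^2$, which is the second equation of \eqref{einsteinpolynomials} with $\ka = 18(n+2)$. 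Together with $\lap_h P = 0$ this is exactly \eqref{einsteinpolynomials}.

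I should double-check the contraction of the middle term $P_{ip}P_j\,^p$: its $h^{ij}$-trace is $P_{ip}\,^j P_j\,^{ip}$, but since $\hess P$ is symmetric this equals $P_{ip}\,^j P^{ip}\,_j = |\hess P|_h^2$ up to the convention for the complete-contraction norm fixed in the introduction; so the factor of $2$ in front survives and the arithmetic above is the intended one. The only genuinely delicate point---and the one I would state carefully---is that harmonicity of $P$ implies harmonicity of $P_p$, so that the term involving third derivatives of $P$ drops out upon tracing; this uses that $D$ is flat and $h$ is $D$-parallel, so $D$-derivatives commute and commute with raising/lowering indices. Everything else is a routine Leibniz-rule computation with polynomial identities, so the ``main obstacle'' is merely keeping the combinatorial constants ($36$ from $d^2$ of $(x_qx^q)^2$, the $n$ from tracing $h_{ij}$, the $72$ from $x_ix_j$) straight; there is no conceptual difficulty.
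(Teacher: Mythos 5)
Your proposal is correct and is essentially the computation in the paper: the paper simply records that for harmonic $P$ one has $2|\hess P|^{2}_{h}=\lap_{h}|dP|^{2}_{h}$ (this is part of Lemma~\ref{einsteinpolynomialslemma}) and then computes $\lap_{h}\bigl(9|x|^{4}\bigr)=36(n+2)|x|^{2}_{h}$, whereas you carry out the same two differentiations and $h$-trace explicitly in indices. The arithmetic and the role of harmonicity in killing the third-derivative term are identical, so the two proofs coincide modulo presentation.
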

\begin{proof}
Straightforward calculation using \eqref{cartaniso} shows
\begin{align}
2|\hess P|^{2}  = \lap |dP| ^{2} = 9\lap |x|^{4}  = 18(|x|^{2} \lap |x|^{2}  + |d|x|^{2} | ^{2}) = 36(n+2)|x|^{2} ,
\end{align}
so that $P$ solves \eqref{einsteinpolynomials} with $\ka = 18(n+2)$.
\end{proof}

\begin{remark}
In \cite{Nadirashvili-Tkachev-Vladuts, Tkachev-jordan} it is shown that the cubic isoparametric polynomials satisfy the \emph{radial eigencubic} equations
\begin{align}\label{radialeigencubic}
|Du|^{3}\div \tfrac{Du}{|Du|} = |Du|^{2}\lap u - \tfrac{1}{2}\lb Du, D|Du|^{2}\ra = \la |x|^{2}u
\end{align}
for some $\la \in \rea$, so that by \cite{Hsiang} their zero level sets define cubic minimal hypercones. That the cubic isoparametric polynomials solve \eqref{einsteinpolynomials} was observed in section $7$ of \cite{Fox-ahs} and \cite{Fox-crm}, and is also shown in Equation $(6.4.15)$ (see also Proposition $6.11.1$ and Corollary $6.11.2$) of \cite{Nadirashvili-Tkachev-Vladuts}. 
\end{remark}

Let $\fie$ be one of $\rea$, $\com$, $\quat$ (quaternions), or $\cayley$ (octonions), and let $m = \dim_{\rea}\fie$. Let $z_{1}, z_{2}, z_{3} \in \fie$ and let $\bar{z}_{i}$ denote the canonical conjugation on $\fie$ fixing the real subfield. Let $u, v \in \rea$, so $(u, v, z_{1}, z_{2}, z_{3}) \in \rea^{2} \oplus \fie \oplus \fie \oplus \fie = \rea^{3m+2}$. By \cite{Cartan-cubic, Cartan-isoparametricsurvey} any solution of \eqref{cartaniso} is equivalent modulo a rotation to one of the four polynomials
\begin{align}\label{cartanformula}
\begin{split}
Q&(u, v, z_{1}, z_{2}, z_{3})\\ &= u^{3} + \tfrac{3}{2}u\left(z_{1}\bar{z}_{1} + z_{2}\bar{z}_{2} - 2z_{3}\bar{z}_{3} - 2v^{2}\right) \\
&\qquad + \tfrac{3\sqrt{3}}{2}v\left(z_{1}\bar{z}_{1} - z_{2}\bar{z}_{2}\right) + \tfrac{3\sqrt{3}}{2}\left((z_{1}z_{2})z_{3} + \bar{z}_{3}(\bar{z}_{2}\bar{z}_{1}) \right)\\
& = u^{3} - 3uv^{2} - \tfrac{1}{2}( u^{3} - 3uz_{1}\bar{z}_{1}) - \tfrac{1}{2}( u^{3} - 3uz_{2}\bar{z}_{2}) + ( u^{3} - 3uz_{3}\bar{z}_{3}) \\
&\qquad -\tfrac{\sqrt{3}}{2}(v^{3} - 3vz_{1}\bar{z}_{1})  + \tfrac{\sqrt{3}}{2}(v^{3} - 3vz_{2}\bar{z}_{2})  + \tfrac{3\sqrt{3}}{2}\left((z_{1}z_{2})z_{3} + \bar{z}_{3}(\bar{z}_{2}\bar{z}_{1}) \right),
\end{split}
\end{align}
which is, up to changes of notation, equation $(17)$ of \cite{Cartan-cubic}. The parentheses in some terms of \eqref{cartanformula} are necessary when $m = 8$, because $\cayley$ is not associative. 

When $\fie$ is $\rea$ or $\com$, \eqref{cartanformula} can be written in terms of the determinant of a trace-free symmetric or Hermitian matrix:
\begin{align}\label{isodet}
Q(x, y, z_{1}, z_{2}, z_{3}) &=\tfrac{3\sqrt{3}}{2}\det \begin{pmatrix} -\tfrac{1}{\sqrt{3}}x + y & z_{3} & \bar{z}_{1} \\ \bar{z}_{3} & -\tfrac{1}{\sqrt{3}}x - y & z_{2} \\ z_{1} & \bar{z}_{2} & \tfrac{2}{\sqrt{3}}x\end{pmatrix}.
\end{align}
Over $\quat$ or $\cayley$ sense has to made of the determinant. When $\fie$ is $\quat$ or $\cayley$, the determinant in \eqref{isodet} can be given sense in the following way. Over a field, a trace-free $3 \times 3$ matrix $X$ satisfies $3\det X = \tr X^{3}$. When $\fie = \quat$, the matrix multiplication is associative, but over $\fie = \cayley$, this fails, so there has to be written $\tfrac{1}{2}(X(X^{2}) + (X^{2})X)$ instead of $X$. Then $Q(x, y, z_{1}, z_{2}, z_{3})$ equals $\tfrac{1}{6}\tr(X(X^{2}) + (X^{2})X)$ where
\begin{align}
X = 3^{1/2}2^{-1/3} \begin{pmatrix} -\tfrac{1}{\sqrt{3}}x + y & z_{3} & \bar{z}_{1} \\ \bar{z}_{3} & -\tfrac{1}{\sqrt{3}}x - y & z_{2} \\ z_{1} & \bar{z}_{2} & \tfrac{2}{\sqrt{3}}x\end{pmatrix}.
\end{align}
With this choice of coordinates $\tfrac{1}{3}\tr X^{2} = x^{2} + y^{2} + |z_{1}|^{2} + |z_{2}|^{2} + |z_{3}|^{2}$ is the standard Euclidean metric.

These solutions of \eqref{einsteinpolynomials} are special because their automorphism groups are large.
The groups $O(n)$, $U(n)$, and $Sp(n)$ act by conjugation on the $3\times 3$ Hermitian matrices over $\rea$, $\com$, and $\quat$ and these actions are automorphisms of the Jordan multiplication, so preserve the determinant and are automorphisms of the associated cubic polynomials. By \cite{Freudenthal}, the compact Lie group of type $F_{4}$ acts as automorphisms on the Jordan algebra of $3 \times 3$ Hermitian matrices over $\cayley$, so acts as automorphisms of the $26$-variable cubic isoparametric polynomial.

\begin{remark}
That the automorphism groups of the cubic isoparametric polynomials are Lie groups of positive dimension has the consequence that $\critline(P)$ is not a discrete subset of $\proj(\alg)$.

The following problem appears interesting: \emph{characterize the orthogonally indecomposable solutions of \eqref{einsteinpolynomials} having automorphism groups of positive dimension}. The cubic polynomials of the curvature algebras studied in \cite{Fox-curvtensor} provide examples in addition to the cubic isoparametric polynomials. A slightly more refined question is to characterize the orthogonally indecomposable solutions of \eqref{einsteinpolynomials} having a given Lie group as automorphisms. 
\end{remark}

\section{Cubic polynomial associated with a regular partial Steiner triple system}\label{steinersection}
A regular partial Steiner triple system on $\setn = \{1, \dots, n\}$ is a collection $\B$ of three element subsets of $\setn$, called \emph{blocks}, such that each pair of distinct $i$ and $j$ in $\setn$ is contained in at most one block of $\B$, and each $i \in \setn$ is contained in exactly $r$ blocks of $\B$. The number $r$ is called the \emph{replication number}. A partial Steiner triple system is a Steiner triple system if each pair of distinct $i$ and $j$ in $\setn$ is contained in exactly one block of $\B$. The sets of partial Steiner triple systems and Steiner triple systems on $\setn$ are denoted $PSTS(n)$ and $STS(n)$.
\begin{lemma}\label{pstslemma}
Let $\B \in PSTS(n)$ be a regular partial Steiner triple system with replication number $r$ and let $\ep \in \{\pm 1\}^{\B}$.
For $I = abc \in \B$ let $x_{I} = x_{a}x_{b}x_{c}$. The polynomial 
\begin{align}\label{partialsteinerpolynomial}
P_{\B, \ep}(x) = \sum_{I \in \B}\ep_{I}x_{I}
\end{align}
solves \eqref{einsteinpolynomials} with $\ka = 2r$ for the standard Euclidean metric with quadratic form $\sum_{i \in \setn}x_{i}^{2}$.
\end{lemma}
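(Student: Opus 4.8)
The plan is to compute everything directly from the two defining features of a regular partial Steiner triple system, since the polynomial $P_{\B,\ep}$ is multilinear in the coordinates. Write $P = P_{\B,\ep}$. First I would dispose of harmonicity: every monomial $x_I = x_ax_bx_c$ with $I = abc \in \B$ has three \emph{distinct} indices, so no variable occurs to a power greater than one; hence $\partial_i^2 x_I = 0$ for every $i$ and every block $I$, and therefore $\lap P = \sum_i \partial_i^2 P = 0$.

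The substance is the computation of $|\hess P|^2$. The key observation is that the partial Steiner condition --- each pair $\{i,j\}$ lies in at most one block --- forces each second partial derivative of $P$ to be a \emph{single} monomial (or zero). Concretely, $P_{ii} = 0$ for all $i$, again because no variable is squared; and for $i \neq j$, the monomial $x_I$ contributes to $\partial_i\partial_j x_I$ exactly when $\{i,j\} \subset I$, in which case $\partial_i\partial_j x_I = x_k$ with $k$ the third element of $I$. Since at most one block contains $\{i,j\}$, one gets $P_{ij} = \ep_I x_k$ when $\{i,j\}$ lies in the (unique) block $I = \{i,j,k\}$, and $P_{ij} = 0$ otherwise.

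Then I would sum. Because $\ep_I^2 = 1$,
\[
|\hess P|^2 = \sum_{i,j} P_{ij}^2 = \sum_{\substack{i\neq j,\ \{i,j\}\subset I\\ \text{for some } I\in\B}} x_{k(i,j)}^2 ,
\]
and grouping the ordered pairs $(i,j)$ by the block $I$ containing $\{i,j\}$ --- with no double counting, again by the partial Steiner property --- each block $I = abc$ accounts for the six ordered pairs drawn from $\{a,b\},\{b,c\},\{a,c\}$, contributing $2(x_a^2+x_b^2+x_c^2) = 2\sum_{m\in I} x_m^2$. Therefore
\[
|\hess P|^2 = 2\sum_{I\in\B}\sum_{m\in I} x_m^2 = 2\sum_{m=1}^n \bigl(\#\{I\in\B : m\in I\}\bigr)\,x_m^2 = 2r\sum_{m=1}^n x_m^2,
\]
the last equality using regularity. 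This is $2r|x|^2$ for the standard Euclidean metric, so $P$ solves \eqref{einsteinpolynomials} with $\ka = 2r$.

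I do not expect a genuine obstacle here; the only point requiring care is keeping track of \emph{which} hypothesis does \emph{what}: the partial Steiner property (at most one block per pair) is what guarantees that each $P_{ij}$ is a single monomial, so that squaring produces no cross-terms, while regularity (exactly $r$ blocks per point) is precisely what collapses the resulting diagonal quadratic form $\sum_m (\#\text{blocks through }m)\,x_m^2$ into a multiple of $|x|^2$. One could alternatively obtain the result from Lemma \ref{tensorproductlemma} or from the equiangular-tight-frame construction of Section \ref{framesection}, but the direct computation is the shortest route.
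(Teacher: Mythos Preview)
Your proof is correct and follows essentially the same approach as the paper's: compute the off-diagonal Hessian entries as $\ep_I x_k$ (or $0$) using the partial Steiner property, then count that each $x_k^2$ appears $2r$ times by regularity. Your write-up is more explicit than the paper's terse version---in particular you state the harmonicity argument and carefully separate the roles of the two hypotheses---but the underlying computation is identical.
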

\begin{proof}
Let $e_{i} = \tfrac{\pr}{\pr x_{i}}$ be the orthonormal basis of $\rea^{n}$ with respect to which $x_{i}$ are coordinates.
If $i \neq j$ the value of $(\hess P_{\B, \ep}(x))(e_{i}, e_{j})$ is $\ep_{I}x_{k}$ for the unique $I$ such that $I = \{i, j, k\}$, and $0$ otherwise. The number of occurrences of $x_{k}$ is twice the number $r$ of blocks in which $k$ occurs, so $P_{\B, \ep}$ solves \eqref{einsteinpolynomials} with $\ka = 2r$.
\end{proof}
When $\ep_{I} = 1$ for all $I \in \B$, the subscript indicating dependence on $\ep$ is omitted. The polynomial $P_{\B}$ is the \textit{cubic polynomial associated with the regular partial Steiner system $\B$}.

In general it is not obvious whether $P_{\B, \ep}$ and $P_{\B, \bar{\ep}}$ for $\ep\neq \bar{\ep} \in \{\pm 1\}^{\B}$ are orthogonally equivalent. Example \ref{dualaffineplaneexample} shows that they need not be equivalent.

\begin{example}\label{fanoplaneexample}
A projective plane with blocks of size $3$ is unique up to isomorphism and is called the Fano projective plane. It can be represented as 
\begin{align}\label{fano}
\B = \{123, 145, 167, 246, 257, 347, 356\} \in STS(7),
\end{align}
which has replication number $r = 3$. The blocks consist of the points on a line in the Fano plane with the points labeled as in Figure \ref{fanoplane}.
\begin{figure}
\begin{tikzpicture}[baseline=(current  bounding  box.center), scale=.5,auto=left]
  \node (m1) at (0,4)  {$1$};
  \node (m2) at (-1.71, 1)  {$2$};
  \node (m3) at (-3.42,-2)  {$3$};
  \node (m4) at (1.71, 1)  {$4$};
 \node (m5) at (3.42, -2)  {$5$};
 \node (m6) at (0, -2)  {$6$};
 \node (m7) at (0, 0)  {$7$};
\draw (m1) -- (m2);
\draw (m2) -- (m3);
\draw (m1) -- (m4);
\draw (m4) -- (m5) ;
\draw (m1) -- (m7);
\draw (m7) -- (m6) ;
\draw (m3) -- (m6);
\draw (m6) -- (m5) ;
\draw (m2) -- (m7);
\draw (m7) -- (m5) ;
\draw (m4) -- (m7);
\draw (m7) -- (m3) ;
\draw (m4) arc (30:150:2) (m2);
\draw (m2) arc (150:270:2) (m6);
\draw (m6) arc (-90:30:2) (m4);
\end{tikzpicture}  
\caption{Fano projective plane}\label{fanoplane}
\end{figure}
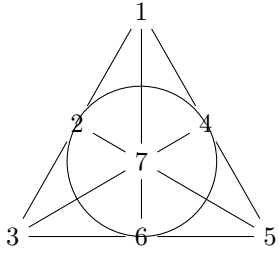
The cubic polynomial corresponding to \eqref{fano} is
\begin{align}\label{fanopoly}
\begin{split}
&x_{1}x_{2}x_{3} + x_{1}x_{4}x_{5} + x_{1}x_{6}x_{7} + x_{2}x_{4}x_{6} + x_{2}x_{5}x_{7} + x_{3}x_{5}x_{6} + x_{3}x_{4}x_{7}.
\end{split}
\end{align}

\begin{lemma}\label{fanoplanelemma}
The harmonic cubic polynomial $P_{\B}$ of \eqref{fanopoly} associated with the Steiner triple system $\B$ determined by the Fano projective plane \eqref{fano} is conformally equivalent to the simplicial polynomial $P_{7}$ defined in \eqref{simplicialpoly}.
\end{lemma}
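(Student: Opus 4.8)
The plan is to show that $P_{\B}$ is conformally associative and to conclude by Theorem~\ref{ealgpolynomialtheorem}. By Lemma~\ref{pstslemma} the polynomial $P_{\B}$ solves \eqref{einsteinpolynomials} with $\ka = 2r = 6$ for the standard Euclidean metric $h$ on $\rea^{7}$, so $\tfrac{\ka}{n-1} = 1$ with $n = 7$. Since Theorem~\ref{ealgpolynomialtheorem} identifies the set of conformally associative solutions of \eqref{einsteinpolynomials} in dimension $7$ with the single orbit $\orb{P_{7}}$, it is enough to prove $\cass(P_{\B})_{ijkl} = 0$; this gives $\orb{P_{\B}} = \orb{P_{7}}$, which is the claim.

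I would argue via the commutative algebra $(\rea^{7},\mlt)$ attached to $P_{\B}$ and $h$, with multiplication operators $L(x) \in \eno(\rea^{7})$ given by $L(x)_{i}\,^{j} = h^{jp}(\hess P_{\B})(x)_{ip}$. Because $\B$ is a Steiner triple system, for $i\neq j$ the Hessian entry $(\hess P_{\B})(x)_{ij}$ equals $x_{k}$, where $\{i,j,k\}$ is the unique block through $i$ and $j$, while $(\hess P_{\B})(x)_{ii} = 0$; hence $e_{i}\mlt e_{j} = e_{k}$ precisely when $\{i,j,k\}\in\B$, and $e_{i}\mlt e_{i} = 0$. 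Equivalently $L(e_{i})$ is the symmetric $0$--$1$ matrix whose $(j,k)$ entry is $1$ if and only if $\{i,j,k\}\in\B$, i.e.\ the adjacency matrix of the perfect matching of $\{1,\dots,7\}\setminus\{i\}$ cut out by the $r=3$ blocks through $i$. Unwinding Example~\ref{epcassexample} with $\tfrac{\ka}{n-1}=1$, the vanishing $\cass(P_{\B}) = 0$ is equivalent to the operator identity $[L(x),L(y)]z = h(x,z)\,y - h(y,z)\,x$ for all $x,y,z\in\rea^{7}$, where $[\dum,\dum]$ is the commutator of endomorphisms.

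The core step is to check this identity on the basis, i.e.\ that for $i\neq j$ the commutator $[L(e_{i}),L(e_{j})]$ is the rank-two skew endomorphism sending $e_{i}\mapsto e_{j}$, $e_{j}\mapsto -e_{i}$, and $e_{k}\mapsto 0$ for $k\notin\{i,j\}$. The automorphism group of the Fano plane is $PGL(3,2)$, which acts on $\rea^{7}$ by coordinate permutations; each such permutation preserves $\B$, hence is an automorphism of $P_{\B}$ and of the algebra, so it conjugates $[L(e_{i}),L(e_{j})]$ to $[L(e_{g(i)}),L(e_{g(j)})]$ and carries the claimed right-hand side to itself. Since $PGL(3,2)$ acts transitively on ordered pairs of distinct points, it suffices to verify the identity for one pair, say $(1,2)$: from the blocks $\{1,2,3\},\{1,4,5\},\{1,6,7\}$ through $1$ and $\{1,2,3\},\{2,4,6\},\{2,5,7\}$ through $2$, one computes $L(e_{1})L(e_{2})$ and $L(e_{2})L(e_{1})$ on each $e_{k}$ and finds that $[L(e_{1}),L(e_{2})]$ sends $e_{1}\mapsto e_{2}$, $e_{2}\mapsto -e_{1}$, and $e_{3},\dots,e_{7}\mapsto 0$, exactly as required. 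Extending bilinearly yields $[L(x),L(y)]z = h(x,z)y - h(y,z)x$ for all $x,y,z$, so $\cass(P_{\B}) = 0$; as $P_{7}$ is conformally associative (Corollary~\ref{ndimexistencecorollary}), Theorem~\ref{ealgpolynomialtheorem} forces $\orb{P_{\B}} = \orb{P_{7}}$.

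The only real difficulty is bookkeeping: correctly translating the index conventions of Example~\ref{epcassexample} into the operator identity with the right sign, and then carrying out the (small) commutator computation; the transitivity of $PGL(3,2)$ on ordered pairs reduces the latter to a single case. If one wishes to avoid invoking the group $PGL(3,2)$, the basis identities $[L(e_{i}),L(e_{j})](e_{i}) = e_{j}$, $[L(e_{i}),L(e_{j})](e_{k}) = 0$ for $k\notin\{i,j\}$, can be read off directly from the projective-plane incidence axiom that any two lines meet in exactly one point, which is the geometric input making the relevant terms cancel and which fails for general Steiner triple systems (compare Theorem~\ref{ststheorem}).
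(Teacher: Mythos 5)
Your proof is correct and takes essentially the same approach as the paper: establish that $P_{\B}$ is conformally associative and conclude via the uniqueness statement (Theorem~\ref{ealgpolynomialtheorem} / Theorem~\ref{confassequivalencetheorem}). The paper proves Lemma~\ref{fanoplanelemma} as the $k=2$ case of Lemma~\ref{ffielemma}, which checks $\cass(P_{\B}) = 0$ directly on basis vectors using the $\ffie_{2}^{k+1}$-linear structure (i.e.\ the same ``two lines meet in exactly one point'' axiom you invoke), working uniformly in the projective dimension $k$; you instead package the verification as the commutator identity $[L(x),L(y)]z = h(x,z)y - h(y,z)x$ and reduce to a single ordered pair via the $2$-transitivity of $PGL(3,2)$, then check the pair $(1,2)$ explicitly. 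Both arguments rest on the same incidence axiom; the paper's version avoids citing the automorphism group but requires a short case split on coincidences among the indices, while yours isolates a single concrete commutator computation. One small clean-up worth making explicit: your translation of Example~\ref{epcassexample} into the operator identity $[L(x),L(y)]z = h(x,z)y - h(y,z)x$ (with $\tfrac{\ka}{n-1}=1$) is correct, but it is the kind of sign-and-index bookkeeping that deserves a line of justification in a final write-up, since the paper's equation \eqref{casscommutator} is stated for $[L(x),L(y)]_{kl}$ with the first index lowered by $h$.
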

Lemma \ref{fanoplanelemma} is a special case of the more general Lemma \ref{ffielemma} below.
The Fano projective plane is a special case of the following more general construction. Let $\proj^{k}(\ffie_{2})$ be the $k$-dimensional projective space over the field with two elements, $\ffie_{2}$. This space has $n = 2^{k+1}- 1$ elements. It can be viewed as $\ffie_{2}^{k+1} \setminus \{0\}$, so an element $x \in \proj^{k}(\fie)$ is a $(k+1)$-tuple $x = (x_{1}, \dots, x_{k+1})$ with $x_{i} \in \ffie_{2}$ and not all $x_{i}$ equal to $0$. A line in $\proj^{k}(\ffie_{2})$ is the image of a two-dimensional subspace of $\ffie_{2}^{k+1}$. Any two nonzero elements $x, y \in \ffie_{2}^{k+1}$ generate a two-dimensional subspace of $\ffie_{2}^{k+1}$ the nonzero elements of which are $\{x, y, x+y\}$, and every two-dimensional subspace of $\ffie_{2}^{k+1}$ arises in this way. Thus the lines in $\proj^{k}(\ffie_{2})$ are identified with sets of the form $\{x, y, x+y\}$, where $x, y \in \proj^{k}(\ffie_{2})$ are distinct. Standard counting arguments show that there are $\tfrac{1}{3}\binom{n}{2} = \tfrac{(2^{k+1} - 1)(2^{k} - 1)}{3}$ lines in $\proj^{k}(\ffie_{2})$. The set $\B$ of lines in $\proj^{k}(\ffie_{2})$ is a Steiner triple system. Each line contains three points and each pair of points lies on exactly one line. Each point lies on $r = (n-1)/2 = 2^{k}-1$ lines. The set $\proj^{k}(\ffie_{2})$ is in bijection with $\setn$; to $x \in \proj^{k}(\ffie_{2})$ assign the number $\sum_{i = 1}^{k+1}x_{i}2^{i-1} \in \setn$. 

\begin{lemma}\label{ffielemma}
The harmonic cubic polynomial $P_{\B}$ associated with the Steiner triple system $\B$ determined by the lines in $\proj^{k}(\ffie_{2})$ is conformally equivalent to the simplicial polynomial $P_{n}$ defined in \eqref{simplicialpoly}, where $n = 2^{k+1} - 1$.
\end{lemma}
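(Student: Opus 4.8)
The plan is to prove that $P_{\B}$ is conformally associative and then to read off the conclusion from the uniqueness part of Theorem \ref{ealgpolynomialtheorem}. By Lemma \ref{pstslemma}, $P_{\B}$ solves \eqref{einsteinpolynomials} with $\ka = 2r$, where $r = 2^{k}-1$ is the replication number of the line system of $\proj^{k}(\ffie_{2})$; since $n = 2^{k+1}-1$ this is $\ka = n-1$. In particular $\sqrt{\ka/(n-1)} = 1$, so by Corollary \ref{confassextcorollary} it suffices to show that the affine extension $\hat{P}_{\B}(x, r) = \tfrac{1}{6}r^{3} + \tfrac{1}{2}r|x|^{2} + P_{\B}(x)$ on $\hat{\alg} = \alg \oplus \rea$ is associative, i.e. that $\ass(\hat{P}_{\B}) = 0$.

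The crux is the observation that $\hat{P}_{\B}$ is (six times) the cubic polynomial of the group algebra $G = \rea[\ffie_{2}^{k+1}]$. Identify the points of $\proj^{k}(\ffie_{2})$ — equivalently the elements of $\setn$ — with the nonzero vectors of $\ffie_{2}^{k+1}$, so that the lines of $\proj^{k}(\ffie_{2})$ are exactly the triples $\{a,b,a+b\}$ of distinct nonzero vectors. Give $G$ the basis $\{f_{a} : a \in \ffie_{2}^{k+1}\}$ with product $f_{a}f_{b} = f_{a+b}$; then $G$ is commutative and associative with unit $f_{0}$, and the inner product making the $f_{a}$ orthonormal is invariant because $\langle f_{a}f_{b}, f_{c}\rangle = \delta_{a+b,c} = \delta_{a+b+c,0} = \langle f_{a}, f_{b}f_{c}\rangle$. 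The map $(x, r) \mapsto r f_{0} + \sum_{a\neq 0}x_{a}f_{a}$ is an isometry of $(\hat{\alg}, \hat{h})$ onto $G$, and writing $y = r f_{0} + x$ one computes
\begin{align}
6\,\hat{P}_{\B}(x,r) = \langle y\cdot y, y\rangle = \sum_{a+b+c=0}y_{a}y_{b}y_{c}.
\end{align}
Grouping the triples $(a,b,c)$ with $a+b+c=0$ according to how many of $a,b,c$ are zero (none, exactly one, or all three — exactly two being impossible over $\ffie_{2}$) produces, respectively, the summand $6P_{\B}(x)$ (each line contributing its monomial with multiplicity $6$), the summand $3r|x|^{2}$ (the three "one-zero" families each giving $r|x|^{2}$), and the summand $r^{3}$; their sum is $r^{3} + 3r|x|^{2} + 6P_{\B}(x)$, confirming the claim.

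Since $G$ is a commutative metrized algebra, the multiplication that $\hat{P}_{\B}$ determines through its Hessian (as in the correspondence recalled in Section \ref{associativitysection}) is precisely the product of $G$, which is associative; hence $\ass(\hat{P}_{\B}) = 0$, so $\hat{P}_{\B}$ is associative. By Corollary \ref{confassextcorollary}, $P_{\B}$ is conformally associative, and by Theorem \ref{ealgpolynomialtheorem} the unique $CO(h)$-orbit of conformally associative solutions of \eqref{einsteinpolynomials} in dimension $n$ is $\orb{P_{n}}$; therefore $P_{\B} \in \orb{P_{n}}$, i.e. $P_{\B}$ is conformally equivalent to $P_{n}$. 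I expect the only genuine work to be the bookkeeping in the displayed sum — the multiplicity count for the "no-zero" triples and the assembly of the "one-zero" triples into $3r|x|^{2}$ — together with the (standard but worth a sentence) fact that the Hessian-multiplication attached to the cubic polynomial of a commutative metrized algebra recovers that algebra's product. A more computational alternative, bypassing Corollary \ref{confassextcorollary}, is to verify the conformal-associativity identity $\ass(P_{\B})_{ijkl} = \tfrac{2\ka}{n-1}h_{k[i}h_{j]l}$ directly: writing the indices as nonzero vectors of $\ffie_{2}^{k+1}$, both terms of $\ass(P_{\B})_{ijkl}$ are supported on the locus $l+i = j+k$, and over $\ffie_{2}$ this forces $l+j = i+k$ as well, so the two terms cancel except in the degenerate configurations $i=k,\ j=l$ or $i=l,\ j=k$, where the left-hand side evaluates to $\delta_{ik}\delta_{jl} - \delta_{il}\delta_{jk}$, which (using $\ka = n-1$) is exactly the right-hand side.
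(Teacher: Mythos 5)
Your proposal is correct and the high-level strategy (show conformal associativity, invoke uniqueness) is the same as the paper's, but your main argument differs in the key technical step. The paper verifies $\cass(P_{\B}) = 0$ by evaluating $\ass(P_{\B})$ on quadruples of basis vectors case by case (four distinct indices, two equal, etc.), using that two points of $\proj^{k}(\ffie_{2})$ determine a unique line; it then cites Theorem \ref{confassequivalencetheorem}. You instead observe that the affine extension $\hat{P}_{\B}$ is exactly the cubic polynomial of the group algebra $\rea[\ffie_{2}^{k+1}]$ with its permutation metric, which is a commutative \emph{associative} metrized algebra, so $\ass(\hat{P}_{\B}) = 0$ with no computation beyond the bookkeeping identity
\begin{align}
\sum_{a+b+c=0} y_{a}y_{b}y_{c} = r^{3} + 3r|x|^{2} + 6P_{\B}(x),
\end{align}
and then apply Corollary \ref{confassextcorollary} (with $\ka = n-1$, so $\sqrt{\ka/(n-1)} = 1$) to descend. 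This is a genuinely cleaner route: it replaces the case analysis with an algebra identification and makes structurally transparent why the line system of $\proj^{k}(\ffie_{2})$ should be conformally associative. One small point worth a sentence if this were to be written up: the step ``the Hessian multiplication attached to the cubic polynomial of a commutative metrized algebra is the algebra's own product'' requires the trace form to be invariant, which you do check ($\langle f_{a}f_{b}, f_{c}\rangle = \delta_{a+b+c,0}$ is symmetric). Your sketched ``computational alternative'' at the end is essentially the paper's argument in compressed form, and your identity $\ass(P_{\B})_{ijkl} = \delta_{ik}\delta_{jl} - \delta_{il}\delta_{jk}$ agrees with the paper's $\ass(P)(x,y,z,w) = h(x,z)h(y,w) - h(x,w)h(y,z)$ on basis vectors, as it should.
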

\begin{proof}
It is shown that $P_{\B}$ is conformally associative. The claim then follows from the uniqueness claim in Theorem \ref{confassequivalencetheorem}. 

Consider the vector space $\alg$ with basis $\{e_{i}: i \in \proj^{k}(\ffie_{2})\}$. Let $c_{ijk}$ be $1$ or $0$ as $\{ijk\}$ is or is not a block of $\B$. Equivalently $c_{ijk}$ equals $1$ if $i+j+k = 0$ in $\ffie_{2}^{k+1}$, and $0$ otherwise. Since two points determine a unique line, if $i,j,k,l \in \proj^{k}(\ffie_{2})$ are pairwise distinct, then
\begin{align}
\ass(P)(e_{i}, e_{j}, e_{k}, e_{l}) = c_{jk\,j+k}c_{il\,i+l} - c_{ik\,i+k}c_{jl\,j+l}.
\end{align}
Either of the products on the right-hand side is nonzero if and only if the two blocks it contains intersect. Since $i+k = j+l$ if and only if $j+k = i+l$ (add $i+j$ to both sides of either) and $i+k = j$ if and only if $j+k = i$, either bot $c_{ik\,i+k}c_{jl\,j+l}$ and $c_{jk\,j+k}c_{il\,i+l}$ equal $1$ or both equal $0$. In either case, $A(F)(e_{i}, e_{j}, e_{k}, e_{l}) = 0 = h(e_{i}, e_{l})h(e_{j}, e_{k}) - h(e_{j}, e_{l})h(e_{i}, e_{k})$.
If $i, j, k \in \proj^{k}(\ffie_{2})$ are pairwise distinct, then
\begin{align}
\ass(P)(e_{i}, e_{j}, e_{k}, e_{i}) =  - c_{ik\,i+k}c_{ij\,i + j} = - 1 = -\left(h(e_{i}, e_{i})h(e_{j}, e_{k}) - h(e_{j}, e_{i})h(e_{i}, e_{k})\right).
\end{align}
If $i,j  \in \proj^{k}(\ffie_{2})$ are distinct, then
\begin{align}
\ass(P)(e_{i}, e_{j}, e_{i}, e_{j}) =  c_{ji\,j+i}c_{ij\,i+j} = 1 = -\left(h(e_{i}, e_{j})h(e_{j}, e_{i}) - h(e_{j}, e_{j})h(e_{i}, e_{i})\right).
\end{align}
The preceding shows that $\ass(P)(x, y, z, w) = -\left(h(x, w)h(y, z) - h(y,w )h(x, z)\right)$ for all basis vectors $x, y, z, w$, and hence for all $x, y, v, w \in \alg$. This suffices to show that $\cass(P) = 0$, so that $P$ is conformally associative. 
\end{proof}
\end{example}

\begin{example}\label{dualaffineplaneexample}
\captionsetup[subfigure]{labelformat = parens, labelfont=rm,textfont=normalfont,singlelinecheck=off,justification=raggedright}
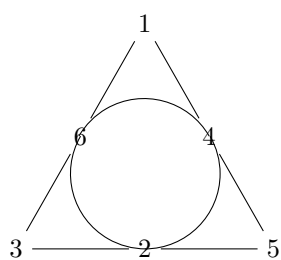
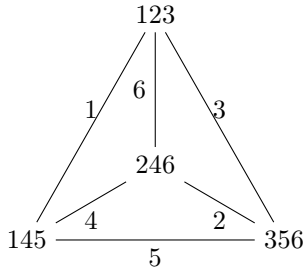
\begin{figure}[!ht]
\renewcommand{\thesubfigure}{\alph{subfigure}}
\begin{subfigure}{.45\textwidth}
\begin{tikzpicture}[baseline=(current  bounding  box.center), scale=.5,auto=left]
  \node (m1) at (0,4)  {$1$};
  \node (m2) at (-1.71, 1)  {$6$};
  \node (m3) at (-3.42,-2)  {$3$};
  \node (m4) at (1.71, 1)  {$4$};
 \node (m5) at (3.42, -2)  {$5$};
 \node (m6) at (0, -2)  {$2$};
\draw (m1) -- (m2);
\draw (m2) -- (m3);
\draw (m1) -- (m4);
\draw (m4) -- (m5) ;
\draw (m3) -- (m6);
\draw (m6) -- (m5) ;
\draw (m4) arc (30:150:2) (m2);
\draw (m2) arc (150:270:2) (m6);
\draw (m6) arc (-90:30:2) (m4);
\end{tikzpicture}  
\caption{Dual affine plane of order $2$}\label{dualaffineplane}
\end{subfigure}
\begin{subfigure}{.45\textwidth}
\begin{tikzpicture}[baseline=(current  bounding  box.center), scale=.5,auto=left]
\node (m1) at (0,4)  {123};
\node (m2) at (-3.42,-2)  {145};
\node (m3) at (3.42,-2)  {356};
\node (m4) at (0,0)  {246};
\draw (m1) -- (m2) node[midway, above] {$1$};
\draw (m2) -- (m3) node[midway, below]   {$5$};
\draw (m3) -- (m4) node[midway, below] {$2$};
\draw (m4) -- (m1) node[midway, left] {$6$};
\draw (m1) -- (m3) node[midway, above] {$3$};
\draw (m2) -- (m4) node[midway, below] {$4$};
\end{tikzpicture}  
\caption{Complete graph $K_{4}$}\label{K4graph}
\end{subfigure}
\caption{Partial Steiner triple system $\left\{123, 145, 246, 356\right\} \in PSTS(6)$}\label{psts6}
\end{figure}
The blocks of the partial Steiner triple system indicated in Figure \ref{psts6} are obtained from the dual affine plane of order $2$, as can be seen in Figure \ref{dualaffineplane}, or from 
the incidence relations of the edge set of the unique trivalent graph on four vertices, the complete graph $K_{4}$ on four vertices, as is indicated in Figure \ref{K4graph}. The corresponding cubic polynomial $P$ is that of \eqref{d2poly2}. 
The decomposition \eqref{d2poly2} together with Theorem \ref{3dpolytheorem} shows that $P$ is orthogonally decomposable, equivalent to a multiple of $P_{3}\oplus P_{3}$. By Theorem \ref{ealgpolynomialtheorem}, $P$ is not equivalent to $P_{6}$.
\end{example}

\begin{lemma}\label{stsextremelemma}
Let $P_{\B}$ be the cubic polynomial associated with a Steiner triple system $\B \in STS(n)$. The element $e = \tfrac{1}{\sqrt{n}}\sum_{i = 1}^{n}e_{i}$ is contained in $\extreme(P_{\B})$ and $\mkc(P_{\B}) =  \tfrac{n}{n-1}$.
\end{lemma}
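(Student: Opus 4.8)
The plan is to combine two things already established: the universal lower bound $\mkc(P)\ge \tfrac{n}{n-1}$ for harmonic $P$ (Lemma \ref{mkcboundlemma}) together with the identity $\mkc(P)=\ka/(\max_{|x|_h=1}6P(x))^{2}$ for solutions of \eqref{einsteinpolynomials} (equation \eqref{einsteinmkc} of Lemma \ref{mkclemma}), on the one hand, and an explicit evaluation of $P_{\B}$ at $e$ on the other. The explicit value will certify that the general lower bound is attained, which simultaneously forces $e$ into $\extreme(P_{\B})$ and pins down $\mkc(P_{\B})$.

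First I would record the relevant counting. In a Steiner triple system $\B\in STS(n)$ each fixed point $i$ is paired with the remaining $n-1$ points, every such pair lies in a unique block, and each block through $i$ uses up two of these pairs; hence $i$ lies on exactly $r=(n-1)/2$ blocks, so $\B$ is a regular partial Steiner triple system with replication number $r=(n-1)/2$, and $|\B|=nr/3=n(n-1)/6$. By Lemma \ref{pstslemma}, $P_{\B}$ then solves \eqref{einsteinpolynomials} with $\ka=2r=n-1$; in particular $P_{\B}$ is harmonic. Next, since $|e|_{h}^{2}=\tfrac1n\sum_{i=1}^{n}1=1$ and every monomial $x_{a}x_{b}x_{c}$ takes the value $n^{-3/2}$ at $e$, one gets $P_{\B}(e)=|\B|\,n^{-3/2}=\tfrac{n-1}{6\sqrt n}$, i.e. $6P_{\B}(e)=\tfrac{n-1}{\sqrt n}$.

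Now the two assertions of the lemma fall out together. By \eqref{einsteinmkc} of Lemma \ref{mkclemma}, $\mkc(P_{\B})=(n-1)\big/\big(\max_{|x|_{h}=1}6P_{\B}(x)\big)^{2}$, and since $P_{\B}$ is harmonic, \eqref{mkclowerbound} of Lemma \ref{mkcboundlemma} gives $\mkc(P_{\B})\ge \tfrac{n}{n-1}$; rearranging yields $\max_{|x|_{h}=1}6P_{\B}(x)\le \tfrac{n-1}{\sqrt n}$. Because $|e|_{h}=1$ and $6P_{\B}(e)=\tfrac{n-1}{\sqrt n}$, this maximum is attained at $e$, so $e\in\extreme(P_{\B})$ and the inequality is an equality; feeding $\max_{|x|_{h}=1}6P_{\B}(x)=\tfrac{n-1}{\sqrt n}$ back into \eqref{einsteinmkc} gives $\mkc(P_{\B})=(n-1)\big/\big((n-1)^{2}/n\big)=\tfrac{n}{n-1}$.

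The whole argument is bookkeeping once the combinatorial identities $r=(n-1)/2$ and $|\B|=n(n-1)/6$ are in hand; the one conceptual step is the passage from $\mkc(P_{\B})\ge\tfrac{n}{n-1}$ to equality, which is precisely where the explicit computation $6P_{\B}(e)=\tfrac{n-1}{\sqrt n}$ is used to show the universal bound of Lemma \ref{mkcboundlemma} is saturated. I do not anticipate a genuine obstacle; the only real risk is a normalization slip in the numerical constants (the factor $6$ in $6P$, and the power $n^{3/2}$), so those should be written out carefully.
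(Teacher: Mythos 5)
Your proof is correct and follows essentially the same strategy as the paper's: compute $6P_{\B}(e)=\tfrac{n-1}{\sqrt n}$, note $\ka=n-1$, and squeeze $\mkc(P_{\B})$ between the upper bound from \eqref{einsteinmkc} and the universal lower bound of Lemma \ref{mkcboundlemma}. The only (cosmetic) variation is that you evaluate $P_{\B}(e)$ directly from the block count $|\B|=n(n-1)/6$, whereas the paper obtains the same value by computing the Hessian contraction $(\hess P(e))(e,e)$.
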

\begin{proof}
For $i \neq j \in \setn$ let $i\join j \in \setn$ be the unique element such that $\{i, j, i\join j\} \in \B$. Write $P = P_{\B}$. By the definition of $P$, $(\hess P(e_{k})(e_{i}, e_{j})$ equals $1$ if $k = i \join j$ and $0$ otherwise. Hence 
\begin{align}
&(\hess P(e))(e_{i}, \dum) = h(e - \tfrac{1}{\sqrt{n}}e_{i}, \dum), & &(\hess P(e))(e, \dum) = \tfrac{n-1}{\sqrt{n}}h(e, \dum).
\end{align}
In particular $6P(e) = (\hess P(e))(e, e) = \tfrac{n-1}{\sqrt{n}}$. Because $\B$ is Steiner, its replication number $r$ satisfies $n -1 = 2r$.  By Lemma \ref{pstslemma}, $P$ solves \eqref{einsteinpolynomials} with constant $\ka = 2r = n-1$, so by \eqref{einsteinmkc} of Lemma \ref{mkclemma},
\begin{align}
\mkc(P) = \tfrac{\ka}{\left(\max_{x \in \sphere_{h}(1)}6P(x)\right)^{2}} \leq \tfrac{n-1}{36P(e)^{2}} = \tfrac{n}{n-1}.
\end{align}
Since by Lemma \ref{mkcboundlemma}, $\mkc(P) \geq \tfrac{n}{n-1}$, there must hold $\mkc(P) = \tfrac{n}{n-1}$, and moreover it must be the case that $e \in \extreme(P)$. 
\end{proof}

\begin{theorem}\label{ststheorem}
Let $P_{\B}$ be the cubic polynomial associated with a Steiner triple system $\B \in STS(n)$. If $n$ equals $1$ or $9$ modulo $12$ then $P_{\B}$ is not conformally linearly equivalent to the simplicial polynomial $P_{n}$, so is not conformally associative.
\end{theorem}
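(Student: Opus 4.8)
The plan is to invoke Theorem~\ref{ealgpolynomialtheorem}, according to which $\orb{P_n}$ is the \emph{unique} $CO(h)$-orbit of conformally associative solutions of \eqref{einsteinpolynomials}. Hence $P_{\B}$ is $CO(h)$-equivalent to $P_n$ if and only if $P_{\B}$ is conformally associative, and everything reduces to showing that $P_{\B}$ is not conformally associative when $n\equiv 1$ or $9\pmod{12}$.

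Step one is to rewrite conformal associativity of $P_{\B}$ combinatorially. By Lemma~\ref{pstslemma}, $P_{\B}$ solves \eqref{einsteinpolynomials} with $\ka=2r=n-1$, since in a Steiner triple system $2r=n-1$. By Example~\ref{epcassexample} this gives $\cass(P_{\B})_{ijkl}=2(P_{\B})_{l[i}{}^{p}(P_{\B})_{j]kp}-\tfrac{2\ka}{n-1}h_{k[i}h_{j]l}$, so conformal associativity is equivalent to $\ass(P_{\B})_{ijkl}=2h_{k[i}h_{j]l}$ for all $i,j,k,l$. In the orthonormal basis $e_1,\dots,e_n$ one has $(P_{\B})_{ijk}=c_{ijk}$, the indicator of $\{i,j,k\}\in\B$; and for $i\neq l$ the sum $\sum_p c_{lip}c_{jkp}$ is supported on the single index $p=i\join l$, the third point of the block through $i$ and $l$. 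A brief inspection shows the identity $\ass(P_{\B})_{ijkl}=2h_{k[i}h_{j]l}$ holds automatically when the indices are not pairwise distinct, so conformal associativity is equivalent to
\begin{align*}
(\star)\qquad i\join l=j\join k\ \Longleftrightarrow\ i\join k=j\join l,\qquad\text{for all pairwise distinct }i,j,k,l.
\end{align*}

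Step two is to deduce from $(\star)$ that $\B$ is a projective Steiner triple system. Set $\tilde V=\setn\sqcup\{0\}$ and define $x+y=x\join y$ for distinct $x,y\in\setn$, $x+x=0$, and $0+x=x+0=x$, $0+0=0$. This operation is commutative, has identity $0$, and is $2$-torsion; associativity $(a+b)+c=a+(b+c)$ is the only nontrivial point. When two of $a,b,c$ agree, or when $\{a,b,c\}$ is a block, this follows from the Steiner axioms alone; in the remaining case set $p=a\join b$, $q=b\join c$, check that $a,b,c,p,q$ are five distinct points, and apply $(\star)$ to the four distinct points $\{p,c,a,q\}$: since $p\join a=b=c\join q$ one gets $p\join c=a\join q$, which is exactly $(a+b)+c=a+(b+c)$. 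Thus $(\tilde V,+)\cong(\ztwo)^{m}$ for some $m\geq 2$, so $n+1=2^m$.

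Finally, for $m\geq 2$ one has $2^m\equiv 4\pmod{12}$ for $m$ even and $2^m\equiv 8\pmod{12}$ for $m$ odd, so $n=2^m-1\equiv 3$ or $7\pmod{12}$. Therefore, if $n\equiv 1$ or $9\pmod{12}$ the system $\B$ cannot satisfy $(\star)$, $P_{\B}$ is not conformally associative, and so, by Theorem~\ref{ealgpolynomialtheorem}, $P_{\B}$ is not $CO(h)$-equivalent to $P_n$. I expect the main obstacle to be Step two: organizing the degenerate index cases in the passage to $(\star)$ and, above all, verifying associativity of $(\tilde V,+)$ --- in particular certifying that the four points to which $(\star)$ is applied are genuinely distinct.
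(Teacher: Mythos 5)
Your proof is correct and takes a genuinely different route from the paper's. The paper proves the theorem by a numerical invariant: it observes that $f = \tfrac{1}{\sqrt{3}}(e_i + e_j + e_{i\join j})$ generates a critical line of $P_\B$ with $6P_\B(f) = \tfrac{2}{\sqrt{3}}$, compares this with the catalogue of critical values $(n+1-2k)\sqrt{n/(k(n+1-k))}$ for $P_n$ computed in Corollary~\ref{ndimexistencecorollary}, and deduces that $\tfrac{1}{\sqrt{n}}P_n$ can have a critical line with that value only if $4k = n+1$ or $4k = 3(n+1)$, hence only if $4 \mid (n+1)$; this fails for $n \equiv 1, 9 \pmod{12}$. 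You instead translate $\cass(P_\B) = 0$ into the purely combinatorial identity $(\star)$ on the quasigroup operation $\join$, show that $(\star)$ is precisely associativity of the natural $2$-torsion commutative operation on $\setn \sqcup \{0\}$ (the argument that $a,b,c,p,q$ are five distinct points when $\{a,b,c\}\notin\B$, which you flagged as a worry, does go through by the Steiner axioms), and conclude that conformal associativity forces $n+1$ to be a power of $2$. Both arguments pass through Theorem~\ref{ealgpolynomialtheorem} to convert a conformal-associativity statement into an orbit statement, but yours is an honest converse of Lemma~\ref{ffielemma}: it characterizes exactly which $P_\B$ are conformally associative (those where $\B$ is the lines of $\proj^{m-1}(\ffie_2)$), a strictly stronger necessary condition ($n+1$ a power of two) than the paper's ($4\mid n+1$). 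The paper's route is shorter because Corollary~\ref{ndimexistencecorollary} already did the work of listing critical values, while yours is more self-contained and more conceptually illuminating about the structure of $\B$.
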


\begin{proof}
For $i \neq j \in \setn$, let $f = \tfrac{1}{\sqrt{3}}(e_{i}  + e_{j} + e_{i\join j})$. The $2dP(f) = (\hess P(f))(f, \dum) = \tfrac{2}{\sqrt{3}}h(f, \dum)$, so $f$ generates a critical line of $P$ and $6P(f) = (\hess P(f))(f, f) = \tfrac{2}{\sqrt{3}}$. 

By Corollary \ref{ndimexistencecorollary} the possible values of $6P_{n}(v)$ for unit norm $v$ generating a critical line of $P_{n}$ are of the form $(n+1-2k)\sqrt{\tfrac{n}{k(n+1-k)}}$ for $1\leq k \leq n$. Note that $P_{n}$ solves \eqref{einsteinpolynomials} with parameter $n(n-1)$, so $\tfrac{1}{\sqrt{n}}P_{n}$ solves \eqref{einsteinpolynomials} with parameter $n-1$ as does $P_{\B}$. Were $P_{\B}$ equivalent to $\tfrac{1}{\sqrt{n}}P_{n}$, then there would be $1 \leq k \leq n$ so that $\tfrac{2}{\sqrt{3}} = \tfrac{n+1-2k}{\sqrt{k(n+1-k)}}$. Squaring this yields $\tfrac{3(n-1)}{4} = \tfrac{k(n+1 -k)(n-1)}{(n+1 - 2k)^{2}}$, which holds if and only if $0 = (4k - 3(n+1))(4k - (n+1))$, so for $P_{\B}$ to be equivalent to $P_{n}$ a necessary condition is that $4$ divide $n+1$. The order $n$ of a Steiner triple system equals $1$ or $3$ modulo $6$. If $n$ equals $1$ or $9$ modulo $12$, then $(n+1)/2$ equals $1$ or $5$ modulo $6$, and in neither case is even, so it cannot be that $4$ divides $n+1$. (On the other hand, if $n$ equals $3$ or $7$ modulo $12$, then $n+1$ equals $4$ or $8$ modulo $12$, and $n+1$ is divisible by $4$.)
\end{proof}

\begin{example}
Fill a $3 \times 3$ grid with the numbers from $\bar{\mathsf{9}}$ and tile the plane with this grid. In each row, column, and diagonal there are exactly three distinct integers, and these are the $12$ blocks of \eqref{9124}. This is illustrated in Figure \ref{3affineplane}. There results the Steiner triple system $\B$ having format $(9, 12, 4)$ with blocks
\begin{align}\label{9124}
\B = \left\{012, 345, 678, 036, 147, 258, 057, 138, 246, 048, 156, 237 \right\}.
\end{align}
This is the \textit{affine plane of order $3$}.

By Theorem \ref{ststheorem} the cubic polynomial $P_{\B}$ associated with the Steiner triple system \eqref{9124} underlying the affine plane of order $3$ is not conformally equivalent to the simplicial polynomial $P_{9}$, so is not conformally associative.

\begin{figure}[!ht]
\begin{tikzpicture}[baseline=(current  bounding  box.center), scale=.5,auto=left]
\node[teal] (m0) at (0,0)  {$0$};
\node[teal] (m1) at (4,0)  {$1$};
\node[teal] (m2) at (8,0)  {$2$};
\node[teal] (m3) at (0,-4)  {$3$};
\node[teal] (m4) at (4, -4)  {$4$};
\node[teal] (m5) at (8, -4)  {$5$};
\node[teal] (m6) at (0, -8)  {$6$};
\node[teal] (m7) at (4, -8)  {$7$};
\node[teal] (m8) at (8, -8)  {$8$};
\node (m1l) at (-8,0)  {$1$};
\node (m4l) at (-8, -4)  {$4$};
\node (m7l) at (-8, -8)  {$7$};
\node (m2l) at (-4,0)  {$2$};
\node (m5l) at (-4, -4)  {$5$};
\node (m8l) at (-4, -8)  {$8$};
\draw[orange] (m0) -- (m1);
\draw[orange] (m1) -- (m2);
\draw[orange] (m3) -- (m4);
\draw[orange] (m4) -- (m5);
\draw[orange] (m6) -- (m7);
\draw[orange] (m7) -- (m8);
\draw[orange] (m0) -- (m3);
\draw[orange] (m1) -- (m4);
\draw[orange] (m2) -- (m5);
\draw[orange] (m6) -- (m3);
\draw[orange] (m7) -- (m4);
\draw[orange] (m8) -- (m5);
\draw[orange] (m0) -- (m4);
\draw[dashed] (m1) -- (m5);
\draw[orange] (m3) -- (m7);
\draw[orange] (m4) -- (m8);
\draw[orange] (m2) -- (m4);
\draw[orange] (m1) -- (m3);
\draw[dashed] (m5) -- (m7);
\draw[orange] (m4) -- (m6);
\draw[orange] (m0) -- (m5l);
\draw[orange] (m3) -- (m8l);
\draw[orange] (m3) -- (m2l);
\draw[orange] (m6) -- (m5l);
\draw[orange] (m5l) -- (m7l);
\draw[dashed] (m4l) -- (m2l);
\draw[orange] (m1l) -- (m5l);
\draw[dashed] (m4l) -- (m8l);
\draw[dashed] (m1l) -- (m2l);
\draw[dashed] (m2l) -- (m0);
\draw[dashed] (m4l) -- (m5l);
\draw[dashed] (m5l) -- (m3);
\draw[dashed] (m7l) -- (m8l);
\draw[dashed] (m8l) -- (m6);
\draw[dashed] (m1l) -- (m4l);
\draw[dashed] (m4l) -- (m7l);
\draw[dashed] (m2l) -- (m5l);
\draw[dashed] (m5l) -- (m8l);
\end{tikzpicture}  
\caption{Affine plane of order $3$}\label{3affineplane}
\end{figure}
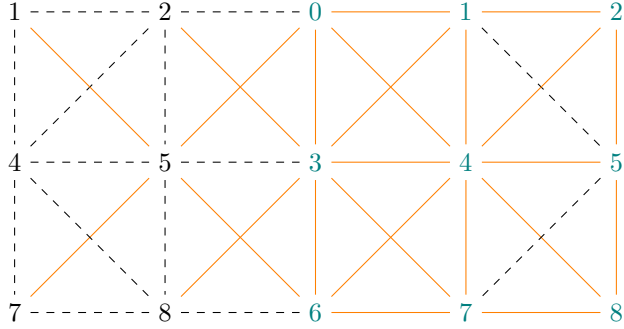
\end{example}

\begin{example}\label{pfaffexample}
For the Euclidean inner product $h(X, X) = -\tfrac{1}{2}\tr X^{2}$ on the space of $6 \times 6$ skew-symmetric matrices, the Pfaffian $P(X) = \pfaff X$ of $X \in \mathfrak{so}(6, \rea)$ is $h$-harmonic and solves $|\hess P|^{2}_{h} = 6|x|^{2}_{h}$. This can be justified as follows. Let $\alg$ be a $6$-dimensional real vector space equipped with a Riemannian metric $h$. Let $\{e_{1}, \dots, e_{6}\}$ be an ordered $h$-orthonormal basis. Let $\mu \in \ext^{6}\alg^{\ast}$ be the volume form equal to $1$ when evaluated on $\{e_{1}, \dots, e_{6}\}$. Let $e_{ij} = e_{i}\wedge e_{j} = e_{i}\tensor e_{j} - e_{j}\tensor e_{i}$. Let $h$ be the norm on $\ext^{2}\alg$ induced by declaring that $e_{ij}$ have unit $h$-norm (this is half of the norm given by complete contraction with $h$). Given $\al, \be \in \ext^{2}\alg$ define $\al \mlt \be \in \ext^{2}\alg$ to be the unique element such that
\begin{align}\label{pfaffmultdefined}
h(\al \mlt \be, \ga)\mu = \al \wedge \be \wedge \ga
\end{align}
for all $\ga \in \ext^{2}\alg$. From \eqref{pfaffmultdefined} it is immediate that $\mlt$ is commutative and $h(\al \mlt \be, \ga)$ is completely symmetric in $\al$, $\be$, and $\ga$. Let $P(\al) = \tfrac{1}{6}h(\al \mlt \al,\al)\in \pol^{3}(\ext^{2}\alg)$ be the associated cubic polynomial. The Pfaffian of $\al \in \ext^{2}\alg$ is defined by $6(\pfaff \al)\mu = \al \wedge \al \wedge \al$, and by \eqref{pfaffmultdefined}, this equals $h(\al \mlt \al, \al)\mu = 6P(\al)\mu$, so $P(\al) = \pfaff(\al)$. If $\al = \sum_{i < j}x_{ij}e_{ij}$, then
\begin{align}\label{pfaff66}
\begin{split}
\pfaff \al & = x_{12}x_{34}x_{56} + x_{12}x_{36}x_{45} - x_{12}x_{35}x_{46} - x_{13}x_{24}x_{56} + x_{13}x_{25}x_{46} \\
&\quad  - x_{13}x_{26}x_{45} + x_{14}x_{23}x_{56} - x_{14}x_{25}x_{36} + x_{14}x_{26}x_{35}- x_{15}x_{23}x_{46} \\
&\quad  + x_{15}x_{24}x_{36} - x_{15}x_{26}x_{34} + x_{16}x_{23}x_{45} - x_{16}x_{24}x_{35}+ x_{16}x_{25}x_{34}  .
\end{split}
\end{align}
If $x_{ij}$ are viewed as the cooordinates of a skew-symmetric matrix $X$ with respect to the basis $\{E_{ij} - E_{ji}: i < j\}$, where $E_{ij}$ is the elementary matrix with $1$ in row $i$ and column $j$ and $0$ elsewhere, then $h(\al, \al) = -\tfrac{1}{2}\tr X^{2} = \sum_{i < j}x_{ij}^{2}$ and $\pfaff \al = \pfaff X$. From \eqref{pfaff66} it is evident that $P$ is harmonic. That it satisfies \eqref{einsteinpolynomials} can be deduced by observing that is associated to a regular partial Steiner triple system, as follows.

This example has the form \eqref{partialsteinerpolynomial} for the regular partial Steiner triple system of format $(15, 15, 3)$, where $\bar{15}$ is identified with the two element subsets of $\{1, 2, 3, 4, 5, 6\}$, and the blocks of $\B$ are the $15$ possible partitions $\{(i_{1},j_{1}), (i_{2},j_{2}), (i_{3},j_{3})\}$ of $\{1, 2, 3, 4, 5, 6\}$ such that $i_{1} < i_{2} < i_{3}$ and $i_{p} < j_{p}$ for $p = 1, 2, 3$. Each such partition $I$ corresponds to a unique permutation of $\{1, 2, 3, 4, 5, 6\}$, and the coefficient $\ep_{I}$ is the sign of this permutation. 

This polynomial $P_{\B, \ep}$ equals \eqref{pfaff66}. It is the relative invariant of a real form of the reduced irreducible prehomogeneous vector space appearing as $(3)$ in table I of the classification of Sato-Kimura, \cite{Sato-Kimura}, namely for $GL(6, \rea)$ acting on skew-symmetric endomorphisms of the $3$-dimensional vector space $\alg$. Consequently, its automorphism group is large, containing $SL(6, \rea)$. The automorphisms that also preserve $h$ therefore contain $SO(h) = SO(6, \rea)$. This is apparent from the desciption above, for if $g \in O(h)$, then
\begin{align}
\begin{split}
h(g \cdot \al \mlt g \cdot \be, g \cdot \ga)\mu &= g\cdot \al \wedge g \cdot be \wedge g \cdot \ga = g\cdot (\al \wedge \be \wedge \ga) = h(\al\mlt \be, \ga)g \cdot \mu\\
&\quad  = \det(g)h(\al \mlt \be, \ga)\mu =\det(g) \al \wedge \be \wedge \ga\\
&\quad = \det(g)h(\al \mlt \be, \ga)\mu = \det(g)h(g\cdot (\al \mlt \be), g\cdot \ga)\mu,
\end{split}
\end{align}
shows that $g$ is an automorphism of $(\ext^{2}\alg, \mlt, g)$ if $g \in SO(h)$.
\end{example}

\begin{example}
Example \ref{pfaffexample} admits the following generalization. Let $\alg$ be a $6n$-dimensional real vector space equipped with a Riemannian metric $h$. Let $\{e_{1}, \dots, e_{6n}\}$ be an ordered $h$-orthonormal basis. Let $\mu \in \ext^{6n}\alg^{\ast}$ be the volume form equal to $1$ when evaluated on $\{e_{1}, \dots, e_{6n}\}$. For $I = \{i_{1} <  \dots < i_{n}\} \subset \overline{6n}$, let $e_{I} = e_{i_{1}}\wedge \dots \wedge e_{i_{n}}$. Let $h$ be the norm on $\ext^{2n}\alg$ induced by declaring that $e_{I}$ have unit $h$-norm (this is $(2n!)^{-1}$ times the norm given by complete contraction with $h$). Given $\al, \be \in \ext^{2n}\alg$ define $\al \mlt \be \in \ext^{2n}\alg$ to be the unique element such that
\begin{align}\label{pfaff2nmultdefined}
h(\al \mlt \be, \ga)\mu = \al \wedge \be \wedge \ga.
\end{align}
for all $\ga \in \ext^{2n}\alg$. From \eqref{pfaff2nmultdefined} it is immediate that $\mlt$ is commutative and $h(\al \mlt \be, \ga)$ is completely symmetric in $\al$, $\be$, and $\ga$. Let $P(\al) = \tfrac{1}{6}h(\al \mlt \al,\al) \in \pol^{3}(\ext^{2n}\alg)$ be the associated cubic polynomial. 

Let $\I$ be the set of cardinality $2n$ increasing subsets of $\overline{6n}$.
If $I\subset \I$, then $h(\al \mlt e_{I}, e_{I})\mu =\al \wedge e_{I} \wedge e_{I} = 0$, so $\lap_{h}P(\al) = \sum_{I}h(\al \mlt e_{I}, e_{I}) =0$, showing that $P$ is $h$-harmonic. If $I, J \subset \in \I$, differentiating $6P(\al) = h(\al \mlt \al, \al)$ yields $(\hess P(\al))(e_{I}, e_{J}) = h(\al \mlt e_{I}, e_{J})\mu = \al \wedge e_{I} \wedge e_{J}$. If $I \cap J$ is nonempty, then $e_{I} \wedge e_{J} =0$. If $I \cap J = \emptyset$, then there is a unique complementary increasing cardinality $2n$ subset $(I \cup J)^{c} = \overline{6n} \setminus (I \cup J) \in \I$. Hence, writing $\al_{K} = h(\al, e_{K})$ for $K \in \I$, 
\begin{align}
\begin{split}
|\hess P|^{2}_{h}(\al) &= \sum_{I, J \in \I}(\hess P(\al))(e_{I}, e_{J})^{2}= \sum_{I, J \in \I}h(\al \mlt e_{I}, e_{J})^{2} \\
&= \sum_{I, J \in \I: I \cap J = \emptyset}\al_{(I \cup J)^{c}}^{2} = \binom{2n}{n}\sum_{K\in I}\al_{K}^{2} =  \binom{2n}{n}|\al|_{h}^{2},
\end{split}
\end{align}
where the coefficient $\binom{2n}{n}$ appears as the number of ways of choosing disjoint $I, J \in \I$ such that $(I \cup J)^{c} = K$. This shows that $P$ and $h$ solve \eqref{einsteinpolynomials} on $\ext^{2n}\alg$ with $\ka = \binom{2n}{n}$.
\end{example}

\section{Cubic polynomial associated with a centered two-distance tight frame}\label{framesection}
Let $(\alg, h)$ be a Euclidean vector space. A collection $\frame$ of vectors in $(\alg, h)$ is a \emph{frame} if there are $A, B > 0$ such that
\begin{align}
&A|x|^{2}_{h} \leq \sum_{v \in \frame}h(x, v)^{2}  \leq B |x|^{2}_{h}, & &\text{for all}\,\, x \in \alg.
\end{align}
Since $\alg$ has finite dimension any spanning set is a frame with constants $A$ and $B$ equal to the minimum and maximum values of the sum $\sum_{v \in \frame}h(x, v)^{2}$ restricted to the $h$-unit sphere.
A frame is \emph{unit norm} if all its vectors have norm $1$. A frame $\frame$ is \emph{tight} if there is $M > 0$ such that
\begin{align}\label{tight1}
&\sum_{v \in \frame}h(x, v)^{2}  = M |x|^{2}, & &\text{for all}\,\,  x \in \alg.
\end{align}
The identity \eqref{tight1} is equivalent to $\sum_{v \in \frame}h(x, v)v  = Mx$ for all $x \in \alg$, for \eqref{tight1} can be rewritten as $Mh_{ij} = \sum_{v \in \frame}v_{i}v_{j}$, and raising the index $j$ yields $M\delta_{i}\,^{j} = \sum_{v \in \frame}v_{i}v^{j}$. Tracing $M\delta_{i}\,^{j} = \sum_{v \in \frame}v_{i}v^{j}$ shows that the frame constant of a tight frame satisfies
\begin{align}\label{frameconstant}
M= \tfrac{1}{n}\sum_{v \in \frame}|v|^{2}.
\end{align}
Such a frame is said to be \textit{$M$-tight}.
From \eqref{frameconstant} it follows that the frame constant of a cardinality $m$ unit norm tight frame is $M = \tfrac{m}{n}$.

A frame $\frame$ is \emph{centered} if its \textit{centroid} $\centroid = \centroid(\frame) = \tfrac{1}{m}\sum_{v \in \frame}v$ satisfies $\centroid = 0$. This terminology was introduced  in \cite{Fickus-Jasper-Mixon-Peterson-Watson} in the context of equiangular tight frames, but it makes sense for any finite frame.

A frame is \emph{two-distance} if there is $ \{c_{1}, c_{2}\} \subset \rea$ such that $h(u, v) \in \{c_{1}, c_{2}\}$ for all $u, v \in \frame$. A two-distance frame is \emph{equiangular} if $c_{1} = c = -c_{2}$. In this case, there are in fact two angles possible. The slightly misleading terminology \emph{equiangular} is well established; it refers to the fact that the angles between the lines generated by vectors of an equiangular frame are indeed all equal.

For a cardinality $m$ equiangular tight frame in a vector space of dimension $n \geq 2$ the equiangularity constant $c$ satisfies $c^{2} = \frac{m-n}{n(m-1)}$, for, if $u \in \frame$, then $1 + (m-1)c^{2} = \sum_{v \in \frame}h(u, v)^{2} = \tfrac{m}{n}|u|^{2} = \tfrac{m}{n}$. 

Lemma \ref{gorbitlemma} shows that the orbit of a nontrivial unit vector under the irreducible action of a finite group is a centered unit norm tight frame. 
\begin{lemma}\label{gorbitlemma}
Let $\rho:G \to \eno(\alg)$ be an irreducible $n$-dimensional representation of the finite group $G$ and let $h$ be a $\rho(G)$-invariant Euclidean metric. Then the orbit $\frame = \{\rho(g)v: g \in G\}$ of any $h$-unit norm $v \in \alg$ is a centered unit norm tight frame with frame constant $\tfrac{1}{n}\tfrac{|G|}{|G_{v}|}$ where $G_{v} = \{g \in G: \rho(g)v = v\}$ is the stabilizer of $v$. 
\end{lemma}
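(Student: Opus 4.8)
The plan is to read off tightness from the frame operator via Schur's lemma, and centeredness from the fact that the sum of the orbit is a $\rho(G)$-fixed vector. First, since $h$ is $\rho(G)$-invariant, $|\rho(g)v|_{h} = |v|_{h} = 1$ for every $g \in G$, so $\frame$ consists of unit vectors, and by the orbit--stabilizer theorem $\frame$ (viewed as a set) has cardinality $m = |G|/|G_{v}|$. Introduce the frame operator $T \in \eno(\alg)$ defined by $Tx = \sum_{w \in \frame} h(x,w)\,w$. It is $h$-self-adjoint, since $h(Tx,y) = \sum_{w \in \frame} h(x,w)h(w,y) = h(x,Ty)$, and it commutes with $\rho(G)$: for $g_{0} \in G$, invariance of $h$ gives $h(\rho(g_{0})x, w) = h(x, \rho(g_{0})^{-1}w)$, and as $w$ ranges over the orbit $\frame$ so does $\rho(g_{0})^{-1}w$, whence $T\rho(g_{0})x = \sum_{w \in \frame} h(x, \rho(g_{0})^{-1}w)\,w = \sum_{w' \in \frame} h(x, w')\,\rho(g_{0})w' = \rho(g_{0})Tx$.

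Next I would show that $T$ is a positive scalar multiple of the identity. Being $h$-self-adjoint, $T$ is diagonalizable over $\rea$, and each of its eigenspaces is $\rho(G)$-invariant since $T$ commutes with $\rho(G)$; by irreducibility each such eigenspace is $\{0\}$ or $\alg$. Because $v = \rho(e)v \in \frame$, one has $h(Tv,v) = \sum_{w \in \frame} h(v,w)^{2} \geq h(v,v)^{2} = 1 > 0$, so $T$ has a positive eigenvalue, whose eigenspace must be all of $\alg$; hence $T = M\,\Id$ with $M > 0$. This is precisely the identity $\sum_{w \in \frame} h(x,w)\,w = Mx$ expressing that $\frame$ is $M$-tight, that is, \eqref{tight1}. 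Taking traces, $nM = \tr T = \sum_{w \in \frame}|w|_{h}^{2} = m = |G|/|G_{v}|$, so $M = \tfrac{1}{n}\tfrac{|G|}{|G_{v}|}$; this also follows from \eqref{frameconstant}.

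Finally, for centeredness, set $s = \sum_{w \in \frame} w$. For every $g_{0} \in G$ one has $\rho(g_{0})s = \sum_{w \in \frame}\rho(g_{0})w = s$ because $\rho(g_{0})$ permutes the orbit, so $s$ lies in the subspace of $\rho(G)$-fixed vectors; this subspace is $\rho(G)$-invariant, hence equals $\{0\}$ or $\alg$, and it cannot be $\alg$ since $\rho$ is a nontrivial irreducible representation (for $n \geq 2$ an irreducible module is automatically nontrivial, an $n$-dimensional trivial module being reducible). Therefore $s = 0$ and the centroid $\centroid(\frame) = \tfrac1m s$ vanishes, so $\frame$ is centered.

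There is no genuinely hard step here; the one point needing a little care is that Schur's lemma over $\rea$ does not by itself force an intertwiner to be a scalar, since the commutant of an irreducible real representation may be $\com$ or $\quat$ rather than $\rea$. This is circumvented above by exploiting the $h$-self-adjointness of $T$ and arguing through its (necessarily real) eigenspaces. One should also keep in mind that $\frame$ is treated as a set, so its cardinality is $[G:G_{v}]$ and the sums $\sum_{w\in\frame}$ count each orbit vector exactly once.
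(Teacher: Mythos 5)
Your proof is correct and follows essentially the same route as the paper's: the frame operator $T$ is $\rho(G)$-equivariant (the paper sums over $g \in G$ and factors out $|G_v|$, you sum over $\frame$ directly — the same operator up to a positive scalar), Schur's lemma forces it to be a scalar, the trace gives the frame constant via \eqref{frameconstant}, and the centroid is a $G$-fixed vector hence zero by irreducibility. You are in fact a bit more careful than the paper on the Schur step: over $\rea$ the commutant of an irreducible representation can be $\com$ or $\quat$, so commuting with $\rho(G)$ alone does not force scalarity, and your detour through the $h$-self-adjointness of $T$ (real eigenvalues, $G$-invariant eigenspaces, irreducibility) is precisely what closes that gap — a point the paper elides by simply invoking ``the Schur Lemma.''
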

\begin{proof}
For $g \in G$, $G_{\rho(g)v} = gG_{v}g^{-1}$, so $|G_{\rho(g)v}| = |G_{v}|$. 
The endomorphism 
\begin{align}
\Phi(x) = \sum_{g \in G}h(\rho(g)v, x)\rho(g)v = |G_{v}|\sum_{u \in \frame}h(u, x)u 
\end{align}
of $\alg$ is $G$-equivariant, so is a multiple of the identity by the Schur Lemma. Because $h(\Phi(v), v) = \sum_{g\in G}h(\rho(g), v)^{2} \geq |v|^{2}_{h} = 1$, the multiplier is positive, so $\frame = \{\rho(g)v: g \in G\}$ is a tight frame. Since $|\frame| = |G|/|G_{v}|$, by \eqref{frameconstant}, the frame constant is $\tfrac{1}{n}{|G|}{|G_{v}|}$. The centroid $\centroid = |G|^{-1}\sum_{g \in G}\rho(g)v$ is $G$-invariant, so equals $0$ by the irreducibility of $\rho$.
\end{proof}
A result equivalent to Lemma \ref{gorbitlemma} is implicit in \cite{Vale-Waldron} and is stated as Lemma $2.3$ of \cite{Vale-Waldron-ginvariant} (the centeredness is not mentioned there). Note however that in \cite{Vale-Waldron, Vale-Waldron-ginvariant} the factor $|G_{v}|^{-1}$ is omitted. This apparent discrepancy is not an error and has the following explanation. Here the frame $\frame = \{\rho(g)v: g \in G\}$ is considered as the set of distinct images of $v$ under the action of $G$. In \cite{Vale-Waldron, Vale-Waldron-ginvariant} and many treatments of frames motivated by signal processing, a frame is regarded not as a set, but as a sequence (so as a map), so that what is considered in \cite{Vale-Waldron, Vale-Waldron-ginvariant} is not $\frame$ but the sequence $\{v_{g}: g \in G\}$ where $v_{g} = \rho(g)v$. In this sequence a given vector is repeated $|G_{v}|$ times. 

The convention used here is more natural for the contexts considered here, as is shown by taking $G = S_{n+1}$ acting on its $n$-dimensional irreducible representation, which yields the cardinality $n+1$ simplicial equiangular frame in an $n$-dimensional space as is apparent from Example \ref{simplicialframeexample}.

\begin{example}\label{simplicialframeexample}
The basic example of a centered equiangular tight frame is the \textit{simplicial} frame, defined as follows. Let $e_{i}$ be an orthonormal basis in Euclidean space $\rea^{n+1}$ and let $e = \sum_{i = 1}^{n+1}e_{i}$. The symmetric group $S_{n+1}$ acts on $\rea^{n+1}$ permuting the vectors $e_{i}$, and this action fixes $e$. The induced action of $S_{n+1}$ on the orthogonal complement $\ste = \{x \in \rea^{n+1}: \lb x, e\ra = 0\}$ is irreducible. Since $f_{i}= (n(n+1))^{-1/2}(e - (n+1)e_{i}) \in \ste$ is a multiple of the orthogonal projection of $e_{i}$ onto $\ste$ unimodular with respect to the induced metric on $\ste$, $\frame =  \{f_{i}:1 \leq i \leq n\}$ spans $\ste$, and the induced action of $S_{n+1}$ on $\ste$ is by permutations of $\frame$. Since $\lb f_{i}, f_{j}\ra = \tfrac{1-n}{n}$, $\frame$ is equiangular. Since $\frame$ can be viewed as the vertices of a simplex, it is called the \emph{simplicial frame}. Since $\frame$ is the $S_{n+1}$ orbit of $f_{1}$, Lemma \ref{gorbitlemma} implies that it is a centered unit norm tight frame with frame constant $(n+1)/n$. 
\end{example}

\begin{example}\label{icosahedralexample}
It would be useful to characterize for which $G$ and which representations $\rho$ the frame $\frame$ of Lemma \ref{gorbitlemma} is a two-distance frame. For example, for $\frame$ to be equiangular the equiangularity constant $c$ must satisfy $c^{2} = \tfrac{1}{n}\tfrac{|G|- n|G_{v}|}{|G| - |G_{v}|}$.

The alternating group $A_{5}$ can be identified with the orientation-preserving symmetries of a regular icosahedron on $\rea^{3}$. If the $12$ vertices of the icosahedron are taken to be the cyclic permutations of $(1 + \om^{2})^{1/2}(\pm 1, \pm \om, 0)$ where $\om = \tfrac{1 + \sqrt{5}}{2} = 1 + \om^{-1}$, then $A_{5}$ is generated by the order $2$ map $(x_{1}, x_{2}, x_{3}) \to (-x_{1}, -x_{2}, x_{3})$, the order $3$ cyclic permutations of the coordinates of $\rea^{3}$, and the rotation through an angle of $2\pi/5$ around the axis passing through any vertex, for example that given by the rotation with axis $(0, -1, \om)$:
\begin{align}
R = \tfrac{1}{2}\begin{pmatrix} -\om & 1 & \om - 1\\ -1 & 1-\om & -\om\\ 1-\om & -\om & 1 \end{pmatrix}.
\end{align}
The six vertices obtained by cyclically permuting $(1 + \om^{2})^{1/2}(1, \pm \om, 0)$ constitute an equiangular tight frame with $c = \tfrac{\om}{1 + \om^{2}}$. By Lemma \ref{gorbitlemma}, the $12$ vertices of the icosahedron constitute a centered tight frame with frame constant $4$. However, this frame is three-distance, not two-distance; with any vertex there are $5$ vertices with inner product $\tfrac{\om}{1 + \om^{2}}$, $5$ with inner product $-\tfrac{\om}{1 + \om^{2}}$, and $1$ vertex with inner product $-1$. 
\end{example}

For $v \in \alg$, the harmonic part of $\tfrac{1}{6}h(v, x)^{3}\in \pol^{3}(\alg)$ is 
\begin{align}\label{pvdefined}
P^{v}(x) = \tfrac{1}{6}\left(h(x, v)^{3} - \tfrac{3}{n+2}|x|^{2}|v|^{2}h(x, v)\right).
\end{align}
The polynomial $P^{v}$ vanishes on the hyperplane orthogonal to $v$, $P^{v}(x) = 0$ if $h(x, v) =0$, and satisfies $P^{v}(v) = \tfrac{n-1}{6(n+2)}|v|^{6}$, $P^{v}(u) = P^{u}(v)$ for all $u, v \in \alg$, and $P^{g\cdot v} = g\cdot P^{v}$ for all $g \in O(h)$. It follows from Corollary IV.2.13 of \cite{Stein-Weiss} that the restriction to the $h$-unit sphere of $\tfrac{(n+4)(n+2)n}{\om_{n}|v|^{3}}P^{v}$ is the zonal spherical harmonic of degree $3$ with pole $\tfrac{v}{|v|}$ (where $\om_{n}$ is the volume of the sphere).

\begin{definition}
The harmonic \emph{cubic polynomial} $P(x)$ associated with a frame $\frame$ in the $n$-dimensional Euclidean vector space $(\alg, h)$ is defined by
\begin{align}\label{framepolynomial}
\begin{split}
6P(x) &=\sum_{v\in \frame}P^{v}(x) = \sum_{v \in \frame}\left(h(x, v)^{3} - \tfrac{3}{n+2}|x|^{2}|v|^{2}h(x, v)\right)
\end{split}
\end{align}
\end{definition}
For a cardinality $m$ unit norm frame $\frame$ with centroid $\centroid$, \eqref{framepolynomial} takes the form
\begin{align}\label{unitframepolynomial}
\begin{split}
6P(x) &=\sum_{v\in \frame}P^{v}(x) = - \tfrac{3m}{n+2}|x|^{2}h(x, \centroid) + \sum_{v \in \frame}h(x, v)^{3}.
\end{split}
\end{align}
The automorphism group of a frame comprises those orthogonal transformations mapping the frame into itself. By construction the automorphism group of a frame acts as automorphisms of the cubic polynomial associated with the frame.

Differentiating \eqref{framepolynomial} shows that the $P$ associated with $\frame$ satisfies
\begin{align}\label{hesspv}
\begin{split}
2P(x)_{i} &  = 2\sum_{v \in \frame}P^{v}(x)_{i} = \sum_{v \in \frame}\left(\left(h(v, x)^{2} - \tfrac{1}{n+2}|v|^{2}|x|^{2}\right) v_{i} - \tfrac{2}{n+2}|v|^{2}h(v, x)x_{i} \right)\\
P(x)_{ij} & = \sum_{v \in \frame}P^{v}(x)_{ij} = \sum_{ v \in \frame}\left(h(x, v)v_{i}v_{j} - \tfrac{2}{n+2}|v|^{2}v_{(i}x_{j)} - \tfrac{1}{n+2}|v|^{2}h(x, v)h_{ij} \right).
\end{split}
\end{align}
For a cardinality $m$ centered unit norm frame $\frame$, \eqref{hesspv} becomes
\begin{align}\label{ctuf}
\begin{split}
2P(x)_{i} &  = \sum_{v \in \frame}\left(\left(h(v, x)^{2} - \tfrac{1}{n+2}|x|^{2}\right) v_{i} - \tfrac{2}{n+2}h(v, x)x_{i} \right)= \sum_{v \in \frame}h(v, x)^{2}v_{i},\\
P(x)_{ij} & = \sum_{ v \in \frame}\left(h(x, v)v_{i}v_{j} - \tfrac{2}{n+2}v_{(i}x_{j)} - \tfrac{1}{n+2}h(x, v)h_{ij} \right) =  \sum_{ v \in \frame}h(x, v)v_{i}v_{j} .
\end{split}
\end{align}
Hence, for a cardinality $m$ centered unit norm tight frame there holds
\begin{align}\label{cutfhessp}
\begin{split}
P^{ij}P_{ij}
& =\tfrac{m}{n}|x|^{2}+ \sum_{v \in \frame}\sum_{\frame \ni u \neq v}h(u, v)^{2}h(u, x)h(v, x) .
\end{split}
\end{align}

\begin{theorem}\label{twodistancetheorem}
In a Euclidean vector space $(\alg, h)$ of dimension $n \geq 2$, let $\frame$ be a cardinality $m$ centered two-distance tight frame such that $h(u, v) \in  \{c_{1}, c_{2}\}$ for all $u \neq v \in \frame$. The harmonic cubic polynomial $P(x)$ associated with $\frame$ as in \eqref{framepolynomial} solves
\begin{align}\label{ctdtfhessp}
\begin{split}
|\hess P|^{2} & = \tfrac{m}{n}\left(1 + (c_{1} + c_{2})(\tfrac{m}{n} - 1) + c_{1}c_{2}\right)|x|^{2}\\
& = \tfrac{m}{n}\left((c_{1} - 1)(c_{2} - 1) + \tfrac{m}{n}c_{1}c_{2}\right)|x|^{2}.
\end{split}
\end{align}
If $\frame$ is a centered equiangular tight frame with $c_{1} = c = -c_{2}$, then 
\begin{align}\label{cetfhessp}
\begin{split}
|\hess P|^{2} & = \tfrac{m(1-c^{2})}{n}|x|^{2} = \tfrac{m^{2}(n-1)}{n^{2}(m-1)}|x|^{2},
\end{split}
\end{align}
where $|h(u, v)| = c= \sqrt{\tfrac{m-n}{n(m-1)}}$ for $u \neq v \in \frame$. 
\end{theorem}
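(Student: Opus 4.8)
The plan is to feed the two-distance hypothesis into the identity \eqref{cutfhessp}, which holds for any centered unit norm tight frame and expresses $|\hess P|^{2}$ as $\tfrac{m}{n}|x|^{2}$ plus the double sum $\Sigma := \sum_{v \in \frame}\sum_{\frame \ni u \neq v}h(u,v)^{2}h(u,x)h(v,x)$. The key step is a \emph{linearization}: since $h(u,v)\in\{c_{1},c_{2}\}$ whenever $u\neq v$, the quadratic $(t-c_{1})(t-c_{2})$ vanishes at $t=h(u,v)$, so $h(u,v)^{2} = (c_{1}+c_{2})h(u,v)-c_{1}c_{2}$ for all $u\neq v\in\frame$. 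Substituting this converts $\Sigma$ into $(c_{1}+c_{2})S_{1}-c_{1}c_{2}S_{2}$, where $S_{1}=\sum_{v}\sum_{u\neq v}h(u,v)h(u,x)h(v,x)$ and $S_{2}=\sum_{v}\sum_{u\neq v}h(u,x)h(v,x)$, both of which are sums to which the defining properties of the frame apply directly.

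Next I would evaluate $S_{1}$ and $S_{2}$ using only centeredness, $\sum_{v\in\frame}v=0$, and the tight frame identity $\sum_{u\in\frame}h(u,w)u=\tfrac{m}{n}w$ for all $w$ (equivalently $\sum_{v}h(v,x)^{2}=\tfrac{m}{n}|x|^{2}$, the frame constant being $m/n$ for $m$ unit vectors in dimension $n$). Restoring the diagonal term gives $S_{2}=\big(\sum_{v}h(v,x)\big)^{2}-\sum_{v}h(v,x)^{2}=-\tfrac{m}{n}|x|^{2}$, the first summand vanishing by centeredness. For $S_{1}$, for each fixed $v$ one has $\sum_{u\neq v}h(u,v)h(u,x)=h\big(x,\sum_{u}h(u,v)u\big)-|v|^{2}h(v,x)=\big(\tfrac{m}{n}-1\big)h(v,x)$ by the tight frame identity together with $|v|^{2}=1$, so $S_{1}=\big(\tfrac{m}{n}-1\big)\sum_{v}h(v,x)^{2}=\tfrac{m}{n}\big(\tfrac{m}{n}-1\big)|x|^{2}$. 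Assembling these, $\Sigma=\tfrac{m}{n}\big((c_{1}+c_{2})(\tfrac{m}{n}-1)+c_{1}c_{2}\big)|x|^{2}$, and adding the leading $\tfrac{m}{n}|x|^{2}$ yields the first displayed form of \eqref{ctdtfhessp}; the second follows by an elementary regrouping of the coefficient.

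For the equiangular case I would specialize to $c_{1}=c$, $c_{2}=-c$, so $c_{1}+c_{2}=0$ and $c_{1}c_{2}=-c^{2}$; then \eqref{ctdtfhessp} collapses to $|\hess P|^{2}=\tfrac{m}{n}(1-c^{2})|x|^{2}$. Inserting the value $c^{2}=\tfrac{m-n}{n(m-1)}$ recorded just above the statement for a cardinality $m$ equiangular tight frame and simplifying $1-c^{2}=\tfrac{n(m-1)-(m-n)}{n(m-1)}=\tfrac{m(n-1)}{n(m-1)}$ produces \eqref{cetfhessp}.

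I do not expect a serious obstacle: once \eqref{cutfhessp} is in hand, the argument is the linearization trick followed by two applications of the frame identities. The one point demanding care is the bookkeeping of the diagonal terms removed and reinserted when passing between $\sum_{u\neq v}$ and $\sum_{u\in\frame}$ — these carry the factors $|v|^{2}=1$ that make the unit-norm normalization (implicit in \eqref{cutfhessp}) essential — and, relatedly, one should make sure the hypotheses of \eqref{cutfhessp} are satisfied, i.e. that $\frame$ is taken unit norm throughout.
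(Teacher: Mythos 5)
Your proof is correct and arrives at the paper's formula \eqref{ctdtfhessp} (more precisely, at its first display). The route is genuinely different in organization from the paper's, though the two use the same underlying identities. The paper sets up a $k$-distance framework: it computes $\bigl(\sum_{u}h(u,x)\bigr)^{2}$ and $\sum_{u,v}h(u,v)h(u,x)h(v,x)$ to obtain the linear system \eqref{ksigmaeqs} in the unknowns $\Sigma_{i}=\sum_{(u,v)\in\G_{i}}h(u,x)h(v,x)$, then specializes to $k=2$ and inverts the $2\times 2$ system to get $\Sigma_{1}$ and $\Sigma_{2}$ individually, and finally forms $c_{1}^{2}\Sigma_{1}+c_{2}^{2}\Sigma_{2}$. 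Your linearization $h(u,v)^{2}=(c_{1}+c_{2})h(u,v)-c_{1}c_{2}$ (the minimal polynomial of the two-valued quantity $h(u,v)$) bypasses the inversion entirely: it converts the weight $h(u,v)^{2}$ into a linear combination that reduces the double sum directly to $S_{1}=c_{1}\Sigma_{1}+c_{2}\Sigma_{2}$ and $S_{2}=\Sigma_{1}+\Sigma_{2}$, which are precisely the two quantities the paper's identities \eqref{tdeq1} and \eqref{tdeq2} compute. So the content is the same, but your packaging is cleaner for $k=2$ because it produces the needed combination $c_{1}^{2}\Sigma_{1}+c_{2}^{2}\Sigma_{2}$ in one step without ever isolating $\Sigma_{1}$ and $\Sigma_{2}$. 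The paper's method has the advantage that it extends formally to $k$-distance frames (one solves a $k\times k$ Vandermonde-type system), which the minimal-polynomial trick would also do but with a degree-$k$ polynomial.

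One small caution: you write that the second displayed form of \eqref{ctdtfhessp} "follows by an elementary regrouping," but in fact the form printed in the statement, $(c_{1}-1)(c_{2}-1)+\tfrac{m}{n}c_{1}c_{2}$, is not algebraically equal to $1+(c_{1}+c_{2})(\tfrac{m}{n}-1)+c_{1}c_{2}$; the difference is $\tfrac{m}{n}(c_{1}+c_{2}-c_{1}c_{2})$. The intended regrouping appears to be $(c_{1}-1)(c_{2}-1)+\tfrac{m}{n}(c_{1}+c_{2})$, which does match, so the statement has a typo. This does not affect your argument (the equiangular specialization $1-c^{2}$ uses the first form), but it is worth noting rather than asserting that the second line follows by regrouping.
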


\begin{proof}
Suppose $\frame$ is a cardinality $m$ centered $k$-distance tight frame such that $h(u, v) \in \C = \{c_{1}, \dots, c_{k}\}$ for all $u, v \in \frame$, $u \neq v$. For $1 \leq i \leq k$, define $\G_{i} = \{(u, v) \in \frame \times \frame: h(u, v) = c_{i}\}$.
Note that for any quantity $\phi(u, v)$ there holds 
\begin{align}
\sum_{u \in \frame}\sum_{\frame \ni v \neq u}\phi(u, v) = \sum_{i = 1}^{k}\sum_{(u, v) \in \G_{i}} \phi(u, v).
\end{align}
Because $\frame$ is centered, unit norm, and tight there holds
\begin{align}\label{tdeq1}
\begin{split}
0 & = \left(\sum_{u \in \frame}h(u, x)\right)^{2} = \sum_{u \in \frame}h(u, x)^{2} + \sum_{i = 1}^{k}\sum_{(u, v) \in \G_{i}} h(u, x)h(v, x) \\
&= \tfrac{m}{n}|x|^{2} +  \sum_{i = 1}^{k}\sum_{(u, v) \in \G_{i}} h(u, x)h(v, x).
\end{split}
\end{align}
for any $x \in \alg$. On the other hand, pairing $\tfrac{m}{n}x = \sum_{u \in \frame}h(u, x)u$ with itself yields
\begin{align}\label{tdeq2}
\begin{split}
\tfrac{m^{2}}{n^{2}}|x|^{2} & = \sum_{u \in \frame}\sum_{v \in \frame}h(u, v)h(u, x)h(v, x) = \sum_{u \in \frame}h(u, x)^{2} +\sum_{i = 1}^{k} \sum_{(u, v)\in \G_{i}}h(u, v)h(u, x)h(v, x) \\
&= \tfrac{m}{n}|x|^{2} + \sum_{i = 1}^{k}c_{i}\sum_{(u, v)\in \G_{i}}h(u, x)h(v, x).
\end{split}
\end{align}
From \eqref{tdeq1} and \eqref{tdeq2} it follows that the $k$ quantities 
\begin{align}
\Sigma_{i} =\sum_{(u, v)\in \G_{i}}h(u, x)h(v, x).
\end{align}
solve the system of equations
\begin{align}\label{ksigmaeqs}
&\Sigma_{1} + \dots + \Sigma_{k} = -\tfrac{m}{n}|x|^{2},&& c_{1}\Sigma_{1} + \dots + c_{k}\Sigma_{k} = \tfrac{m}{n}\left(\tfrac{m}{n} - 1\right)|x|^{2}.
\end{align}
When $k = 2$ \eqref{ksigmaeqs} becomes the $2\times 2$ system
\begin{align}
&\Sigma_{1} + \Sigma_{2} = -\tfrac{m}{n}|x|^{2},&& c_{1}\Sigma_{1} + c_{2}\Sigma_{2} = \tfrac{m}{n}\left(\tfrac{m}{n} - 1\right)|x|^{2}.
\end{align}
Solving these equations yields
\begin{align}\label{tdsol}
\begin{split}
\sum_{(u, v)\in \G_{1}}h(u, x)h(v, x) & = \tfrac{1}{c_{1} - c_{2}}\tfrac{m}{n}\left(\tfrac{m}{n} - 1 + c_{2}\right)|x|^{2},\\
\sum_{(u, v)\in \G_{2}}h(u, x)h(v, x) & = -\tfrac{1}{c_{1} - c_{2}}\tfrac{m}{n}\left(\tfrac{m}{n} - 1 + c_{1}\right)|x|^{2}.
\end{split}
\end{align}
Substituting \eqref{tdsol} into \eqref{cutfhessp} and simplifying the result yields
\begin{align}\label{tdhessp}
\begin{split}
P^{ij}P_{ij}
& =\tfrac{m}{n}|x|^{2}+ \sum_{v \in \frame}\sum_{\frame \ni u \neq v}h(u, v)^{2}h(u, x)h(v, x) \\
& = \tfrac{m}{n}|x|^{2} + c_{1}^{2}\sum_{(u, v)\in \G_{1}}h(u, x)h(v, x) + c_{2}^{2}\sum_{(u, v)\in \G_{2}}h(u, x)h(v, x)\\
& = \tfrac{m}{n}\left(1 + (c_{1} + c_{2})(\tfrac{m}{n} - 1) + c_{1}c_{2}\right)|x|^{2},
\end{split}
\end{align}
proving \eqref{ctdtfhessp}.
\end{proof}

\begin{lemma}
In a Euclidean vector space $(\alg, h)$ of dimension $n \geq 2$, let $\frame$ be a cardinality $m$ centered equiangular tight frame such that $|h(u, v)| = c$ for $u \neq v \in \frame$. 
For $u \in \frame$, $[u] \in \critline(P) \setminus \zero(P)$ and the element $e = (1- c^{2})^{-1}u = \tfrac{n(m-1)}{m(n-1)}u$ satisfies $6P(e) = |e|^{2}_{h}$ and 
\begin{align}\label{frameleft}
(\hess P)(e)_{ij} = \sum_{v \in \frame}h(e, v)v_{i}v_{j} = (1-c^{2})e_{i}e_{j} + \sum_{\frame \ni v \neq u}h(e, v)v_{i}v_{j}.
\end{align}
\end{lemma}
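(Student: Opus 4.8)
The plan is to read everything off the formulas \eqref{ctuf} for the first and second derivatives of the cubic polynomial $P$ associated with a centered unit norm frame, specialized using the two defining properties of $\frame$: equiangularity, $h(u,v)\in\{c,-c\}$ for $u\neq v$ (so that $h(u,v)^{2}=c^{2}$), and centeredness, $\sum_{v\in\frame}v=0$.

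First I would check that $[u]\in\critline(P)$. By the first identity of \eqref{ctuf}, $2P(u)_{i}=\sum_{v\in\frame}h(v,u)^{2}v_{i}$; isolating the term $v=u$ (where $h(u,u)^{2}=1$), using $h(v,u)^{2}=c^{2}$ for $v\neq u$, and then $\sum_{\frame\ni v\neq u}v=-u$, this collapses to $2P(u)_{i}=(1-c^{2})u_{i}$. Thus $dP(u)=\tfrac{1-c^{2}}{2}h(u,\dum)$ is proportional to $h(u,\dum)$, so $h(u,\dum)\wedge dP(u)=0$ and $u$ spans a critical line. For $[u]\notin\zero(P)$ I would use that $P$ is cubic homogeneous, hence $6P(x)=P(x)_{ij}x^{i}x^{j}$, so by the second identity of \eqref{ctuf}, $6P(u)=\sum_{v\in\frame}h(u,v)^{3}$. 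Here the term $v=u$ contributes $1$, and since $h(v,u)^{3}=c^{2}h(v,u)$ for $v\neq u$, centeredness gives $\sum_{\frame\ni v\neq u}h(v,u)^{3}=c^{2}\sum_{\frame\ni v\neq u}h(v,u)=-c^{2}$, so $6P(u)=1-c^{2}$. Since $c^{2}=\tfrac{m-n}{n(m-1)}$, one has $1-c^{2}=\tfrac{m(n-1)}{n(m-1)}>0$ for $n\geq 2$, so $P(u)\neq 0$ and $[u]\notin\zero(P)$.

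Finally, setting $e=(1-c^{2})^{-1}u=\tfrac{n(m-1)}{m(n-1)}u$, which is legitimate by the previous paragraph, I would use $|u|^{2}_{h}=1$ and homogeneity of degree $3$ to get $6P(e)=(1-c^{2})^{-3}\cdot 6P(u)=(1-c^{2})^{-2}=|e|^{2}_{h}$. The Hessian identity is simply the second formula of \eqref{ctuf} evaluated at $x=e$, namely $(\hess P)(e)_{ij}=\sum_{v\in\frame}h(e,v)v_{i}v_{j}$, and separating the term $v=u$, with $h(e,u)=(1-c^{2})^{-1}$ and $u_{i}u_{j}=(1-c^{2})^{2}e_{i}e_{j}$, rewrites that term as $(1-c^{2})e_{i}e_{j}$, which is \eqref{frameleft}. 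I do not expect any real obstacle: the whole argument is bookkeeping with the equiangularity and centeredness relations, the only point requiring a little care being that equiangularity makes $h(u,v)^{2}$ constant off the diagonal but $h(u,v)^{3}$ only constant up to the sign of $h(u,v)$, which is why the gradient computation and the $6P(u)$ computation collapse in slightly different ways.
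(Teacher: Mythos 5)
Your proof is correct and takes essentially the same route as the paper: both read the three claims directly off the formulas in \eqref{ctuf}, using equiangularity to replace $h(u,v)^{2}$ by $c^{2}$ and $h(u,v)^{3}$ by $c^{2}h(u,v)$ off the diagonal, and centeredness to replace $\sum_{v\neq u}v$ by $-u$. The paper states these collapses more tersely (and leaves the non-vanishing $1-c^{2}>0$ implicit), whereas you spell them out; the only substantive content you add beyond the paper's proof is the explicit verification that $1-c^{2}=\tfrac{m(n-1)}{n(m-1)}>0$, which justifies both $[u]\notin\zero(P)$ and the well-definedness of $e$.
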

\begin{proof}
By \eqref{ctuf}, for a centered equiangular tight frame $\frame$, if $u \in \frame$, then $2P(u)_{i} = (1-c^{2})u_{i}$, so $u$ is a critical point of the restriction of $P$ to the $h$-unit sphere. Because $\frame$ is centered,
\begin{align}
\begin{split}
6P(u) &= \sum_{v \in \frame}h(u, v)^{3} = 1 + c^{2}\sum_{\frame \ni v \neq u}h(u, v)  = 1- c^{2} =\tfrac{m(n-1)}{n(m-1)},
\end{split}
\end{align}
so that $6P(e) = \left(\tfrac{n(m-1)}{m(n-1)}\right)^{2} = |e|^{2}_{h}$.
The expression \eqref{frameleft} follows from \eqref{ctuf}.
\end{proof}

\begin{example}\label{simplicialframepolynomialexample}
This example justifies calling \emph{simplicial} the polynomial $P_{n}$ of Theorem \ref{ealgpolynomialtheorem}. It also illustrates that checking whether the polynomial associated with a frame as in Theorem \ref{twodistancetheorem} is equivalent to a given polynomial is nontrivial.
\begin{lemma}\label{pncharlemma}
The cubic polynomial associated with the simplicial frame described in Example \ref{simplicialframeexample} is equivalent to the simplicial polynomial $P_{n}$ defined in \eqref{simplicialpoly} of Theorem \ref{ealgpolynomialtheorem}.
\end{lemma}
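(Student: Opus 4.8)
The plan is to show that the polynomial $P$ associated with the simplicial frame is a conformally associative solution of \eqref{einsteinpolynomials}, and then to invoke the uniqueness assertion of Theorem \ref{ealgpolynomialtheorem}.

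First I would assemble the data of the simplicial frame $\frame = \{f_{0}, \dots, f_{n}\}$ of Example \ref{simplicialframeexample}: by Lemma \ref{gorbitlemma} it is a centered unit norm tight frame of cardinality $m = n+1$ with frame constant $\tfrac{n+1}{n}$, and a direct computation gives $h(f_{i}, f_{j}) = -\tfrac{1}{n}$ for $i \neq j$. Because $\frame$ is centered, \eqref{framepolynomial} and \eqref{unitframepolynomial} show $6P(x) = \sum_{v \in \frame}h(x, v)^{3}$, and \eqref{ctuf} gives $P(x)_{ij} = \sum_{v \in \frame}h(x, v)v_{i}v_{j}$, so the associated multiplication operator, defined by $L(x)_{i}\,^{p}h_{pj} = P(x)_{ij}$, is $L(x)y = \sum_{v \in \frame}h(x, v)h(y, v)v$. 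Since $h(u, v)^{2} = \tfrac{1}{n^{2}}$ for distinct $u, v \in \frame$, substituting into \eqref{cutfhessp} and using $\sum_{v}h(x, v) = 0$ (centeredness) and $\sum_{v}h(x, v)^{2} = \tfrac{m}{n}|x|^{2}$ (tightness) yields
\begin{align*}
|\hess P|^{2} = \tfrac{m}{n}\Bigl(1 - \tfrac{1}{n^{2}}\Bigr)|x|^{2} = \tfrac{(n+1)^{2}(n-1)}{n^{3}}|x|^{2},
\end{align*}
so $P$ solves \eqref{einsteinpolynomials} with $\ka = \tfrac{(n+1)^{2}(n-1)}{n^{3}}$; equivalently one may quote Theorem \ref{twodistancetheorem} in the limiting two-distance case $c_{1} = c_{2} = -\tfrac{1}{n}$.

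Next, assume $n \geq 3$ and compute the commutator of multiplication operators. Starting from $[L(x), L(y)]z = \sum_{u, v \in \frame}h(u, v)h(z, u)\bigl(h(x, v)h(y, u) - h(y, v)h(x, u)\bigr)v$, the summand vanishes whenever $u = v$, so only the terms with $u \neq v$, for which $h(u, v) = -\tfrac{1}{n}$, contribute; pulling out this constant and restoring the (zero) diagonal terms lets the sum factor through $\sum_{v}h(x, v)v = \tfrac{m}{n}x$ and $\sum_{u}h(z, u)h(y, u) = \tfrac{m}{n}h(y, z)$, giving
\begin{align*}
[L(x), L(y)]z = \tfrac{m^{2}}{n^{3}}\bigl(h(x, z)y - h(y, z)x\bigr),\qquad\text{i.e.}\qquad [L(x), L(y)]_{kl} = \tfrac{2m^{2}}{n^{3}}x_{[k}y_{l]}.
\end{align*}
Comparing with \eqref{casscommutator} and noting $\tfrac{\ka}{n-1} = \tfrac{(n+1)^{2}}{n^{3}} = \tfrac{m^{2}}{n^{3}}$ shows $x^{i}y^{j}\cass(P)_{ijkl} = 0$ for all $x, y \in \alg$, hence $\cass(P) = 0$: $P$ is conformally associative. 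By Theorem \ref{ealgpolynomialtheorem}, the $CO(h)$ orbit of conformally associative solutions of \eqref{einsteinpolynomials} is unique and is represented by $P_{n}$, so $P$ is conformally equivalent to $P_{n}$. For the remaining case $n = 2$, where the conformal nonassociativity tensor is not defined, $P$ is a nontrivial harmonic cubic on $\rea^{2}$, hence $CO(2)$-equivalent to $P_{2}$ by Example \ref{2dharpolyexample}.

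The computation is essentially forced once the right identities are invoked; the only point needing attention is the bookkeeping of the constant — specifically, checking that the scalar $\tfrac{m^{2}}{n^{3}}$ produced by the commutator coincides with $\tfrac{\ka}{n-1}$ — together with the observation that the potentially obstructing completely symmetric cubic contribution to $[L(x), L(y)]z$ drops out because the relevant summand vanishes on the diagonal $u = v$. I expect no serious obstacle beyond this.
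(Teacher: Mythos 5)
Your proposal matches the paper's proof essentially step for step: compute the constant $\ka$ via Theorem \ref{twodistancetheorem}, express $L(x)$ through the frame via \eqref{ctuf}, verify that $[L(x),L(y)]_{kl}=\tfrac{2\ka}{n-1}x_{[k}y_{l]}$ so that \eqref{casscommutator} gives $\cass(P)=0$, and conclude by the uniqueness in Theorem \ref{ealgpolynomialtheorem}. The only differences are cosmetic — you observe up front that the diagonal $u=v$ terms vanish in the commutator so that the factor $-\tfrac{1}{n}$ can be pulled out cleanly, whereas the paper keeps the diagonal contribution and lets it cancel between $L(x)L(y)$ and $L(y)L(x)$ at the end — and you explicitly dispatch $n=2$ (where $\cass$ and Theorem \ref{confassequivalencetheorem} are not available) via Example \ref{2dharpolyexample}, a small gap the paper's proof leaves implicit.
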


\begin{proof}
Let $\frame$ be a cardinality $n+1$ unit frame in $\rea^{n}$ such that $h(u, v) = -\tfrac{1}{n}$ for all $u \neq v \in \frame$. Such a frame exists by Example \ref{simplicialframeexample} and any two such frames are orthogonally equivalent because they have the same Gram matrix. Let $P = P^{\frame}$ be the cubic polynomial associated with $\frame$. By Theorem \ref{twodistancetheorem}, $P$ solves \eqref{einsteinpolynomials} with constant $\ka = \tfrac{(n+1)^{2}(n-1)}{n^{3}}$. The claim follows from Theorems \ref{ealgpolynomialtheorem} and \ref{confassequivalencetheorem} once it is shown that $P$ is conformally associative. Let $L(x)_{i}\,^{j} = P(x)_{ip}h^{pj}$. By \eqref{ctuf},
\begin{align}
\begin{split}
L(x)_{p}\,^{j}L(y)_{i}\,^{p} & = \sum_{u \in \frame}h(x, u)\left(\sum_{u \neq v \in \frame}h(y, v)h(u, v)v_{i}\right)u^{j} + \sum_{u \in \frame}h(x, u)h(y, u)u_{i}u^{j}\\
& = -\tfrac{1}{n}\sum_{u \in \frame}h(x, u)\left(\sum_{u \neq v \in \frame}h(y, v)v_{i}\right)u^{j} + \sum_{u \in \frame}h(x, u)h(y, u)u_{i}u^{j}\\
&= -\tfrac{1}{n}\sum_{u \in \frame}h(x, u)\left(\tfrac{n+1}{n}y_{i} - h(y, u)u_{i}\right)u^{j} + \sum_{u \in \frame}h(x, u)h(y, u)u_{i}u^{j}\\
&= -\tfrac{1}{n}\sum_{u \in \frame}h(x, u)\left(\tfrac{n+1}{n}y_{i}\right)u^{j} + \tfrac{n+1}{n} \sum_{u \in \frame}h(x, u)h(y, u)u_{i}u^{j}\\
&= -\tfrac{(n+1)^{2}}{n^{3}}y_{i}x^{j} + \tfrac{n+1}{n} \sum_{u \in \frame}h(x, u)h(y, u)u_{i}u^{j},
\end{split}
\end{align}
where the second equality is special to the particular frame considered. It follows that
\begin{align}
\begin{split}
[L(x), L(y)]_{ij} & =  \tfrac{2(n+1)^{2}}{n^{3}}x_{[i}y_{j]} = \tfrac{2\ka}{n-1}x_{[i}y_{j]},
\end{split}
\end{align}
and by \eqref{casscommutator} this shows that $\cass(P) = 0$, so that $P$ is conformally associative.
\end{proof}

Together Lemmas \ref{zeroautomorphismlemma} and \ref{pncharlemma} imply that the autormorphism group of $P_{n}$ can be realized concretely as that generated by the reflections through the hyperplanes orthogonal to the pairwise differences of the elements of the equiangular simplicial frame. Let $P$ be the cubic polynomial associated with the simplicial $\frame$ as in the proof of Lemma \ref{pncharlemma}. Then $6P(v) = \sum_{u \in \frame}h(u, v)^{3} = 1 - n^{-2}$ for all $v \in \frame$, so, if $v \neq w \in \frame$, then 
\begin{align}
\begin{split}
P(v-w) &= \sum_{u \in \frame}h(v-w, u)^{3} \\
&= P(v) - P(w) + 3\sum_{u \in \frame\setminus\{v, w\}}h(u, v)h(u, w)\left(h(v, u) - h(w, u)\right) = 0,
\end{split}
\end{align}
so $v - w \in \zero(P)$. Similarly,
\begin{align}
\begin{split}
\lb dP(x), v - w\ra &= \sum_{u \in \frame}h(x, u)^{2}h(v-w, u) = \sum_{u \in\setminus\{v, w\}}h(x, u)^{2}h(v-w, u) = 0,
\end{split}
\end{align}
for all $x \in \ste$. By Lemma \ref{zeroautomorphismlemma} the reflection through the hyperplane orthogonal to $v - w$ is an automorphism of $P$. Since these reflections act on $\frame$ as transpositions, they generate a group isomorphic to the symmetric group $S_{n+1}$. Since the automorphisms of $P$ permute $\frame$, this is the full automorphism group of $P$.
\end{example}

\begin{example}
The cardinality $6$ equiangular frame described in Example \ref{icosahedralexample} is not centered, so Theorem \ref{twodistancetheorem} does not apply to this example.
\end{example}

\begin{example}\label{d2poly2frameexample}
This example shows that different centered two-distance tight frames can give rise to the same cubic polynomial. In the construction here the different frames arise as generators of different subsets of the set $\critline(P)$ of critical lines of $P$.

The $8$ columns of
\begin{align}
\frac{1}{\sqrt{6}}
\begin{pmatrix*}[r]
1   &  1 &   -1&    -1 &    1 &    1 &   -1 &    -1\\ 
     1    &-1&     1 &   -1&     1&    -1&     1&    -1\\
     1    &-1  &  -1    & 1   &  1  &  -1  &  -1  &   1\\
     1    &-1   & -1     &1   & -1   &  1   &  1   & -1\\
     1    &-1 &    1  &  -1  &  -1 &    1 &   -1 &    1\\
   1   &  1   & -1   & -1   & -1   & -1    & 1   &  1\\
\end{pmatrix*}
\end{align}
are a centered two-distance tight frame in $\rea^{6}$ with $c_{1} = 0$ and $c_{2} = -1/3$. 
The associated cubic polynomial is $\tfrac{8}{\sqrt{6}}(x_{1}x_{2}x_{3} + x_{1}x_{4}x_{5} + x_{2}x_{4}x_{6} + x_{3}x_{5}x_{6})$, a constant multiple of \eqref{d2poly2}.

The $16$ columns of 
\begin{align}\label{16colexample}
\setcounter{MaxMatrixCols}{20}
\frac{1}{\sqrt{3}}
\begin{pmatrix*}[r]
1 & -1 & 1 & -1 & 1 & -1 & 1 & -1 & 0 & 0 & 0 & 0 & 0 & 0 & 0 & 0\\
1 & 1 &-1 &-1 & 0 & 0 & 0 & 0 & 1 & 1 &-1 &-1& 0 & 0 & 0 & 0\\
1 &-1&-1 & 1 &  0 & 0 & 0 & 0 & 0 & 0 & 0 & 0 & 1 &-1 &-1 &1 \\
0 & 0 & 0 & 0 & 1 & -1 &-1 &1 &1 & -1&-1& 1   & 0 & 0 & 0 & 0 \\
0 & 0 & 0 & 0 & 1 & 1 &-1 &-1 & 0 & 0 & 0 & 0 & 1 & 1 &-1 &-1\\
0 & 0 & 0 & 0 & 0 & 0 & 0 & 0 & 1 & -1 & 1 & -1 & 1 & -1 & 1 & -1\\
\end{pmatrix*}
\end{align}
are a centered equiangular tight frame in $\rea^{6}$ with $c = 1/3$. (Example \eqref{16colexample} is not new; it was given at the end of section $3$ of \cite{Fickus-Jasper-Mixon-Peterson-Watson}, where it was obtained from purely combinatorial considerations.) The associated cubic polynomial is
\begin{align}
\begin{split}
18\sqrt{3}P(x) & = (x_{1} + x_{2} + x_{3})^{3} + (-x_{1} + x_{2} - x_{3})^{3}+ (x_{1} - x_{2} - x_{3})^{3}+ (-x_{1} - x_{2} + x_{3})^{3}\\
&\quad +  (x_{1} + x_{4} + x_{5})^{3} + (-x_{1} + x_{4} - x_{5})^{3}+ (x_{1} - x_{4} - x_{5})^{3}+ (-x_{1} - x_{4} + x_{5})^{3}\\
&\quad +  (x_{2} + x_{4} + x_{6})^{3} + (-x_{2} + x_{4} - x_{6})^{3}+ (x_{2} - x_{4} - x_{6})^{3}+ (-x_{2} - x_{4} + x_{6})^{3}\\
&\quad +  (x_{3} + x_{5} + x_{6})^{3} + (-x_{3} + x_{5} - x_{6})^{3}+ (x_{3} - x_{5} - x_{6})^{3}+ (-x_{3} - x_{5} + x_{6})^{3}\\
& = 24(x_{1}x_{6}x_{2} + x_{1}x_{5}x_{3} + x_{6}x_{5}x_{4} + x_{2}x_{3}x_{4}),
\end{split}
\end{align}
and $P$ solves \eqref{einsteinpolynomials} with $\ka = 64/27$. 
\end{example}

\begin{example}\label{728example}
The $28$ possible permutations of 
\begin{align}
\tfrac{1}{\sqrt{24}}\begin{pmatrix*}[c] -3 & -3 & 1 & 1 & 1 & 1 & 1 & 1\end{pmatrix*}^{t}
\end{align}
span the $7$-dimensional subspace $\stw = \{x \in \rea^{8}: h(x, e) = 0\}$ orthogonal to $e = e_{1} + \dots + e_{8}$, and it can be checked that they constitute a centered equiangular tight frame $\frame$ in $\stw$ having frame constant $4$ and equiangularity constant $1/3$.  Alternatively, these claims follow from Lemma \ref{gorbitlemma}, as this frame is the orbit of a unit vector in the $7$-dimensional irreducible representation of $S_{8}$ having stabilizer $S_{6}\times S_{2}$. The associated $7$-variable harmonic cubic polynomial $P$ solves \eqref{einsteinpolynomials} with $\ka = 32/9$. The elements of $\frame$ are the unit vectors in the directions of the pairwise sums of distinct elements of the $8$ element equiangular simplicial frame in $\stw$, and, as in Example \ref{d2poly2frameexample}, $\frame$ and the simplicial frame determine multiples of the same cubic polynomial (in this case it is the simplicial polynomial $P_{7}$).
\end{example}

\bibliographystyle{amsplain}
\def\polhk#1{\setbox0=\hbox{#1}{\ooalign{\hidewidth
  \lower1.5ex\hbox{`}\hidewidth\crcr\unhbox0}}} \def\cprime{$'$}
  \def\cprime{$'$} \def\cprime{$'$}
  \def\polhk#1{\setbox0=\hbox{#1}{\ooalign{\hidewidth
  \lower1.5ex\hbox{`}\hidewidth\crcr\unhbox0}}} \def\cprime{$'$}
  \def\cprime{$'$} \def\cprime{$'$} \def\cprime{$'$} \def\cprime{$'$}
  \def\polhk#1{\setbox0=\hbox{#1}{\ooalign{\hidewidth
  \lower1.5ex\hbox{`}\hidewidth\crcr\unhbox0}}} \def\cprime{$'$}
  \def\Dbar{\leavevmode\lower.6ex\hbox to 0pt{\hskip-.23ex \accent"16\hss}D}
  \def\cprime{$'$} \def\cprime{$'$} \def\cprime{$'$} \def\cprime{$'$}
  \def\cprime{$'$} \def\cprime{$'$} \def\cprime{$'$} \def\cprime{$'$}
  \def\cprime{$'$} \def\cprime{$'$} \def\cprime{$'$} \def\dbar{\leavevmode\hbox
  to 0pt{\hskip.2ex \accent"16\hss}d} \def\cprime{$'$} \def\cprime{$'$}
  \def\cprime{$'$} \def\cprime{$'$} \def\cprime{$'$} \def\cprime{$'$}
  \def\cprime{$'$} \def\cprime{$'$} \def\cprime{$'$} \def\cprime{$'$}
  \def\cprime{$'$} \def\cprime{$'$} \def\cprime{$'$} \def\cprime{$'$}
  \def\cprime{$'$} \def\cprime{$'$} \def\cprime{$'$} \def\cprime{$'$}
  \def\cprime{$'$} \def\cprime{$'$} \def\cprime{$'$} \def\cprime{$'$}
  \def\cprime{$'$} \def\cprime{$'$} \def\cprime{$'$} \def\cprime{$'$}
  \def\cprime{$'$} \def\cprime{$'$} \def\cprime{$'$} \def\cprime{$'$}
  \def\cprime{$'$} \def\cprime{$'$} \def\cprime{$'$}
\providecommand{\bysame}{\leavevmode\hbox to3em{\hrulefill}\thinspace}
\providecommand{\MR}{\relax\ifhmode\unskip\space\fi MR }
\providecommand{\MRhref}[2]{%
  \href{http://www.ams.org/mathscinet-getitem?mr=#1}{#2}
}
\providecommand{\href}[2]{#2}

\end{document}